\newcommand{\twopartdef}[4]
{
	\left\{
	\begin{array}{ll}
		#1 & \mbox{if } #2 \\
		#3 & \mbox{if } #4
	\end{array}
	\right.
}
\newcommand{\bN}{\mathbb{N}}
\newcommand{\bbN}{\mathbb{N}}
\newcommand{\bZ}{\mathbb{Z}}
\newcommand{\cH}{\mathcal{H}}
\newcommand{\cL}{\mathcal{L}}
\newcommand{\cN}{\mathcal{N}}
\newcommand{\cF}{\mathcal{F}}
\newcommand{\cP}{\mathcal{P}}
\newcommand{\cS}{\mathcal{S}}
\newcommand{\cT}{\mathcal{T}}
\newcommand{\cU}{\mathcal{U}}
\newcommand{\Aut}{\operatorname{Aut}}
\newcommand{\CAT}{\operatorname{CAT}}
\newcommand{\Sym}{\operatorname{Sym}}
\newcommand{\id}{\operatorname{id}}
\newcommand{\COS}{\operatorname{COS}}
\newcommand{\Int}{\operatorname{Int}}
\newcommand{\normal}{\trianglelefteq}
\newcommand{\axis}{\operatorname{axis}}
\newcounter{ClaimCounter}
\newtheorem{theorem}{Theorem}[section]
\newtheorem{proposition}[theorem]{Proposition}
\newtheorem{lemma}[theorem]{Lemma}
\newtheorem{corollary}[theorem]{Corollary}
\newtheorem{claim}[ClaimCounter]{Claim}
\theoremstyle{definition}
\newtheorem{definition}[theorem]{Definition}
\newtheorem{example}[theorem]{Example}
\theoremstyle{remark}
\newtheorem{rem}[theorem]{Remark}
\newenvironment{remark}{\begin{rem}}{\end{rem}}
\title{Directions and scale-multiplicative semigroups in restricted Burger-Mozes groups}
\author{Timothy P.~Bywaters\\
School of Mathematics and statistics\\
The University of Sydney\\
NSW 2006 Australia\\
t.bywaters@maths.usyd.edu.au}
\date{}
\begin{document}
\maketitle

\begin{abstract}
	We study the scale function, space of directions and scale-multiplicative semigroups for restricted Burger-Mozes groups. We relate these general notions to intrinsic properties of the group. Among other things, we give a formula for the scale function; relate the space of directions to both the action on the tree and an action on a $\CAT(0)$ cube complex; and construct maximal scale-multiplicative semigroups from the space of directions.
\end{abstract}
\tableofcontents
\section{Introduction}
The scale function and tidy subgroups for totally disconnected locally compact (t.d.l.c.) groups were originally introduced in \cite{Willis94} as a tool to resolve a conjecture in \cite{Hofmann81}. Since then, ideas surrounding theses concepts have grown and found applications in other areas of mathematics including random walks, ergodic theory and dynamical systems \cite{Dani06, Jaworski96, Previts03, Willis14}, arithmetic groups \cite{Shalom13} and Galois Theory \cite{Chat12}. These are applications not only of the scale function and tidy subgroups but of structural invariants derived from them. The space of directions and maximal scale-multiplicative semigroups are two such structural invariants.

The space of directions of a t.d.l.c. group is a notion of boundary built from the group action on its compact open subgroups. Originally defined in \cite{Baumgartner06} where it was shown that the boundary can be used to detect flat subgroups. These are analogues of geometric flats or apartments in buildings. On the other hand, hyperbolic groups, which are very far from having any geometric flats, are shown to have a discrete space of directions in \cite{Bywaters19}.

Scale-multiplicative semigroups are subsemigroups for which the scale function is a homomorphism. All such semigroups are contained within a maximal scale-multiplicative semigroup. These objects are defined and studied in \cite{Baumgartner15}. The authors also determine all maximal scale-multiplicative semigroups of the automorphism group of a regular tree. It is seen that these semigroups correspond to geometric properties of the tree. However, without further examples it is difficult to see how these results generalise to arbitrary t.d.l.c. groups, especially those which are not so closely linked to a geometric object.

We study the scale function, tidy subgroups, space of directions, and scale-multiplicative semigroups for restricted Burger-Mozes groups. We specifically aim to give meaning to these concepts in terms of the group itself. This is to increase our understanding of what information is encoded by these concepts. By studying restricted Burger-Mozes groups, we also gain insight into how results for the automorphism group of a regular tree, specifically those given in \cite{Baumgartner15}, may generalise to a larger class in t.d.l.c. groups. Restricted Burger-Mozes groups are a natural example to consider as they are still tree-like, they act as automorphisms on a regular tree, but the topology differs from the permutation topology. This topology is a result of a general construction which is also used to define families of almost automorphism groups, an example of which is Neretin's group. Consequently, studying restricted Burger-Mozes groups may also help to understand these other families of groups.

Many of our results require a good understanding of the scale function on restricted Burger-Mozes groups. This understanding is developed in Section \ref{sec:scale_on_G(F)}. Here we calculate the scale of a general element. Our arguments are based on the tidying procedure found in \cite{Willis01}. We then use our results to investigate the uniscalar elements of restricted Burger-Mozes groups. In particular, we characterise when the notions of uniscalar and elliptic (as a tree automorphism) coincide.

Section \ref{sec:leboudecDirections} contains results concerning the space of directions for restricted Burger-Mozes groups. We give two characterisations for when two group elements move in the same direction, that is they are asymptotic. The first involves the action of the group of a $\CAT(0)$-cube complex which was defined by \cite{Boudec15}. The other is in terms of a function which captures the asymptotic nature of the local action (on the tree) of a given element. The section concludes by using these characterisations to show that the topology on the space of directions is discrete.

The focus of Section \ref{sec:s_mult_leboudec} is on constructing maximal scale-multiplicative semigroups from asymptotic classes. To do so, the concepts from previous sections are built upon and it is shown that an asymptotic class union the uniscalar component of its stabiliser gives a maximal scale-multiplicative semigroup. In doing so we generalise some results specific of tree automorphism groups but using notions that are defined for any t.d.l.c. group. Thus, we gain insight into how maximal scale-multiplicative groups may be constructed from asymptotic classes in arbitrary t.d.l.c. groups.
\subsection*{Acknowledgements}
I am grateful to Udo Baumgartner, Jacqui Ramagge, Stephan Tornier and George Willis for fruitful discussions, suggestions and recommendations with respect to earlier drafts. This work was undertaken while the author was supported by the Australian Government Research Training Program.
\subsection*{Conventions}
Let $\bN = \{1,2,3,4,\ldots\}$ be the natural numbers and $\bN_0 = \{0\}\cup \bN$. For a group $G$ acting on a space $X$ and $V\subset X$, we let $G_{V} = \{g\in G\mid g(v) = v \hbox{ for all }v\in V\}$ and $G_{\{V\}} = \{g\in G\mid g(V) = V \}$. For $S\subset G$ and $g\in G$, we occasionally write $S^g := gSg^{-1}$.
\section{Preliminaries}
\subsection{Restricted Burger-Mozes groups}
After establishing out notation for graphs and their automorphisms, we define restricted Burger-Mozes groups and give some preliminary results. These are groups which act as automorphisms on a regular tree with the restriction that the local action of an element is specified almost everywhere by a fixed permutation group. They do not come equipped with the permutation topology but with a variation which is reminiscent of Neretin's group of almost automorphisms. These groups were studied in depth in \cite{Boudec15}. We adopt the terminology of restricted Burger-Mozes groups from \cite{Caprace17} which describes the relation between restricted Burger-Mozes groups and Burger-Mozes groups as analogous to the relation between restricted direct products and direct products. Justification for most of what is said here can be found in  \cite{Burger00}, \cite{Boudec15} and \cite{garrido2018}.
\subsubsection{Graphs and tree automorphisms}
\label{sec:graph_and_auto}
It is important that we establish our language for graphs. For us, a \emph{directed graph} $\Gamma$ is a disjoint union $V(\Gamma)\sqcup E(\Gamma)$ of a vertex set $V(\Gamma)$ and edge set $E(\Gamma)\subset V(\Gamma)\times V(\Gamma)$. We denote by $o,t:E(\Gamma)\to V(\Gamma)$ the projections onto the first and second components, the \emph{origin} and \emph{terminus}, of an edge. We require that our edge set is antisymmetric, that is, $(u,v)\in E(\Gamma)$ implies $u \neq v$. Geometrically, this means that $\Gamma$ is without loops.

Two vertices $v,w\in V(\Gamma)$ are \emph{adjacent} if either $(v,w)\in E(\Gamma)$ or $(w,v)\in E(\Gamma)$, equivalently the adjacency relation is the symmetric closure of the edge relation. A \emph{path} of length $k\in \bbN$ from $v\in V(\Gamma)$ to $v'\in V(\Gamma)$ is a sequence of vertices $(v=v_{0},\ldots,v_{k}=v')$ of $\Gamma$ such that $v_{i}$ and $v_{i+1}$ are adjacent for all $i\in\{0,\ldots,k-1\}$ and $v_{i}\neq v_{i+2}$ for all $i\in\{0,\ldots,k - 2\}$. We also allow \emph{infinite} and \emph{bi-infinite paths} which are paths indexed by $\bN$ and $\bZ$ respectively. A directed graph $\Gamma$ is \emph{connected} if for all $v,w\in V(\Gamma)$ there is a path from $v$ to $w$. It is a \emph{tree} if it is connected and the path joining any two vertices is unique. The \emph{distance} between two vertices $v_0,v_1\in V(\Gamma)$ is the length of the shortest path between them or $\infty$ if such a path does not exist. This defines a metric $d:V(\Gamma)\times V(\Gamma)\to \bN_0$ if $\Gamma$ is connected.

Two infinite paths in $\Gamma$ are \emph{equivalent} if they intersect in an infinite path. When $\Gamma$ is a tree, this is an equivalence relation on infinite paths and the \emph{boundary} $\partial\Gamma$ of $\Gamma$ is the set of these equivalence classes. 

A directed graph $\Gamma'$ is a \emph{subgraph} of $\Gamma$ if $V(\Gamma')\subset V(\Gamma)$ and $E(\Gamma')\subset E(\Gamma)$. For a subset $A\subseteq V(\Gamma)$, the \emph{subgraph of $\Gamma$ spanned by $A$} is the directed graph with vertex set $A$ and edge set $\{(v,w)\in E(\Gamma)\mid v,w\in A\}$.

A \emph{morphism} between directed graphs $\Gamma_{0}$ and $\Gamma_{1}$ is a map $\alpha_{V}:V(\Gamma_{0})\to V(\Gamma_1)$ such that for every $(v_0,v_1)\in E(\Gamma_0)$ we have $(\alpha(v_0),\alpha(v_1))\in E(\Gamma_{1})$ or $\alpha(v_0) = \alpha(v_1)$. We call $\alpha$ an \emph{isomorphism} if has an inverse which is also a morphism and an \emph{automorphism} if $\alpha$ is an isomorphism and $\Gamma_0 = \Gamma_1$. It is routine to show that the collection of automorphisms of a directed graph $\Gamma$ forms a group under composition which we denote by $\Aut(\Gamma)$.

An \emph{undirected graph} $\Gamma$, or simply \emph{graph}, is a directed graph together with a fixed-point-free involution of $E(\Gamma)$, denoted by $e\mapsto\overline{e}$, such that $o(\overline{e})=t(e)$ and $t(\overline{e})=o(e)$ for all $e\in E$. Alternatively, since our edges are determined by their origin and terminus, an undirected graph is a graph where the edge and adjacency relation coincide. Here, $\overline{(u,v)}$ is defined to be $(v,u)$ for all $(u,v)\in E(\Gamma)$. For $v\in V(\Gamma)$, we let $E(v):=o^{-1}(v)=\{e\in E(\Gamma)\mid o(e)=v\}$ be the set of edges issuing from $v$. The \emph{valency} of $x\in V(\Gamma)$ is $|E(x)|$. We say $\Gamma$ is \emph{regular} if $\deg(\Gamma):= |E(v)|$ does not depend on $v\in V(\Gamma)$. Note that a directed graph yields an undirected graph by passing to the symmetric closure of the edge set. Subgraphs of undirected graphs are required to be undirected. A subgraph $\cT$ of a regular undirected tree $T$ is \emph{complete} if $v\in V(\cT)\subset V(T)$ implies either all vertices adjacent to $v$ in $T$ are vertices in $\cT$ or the valency of $v$ in $\cT$ is $1$. The \emph{internal vertices} of $\cT$ are precisely those for which $E(v)\subset E(\cT)$. The set of internal vertices of $\cT$ is denoted by $\Int(\cT)$.

Occasionally, we will need control over the size of spheres in a tree. 
\begin{lemma}\label{lem:size_of_balls}
	Suppose $T$ is an infinite regular tree and $e\in E(T)$. Then 
	\[|\{v\in V(T)\mid d(v,e) = k\}|= 2(\deg(T)-1)^{k}.\]
\end{lemma}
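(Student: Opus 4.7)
The plan is to remove the edge $e$ and exploit the resulting rooted subtrees. Writing $d = \deg(T)$ and letting $u, w$ denote the two endpoints of $e$ (reserving the letter $v$ for the generic vertex in the statement), deletion of $e$ from $T$ disconnects it into two infinite subtrees $T_u \ni u$ and $T_w \ni w$, each of which is a tree rooted at $u$ (respectively $w$).

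Next I would count vertices by depth in each rooted subtree. In $T_u$, the root $u$ has $d - 1$ children (its $T$-neighbours other than $w$), and every non-root vertex also has $d - 1$ children (its $T$-neighbours other than its unique $T_u$-parent). A one-line induction on $k$ then gives that the number of vertices at depth exactly $k$ in $T_u$ is $(d-1)^k$, and similarly for $T_w$.

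Finally I would translate depth into distance from $e$. A vertex $v \in T_u$ at depth $k \geq 1$ from $u$ satisfies $d(v, u) = k$ while $d(v, w) = k + 1$, because the unique geodesic from $v$ to $w$ must cross $e$. Interpreting $d(v, e) = \min\bigl(d(v, u),\, d(v, w)\bigr)$, this gives $d(v, e) = k$; conversely, every vertex at distance $k \geq 1$ from $e$ lies in exactly one of $T_u, T_w$ at the corresponding depth. Adding the two sides yields $2(d-1)^k$ for $k \geq 1$, and the base case $k = 0$ contributes the two endpoints of $e$, matching $2(d-1)^0 = 2$.

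The argument is essentially elementary bookkeeping, so there is no real obstacle; the only point deserving care is fixing the interpretation of $d(v, e)$ and handling $k = 0$ separately, since the rooted-subtree counts double-count neither endpoint.
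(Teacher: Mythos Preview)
Your proof is correct and essentially the same as the paper's: both arguments rest on the observation that each vertex at distance $k$ from $e$ has exactly $\deg(T)-1$ neighbours at distance $k+1$, with the base case $k=0$ giving the two endpoints. The paper runs this as a single induction on spheres around $e$, whereas you first split $T$ along $e$ into two rooted subtrees and count by depth before reassembling; this is a minor organizational difference rather than a different idea.
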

\begin{proof}
	Note $|\{v\in V(T)\mid d(v,e) = 0\}|= 2$ as the vertices distance $0$ from $e$ are precisely $o(e)$ and $t(e)$. If $d(v,e) = k$, then there are exactly $\deg(T) - 1$ vertices adjacent to $v$ that are $k+1$ away from $e$. Thus,
	\[|\{v\in V(T)\mid d(v,e) = k+1\}| = (\deg(T) - 1)|\{v\in V(T)\mid d(v,e) = k\}|.\]
	The result follows via induction.
\end{proof}

Every automorphism $g$ of a locally finite regular tree $T$ can be classified into two types. First, $g$ is \emph{elliptic} if there exists $V\subset V(T)$ finite, such that $g(V) = V$. It can be seen that $V$ can be chosen as either a single vertex or a pair of adjacent vertices. Second, $g$ is \emph{hyperbolic} if $g$ is not elliptic and there exist two unique boundary points fixed by $g$. If $g$ is hyperbolic, then there exists a bi-infinite path $\axis(g) = (\ldots, v_{-1},v_0,v_1,\ldots)$, called the \emph{axis} of $g$, such that $g(\axis(g)) = \axis(g)$. For any $g\in\Aut(T)$ define
\begin{equation*}
\label{eq:length_function}
l(g): = \min_{p\in V(T)\cup E(T)}d_\cH(g(p),p),
\end{equation*}
where $d_{\cH}$ is the Hausdorff distance induced by the distance on the vertices of $T$. Equivalently $l(g) = 0$ if $g$ is elliptic and if $g$ is hyperbolic $l(g) = d(v,g(v))$ for some (hence any) $v\in\axis(g)$.
When $g$ is hyperbolic, the condition $d(v,g(v)) = l(g)$ characterises vertices $v\in\axis(g)$. By identifying $\axis(g)$ with $\bZ$, we can place a total order $\le_g$ on the vertices of $\axis(g)$. For $u,v\in \axis(g)$, we say that $u\le_g v$ if for some $k\in\bN_0$, $v$ is on the path from $u$ to $g^k(u)$. We say $u <_g v$ if $u\le_g v$ and $u\neq v$. Intuitively, vertices are larger with respect to $\le_g$ in the direction that $g$ translates. The two unique ends fixed by $g$ can be recovered in the following way: Choose $v\in \axis(g)$. Then there exists a unique minimal infinite path containing $g^{k}(v)$ for all $k\in\bN_0$. Set $\omega_{+}(g)$ to be the equivalence class of this path. This is one of the ends fixed by $g$. The other is $\omega_{-}(g) := \omega_{+}(g^{-1})$. Given $v\in V(T)$, let $\pi_g(v)$ denote the unique vertex in $\axis(g)$ which has minimal distance form $v$. Then $d(v, \axis(g)) = d(v,\pi_g(v))$. Since $g$ acts as a translation on $\axis(g)$, it can be shown that $\pi_gg(v) = g\pi_g(v)$.

The following lemma can be shown using the axis of translation of the individual hyperbolic automorphisms.

\begin{lemma}\label{lem:dist_to_axis_seq}
	Suppose $g$ and $h$ are automorphisms of a locally finite regular tree $T$ and suppose $v\in V(T)$. Set $d_n = d(h^{-n}(v), \axis(g))$. Then $d_n$ is eventually non-decreasing and is bounded if and only if $\omega_{-}(h)\in \{\omega_{\pm}(g)\}$.
\end{lemma}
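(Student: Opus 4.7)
Write $A_g := \axis(g)$ and $A_h := \axis(h)$, assuming both $g$ and $h$ are hyperbolic so that these axes and the boundary point $\omega_{-}(h)$ are defined (if either is elliptic the statement is trivial or not meaningfully applicable). Set $w_n := h^{-n}(\pi_h(v))$. Since the nearest-point projection $\pi_h$ onto $A_h$ is equivariant under $h$, one has $w_n = \pi_h(h^{-n}(v))$ and $d(h^{-n}(v), w_n) = d(v, A_h) =: r$ for every $n$. Note that $(w_n)$ traverses $A_h$ toward $\omega_{-}(h)$, moving by $l(h)$ at each step.

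The plan is to reduce the problem to analyzing $d(w_n, A_g)$ via the identity
\[
d_n = r + d(w_n, A_g) \qquad \text{for all sufficiently large } n.
\]
To prove this I will verify a no-backtracking condition at $w_n$. When $r>0$, the edge $e_n$ at $w_n$ pointing toward $h^{-n}(v)$ is the image under $h^{-n}$ of the edge at $\pi_h(v)$ pointing toward $v$; since $v \notin A_h$, this is not an $A_h$-edge. For large $n$, $e_n$ also fails to be an $A_g$-edge: either $w_n \notin A_g$ and no edge at $w_n$ belongs to $A_g$, or $w_n \in A_g$ and the axes $A_h$ and $A_g$ coincide on a ray through $w_n$, so the $A_g$-edges at $w_n$ coincide with the $A_h$-edges at $w_n$. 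Hence the geodesic from $h^{-n}(v)$ to $A_g$ first traverses the hanging branch of length $r$ to $w_n$ and only then exits via a different edge, yielding the identity. The case $r=0$ is immediate from $h^{-n}(v) = w_n$.

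With the identity in hand I split into cases depending on whether $\omega_{-}(h) \in \{\omega_\pm(g)\}$, using the standard fact that two geodesic rays in a tree with a common ideal endpoint eventually coincide. If $\omega_{-}(h) \in \{\omega_\pm(g)\}$, then $A_h$ and $A_g$ coincide on a ray toward this common end, so for $n$ large enough $w_n$ lies on this ray and $d(w_n, A_g) = 0$; hence $d_n = r$ eventually, which is bounded and non-decreasing. If $\omega_{-}(h) \notin \{\omega_\pm(g)\}$, then there is a unique vertex $u^* \in A_g$ from which the geodesic ray toward $\omega_{-}(h)$ meets $A_g$ only at $u^*$, and this ray eventually coincides with the $\omega_{-}(h)$-ray of $A_h$; for large $n$, $\pi_g(w_n) = u^*$ and $d(w_n, A_g) = d(w_n, u^*)$ increases strictly by $l(h)$ at each step, so $d_n$ is eventually strictly increasing and unbounded.

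The technical step I expect to require the most care is the no-backtracking verification underlying the identity $d_n = r + d(w_n, A_g)$, and in particular the observation that on the common-ray region the axes $A_h$ and $A_g$ agree as subgraphs, so the condition of $e_n$ not being an $A_h$-edge is strong enough to force $e_n$ not to be an $A_g$-edge.
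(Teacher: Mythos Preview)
The paper itself gives no proof of this lemma, only the remark that it ``can be shown using the axis of translation of the individual hyperbolic automorphisms.'' Your argument supplies a complete proof along exactly those lines, via the clean reduction $d_n = r + d(w_n, A_g)$ for large $n$, and is essentially correct.

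There is one small imprecision in the no-backtracking step. The condition ``$e_n$ is not an $A_g$-edge'' is not by itself enough to force the geodesic from $h^{-n}(v)$ to $A_g$ to pass through $w_n$; what you actually need is that the hanging half-tree $T_n$ (the component of $T\setminus\{w_n\}$ containing $h^{-n}(v)$) is disjoint from $A_g$. In your second sub-case ($w_n\in A_g$ for large $n$, i.e.\ the shared-end case) your argument does give this: since both $A_g$-edges at $w_n$ are $A_h$-edges and $e_n$ is not, $A_g$ cannot enter $T_n$. But in your first sub-case ($w_n\notin A_g$), merely observing that no edge at $w_n$ is an $A_g$-edge does not rule out $A_g\subset T_n$. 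The fix is immediate: the branches $T_n = h^{-n}(T_0)$ are pairwise disjoint subtrees hanging off $A_h$ at distinct vertices, so the fixed line $A_g$ can lie inside at most one of them, and hence $A_g\cap T_n=\varnothing$ for all sufficiently large $n$. With this observation in place, the identity $d_n = r + d(w_n,A_g)$ and the remainder of your case analysis go through as written.
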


\subsubsection{Restricting the local action of tree automorphisms}

Suppose $T$ is a locally finite regular tree. Let $\Omega$ be the set $\{0,1,\cdots, \deg(T)-1\}$. A \emph{legal colouring} of $T$ is a map $c:E(T)\to \Omega$ such that $c(e) = c(\bar{e})$ and  $c|_{E(v)}:E(v)\to \Omega$ is a bijection for each $v\in V(T)$. Suppose $g\in \Aut(T)$ and $v\in V(T)$. Then $g$ acts as a bijection from $E(v)$ to $E(g(v))$. This induces a permutation $\sigma(g,v)\in\Sym(\Omega)$ by
\[\sigma(g,v)(a) = cgc|_{E(v)}^{-1}(a).\]
We call $\sigma(g,v)$ the \emph{local action} of $g$ at $v$. Lemma \ref{lem:sig_basic} summarises the relation between $\sigma(\cdot,\cdot)$ and the group operations in $\Aut(T)$.

\begin{lemma}\label{lem:sig_basic}
	Suppose $g,h\in \Aut(T)$ and $v\in V(T)$. Then $\sigma(gh,v) = \sigma(g,h(v))\sigma(h,v)$. In particular $\sigma(g,v)^{-1} = \sigma(g^{-1},g(v))$ and
	\[\sigma(ghg^{-1},g(v)) = \sigma(g, h(v))\sigma(h,v)\sigma(g,v)^{-1}.\]
\end{lemma}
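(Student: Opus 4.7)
The identities are formal consequences of unpacking the definition of the local action. My plan is to first rewrite the definition in the explicit compositional form
\[
\sigma(g,v) \;=\; c|_{E(g(v))} \circ g|_{E(v)} \circ (c|_{E(v)})^{-1},
\]
so that $\sigma(g,v)$ is realised as the bijection $\Omega\to\Omega$ obtained by conjugating the restriction $g|_{E(v)}\colon E(v)\to E(g(v))$ by the legal colouring $c$.

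To establish the cocycle identity $\sigma(gh,v) = \sigma(g,h(v))\sigma(h,v)$, I would factor $(gh)|_{E(v)} = g|_{E(h(v))} \circ h|_{E(v)}$ and insert the identity $(c|_{E(h(v))})^{-1} \circ c|_{E(h(v))} = \id_{E(h(v))}$ between the two factors. Reading off the resulting four-fold composition, the left half is precisely $\sigma(g,h(v))$ and the right half is $\sigma(h,v)$.

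The remaining two identities follow from this cocycle rule. For the inverse formula, I would specialise to $h = g^{-1}$ at the vertex $g(v)$: the left-hand side becomes $\sigma(\id,g(v))$, which is the identity permutation directly from the definition, and the right-hand side becomes $\sigma(g,v)\sigma(g^{-1},g(v))$; rearranging yields $\sigma(g^{-1},g(v)) = \sigma(g,v)^{-1}$. For the conjugation formula, I would apply the cocycle identity twice:
\[
\sigma(ghg^{-1},g(v)) \;=\; \sigma(g,h(v))\,\sigma(hg^{-1},g(v)) \;=\; \sigma(g,h(v))\,\sigma(h,v)\,\sigma(g^{-1},g(v)),
\]
and the inverse formula then rewrites the last factor as $\sigma(g,v)^{-1}$.

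There is no real conceptual obstacle here; the only care required is bookkeeping, keeping track of which vertex each local bijection lives over so that the cancellations $(c|_{E(h(v))})^{-1} \circ c|_{E(h(v))} = \id_{E(h(v))}$ and $\sigma(\id,g(v)) = \id_\Omega$ are legitimate in their respective domains.
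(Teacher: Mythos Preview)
Your proof is correct; the paper itself states this lemma without proof, treating it as a routine verification from the definition of $\sigma(g,v)$, which is exactly what you have supplied.
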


Fix a subgroup $F\le\Sym(\Omega)$. The \emph{Burger-Mozes} group associated to $F$, studied in depth in \cite{Burger00}, is given by
\[U(F):=\{g\in\Aut(T)\mid \sigma(g,v)\in F \hbox{ for all }v\in T\}\]
and is independent of choice of legal colouring. Furthermore, $U(F)$ is a closed subgroup of $\Aut(T)$ and is itself a t.d.l.c. group with the subspace (equivalently permutation) topology. It is discrete if and only if the action of $F$ on $\Omega$ is free. For $F$ fixed, we often need to refer to $U(F)_{\{V\}}$ and $U(F)_V$ for $V\subset T$. We make the abbreviations $U_{\{V\}}:=U(F)_{\{V\}}$ and $U_{V}:=U(F)_{V}$. Note that
\[\cU:= \{U_{\cF}\mid\cF\subset V(T) \hbox{ is finite}\}\]
is a basis of neighbourhoods at the identity for $U(F)$. Suppose we now have the following sequence of subgroups:
\[F\le F'\le \Sym(\Omega).\]
We define \emph{restricted Burger-Mozes groups} associated to $F$ and $F'$ as
\[G(F,F'): = \left\{g\in\Aut(T)\mid \sigma(g,v)\in F \hbox{ for all but finitely many }v\in V(T)\right\}\cap U(F').\]

Following from a general argument, see \cite[Section 3]{Bourbaki98} and \cite[Lemma 3.2]{Boudec15}, there exists a unique group topology on $G(F,F')$ such that the inclusion $U(F)\to G(F,F')$ is open and continuous. This topology has $\cU$ as a basis or neighbourhoods at the identity. Although they have the same neighbourhood basis, the topological properties of $G(F,F')$ and $U(F)$ are quite different. For example the action of $G(F,F')$ on $T$ is proper if and only if $G(F,F') = U(F)$. If the action is not proper, $G(F,F')$ does not have maximal compact open subgroups. In this case $G(F,F')$ cannot act properly and cocompactly on any simply connected metric space.

For a given $g\in G(F,F')$ it is useful  to identify the finite set of vertices for which the local action of $g$ is not in $F$. This is the purpose of Definition \ref{def:singularities}.
\begin{definition}\label{def:singularities}
	For $g\in G(F,F')$ we define the set of \emph{singularities} of $g$ to be
	\[S(g):=\{v\in V(T)\mid\sigma(g,v)\not\in F\}.\]
\end{definition}
\noindent It follows from Lemma \ref{lem:sig_basic} that $S(g^{-1}) = gS(g)$.

For $F\le \Sym(\Omega)$ define $\hat{F}\le\Sym(\Omega)$ to be the largest subgroup that preserves the orbits of $F$, known as a \emph{Young subgroup}. Then $\hat{F}$ is isomorphic to the direct product of symmetric groups over the orbits of $F$. It is shown in \cite{Boudec15} that $G(F,F')\le U(\hat{F})$ and $G(F,F') = U(F)$ if and only if $F = \hat{F}\cap F'$. 

For a subtree $A$ of $T$, a collection of permutations $\{\sigma_v\in F'|v\in V(A)\}$ is said to be consistent if $\sigma_v(c(v,u)) = \sigma_u(c(v,u))$ whenever $(v,u)$ forms an edge. Observe that for any $g\in G(F,F')$ and subtree $A$ of $T$, the set $\{\sigma_v = \sigma(g,v)\mid v\in V(A)\}$ is consistent. The following Lemma is a slight generalisation of \cite[Lemma 3.4]{Boudec15} and can be proved in an identical manner.

\begin{lemma}\label{lem:autExtension}
	Suppose $F\le F'\le \hat{F}$, $A$ is a subtree of $T$ and $\{\sigma_v\in F'\mid v\in V(A)\}$ is a consistent set of permutations such that  $\sigma_v\not\in  F$ for only finitely many $v\in V(A)$. Then for any $u\in V(A)$ and $u'\in V(T)$, there exists $g\in G(F,F')$ such that
	\begin{enumerate}[label = (\roman{*})]
		\item $g(u) = u'$;
		\item $\sigma(g,v) = \sigma_v$ for all $v\in V(A)$; and 
		\item $S(g)\subset A$. 
	\end{enumerate}
\end{lemma}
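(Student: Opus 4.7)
The plan is to build $g$ in two stages: first define $g$ on $V(A)$ from the consistent data $\{\sigma_v\}$ starting at $u\mapsto u'$, then extend branch-by-branch to the rest of $T$ by choosing local actions from $F$ at each new vertex.

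First I would fix the legal colouring $c$, set $g(u)=u'$, and for any $w\in V(A)$ follow the unique path $(u=w_0,\ldots,w_k=w)$ in $A$, recursively taking $g(w_i)$ to be the neighbour of $g(w_{i-1})$ along the edge of colour $\sigma_{w_{i-1}}(c(w_{i-1},w_i))$. Consistency gives $\sigma_{w_{i-1}}(c(w_{i-1},w_i))=\sigma_{w_i}(c(w_{i-1},w_i))$, so the prescription agrees at the two endpoints of every edge of $A$ and yields a well-defined injective graph morphism $g|_A$ whose local action at each $v\in V(A)$ matches $\sigma_v$ on $E(v)\cap E(A)$.

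Next I would extend to the complement. Because $A$ is a subtree, $V(T)\setminus V(A)$ partitions into disjoint subtrees $T_e$ hanging off $A$, one for each pair $(v,e)$ with $v\in V(A)$ and $e\in E(v)\setminus E(A)$. For each such $e$ I would first let $g(e)$ be the edge of colour $\sigma_v(c(e))$ at $g(v)$, and then propagate $g$ through $T_e$ by choosing, at each newly reached vertex $w$, a permutation $\sigma(g,w)\in F$ compatible with the edge back toward the already-defined portion of $g$. Verifying (i)--(iii) is then immediate: (i) is built in; (ii) follows because $\sigma(g,v)$ matches $\sigma_v$ on both $E(v)\cap E(A)$ and $E(v)\setminus E(A)$; (iii) follows because every $\sigma(g,w)$ with $w\notin V(A)$ lies in $F$, which reduces $S(g)$ to the finite set $\{v\in V(A):\sigma_v\notin F\}$.

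The main obstacle is the compatibility constraint in stage two: at each newly reached vertex we must exhibit an element of $F$ mapping a prescribed colour to another prescribed colour. This is where the hypothesis $F'\le\hat{F}$ is essential. Since $\hat{F}$ is the Young subgroup preserving $F$-orbits on $\Omega$, every $\sigma_v\in F'$ sends each colour to one in the same $F$-orbit; and since each previously chosen local action in $T_e$ also lies in $F$, the target colour is always in the $F$-orbit of the source, so a suitable permutation in $F$ exists. With this remark the branch-by-branch extension goes through at every vertex and the resulting $g$ lies in $G(F,F')$.
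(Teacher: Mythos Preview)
Your proposal is correct and is precisely the standard argument. The paper itself does not give a proof but simply states that the lemma ``is a slight generalisation of \cite[Lemma 3.4]{Boudec15} and can be proved in an identical manner''; your two-stage construction --- define $g$ on $A$ from the consistent data, then extend branch-by-branch choosing local actions in $F$, with the hypothesis $F'\le\hat{F}$ guaranteeing the one nontrivial compatibility at the boundary vertices of $A$ --- is exactly that argument. One small point worth making explicit: the resulting $g$ is not just an injective graph morphism but an automorphism of $T$, since a locally bijective injective self-map of a regular tree has image a full-degree subtree and is therefore surjective.
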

\begin{corollary}\label{cor:2-trans_translations}
	Suppose $F$ is $2$-transitive and $P$ is a bi-infinite path in $T$. Then there exists $h\in U(F)$ with $l(h) = 1$ and $\axis(h) = P$.
\end{corollary}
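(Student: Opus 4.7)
The plan is to construct $h$ explicitly via Lemma \ref{lem:autExtension} with $A = P$, specifying a consistent family of local actions $\{\sigma_{v}\in F \mid v\in V(P)\}$ that forces the extension to translate $P$ by one step. Since all $\sigma_v$ will lie in $F$, the resulting element will have $S(h)=\emptyset$, so it belongs to $U(F) = G(F,F)$.

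Label $P = (\ldots, v_{-1}, v_0, v_1, v_2, \ldots)$. For each $i \in \bZ$, I would use $2$-transitivity of $F$ to pick $\sigma_{v_i}\in F$ sending the ordered pair of distinct colors $\bigl(c(v_i,v_{i-1}),\, c(v_i,v_{i+1})\bigr)$ to $\bigl(c(v_{i+1},v_i),\, c(v_{i+1},v_{i+2})\bigr)$. The first part of this assignment is what will eventually force $h(v_{i-1}) = $ previous vertex of $h(v_i)$ on $P$, while the second part is chosen precisely to make the family consistent: at the edge between $v_i$ and $v_{i+1}$, both $\sigma_{v_i}$ and $\sigma_{v_{i+1}}$ map the shared color $c(v_i,v_{i+1})$ to $c(v_{i+1},v_{i+2})$, so the compatibility condition $\sigma_{v_i}(c(v_i,v_{i+1})) = \sigma_{v_{i+1}}(c(v_i,v_{i+1}))$ holds automatically.

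With this setup, Lemma \ref{lem:autExtension} (applied with $F'=F$, $u = v_0$, $u' = v_1$) yields $h \in G(F,F) = U(F)$ satisfying $h(v_0)=v_1$, $\sigma(h,v_i) = \sigma_{v_i}$ for all $i$, and $S(h)\subset V(P)$; but the inclusion $\sigma_{v_i}\in F$ actually gives $S(h)=\emptyset$. I would then check by induction on $|i|$ that $h(v_i)=v_{i+1}$ for all $i\in\bZ$: knowing $h(v_i)=v_{i+1}$, the local-action condition forces $c(v_{i+1}, h(v_{i+1}))=\sigma(h,v_i)(c(v_i,v_{i+1})) = c(v_{i+1},v_{i+2})$, and since $c|_{E(v_{i+1})}$ is a bijection this gives $h(v_{i+1}) = v_{i+2}$; the other direction is similar.

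Finally, $h$ translates every vertex of $P$ by distance $1$, so $h$ is hyperbolic, $\axis(h) \supseteq P$, and $l(h) \leq 1$. Since $l(h) \geq 1$ for any hyperbolic tree automorphism and $P$ is a bi-infinite path invariant under $h$, we conclude $l(h) = 1$ and $\axis(h) = P$. I do not expect any serious obstacle: the only real content is arranging the two color-pair assignments at each vertex simultaneously, which is exactly what $2$-transitivity of $F$ supplies.
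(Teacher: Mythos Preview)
Your proposal is correct and follows essentially the same approach as the paper's proof: both pick, at each $v_i$, a permutation in $F$ sending the ordered pair $(c(v_i,v_{i-1}),c(v_i,v_{i+1}))$ to $(c(v_{i+1},v_i),c(v_{i+1},v_{i+2}))$ via $2$-transitivity, verify consistency, apply Lemma~\ref{lem:autExtension} with $u=v_0$, $u'=v_1$, and then induct on $|i|$ to get $h(v_i)=v_{i+1}$. Your explicit remark that $F'=F$ forces $S(h)=\varnothing$ and hence $h\in U(F)$ is a nice clarification of what the paper leaves implicit.
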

\begin{proof}
	Label $P = (\ldots, v_{-1}, v_0, v_1,\ldots )$. For each $i\in\bZ$, using $2$-transitivity of $F$, choose $\sigma_{v_i}\in F$ such that 
	\[\sigma_{v_i}(c(v_{i - 1}, v_i)) = c(v_i, v_{i+1})\vspace{0.2cm}\hbox{ and }\vspace{0.2cm}\sigma_{v_i}(c(v_i,v_{i+1})) = c(v_{i+1},v_{i+2}).\]
	Then $\{\sigma_v\mid v\in P\}$ is a consistent set of permutations. Lemma \ref{lem:autExtension} gives $h\in U(F)$ such that $h(v_0) = v_1$ and $\sigma(h,v_i) = \sigma_{v_i}$. We show that $h(v_i) = v_{i+1}$ for all $i\in\bZ$. Observe that we must have $h(v_1) =  v_2$ since $h(v_0) = v_1$ and $\sigma(h,v_1) = \sigma_{v_1}$. Similarly, $h(v_{-1}) = v_0$ since $h(v_0) = v_1$ and $\sigma(h,v_0) = \sigma_{v_0}$. Inductively, we see that $h(v_i) = v_{i+1}$ for all $i\in\bZ$.
\end{proof}
\subsection{Scale function and tidy subgroups}
The concepts of scale function and tidy subgroups were introduced in \cite{Willis94} but have been built up in \cite{Willis01} and \cite{Willis15}. We give the basic definitions and properties.

Results in \cite{vDa31} show that any t.d.l.c. group has a basis at the identity consisting of compact open subgroups. 
\begin{definition}
	Let $G$ be a t.d.l.c. group. Set 
	\[\COS(G) = \{U\le G \mid U \hbox{ is compact and open}\}.\]
\end{definition}
\begin{definition}
	Let $G$ be a t.d.l.c. group and $\alpha\in\Aut(G)$. We define the \emph{scale} of $\alpha$ to be the natural number
	\[s(\alpha):= \min\{[\alpha(U):\alpha(U)\cap U]\mid U\in\COS(G)\}.\]
	Any compact open subgroup for which $s(\alpha) = [\alpha(U):\alpha(U)\cap U]$ is called \emph{tidy} for $\alpha$. An automorphism $\alpha$ is said to be \emph{uniscalar} if $s(\alpha) = 1 = s(\alpha^{-1})$.
	
	The scale induces a function $s:G\to \bbN$, which we also call the \emph{scale}, by setting the scale of $g\in G$ to be the scale of the inner automorphism corresponding to $g$. 
\end{definition}
When a subgroup is tidy is classified by structural properties of the subgroup. These properties are defined in terms of subgroups derived from the original which depend on the given automorphism. For $G$ a t.d.l.c. group, $\alpha\in \Aut(G)$ and $U\in\COS(G)$, set 
\begin{alignat*}{3}
&U_+ = \bigcap_{n\ge 0} \alpha^n(U) \hbox{, } &U_{++} = \bigcup_{n\ge 0}\alpha^n(U_+),\\
&U_- = \bigcap_{n\le 0} \alpha^n(U)\quad \hbox{and} \quad&U_{--} = \bigcup_{n\le 0}\alpha^n(U_-).
\end{alignat*}
\begin{theorem}\label{thm:tid_min}
	Let $G$ be a t.d.l.c. group and $\alpha\in \Aut(G)$. A subgroup $U$ is tidy for $\alpha$ if and only if
	\begin{enumerate}
		\item $U = U_+U_-$. In this case we say $U$ is \emph{tidy above}.
		\item $U_{--}$ (or equivalently $U_{++}$ if $U$ is already tidy above) is closed. In this case we say $U$ is \emph{tidy below}.
	\end{enumerate}
\end{theorem}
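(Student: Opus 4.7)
The plan is to establish the equivalence by treating the two implications separately. For both directions the key quantity is the index $[\alpha(U) : \alpha(U) \cap U]$; one direction is a direct computation of this index, the other an explicit tidying procedure combined with a minimality argument.

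For the $(\Leftarrow)$ direction, assume $U = U_+ U_-$ with $U_{++}$ closed. First I would record the immediate inclusions $\alpha(U_+) \supseteq U_+$ and $\alpha(U_-) \subseteq U_-$ coming straight from the definitions. Combining these with the factorisation and the modular law yields the central calculation
\[
[\alpha(U) : \alpha(U) \cap U] = [\alpha(U_+) : U_+],
\]
a finite index of compact subgroups. To show this is minimal among all $V \in \COS(G)$, I would exploit that $U_{++}$, being closed and equal to an ascending union of $U_+$-cosets, is an open subgroup, so that $\alpha$ acts on it with the finite quotient $\alpha(U_+)/U_+$ serving as a ``scale ratio''; a comparison argument for an arbitrary $V$ then gives $[\alpha(V) : \alpha(V) \cap V] \geq [\alpha(U_+) : U_+]$.

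For the $(\Rightarrow)$ direction, assume $U$ realises the scale $s(\alpha)$. I would proceed via a two-step tidying procedure. For step one (tidying above), consider $V_n = \bigcap_{k=0}^{n} \alpha^k(U)$; each is compact open and the index sequence $[\alpha(V_n) : \alpha(V_n) \cap V_n]$ is non-increasing. Showing this sequence can only stay constant when $U = U_+ U_-$ forces this factorisation on $U$, since by hypothesis the value is already at its minimum. For step two (tidying below), if $U_{++}$ fails to be closed, construct a modified compact open subgroup $U'$ using coset representatives from $\overline{U_{++}} \setminus U_{++}$, and show that $[\alpha(U'):\alpha(U') \cap U'] < [\alpha(U) : \alpha(U) \cap U]$, contradicting minimality. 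Hence $U$ must satisfy both (1) and (2).

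The main obstacle I anticipate is the tidying-below step. The construction of the replacement $U'$ must simultaneously ensure that $U'$ is compact and open, remains tidy above, and yields a strictly smaller index. This requires a delicate choice of finitely many elements from $\overline{U_{++}}$ and a careful tracking of how $\alpha$-iteration interacts with the factorisation $U = U_+ U_-$. The difficulty stems from the fact that $\overline{U_{++}}/U_+$ may a priori be a large compact space, yet enough structure must be extracted from it to perform a finite-index modification. The detailed subgroup computations are carried out in \cite{Willis01, Willis15}.
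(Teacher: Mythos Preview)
The paper does not prove this theorem. Theorem~\ref{thm:tid_min} appears in the preliminaries section as a background result quoted from Willis's work; the paper merely remarks that ``the proof of Theorem~\ref{thm:tid_min} relies on a process called a tidying procedure'' and points to \cite{Willis94,Willis01,Willis15}. There is therefore no proof in the paper to compare your proposal against.

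That said, since you have sketched an argument, a couple of points in your outline are not quite right and would need repair if you were to carry it out. In the $(\Leftarrow)$ direction you assert that $U_{++}$, being closed and an ascending union of $U_+$-cosets, is open. This is false in general: $U_+$ is a closed (compact) subgroup of $U$ but typically not open, so a union of its cosets need not be open either. The minimality of $[\alpha(U_+):U_+]$ over all $V\in\COS(G)$ is established by a different mechanism in \cite{Willis01}, using the structure of $U_{++}$ as a locally compact group on which $\alpha$ acts expansively together with a comparison of Haar measures (or an equivalent index-counting argument). In the $(\Rightarrow)$ direction, your claim that the sequence $[\alpha(V_n):\alpha(V_n)\cap V_n]$ ``can only stay constant when $U=U_+U_-$'' is not the statement one proves; the actual step is that some $V_n$ is tidy above regardless of whether $U$ is, and that the passage $U\mapsto V_n$ does not increase the index. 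Tidiness above of the \emph{original} $U$ then follows from minimality plus the observation that $V_0=U$ already works when $U$ is minimising. Your description of the tidying-below obstacle is accurate in spirit, and your closing reference to \cite{Willis01,Willis15} is exactly where the details live.
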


The proof of Theorem \ref{thm:tid_min} relies on a process called a tidying procedure. This procedure takes in as input a compact open subgroup and automorphism and produces a subgroup a tidy subgroup for said automorphism. In Section \ref{sec:scale_on_G(F)}, we follow a tidying procedure to calculate the scale function on restricted Burger-Mozes groups.

Theorem \ref{thm:tid_min} and the associated tidying procedure are instrumental in proving the following properties of tidy subgroups and properties of the scale function.

\begin{proposition}[Properties of tidy subgroups]\label{prop:prop_of_tid}
	Suppose $G$ is a t.d.l.c. group with $U\in\COS(G)$ tidy for $\alpha\in\Aut(G)$. Then
	\begin{enumerate}[label = (\roman*)]
		\item $U$ is tidy for $\alpha^n$ for all $n\in\bZ$;
		\item If $V\in\COS(G)$ is tidy for $\alpha$, then $U\cap V$ is also tidy for $\alpha$;
		\item $\alpha^n(U)$ is tidy for $\alpha$ for all $n\in\bZ$.
	\end{enumerate}
\end{proposition}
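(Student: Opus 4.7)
My plan is to verify each property using the tidy above / tidy below characterization in Theorem \ref{thm:tid_min}, writing $W_\pm$ and $W_{\pm\pm}$ for the corresponding subgroups with respect to the ambient automorphism.

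For part (iii), index-shifting gives $(\alpha^n(U))_\pm = \alpha^n(U_\pm)$ and $(\alpha^n(U))_{\pm\pm} = \alpha^n(U_{\pm\pm})$ for any $n\in\bZ$, both by straightforward reindexing of the defining intersections and unions. Applying $\alpha^n$ to $U = U_+U_-$ gives tidy above for $\alpha^n(U)$, and $\alpha^n(U_{++})$ is closed because $\alpha^n$ is a homeomorphism and $U_{++}$ is closed.

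For part (i), I reduce to $n > 0$ (the case $n = 0$ being trivial and $n < 0$ following from the symmetry swapping $+$ and $-$). Writing $V_\pm = \bigcap_{\pm k\ge 0}\alpha^{nk}(U)$ for the $\pm$-subgroups of $U$ relative to $\alpha^n$, the containment $\{nk : k \ge 0\} \subseteq \{k : k \ge 0\}$ combined with $V_\pm \subseteq U$ (taking $k=0$) yields $U_\pm \subseteq V_\pm \subseteq U$, and using $U = U_+U_-$ I then get $U \subseteq V_+V_- \subseteq U$, so $U$ is tidy above for $\alpha^n$. For tidy below I need $V_{++} = \bigcup_{k\ge 0}\alpha^{nk}(V_+)$ to be closed. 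My plan is to trap $V_{++}$ inside the closed set $U_{++}\cdot U_-$: writing each $x\in V_+\subseteq U$ as $x = u_+ u_-$ via tidy above for $\alpha$ with $u_+\in U_+$ and (forced by $x\in V_+$) $u_-\in V_+\cap U_-$, and noting that $U_{++}$ is $\alpha$-invariant while $\alpha(U_-)\subseteq U_-$, I get that applying $\alpha^{nk}$ keeps the product inside $U_{++}\cdot U_-$. A further argument using cosets modulo the open subgroup $U_-$ will then upgrade inclusion in a closed set to closedness of $V_{++}$ itself.

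For part (ii), with $U, V$ both tidy for $\alpha$, the identity $(U\cap V)_\pm = U_\pm \cap V_\pm$ is immediate from the definitions. For tidy below I will show $(U\cap V)_{++} = U_{++}\cap V_{++}$, which is closed as an intersection of closed sets: the inclusion $\subseteq$ is immediate from $\alpha^k(U_+\cap V_+)\subseteq U_{++}\cap V_{++}$, while the reverse uses the ascending chains $U_+\subseteq\alpha(U_+)\subseteq\cdots$ and $V_+\subseteq\alpha(V_+)\subseteq\cdots$ (consequences of tidy above) to align indices in any $x\in\alpha^k(U_+)\cap\alpha^l(V_+)$ by passing to $m = \max(k, l)$. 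The substantive step, and the main obstacle, is tidy above: establishing $U\cap V = (U_+\cap V_+)(U_-\cap V_-)$. Given $x\in U\cap V$ with decompositions $x = u_+u_- = v_+v_-$, I will realign the factors using the fact that tidy-above decompositions in $U$ (resp.\ $V$) are unique modulo the $\alpha$-invariant subgroup $U_+\cap U_-$ (resp.\ $V_+\cap V_-$) to produce a common decomposition landing in $(U_+\cap V_+)(U_-\cap V_-)$.
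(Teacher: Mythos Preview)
The paper does not prove this proposition; it is stated as a preliminary result drawn from the literature (\cite{Willis94}, \cite{Willis01}). So there is no ``paper's proof'' to compare against, and your proposal must stand on its own.

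Parts (iii) and the tidy-above half of (i) are fine. There are two genuine gaps elsewhere.

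\textbf{Part (i), tidy below.} Your final step invokes ``the open subgroup $U_-$'', but $U_-$ is \emph{not} open in general: it is a closed subgroup of the compact group $U$, typically of infinite index. So the coset argument you allude to does not get off the ground. Your decomposition $V_+ = U_+\,(V_+\cap U_-)$ is correct and is the right start; the way to finish is to observe that $V_+\cap U_- \subseteq V_+\cap V_-$ (since $U_-\subseteq V_-$), that $V_+\cap V_-$ is $\alpha^n$-invariant, and hence $V_{++} = U_{++}\,(V_+\cap V_-)$. This is closed because $U_{++}$ is closed and $V_+\cap V_-$ is compact.

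\textbf{Part (ii), tidy above.} This is where your sketch is too thin to count as a proof, and the difficulty is real. You have $x = u_+u_- = v_+v_-$, and you propose to ``realign'' using uniqueness of the $U$-decomposition modulo $U_+\cap U_-$ and of the $V$-decomposition modulo $V_+\cap V_-$. But uniqueness only lets you move within each decomposition separately; it gives no mechanism for producing a \emph{common} factor in $U_+\cap V_+$. Concretely, from $v_+^{-1}u_+ = v_-u_-^{-1}$ you learn nothing that places either side in $U_\pm\cap V_\pm$. The actual proof (Willis, \cite[Lemma~3.10]{Willis01}) uses more structure: one shows first that $U_{++} = V_{++}$ and $U_{--} = V_{--}$ for any two tidy subgroups, and then exploits this together with the tidy-below property to manufacture the common decomposition. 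Your tidy-below argument for (ii) is correct, but you should expect the tidy-above step to require a genuinely new idea beyond bookkeeping with the two factorisations.
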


\begin{proposition}[Properties of the scale function]\label{prop:scale_prop}
	Suppose $G$ is a t.d.l.c. group and $\alpha\in\Aut(G)$. Then
	\begin{enumerate}[label= (\roman*)]
		\item $s(\alpha) = 1$ if and only if there exists $U\in\COS(G)$ such that $\alpha(U)\le U$;
		\item $\alpha$ is uniscalar if and only if there exists $U\in\COS(G)$ such that $\alpha(U) = U = \alpha^{-1}(U)$;
		\item $s(\alpha^n) = s(\alpha)^n$ for all $n\in\bbN_0$;
		\item If $\beta\in\Aut(G)$, then $s(\alpha) = s(\beta\alpha\beta^{-1})$.	
	\end{enumerate}
	Furthermore, the scale function on $G$ is continuous and $\Delta(g) = s(g)/s(g^{-1})$, where $\Delta$ is the modular function on $G$.
\end{proposition}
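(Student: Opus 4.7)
The plan is to derive every item from Theorem \ref{thm:tid_min} and Proposition \ref{prop:prop_of_tid}, using throughout the fact that any $\alpha \in \Aut(G)$ admits a tidy $U \in \COS(G)$, so the minimum in the definition of $s(\alpha)$ is attained.

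For (i), the direction ($\Leftarrow$) is immediate: if $\alpha(U)\le U$ then $[\alpha(U):\alpha(U)\cap U]=1$, forcing $s(\alpha)=1$. For ($\Rightarrow$), pick $U$ tidy for $\alpha$, so that $[\alpha(U):\alpha(U)\cap U]=s(\alpha)=1$, i.e.\ $\alpha(U)\subseteq U$. Part (ii) follows by applying (i) to both $\alpha$ and $\alpha^{-1}$: by Proposition \ref{prop:prop_of_tid}(i) a subgroup tidy for $\alpha$ is also tidy for $\alpha^{-1}$, so a simultaneous $U$ satisfies $\alpha(U)\subseteq U$ and $\alpha^{-1}(U)\subseteq U$, hence $\alpha(U)=U$. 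The other direction is trivial.

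For (iii), fix $U$ tidy for $\alpha$; by Proposition \ref{prop:prop_of_tid}(i), $U$ is also tidy for $\alpha^n$. Using the decomposition $U=U_+U_-$ from Theorem \ref{thm:tid_min}, the computation reduces to $[\alpha^n(U_+):U_+]$, along the ascending chain $U_+\le\alpha(U_+)\le\alpha^2(U_+)\le\cdots$. Since $\alpha$ is an index-preserving automorphism, each consecutive index equals $[\alpha(U_+):U_+]$, and telescoping gives $[\alpha^n(U_+):U_+]=[\alpha(U_+):U_+]^n=s(\alpha)^n=s(\alpha^n)$. For (iv), if $U$ is tidy for $\alpha$ then $\beta(U)\in \COS(G)$, and because $\beta$ preserves indices,
\[[\beta\alpha\beta^{-1}(\beta(U)):\beta\alpha\beta^{-1}(\beta(U))\cap \beta(U)] = [\alpha(U):\alpha(U)\cap U]=s(\alpha),\]
so $s(\beta\alpha\beta^{-1})\le s(\alpha)$; swapping $\alpha$ with $\beta\alpha\beta^{-1}$ and $\beta$ with $\beta^{-1}$ yields the reverse inequality.

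Continuity of the scale on $G$ is essentially a local-constancy argument: if $U$ is tidy for $g$ and $u\in U$, then $U$ is also tidy for $gu$ with the same defining index, so $s$ is constant on the open neighbourhood $gU$. For the modular function identity, I would compute $\mu(gUg^{-1})/\mu(U)$ for a left Haar measure $\mu$ by intersecting with $U$ and using the two resulting indices: tidiness identifies $[gUg^{-1}:gUg^{-1}\cap U]=s(g)$ and $[U:gUg^{-1}\cap U]=s(g^{-1})$, giving $\Delta(g)=s(g)/s(g^{-1})$. The main obstacle is the modular function step, where one must honestly track Haar-measure ratios and justify the two index computations; parts (iii) and continuity also require care in handling the interplay of $\alpha$ with the factorisation $U=U_+U_-$, but are otherwise mechanical once Theorem \ref{thm:tid_min} is in hand.
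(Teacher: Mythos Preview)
The paper does not actually prove this proposition: it is recorded in the preliminaries as a standard result from \cite{Willis94,Willis01}, with only the remark that Theorem~\ref{thm:tid_min} and the tidying procedure are the key tools. There is therefore no proof in the paper to compare against; your outline is a genuine attempt to supply one, and parts (i)--(iv) together with the modular-function identity are argued correctly along the lines one finds in Willis's original papers.

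The one real gap is in the continuity argument. You assert that if $U$ is tidy for $g$ and $u\in U$ then $U$ is also tidy for $gu$, but this does not follow from what you have written: although $(gu)U(gu)^{-1}=gUg^{-1}$, in general $(gu)^{n}U(gu)^{-n}\neq g^{n}Ug^{-n}$ for $n\ge 2$, so the subgroups $U_{\pm}$ computed relative to $gu$ need not agree with those for $g$, and tidiness does not transfer automatically. What your computation \emph{does} establish is the inequality $s(gu)\le s(g)$, since the defining index at $U$ is unchanged. To close the argument, prove the modular-function identity first (your proof of that step uses only the existence of a tidy subgroup for $g$ and Proposition~\ref{prop:prop_of_tid}(i), so it is independent of continuity) and then use it: the same index computation applied with $U$ tidy for $g^{-1}$ gives $s((gu)^{-1})=s(u^{-1}g^{-1})\le s(g^{-1})$, while $\Delta(gu)=\Delta(g)\Delta(u)=\Delta(g)$ because $u$ lies in a compact subgroup. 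Combining $s(gu)/s((gu)^{-1})=s(g)/s(g^{-1})$ with the two inequalities forces $s(gu)=s(g)$, and local constancy follows.
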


\subsection{Scale-multiplicative semigroups}
\label{sec:s_mult_semigroups}

Scale-multiplicative semigroups are a recent attempt to extract geometric information from a t.d.l.c. group. They can be viewed as a refinement of flat groups, see \cite{Willis04}, a fact that was studied and exploited in \cite{Praeger17} to generalise the results from \cite{Moe02} to the higher rank case. The automorphism group of a regular tree as an enlightening example which we expand upon in Example \ref{examp:s_mult_tree}. 
\begin{definition}
	Suppose $G$ is a t.d.l.c. group. A semigroup $\cS\subset G$ is scale multiplicative if $s(gh) = s(g)s(h)$ for all $g,h\in \cS$. 
\end{definition}
\begin{example}
	It follows from Proposition \ref{prop:scale_prop} that the semigroup generated by a single $g\in G$ is scale-multiplicative.
\end{example}

\begin{definition}
	Suppose $G$ is a t.d.l.c. group, $H\le G$ and $\cS\subset H$ a scale-multiplicative semigroup. We say $\cS$ is \emph{maximal} in $H$ if for any other scale-multiplicative semigroup $\cS'\subset H$, we have $\cS\subset \cS'$ if and only if $\cS = \cS'$. If $H = G$, we say $\cS$ is \emph{maximal}.
\end{definition}
Proposition \ref{prop:prop_s_mult} is a collection of results from \cite{Baumgartner15}.
\begin{proposition}\label{prop:prop_s_mult}
	Suppose $G$ is a t.d.l.c. group and $\cS\subset G$ is scale-multiplicative semigroup. Then:
 	\begin{enumerate}[label = (\roman{*})]
 		\item $S$ is contained within a maximal scale-multiplicative semigroup of $G$;
 		\item If $S$ is a maximal, then $S$ is closed and contains the identity;
 		\item $S^{-1}$ is scale-multiplicative and $S\cap S^{-1}$ is uniscalar.
	\end{enumerate}
\end{proposition}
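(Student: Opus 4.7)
The plan is to prove the three claims in order, each via a short argument using only the continuity of the scale and the modular-function identity from Proposition \ref{prop:scale_prop}. For (i), I would apply Zorn's lemma to the poset of scale-multiplicative semigroups of $G$ containing $\cS$, ordered by inclusion. The only thing to check is that the union of a chain is again a scale-multiplicative semigroup; this is immediate, since closure under multiplication and the identity $s(gh)=s(g)s(h)$ are both conditions on pairs of elements and any two elements in such a union lie in a common member of the chain.

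For (ii), the identity is easy: since $s(e)=1$, the set $\cS\cup\{e\}$ is a scale-multiplicative semigroup containing $\cS$, so maximality forces $e\in\cS$. For closedness I would show that $\overline{\cS}$ is scale-multiplicative and then invoke maximality. The tool is continuity of the scale function (Proposition \ref{prop:scale_prop}): since $s$ takes values in $\bbN$, continuity means $s$ is locally constant. Given $g,h\in\overline{\cS}$, pick sequences $g_{n}\to g$ and $h_{n}\to h$ in $\cS$; then $g_{n}h_{n}\to gh$ and, for $n$ large enough, $s(g_{n})=s(g)$, $s(h_{n})=s(h)$, and $s(g_{n}h_{n})=s(gh)$. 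Combined with scale-multiplicativity of $\cS$, this gives $s(gh)=s(g)s(h)$, so $\overline{\cS}$ is scale-multiplicative. Maximality yields $\cS=\overline{\cS}$.

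For (iii), the key is to combine the identity $\Delta(g)=s(g)/s(g^{-1})$ with the fact that $\Delta$ is a homomorphism. Given $g^{-1},h^{-1}\in \cS^{-1}$, we have $g,h\in\cS$; applying $\Delta(hg)=\Delta(h)\Delta(g)$ and rearranging,
\[
\frac{s(hg)}{s((hg)^{-1})}=\frac{s(h)s(g)}{s(h^{-1})s(g^{-1})}.
\]
Since $s(hg)=s(h)s(g)$ by scale-multiplicativity of $\cS$, the numerators cancel and we obtain $s(g^{-1}h^{-1})=s((hg)^{-1})=s(g^{-1})s(h^{-1})$, which shows $\cS^{-1}$ is scale-multiplicative. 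Finally, for $g\in\cS\cap\cS^{-1}$ both $g$ and $g^{-1}$ lie in $\cS$, so $1=s(e)=s(g\cdot g^{-1})=s(g)s(g^{-1})$; as scales are positive integers this forces $s(g)=s(g^{-1})=1$, proving every element of $\cS\cap\cS^{-1}$ is uniscalar.

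The main obstacle is a conceptual rather than technical one: spotting that the modular-function identity from Proposition \ref{prop:scale_prop} lets scale-multiplicativity be transported from $\cS$ to $\cS^{-1}$ essentially for free. The remaining work — Zorn, locally-constancy of $s$, the identity/closedness arguments — is routine once this observation is in hand.
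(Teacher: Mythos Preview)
Your proof is correct and follows the standard arguments; the paper itself does not prove this proposition but simply cites \cite{Baumgartner15} as the source, so there is no in-paper argument to compare against.

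One small technical note on part (ii): your closedness argument uses sequences, but a general t.d.l.c.\ group need not be first countable. The fix is immediate --- replace sequences by nets, or argue directly: since $s$ is locally constant, for $g,h\in\overline{\cS}$ choose open neighbourhoods on which $s$ equals $s(g)$, $s(h)$, $s(gh)$ respectively, shrink the first two so their product lands in the third, and pick $g',h'\in\cS$ in these neighbourhoods. This does not affect the validity of your approach.
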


\begin{example}[{\cite[Section 4]{Baumgartner15}}]\label{examp:s_mult_tree}
	Suppose $T$ is a regular tree of degree at least $3$ and $G\le \Aut(T)$ a subgroup which acts $2$-transitively on $\partial T$.  We list the maximal scale-multiplicative semigroups of $G$. These are of four possible types.
	
	Choose $v\in V(T)$ and $I\subsetneq E(v)$ non-empty. Recall that $E(v) = o^{-1}(v)$. Set $G_{v,I}$ to be the collection of $g\in G$ which satisfy one of the following:
	\begin{enumerate}[label = (\roman*)]
		\item $g$ fixes $v$ and $g(I) = I$; or
		\item $g$ is hyperbolic with $v\in\axis(g)$. Further still, $g$ translates in through $I$ and out through $E(v)\setminus I$, that is, there exist edges $e\in E(v)\setminus I$ and $e'\in I$ such that $t(e),t(e')\in \axis(g)$ and $t(e)>_g v >_g t(e')$.
	\end{enumerate}
	
	Now choose $\varepsilon\in\partial T$, define $G_{\pm,\varepsilon}$ to be the collection of $g\in G$ such that $g$ is elliptic and fixes $\varepsilon$, or $g$ is hyperbolic and $\omega_{\pm}(g) = \varepsilon$.
	
	\begin{theorem}[{\cite[4.11]{Baumgartner15}}]\label{thm:max_s_mult_Aut_T}
		The maximal scale-multiplicative semigroups of $G$ are of the following types:
		\begin{enumerate}[label = (\roman*)]
			\item the stabiliser of vertex in $G$;
			\item the stabiliser of the mid point of an edge in $G$, that is $G_{\{e\}}$ for some $e\in E(T)$;
			\item $G_{v,I}$ for $v\in V(T)$ and $I\subsetneq E(v)$ non-empty; or
			\item $G_{\pm,\varepsilon}$ for some $\varepsilon\in \partial T$.
		\end{enumerate}
		Conversely, any of the above sets are maximal scale-multiplicative semigroups.
	\end{theorem}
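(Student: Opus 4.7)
The plan is to prove Theorem \ref{thm:max_s_mult_Aut_T} in four phases: (A) establish a geometric formula for $s$ on $G$; (B) check that each listed set is a scale-multiplicative semigroup; (C) prove maximality of each type; (D) show every maximal scale-multiplicative semigroup of $G$ is one of the four types.

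For phase (A), I would exploit that $2$-transitivity on $\partial T$ forces the local action to be $2$-transitive on each $E(v)$, so Corollary \ref{cor:2-trans_translations} gives plenty of hyperbolic elements with prescribed axes. Given a hyperbolic $g$ and $v\in\axis(g)$, I would verify directly that the compact open subgroup $G_v$ is tidy for $g$ by decomposing it into pieces fixing the two half-trees on each side of $v$ on $\axis(g)$, then use Lemma \ref{lem:size_of_balls} to count cosets, yielding $s(g)=(\deg(T)-1)^{l(g)}$. For elliptic $g$, the stabiliser of a fixed vertex or midpoint of a fixed edge is invariant under $g$, so $s(g)=1$ by Proposition \ref{prop:scale_prop}(ii).

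For phases (B) and (C), vertex and edge stabilisers consist of uniscalar elements and are trivially scale-multiplicative; maximality among such sets follows from the fact that any strictly larger subsemigroup would contain a hyperbolic element, breaking uniscalarity of intersections. For $\cS=G_{\pm,\varepsilon}$, two hyperbolic elements of $\cS$ have axes sharing $\varepsilon$; using the intersection of their axes as a common sub-ray I would show $l(gh)=l(g)+l(h)$, whence $s(gh)=s(g)s(h)$. Composing with elliptic elements fixing $\varepsilon$ is analogous, since such elements preserve a ray to $\varepsilon$ and leave translation length unchanged. For $\cS=G_{v,I}$, the inward/outward condition at $v$ is designed so that composing two type (ii) elements yields a hyperbolic whose axis still passes through $v$ with correct directions; the key computation is again $l(gh)=l(g)+l(h)$, verified by tracing where the $g$-axis and $h$-axis fit together at $v$.

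For phase (D), let $\cS$ be maximal. If every element of $\cS$ is elliptic, then Proposition \ref{prop:prop_s_mult} implies $\cS$ is a group of uniscalar elements; a standard fixed-point argument on the tree (using that any elliptic automorphism fixes a vertex or an edge midpoint, and that common fixators of groups of tree automorphisms are themselves vertex or edge fixators) places $\cS$ inside a vertex or edge stabiliser, giving types (i) or (ii). If $\cS$ contains a hyperbolic $g$, I would analyse for each hyperbolic $h\in\cS$ the possible configurations of $\axis(g)$ and $\axis(h)$: the rigid requirement $s(gh)=s(g)s(h)$, combined with the formula $s=(\deg(T)-1)^l$, forces $l(gh)=l(g)+l(h)$, which by a case analysis on intersections of axes can hold only when the axes share an end or share a vertex with consistent directions. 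This places $\cS$ inside either $G_{\pm,\varepsilon}$ or $G_{v,I}$, and maximality forces equality.

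The main obstacle is the length-additivity step in phase (D): showing that $l(gh)=l(g)+l(h)$ for hyperbolic $g,h$ forces their axes into the very restrictive configurations giving $G_{\pm,\varepsilon}$ or $G_{v,I}$. This requires a careful case analysis of how two axes can intersect (disjoint, meeting in a finite segment, meeting in a ray, equal) and computing $l(gh)$ in each case, using Lemma \ref{lem:dist_to_axis_seq} to track how iteration pushes vertices toward or away from $\axis(g)$.
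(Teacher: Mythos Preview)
The paper does not prove this theorem; it is quoted from \cite{Baumgartner15} and accompanied only by the one-line summary that the proof ``is based on the result that $s(g)=(\deg(T)-1)^{l(g)}$ and so scale-multiplicative semigroups are precisely the semigroups where the length function is additive.'' Your four-phase plan is exactly this reduction: phase (A) is the scale formula, and phases (B)--(D) carry out the translation-length analysis. So at the level of strategy you are aligned with what the paper (and the cited source) do.

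That said, two places in your outline are genuinely incomplete. In phase (C), your maximality argument for vertex and edge stabilisers (``breaking uniscalarity of intersections'') is not an argument. What is needed is: given $h\notin G_v$, produce $g\in G_v$ with $s(gh)\neq s(g)s(h)$. When $h$ is elliptic this means manufacturing a hyperbolic product $gh$, and this is where the $2$-transitivity hypothesis on $\partial T$ is actually used; without it the claim can fail. In phase (D), your ``standard fixed-point argument'' for a group of elliptic tree automorphisms can yield a fixed end rather than a fixed vertex or edge midpoint; you must separately dispose of that case by observing that the elliptic part of an end-stabiliser embeds properly in $G_{\pm,\varepsilon}$, contradicting maximality. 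Finally, the obstacle you flag at the end is indeed the substance of the proof: the case analysis showing that $l(gh)=l(g)+l(h)$ forces the axes of $g$ and $h$ into one of the listed configurations is carried out in \cite{Baumgartner15} (the present paper cites their Lemmas 4.6 and 4.7 for exactly these computations), and your sketch does not yet indicate how the disjoint-axes case or the finite-segment-intersection case is handled.
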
 
	
	The proof of Theorem \ref{thm:max_s_mult_Aut_T} is based on the result that $s(g) = (\deg(T) - 1)^{l(g)}$ and so scale-multiplicative semigroups are precisely the semigroups where the length function is additive. 
\end{example}
One of the major obstructions to further developing the theory of maximal scale-multiplicative semigroups and scale-multiplicative semigroups in general is the lack of examples that have been computed. This is the motivation for Section \ref{sec:s_mult_leboudec}.
\subsection{The direction of an automorphism}
In \cite{Baumgartner06}, the authors construct a metric space at infinity for an arbitrary t.d.l.c. group. The construction is based on the observation that any t.d.l.c. group acts by isometries on it's set of compact open subgroups with appropriate choice of metric. From there, construction is analogous other other notions of boundaries for group actions. 

For this section fix a t.d.l.c. group $G$. Note that for $\alpha\in\Aut(G)$, the map $U\in\COS(G)\mapsto \alpha(U)\in \COS(G)$ defines an action of $\Aut(G)$ on $\COS(G)$.

\begin{lemma}[{\cite[Section 2]{Baumgartner06}}]\label{lem:cos_g_met}
	The set $\COS(G)$ is a metric space with metric
	\[d(U,V) := \log([U:U\cap V][V:U\cap V])\]		
	on which $\Aut(G)$ acts by isometries.
\end{lemma}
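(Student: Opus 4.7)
The plan is to verify the three non-trivial metric axioms (definiteness, symmetry, triangle inequality; non-negativity is immediate from $U\cap V\le U,V$) and then check invariance under $\Aut(G)$. All four compact open subgroups in sight are commensurable, so all indices appearing are finite, which I will use without further comment.

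For definiteness, $d(U,V)=0$ forces $[U:U\cap V]=[V:U\cap V]=1$, hence $U=U\cap V=V$. Symmetry is immediate from the formula. The real content is the triangle inequality: given $U,V,W\in\COS(G)$, I want
\[[U:U\cap W]\,[W:U\cap W]\;\le\;[U:U\cap V]\,[V:U\cap V]\,[V:V\cap W]\,[W:V\cap W].\]
The key elementary fact I will use is that for subgroups $A,B$ of a common group $H$ with $[H:B]$ finite, the inclusion of coset spaces $A/(A\cap B)\hookrightarrow H/B$ gives $[A:A\cap B]\le [H:B]$. Applied inside $V$ with $A=U\cap V$, $B=V\cap W$, $H=V$, this yields
\[[U\cap V:U\cap V\cap W]\;\le\;[V:V\cap W].\]
Combining with the multiplicativity $[U:U\cap V\cap W]=[U:U\cap V]\,[U\cap V:U\cap V\cap W]$ and the obvious bound $[U:U\cap W]\le[U:U\cap V\cap W]$, I obtain
\[[U:U\cap W]\;\le\;[U:U\cap V]\,[V:V\cap W].\]
The symmetric argument with the roles of $U$ and $W$ swapped (i.e., bounding $[W:U\cap W]$ via $V$) produces
\[[W:U\cap W]\;\le\;[W:V\cap W]\,[V:U\cap V].\]
Multiplying the two inequalities and taking logarithms gives exactly $d(U,W)\le d(U,V)+d(V,W)$.

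For the isometric action, any $\alpha\in\Aut(G)$ satisfies $\alpha(U\cap V)=\alpha(U)\cap\alpha(V)$, and $\alpha$ induces a bijection between the cosets of $U\cap V$ in $U$ and those of $\alpha(U)\cap\alpha(V)$ in $\alpha(U)$, so $[\alpha(U):\alpha(U)\cap\alpha(V)]=[U:U\cap V]$; the analogous identity for $V$ gives $d(\alpha(U),\alpha(V))=d(U,V)$.

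The only step I expect to need care is the triangle inequality, and specifically the bookkeeping that reduces it to the single coset-injection lemma $[A:A\cap B]\le[H:B]$; everything else is formal. I would present the argument in exactly the order above: reduce the log-inequality to a product inequality of indices, prove the two halves by intersecting through $U\cap V\cap W$, then derive isometry from $\alpha(U\cap V)=\alpha(U)\cap\alpha(V)$.
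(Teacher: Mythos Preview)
Your proof is correct. The paper does not actually prove this lemma; it simply cites \cite[Section~2]{Baumgartner06} and moves on. So there is nothing to compare against beyond noting that your direct verification is exactly the standard argument one finds in that reference: reduce the triangle inequality to the submultiplicativity of indices via the injection $A/(A\cap B)\hookrightarrow H/B$, and observe that automorphisms preserve intersections and indices. One minor point worth making explicit for a reader is why the indices are finite in the first place: $U\cap V$ is open (as an intersection of open sets) and compact (as a closed subset of $U$), hence of finite index in both $U$ and $V$ by compactness. You allude to this with ``commensurable'' but a sentence of justification would not hurt.
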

The action of $\Aut(G)$ on $\COS(G)$ gives a different interpretation of the scale function and tidy subgroups. 
\begin{lemma}[{\cite[Lemma 3]{Baumgartner06}}]\label{lem:basic_cos_action}
	Suppose $\alpha\in\Aut(G)$ and $U\in\COS(G)$. Then
	\begin{enumerate}[label = (\roman*)]
		\item \label{itm:lem_dir_obv1} The set $\{d(\alpha^n(U),U)\mid n\in\bN_0\}$ is bounded if and only if $s(\alpha) = 1$;
		\item\label{itm:lem_dir_obv2} $U\in \COS(G)$ is tidy for $\alpha$ if and only if $d(\alpha(U), U) = \min_{V\in\COS(G)}d(\alpha(V), V)$. In this case we have $d(\alpha(U),U) = \log(s(\alpha))+\log(s(\alpha^{-1}))$. 
	\end{enumerate}	
\end{lemma}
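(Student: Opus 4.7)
The plan is to address (i) and (ii) separately, both grounded in the explicit formula
\[ d(\alpha^n(U), U) = \log[\alpha^n(U) : \alpha^n(U) \cap U] + \log[U : \alpha^n(U) \cap U] \]
from the definition of the metric on $\COS(G)$, together with the package of scale-function properties from Proposition \ref{prop:scale_prop}. For \emph{(i)}, the forward implication is the clean one: assuming $d(\alpha^n(U), U) \leq C$ for all $n$, I would combine the trivial bound $[\alpha^n(U) : \alpha^n(U) \cap U] \geq s(\alpha^n)$ with the identity $s(\alpha^n) = s(\alpha)^n$ from Proposition \ref{prop:scale_prop}(iii) to get $n \log s(\alpha) \leq C$ for all $n \in \bN_0$, forcing $s(\alpha) = 1$. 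For the converse, Lemma \ref{lem:cos_g_met} tells us $\alpha$ acts by isometries on $\COS(G)$, so a triangle-inequality comparison shows boundedness of the forward orbit of $U$ is independent of the base subgroup; it therefore suffices to produce a single $V \in \COS(G)$ whose forward orbit is bounded, and Proposition \ref{prop:scale_prop}(ii) supplies such a $V$ with $\alpha(V) = V$, making the orbit trivially $\{V\}$.

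For \emph{(ii)}, the first step is the universal lower bound
\[ d(\alpha(V), V) \geq \log s(\alpha) + \log s(\alpha^{-1}) \]
for every $V \in \COS(G)$. The first summand $\log[\alpha(V) : \alpha(V) \cap V] \geq \log s(\alpha)$ is immediate from the definition of $s(\alpha)$; for the second, I would apply the index-preserving bijection $\alpha^{-1}$ to rewrite $[V : \alpha(V) \cap V] = [\alpha^{-1}(V) : \alpha^{-1}(V) \cap V] \geq s(\alpha^{-1})$. Equality in the sum is therefore equivalent to $V$ realising both $s(\alpha)$ and $s(\alpha^{-1})$ simultaneously. To identify this with tidyness I would observe that the subgroups $V_+, V_-, V_{++}, V_{--}$ attached to $\alpha$ swap pairwise under $\alpha \mapsto \alpha^{-1}$, so the tidy-above and tidy-below conditions of Theorem \ref{thm:tid_min} are symmetric in $\alpha \leftrightarrow \alpha^{-1}$. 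Hence $V$ is tidy for $\alpha$ iff $V$ is tidy for $\alpha^{-1}$, and both are equivalent to $V$ realising both scale minima — that is, to equality in the universal bound. The identity $d(\alpha(U), U) = \log s(\alpha) + \log s(\alpha^{-1})$ for tidy $U$ follows by direct substitution.

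The main obstacle I anticipate is the reverse implication of \emph{(i)}: Proposition \ref{prop:scale_prop}(ii) is genuinely required, since $s(\alpha) = 1$ alone only yields some $W$ with $\alpha(W) \leq W$, and an unstabilised strictly descending chain $W > \alpha(W) > \alpha^2(W) > \cdots$ would give $d(\alpha^n(W), W) = n \log [W : \alpha(W)]$, which is unbounded. What genuinely underwrites boundedness is the existence of an $\alpha$-stable compact open subgroup, so the cleanest route is to funnel through Proposition \ref{prop:scale_prop}(ii) rather than Proposition \ref{prop:scale_prop}(i). For \emph{(ii)}, the only subtle point is the $\alpha \leftrightarrow \alpha^{-1}$ symmetry of the tidy conditions; once that is in hand, the equality case drops out immediately.
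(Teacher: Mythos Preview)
The paper does not prove this lemma; it is quoted from \cite{Baumgartner06} without proof, so there is no in-paper argument to compare against. Your proof of (ii) is correct and standard; the symmetry ``tidy for $\alpha$ iff tidy for $\alpha^{-1}$'' is in fact already recorded as Proposition~\ref{prop:prop_of_tid}(i) with $n=-1$, so you need not rederive it from the $U_{\pm}$ picture.

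There is, however, a genuine gap in your argument for the converse direction of (i), and it is not repairable: the statement as transcribed is false. You correctly observe that $s(\alpha)=1$ by itself only produces $W$ with $\alpha(W)\le W$, and that a strictly descending chain would make $d(\alpha^n(W),W)$ unbounded; but then you invoke Proposition~\ref{prop:scale_prop}(ii), whose hypothesis is that $\alpha$ is \emph{uniscalar}, i.e.\ $s(\alpha)=1=s(\alpha^{-1})$. That is strictly stronger than $s(\alpha)=1$. Concretely, take $G=\bbQ_p$ and $\alpha(x)=px$: then $\alpha(\bbZ_p)=p\bbZ_p\le\bbZ_p$ gives $s(\alpha)=1$, yet $d(\alpha^n(\bbZ_p),\bbZ_p)=\log[\bbZ_p:p^n\bbZ_p]=n\log p$ is unbounded. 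So the converse of (i) fails under the hypothesis $s(\alpha)=1$ alone. The correct statement (and the one your argument actually proves) replaces ``$s(\alpha)=1$'' by ``$\alpha$ is uniscalar''; your forward direction already yields this stronger conclusion once you also bound the second index $[U:\alpha^n(U)\cap U]=[\alpha^{-n}(U):\alpha^{-n}(U)\cap U]\ge s(\alpha^{-1})^n$.
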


We use $\COS(G)$ to build a space at infinity for $G$. Like many boundary constructions associated to metric spaces, our construction involves quotienting infinite rays by an asymptotic relation before defining a suitable topology. We start with our notion of rays.

\begin{definition}
	Suppose $\alpha\in \Aut(G)$. Then for any $U\in \COS(G)$, the \emph{ray generated by $\alpha$ based at $U$} is the sequence $(\alpha^{n}(U))_{n\in\bN_0}$.
\end{definition}

Observe that if $(\alpha^n(U))_{n\in\bN_0}$ and $(\alpha^n(V))_{n\in\bN_0}$ are two rays generated by $\alpha$ based at $U,V\in\COS(G)$, then $d(\alpha^n(U),\alpha^n(V)) = d(U,V)$ trivially is bounded. It is desirable to have a definition that is independent of base point. We expand on this notion of bounded. 

\begin{definition}\label{def:asymptotic_rays}
	Two sequences $(V_n)_{n\in\bN_0}$ and $(U_{n})_{n\in\bN_0}$ of compact open subgroups of $G$ are \emph{asymptotic} if $d(V_n,U_n)$ is bounded.
\end{definition}
We use the notion of asymptotic sequences of compact open subgroups to define a notion of asymptotic on automorphisms. We use rays generated by the automorphisms at a given base point. Naively, we could require that these two rays be asymptotic in the sense of  Definition \ref{def:asymptotic_rays}; however, this would not account for the speed at which an automorphism moves towards infinity. To accommodate for this, a scaling factor of the automorphism is incorporated. This is given by $k_{\alpha}$ and $k_{\beta}$ in Definition \ref{def:asymp}.

\begin{definition}\label{def:asymp}
	Let $\alpha,\beta\in \Aut(G)$. We say $\alpha$ and $\beta$ are \emph{asymptotic} and write $\alpha\asymp \beta$ if there exists $k_\alpha,k_\beta\in\bN$ and $U_{\alpha},U_{\beta}\in\COS(G)$ such that the rays generated by $\alpha^{k_\alpha}$ and $\beta^{k_{\beta}}$ based at $U_{\alpha}$ and $U_{\beta}$ respectively are asymptotic.
\end{definition}

\begin{proposition}[{\cite[Lemma 10]{Baumgartner06}}]
	The relation $\asymp$ is an equivalence relation and is independent of base point subgroups.
\end{proposition}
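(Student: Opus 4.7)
The plan is to verify reflexivity, symmetry, transitivity, and base-point independence, leveraging throughout the fact that $\Aut(G)$ acts on $\COS(G)$ by isometries (Lemma \ref{lem:cos_g_met}). Reflexivity is immediate: take $k_\alpha=k_\beta=1$ and $U_\alpha=U_\beta$, so the two rays coincide and their pointwise distance is identically $0$. Symmetry follows immediately from the symmetry of Definition \ref{def:asymptotic_rays} in the two sequences.

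For base-point independence I would show that if $\alpha\asymp\beta$ holds for one pair of base points, then it holds for every pair. Assume the relation is realised by $(k_\alpha,k_\beta,U_\alpha,U_\beta)$ and pick arbitrary $V_\alpha,V_\beta\in\COS(G)$. Since each automorphism acts isometrically on $\COS(G)$, the quantities $d(\alpha^{nk_\alpha}(V_\alpha),\alpha^{nk_\alpha}(U_\alpha))=d(V_\alpha,U_\alpha)$ and $d(\beta^{nk_\beta}(U_\beta),\beta^{nk_\beta}(V_\beta))=d(U_\beta,V_\beta)$ are constants independent of $n$. A three-term triangle inequality then bounds $d(\alpha^{nk_\alpha}(V_\alpha),\beta^{nk_\beta}(V_\beta))$ in terms of these two constants and the assumed bound at the original base points, so the rays through $V_\alpha,V_\beta$ are asymptotic as well.

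Transitivity will be the main obstacle. Given $\alpha\asymp\beta$ with witnesses $(k_\alpha,k_\beta,U_\alpha,U_\beta)$ and $\beta\asymp\gamma$ with witnesses $(k_\beta',k_\gamma,U_\beta',U_\gamma)$, the two appearances of $\beta$ use different speeds and different base points, so the two $\beta$-rays are not directly comparable. The remedy is to substitute $n\mapsto nk_\beta'$ in the first relation and $n\mapsto nk_\beta$ in the second; both $\beta$-rays then become iterations of $\beta^{k_\beta k_\beta'}$ starting at $U_\beta$ and $U_\beta'$ respectively. By the isometric action these two rays remain at the constant distance $d(U_\beta,U_\beta')$ apart, and stringing together three bounded terms via the triangle inequality produces a uniform bound on $d(\alpha^{nk_\alpha k_\beta'}(U_\alpha),\gamma^{nk_\gamma k_\beta}(U_\gamma))$, exhibiting $\alpha\asymp\gamma$ with new powers $k_\alpha k_\beta'$ and $k_\gamma k_\beta$. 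The one non-routine step is this synchronisation of the $\beta$-speeds via a common multiple; once that is in place, the isometry property reduces everything else to a pair of triangle inequalities.
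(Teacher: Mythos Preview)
Your argument is correct. Reflexivity and symmetry are indeed immediate, base-point independence follows from the isometric action together with the triangle inequality exactly as you describe, and your handling of transitivity via the substitution $n\mapsto nk_\beta'$ and $n\mapsto nk_\beta$ to synchronise the $\beta$-speeds is the right idea and executes cleanly.

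Note, however, that the paper does not supply its own proof of this proposition: it is stated with a citation to \cite[Lemma~10]{Baumgartner06} and no argument is given in the text. So there is nothing in the present paper to compare your proof against; you have reconstructed the standard proof that the cited reference presumably contains.
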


It is not hard to see from Lemma \ref{lem:basic_cos_action} that all automorphisms with scale $1$ form an asymptotic class. We distinguish these automorphisms from others.

\begin{definition} We say an automorphism $\alpha$ of $G$ \emph{moves towards infinity} if $s(\alpha)> 1$. 
\end{definition}

\begin{lemma}\label{lem:asymp_towards_inf}
	Suppose $\alpha$, $\beta\in \Aut(G)$ are asymptotic with $\alpha$ moving towards infinity. Then $\beta$ moves towards infinity.
\end{lemma}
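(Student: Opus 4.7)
The plan is to proceed by contraposition: assume $\beta$ does \emph{not} move towards infinity, so $s(\beta) = 1$, and then derive that any $\alpha$ asymptotic to $\beta$ must also have scale $1$. The argument is essentially a triangle-inequality comparison of rays, using Lemma \ref{lem:basic_cos_action}(i) to translate between ``bounded distance to base point'' and ``scale $1$''.

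First, suppose $s(\beta) = 1$. By Proposition \ref{prop:scale_prop}(iii), $s(\beta^k) = s(\beta)^k = 1$ for every $k \in \bN$. Thus, by Lemma \ref{lem:basic_cos_action}\ref{itm:lem_dir_obv1}, for \emph{any} choice of $V \in \COS(G)$ the set $\{d(\beta^{nk}(V), V) \mid n \in \bN_0\}$ is bounded.

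Next, suppose $\alpha \asymp \beta$. By Definition \ref{def:asymp}, there exist $k_\alpha, k_\beta \in \bN$ and $U_\alpha, U_\beta \in \COS(G)$ such that the rays $(\alpha^{nk_\alpha}(U_\alpha))_{n \in \bN_0}$ and $(\beta^{nk_\beta}(U_\beta))_{n \in \bN_0}$ are asymptotic, i.e.\ the distances $d(\alpha^{nk_\alpha}(U_\alpha), \beta^{nk_\beta}(U_\beta))$ are bounded in $n$. Applying the triangle inequality (and isometry of the action to shuffle base points) gives
\[
d(\alpha^{nk_\alpha}(U_\alpha), U_\alpha) \le d(\alpha^{nk_\alpha}(U_\alpha), \beta^{nk_\beta}(U_\beta)) + d(\beta^{nk_\beta}(U_\beta), U_\beta) + d(U_\beta, U_\alpha).
\]
The first summand is bounded by hypothesis, the second is bounded by the previous paragraph applied to $V = U_\beta$, and the third is a constant independent of $n$. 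Therefore the left-hand side is bounded in $n$, and another application of Lemma \ref{lem:basic_cos_action}\ref{itm:lem_dir_obv1} yields $s(\alpha^{k_\alpha}) = 1$. Finally, $s(\alpha)^{k_\alpha} = s(\alpha^{k_\alpha}) = 1$ by Proposition \ref{prop:scale_prop}(iii), so $s(\alpha) = 1$, meaning $\alpha$ does not move towards infinity.

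There is no real obstacle here — the entire content is the triangle inequality plus the dictionary between the $\COS(G)$-metric and the scale provided by Lemma \ref{lem:basic_cos_action}. The only mild care needed is in keeping track of the auxiliary exponents $k_\alpha, k_\beta$ appearing in Definition \ref{def:asymp}, which is precisely what makes Proposition \ref{prop:scale_prop}(iii) necessary to translate a statement about $s(\alpha^{k_\alpha})$ back to one about $s(\alpha)$.
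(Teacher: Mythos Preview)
The paper states this lemma without proof, presenting it as an immediate consequence of Lemma~\ref{lem:basic_cos_action} (see the sentence just before it, asserting that all scale-$1$ automorphisms form a single asymptotic class). Your contrapositive argument via the triangle inequality in $\COS(G)$, Lemma~\ref{lem:basic_cos_action}\ref{itm:lem_dir_obv1}, and Proposition~\ref{prop:scale_prop}(iii) is exactly the intended reasoning and is correct.
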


\begin{definition}
	For $A\subset \Aut(G)$, let $A_>$ be the subset of automorphisms moving towards infinity. For $\alpha\in A_>$, let $\partial_A(\alpha)$ to be the asymptotic class of $\alpha$ in $A$.  By identifying $G$ with the subgroup of inner automorphisms, we have a definition of $\partial_G$ which we abbreviate to $\partial$. Finally, we let $\partial G = \partial_G(G_>)$ be the \emph{set of directions} for $G$.
\end{definition}

We use asymptotic classes to associate a metric space to $G$. This space is the completion of a quotient of $\partial G$ by a pseudometric which we now define.

\begin{definition}
	Suppose $\alpha$ and $\beta$ are automorphisms moving towards infinity and choose $U,V\in\cos(G)$. We define 
	\[\delta_{+,n}^{U,V}(\alpha,\beta) = \min\left\{\left.\dfrac{\log[\alpha^n(U):\alpha^n(U)\cap \beta^k(V)]}{n\log s(\alpha)} \hspace{2pt}\right| k\in\bN_0, s(\beta^k)\le s(\alpha^n)\right\}.\]
	Set 
	\[\delta_{+}(\alpha,\beta) = \limsup_{n\to\infty}\delta_{+,n}^{U,V}(\alpha,\beta).\]
\end{definition}
\begin{proposition}[{\cite[Corollary 16 and Lemma 17]{Baumgartner06}}]\label{intro:prop:pmetric}
	The map $\delta_{+}$ is independent of choice of $U$ and $V$, and takes values between $0$ and $1$. We have $\delta_{+}(\alpha,\beta) = 0$ whenever $\alpha\asymp \beta$. The map $\delta(\alpha,\beta):= \delta_+(\alpha,\beta)+\delta_{+}(\beta,\alpha)$ defines a pseudometric on $\Aut(G)$ where the maximum distance between classes is $2$.
\end{proposition}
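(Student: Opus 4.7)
The plan is to verify the four assertions in turn: independence of $\delta_{+}$ from the base points $U, V$; the range bound $0 \le \delta_+ \le 1$; vanishing on asymptotic classes; and the pseudometric axioms for $\delta$.

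For independence, since $\Aut(G)$ acts by isometries on $\COS(G)$ by Lemma~\ref{lem:cos_g_met}, replacing $U$ by $U'$ shifts $d(\alpha^n(U), \cdot)$ by the fixed constant $d(U, U')$. In index form, $\log[\alpha^n(U) : \alpha^n(U) \cap \beta^k(V)]$ changes by a bounded additive amount uniformly in $n$ and $k$, which disappears after dividing by $n \log s(\alpha)$ and taking $\limsup$; the analogous argument covers $V$. For the range, non-negativity is immediate, and for the upper bound I take $U$ tidy for $\alpha$, so that $[\alpha^n(U) : \alpha^n(U) \cap U] = s(\alpha)^n$ by Proposition~\ref{prop:prop_of_tid}(i) and Proposition~\ref{prop:scale_prop}(iii); choosing $k = 0$ (which satisfies $s(\beta^0) = 1 \le s(\alpha^n)$) and using
\[
[\alpha^n(U) : \alpha^n(U) \cap V] \le [\alpha^n(U) : \alpha^n(U) \cap U] \cdot [U : U \cap V]
\]
then gives $\delta_{+,n}^{U,V}(\alpha, \beta) \le 1 + O(1/n)$.

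For vanishing on asymptotic classes, suppose $\alpha \asymp \beta$ via parameters $k_\alpha, k_\beta$ and base points $U_\alpha, U_\beta$ as in Definition~\ref{def:asymp}, so $d(\alpha^{k_\alpha m}(U_\alpha), \beta^{k_\beta m}(U_\beta)) \le C$ for all $m$. Exploiting base-point independence, I set $U = U_\alpha$ and $V = U_\beta$. Along the subsequence $n = k_\alpha m$, taking $k = k_\beta m$ bounds the numerator in $\delta_{+,n}^{U,V}$ by $C$ while the denominator grows linearly in $m$; the side condition $s(\beta^k) \le s(\alpha^n)$ is then secured by observing, via Lemma~\ref{lem:basic_cos_action}(ii), that bounded asymptoticity forces the per-step displacements $d(\alpha^{k_\alpha}(U), U)$ and $d(\beta^{k_\beta}(V), V)$ in $\COS(G)$ to agree, so $k_\alpha$ and $k_\beta$ may be enlarged proportionally to guarantee the inequality. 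Intermediate values of $n$ are handled by sandwiching between consecutive subsequence terms.

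The pseudometric axioms now fall out: symmetry of $\delta$ is built in, and $\delta(\alpha, \alpha) = 0$ follows from the previous step via $\alpha \asymp \alpha$. The substantive step is the triangle inequality $\delta_+(\alpha, \gamma) \le \delta_+(\alpha, \beta) + \delta_+(\beta, \gamma)$. For near-optimal indices $k_1$ realising $\delta_{+,n}^{U,V}(\alpha, \beta)$ and $k_2$ realising $\delta_{+,k_1}^{V,W}(\beta, \gamma)$, the multiplicative chain
\[
[\alpha^n(U) : \alpha^n(U) \cap \gamma^{k_2}(W)] \le [\alpha^n(U) : \alpha^n(U) \cap \beta^{k_1}(V)] \cdot [\beta^{k_1}(V) : \beta^{k_1}(V) \cap \gamma^{k_2}(W)]
\]
after taking logarithms, dividing by $n \log s(\alpha)$ and using $\log s(\beta^{k_1}) = k_1 \log s(\beta)$ from Proposition~\ref{prop:scale_prop}(iii) produces the sum of the target quantities. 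I expect the main obstacle to be routing the constraints $s(\gamma^{k_2}) \le s(\beta^{k_1}) \le s(\alpha^n)$ coherently through the $\limsup$ while confirming that the prefactor $(k_1 \log s(\beta))/(n \log s(\alpha))$ stays at most $1$ in the limit. The bound $\delta \le 2$ between classes is then immediate from $\delta_+ \le 1$.
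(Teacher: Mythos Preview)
The paper does not supply its own proof of this proposition: it is quoted verbatim from \cite[Corollary 16 and Lemma 17]{Baumgartner06} as a preliminary result, with no argument given. There is therefore nothing in the present paper to compare your proposal against.

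As a standalone sketch your outline follows the natural route, but two of the steps you flag as delicate are genuinely incomplete as written. First, in the vanishing argument, from boundedness of $d(\alpha^{k_\alpha m}(U_\alpha),\beta^{k_\beta m}(U_\beta))$ you only deduce that the \emph{total} displacements $k_\alpha(\log s(\alpha)+\log s(\alpha^{-1}))$ and $k_\beta(\log s(\beta)+\log s(\beta^{-1}))$ agree, not that $k_\alpha\log s(\alpha)=k_\beta\log s(\beta)$; so the side condition $s(\beta^{k_\beta m})\le s(\alpha^{k_\alpha m})$ need not hold, and ``enlarging $k_\alpha,k_\beta$ proportionally'' does not repair this since the ratio is preserved. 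One needs the finer fact (proved in \cite{Baumgartner06}) that asymptotic automorphisms actually satisfy $s(\alpha^{k_\alpha})=s(\beta^{k_\beta})$, which comes from examining the one-sided indices separately rather than the symmetric metric $d$. Second, in the triangle inequality the factor $(k_1\log s(\beta))/(n\log s(\alpha))$ must be shown to be bounded above by $1$ up to $o(1)$, and your sketch does not explain why the near-optimal $k_1$ cannot overshoot; the constraint $s(\beta^{k_1})\le s(\alpha^n)$ gives exactly this bound, but you then need to argue that $\delta_{+,k_1}^{V,W}(\beta,\gamma)$ for this particular $k_1$ approaches $\delta_+(\beta,\gamma)$, which requires some care with the $\limsup$ since $k_1$ depends on $n$.
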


\begin{lemma}[{\cite[Lemma 15]{Baumgartner06}}]\label{lem:dir_inv_distance}
	Suppose $\alpha\in \Aut(G)$ such that both $\alpha$ and $\alpha^{-1}$ move towards infinity. Then $\delta(\alpha,\alpha^{-1}) = 2$.
\end{lemma}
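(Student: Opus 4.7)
The plan is to establish $\delta_+(\alpha,\alpha^{-1}) = 1 = \delta_+(\alpha^{-1},\alpha)$, from which $\delta(\alpha,\alpha^{-1}) = 2$ follows by definition. Since Proposition \ref{intro:prop:pmetric} already gives $\delta_+ \le 1$, each equality reduces to proving a lower bound, and the symmetry in $\alpha, \alpha^{-1}$ (both move towards infinity by hypothesis) means it suffices to treat $\delta_+(\alpha,\alpha^{-1}) \ge 1$.

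Using that $\delta_+$ is independent of the base point, I would pick $U = V \in \COS(G)$ tidy for $\alpha$; by Proposition \ref{prop:prop_of_tid}(i) this $U$ is then tidy for every power $\alpha^m$, $m \in \bZ$. The key input is the index identity $[U : U\cap\alpha^{-m}(U)] = s(\alpha)^m$ for all $m \ge 0$, which I would derive as follows: Lemma \ref{lem:basic_cos_action}(ii) together with the definition of tidiness yields $[\alpha(U):\alpha(U)\cap U] = s(\alpha)$ and $[U:\alpha(U)\cap U] = s(\alpha^{-1})$; iterating via Proposition \ref{prop:scale_prop}(iii) gives $[\alpha^m(U):\alpha^m(U)\cap U] = s(\alpha)^m$ and $[U:\alpha^m(U)\cap U] = s(\alpha^{-1})^m$ for $m \ge 0$; replacing $\alpha$ by $\alpha^{-1}$ in the second identity produces the desired formula. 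Extracting this tidy index identity from the facts in the excerpt is the one place where care is needed.

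With this identity, applying the isomorphism $\alpha^{-n}$ gives $[\alpha^n(U):\alpha^n(U)\cap\alpha^{-k}(U)] = [U:U\cap\alpha^{-(n+k)}(U)] = s(\alpha)^{n+k}$ for all $n, k \ge 0$. Since $\alpha$ moves towards infinity we have $\log s(\alpha) > 0$, and the ratio appearing in the definition of $\delta_{+,n}^{U,U}(\alpha,\alpha^{-1})$ simplifies to $(n+k)/n$. This is minimised over the feasible set by $k = 0$, which is always feasible since $s(\alpha^{-1})^0 = 1 \le s(\alpha)^n$; the minimum value is $1$. Hence $\delta_{+,n}^{U,U}(\alpha,\alpha^{-1}) = 1$ for every $n \ge 1$, so $\delta_+(\alpha,\alpha^{-1}) = 1$; the argument with $\alpha$ and $\alpha^{-1}$ interchanged produces $\delta_+(\alpha^{-1},\alpha) = 1$, and the claim follows.
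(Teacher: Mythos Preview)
The paper does not supply its own proof of this lemma: it is quoted verbatim from \cite[Lemma~15]{Baumgartner06} with no argument given, so there is nothing in the paper to compare your proposal against. That said, your argument is correct. The choice of a tidy $U$ makes the index $[\alpha^n(U):\alpha^n(U)\cap\alpha^{-k}(U)]$ exactly $s(\alpha)^{n+k}$, and since $k=0$ is always feasible the minimum in $\delta_{+,n}^{U,U}(\alpha,\alpha^{-1})$ is $1$ on the nose for every $n$; combined with the upper bound $\delta_+\le 1$ from Proposition~\ref{intro:prop:pmetric} and the symmetric role of $\alpha^{-1}$, this yields $\delta(\alpha,\alpha^{-1})=2$. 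Your derivation of the tidy index identity from Propositions~\ref{prop:prop_of_tid} and~\ref{prop:scale_prop} is clean and is indeed the standard route.
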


\begin{definition}
	The \emph{space of directions} of $G$ is the completion of the metric space $\partial G/\delta^{-1}(0)$.
\end{definition}
The pseudometric $\delta$ is designed to be a notion of angle between two asymptotic classes. This is analogous to the Tit's metric on the boundary of a $\CAT(0)$ space. Intuition suggests that an $n$-flat, which we roughly interpret as free abelian subgroup, should give an $(n-1)$-sphere inside our boundary. This is the case for appropriate notion of $n$-flat, see the calculations in \cite[Sections 3.4 and 4.2]{Baumgartner06}. On the other hand, hyperbolic groups, which are far from having any $2$-dimensional flats, have very large angles between rays. In \cite{Bywaters19} it is shown that the space of directions of a hyperbolic group can by identified with a subset of the hyperbolic boundary and that any two distinct asymptotic classes are distance 2 apart. This is generalisation of the fact that the space of directions of the automorphism group of a regular tree is isometric to the boundary of the tree with the discrete metric, see \cite{Baumgartner06}.

\section{The scale function via a tidy subgroup}\label{sec:scale_on_G(F)}

In this section we give an explicit calculation of the scale of $g\in G(F,F')$ via the tidying procedure found in \cite{Willis01}. The implementation of this tidying procedure is the main content of Section \ref{ssec:con_tidy_sub}. To ease the calculation, we make a particular choice subgroup to input into the algorithm. This choice uses the notion of pando which we define in Definition \ref{def:pando}. Pandos give considerable control over the singularities of a hyperbolic element and as a result the last part of the algorithm is simpler than the general case, see Theorem \ref{thm:tidysubgroup} for the precise simplification and tidy subgroup. We further exploit the structure of our tidy subgroup in Section \ref{ssec:leboudec_scale_formula} where we give an explicit formula for the scale in Proposition \ref{prop:Scale_calc_2}. This formula is then used in Section \ref{ssec:leboudec_uniscalar} to investigate uniscalar elements of restricted Burger-Mozes groups. In particular, Corollary \ref{cor:classification_of_uniscalar} characterises precisely when elliptic and uniscalar coincide in terms of properties of $F$.

\begin{proposition}\label{prop:elliptic_scale}
Suppose $g\in G(F,F')$ is elliptic. Then $s(g) = 1$.
\end{proposition}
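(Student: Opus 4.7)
The plan is to verify $s(g)=1$ via Proposition \ref{prop:scale_prop}(i) by producing a compact open subgroup $U \leq G(F,F')$ with $gUg^{-1} \subseteq U$. The standard basic open subgroups $U_{\cF}$ lie in $U(F)$ and are therefore not preserved by conjugation by $g$ whenever $g$ has singularities. So I would relax the definition and allow singularities, but only inside a $g$-invariant finite set already containing $S(g)$.

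Because $g$ is elliptic it preserves a finite set $V_0$ of vertices (a single vertex or an adjacent pair), and is therefore isometric with bounded orbits on $V(T)$. Since $T$ is locally finite, spheres around $V_0$ are finite, so every $\langle g\rangle$-orbit in $V(T)$ is finite. In particular, $\cF := V_0 \cup \langle g\rangle\cdot S(g)$ is a finite, $g$-invariant subset of $V(T)$ containing $S(g)$, and via the identity $S(g^{-1})=gS(g)$ also $S(g^{-1})$. I would then set
\[
U := \{ h \in G(F,F') : h|_{\cF} = \id \text{ and } S(h) \subseteq \cF\}
\]
and check in turn, using Lemma \ref{lem:sig_basic}, that: $U$ is closed under products and inverses (at $w\notin\cF$, both $h_2(w)$ and $h_2^{-1}$ stay outside $\cF$, so the two $\sigma$-factors of $\sigma(h_1h_2,w)$ or $\sigma(h^{-1},w)$ lie in $F$); $U_{\cF} \subseteq U$, so $U$ is open; and the well-defined map $hU_{\cF} \mapsto (\sigma(h,v)F)_{v\in\cF}$ injects $U/U_{\cF}$ into the finite set $(F'/F)^{\cF}$, so $U$ is compact.

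The remaining step is $gUg^{-1} \subseteq U$. For $h \in U$ the element $ghg^{-1}$ fixes $g\cF = \cF$ pointwise, and for any $w \in V(T)\setminus\cF$ Lemma \ref{lem:sig_basic} gives
\[
\sigma(ghg^{-1}, w) = \sigma(g, h(g^{-1}w))\,\sigma(h, g^{-1}w)\,\sigma(g^{-1}, w).
\]
Since $g^{-1}\cF=\cF$ and $h$ permutes $V(T)\setminus\cF$, the arguments $g^{-1}w$ and $h(g^{-1}w)$ both lie outside $\cF$; as $\cF$ contains $S(g)\cup S(g^{-1})\cup S(h)$, all three factors lie in $F$, so $S(ghg^{-1}) \subseteq \cF$ and $ghg^{-1}\in U$. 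Proposition \ref{prop:scale_prop}(i) then yields $s(g)=1$. The main obstacle is really the design of $U$: once the correct compact open subgroup is identified, the verification is just disciplined bookkeeping with local actions, and the ellipticity hypothesis enters only to guarantee that $\langle g\rangle\cdot S(g)$ is finite.
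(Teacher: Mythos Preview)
Your proof is correct and follows the same core idea as the paper: build a compact open subgroup by relaxing $U_{\cF}$ to allow singularities inside a fixed finite set, then observe that conjugation by $g$ preserves it. The paper's version is slightly more streamlined in one respect: it first uses $s(g^n)=s(g)^n$ to reduce to the case where $g$ fixes a single vertex $v$, then takes a ball $B_n(v)\supset S(g)$ and sets $K_n(v)=\{g'\in G(F,F')_v\mid S(g')\subset B_n(v)\}$. Since $g$ itself lies in $K_n(v)$, the conjugation-invariance is immediate and no bookkeeping with Lemma~\ref{lem:sig_basic} is needed. Your approach trades that reduction for a direct argument: you keep the general elliptic $g$, use the $g$-invariant set $\cF=V_0\cup\langle g\rangle\cdot S(g)$ in place of a ball, and then must verify $gUg^{-1}\subseteq U$ by hand (which you do correctly). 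Both routes are short; the paper's avoids the local-action computation at the cost of an initial power-reduction, while yours handles the edge-inversion case uniformly without passing to a power.
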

\begin{proof}
By definition, some power of $g$ fixes a vertex. Since $s(g^n) = s(g)^n$, see Proposition \ref{prop:scale_prop}, and the scale takes values in the natural numbers, we may assume without loss of generality that $g$ fixes some vertex, say $v$.

Since the set $S(g)$ of singularities of $g$ is finite, see Definition \ref{def:singularities}, there exists a ball
\[B_n(v):=\{u\in V(T)\mid d(u,v)\le n\},\]
such that $S(g)\subset B_n(v)$. Define
\[K_{n}(v):=\{g'\in G(F,F')_{v}\mid S(g')\subset B(v,n)\}.\]
That $K_{n}(v)$ is a compact open subgroup of $G(F,F')$ follows as $U_{v}\le K_{n}(v)$ has finite index and is compact and open. Since $g\in K_{n}(v)$ we see that \[s(g) = [gK_n(v)g^{-1}:gK_n(v)g^{-1}\cap K_n(v)] = 1.\qedhere\]
\end{proof}

The rest of this section is devoted to the study of the scale function on hyperbolic elements of $G(F,F')$. Lemma \ref{lem:SingOfPowers} concerns the singularities of $g^n$ for some $g\in G(F,F')$ hyperbolic. Although simple, these observations are used often.

\begin{lemma}\label{lem:SingOfPowers}
Suppose $g\in G(F,F')$, $k\ge 1$  and $v\in S(g^k)$. There exists $0\le l< k$ such that $g^{l}(v)\in S(g)$.
\end{lemma}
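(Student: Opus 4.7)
The plan is to iterate the cocycle identity from Lemma \ref{lem:sig_basic} and then use the fact that $F$ is a subgroup. Specifically, the identity $\sigma(gh,v) = \sigma(g,h(v))\sigma(h,v)$ can be applied repeatedly to $\sigma(g^k,v) = \sigma(g\cdot g^{k-1}, v)$ to obtain the factorisation
\[
\sigma(g^k, v) \;=\; \sigma\!\bigl(g, g^{k-1}(v)\bigr)\,\sigma\!\bigl(g, g^{k-2}(v)\bigr)\cdots \sigma\!\bigl(g, g(v)\bigr)\,\sigma(g,v),
\]
which is a straightforward induction on $k$ with base case $k=1$ trivial and the inductive step applying Lemma \ref{lem:sig_basic} to $g^{k+1} = g\cdot g^k$.

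Once this factorisation is in hand, I would argue by contraposition. Assume for every $0\le l < k$ that $g^l(v) \notin S(g)$, i.e.\ $\sigma(g, g^l(v)) \in F$ for all such $l$. Then each factor in the displayed product lies in $F$, and because $F \le \Sym(\Omega)$ is a subgroup (in particular closed under products), the product $\sigma(g^k,v)$ lies in $F$ as well. But this contradicts the hypothesis $v \in S(g^k)$. Hence at least one $0\le l < k$ must satisfy $\sigma(g, g^l(v)) \notin F$, i.e.\ $g^l(v) \in S(g)$, which is exactly the claim.

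There is essentially no obstacle here: the only subtlety is making sure the factorisation of $\sigma(g^k,v)$ is correctly ordered (with the values $g^l(v)$ appearing as one runs $l$ from $0$ to $k-1$), so that the indexing range $0\le l < k$ in the conclusion matches the factors produced by the iteration. Everything else is a direct consequence of $F$ being a group.
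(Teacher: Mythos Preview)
Your proof is correct and is essentially the same as the paper's: the paper writes the factorisation $\sigma(g^{k},v) = \sigma(g,g^{k-1}(v))\cdots\sigma(g,g(v))\sigma(g,v)\not\in F$ and immediately concludes that some $\sigma(g,g^{l}(v))\not\in F$. You have merely spelled out the induction for the factorisation and the contrapositive reasoning more explicitly.
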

\begin{proof}
We have 
\[\sigma(g^{k},v) = \sigma(g,g^{k-1}(v))\cdots\sigma(g,g(v))\sigma(g,v)\not\in F.\]
Hence $\sigma(g,g^{l}(v))\not\in F$ for some $0\le l < k$.
\end{proof}
It is useful to specify a bound on the distance between a singularity of a hyperbolic element and its axis. This is the purpose on Definition \ref{def:sing_depth}.

\begin{definition}\label{def:sing_depth}
	Suppose $g\in G(F,F')$ is hyperbolic. Set
	\[D_g := \max\{d(v,\axis(g))\mid v\in S(g) \}.\]
\end{definition}
Lemma \ref{lem:depth_power} is a consequence of Lemma \ref{lem:SingOfPowers}.
\begin{lemma}\label{lem:depth_power}
	For $g\in G(F,F')$ hyperbolic and $n\in\bN$, we have $D_{g^n}\le D_g$.
\end{lemma}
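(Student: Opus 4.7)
The plan is to combine Lemma \ref{lem:SingOfPowers} with two elementary facts about hyperbolic tree automorphisms: that $g$ acts by isometries on $V(T)$, and that $g$ preserves its axis, which moreover coincides with the axis of every non-zero power $g^n$.

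First I would observe that $\axis(g^n)=\axis(g)$ for all $n\in\bN$. This is standard: $g$ translates along $\axis(g)$ by $l(g)$, so $g^n$ translates along the same bi-infinite path by $nl(g)$, and a hyperbolic tree automorphism has a unique axis. Hence $D_{g^n}$ is computed as distances to the same set $\axis(g)$ as is used for $D_g$.

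Next, let $v\in S(g^n)$. By Lemma \ref{lem:SingOfPowers}, there exists $0\le l<n$ with $g^l(v)\in S(g)$. Because $g$ is a graph automorphism it acts by isometries on $V(T)$, and because $g(\axis(g))=\axis(g)$ this isometry sends $\axis(g)$ to itself setwise. Therefore
\[d(v,\axis(g))=d(g^l(v),g^l(\axis(g)))=d(g^l(v),\axis(g))\le D_g,\]
the last inequality by definition of $D_g$ applied to the singularity $g^l(v)\in S(g)$. Taking the maximum over $v\in S(g^n)$ (a finite set, since $S(g^n)$ is finite for any element of $G(F,F')$) together with $\axis(g^n)=\axis(g)$ gives $D_{g^n}\le D_g$.

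There is no real obstacle here; the only point to be careful about is justifying $\axis(g^n)=\axis(g)$, which I would simply record as a standard fact about hyperbolic automorphisms of trees (and which is implicit in the discussion of $\axis(g)$ earlier in the preliminaries).
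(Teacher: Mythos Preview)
Your proof is correct and follows exactly the route the paper indicates: the paper simply states that Lemma~\ref{lem:depth_power} is a consequence of Lemma~\ref{lem:SingOfPowers}, leaving implicit the facts you spell out (that $\axis(g^n)=\axis(g)$ and that $g$ preserves distances to its axis). Your write-up is a faithful expansion of what the paper intends.
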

\subsection{A convenient tidy subgroup}
\label{ssec:con_tidy_sub}
To simplify calculations, we make a specific choice of compact open subgroup, the definition of which relies on Definition \ref{def:pando}. We see that our choice of subgroup is automatically tidy above in Proposition \ref{prop:tidy_above}. The control given by Definition \ref{def:pando} is highlighted in Lemma \ref{lem:ObstructionToTidySingularities} which is a major part of the proof used to give the desired tidy subgroup, see Theorem \ref{thm:tidysubgroup}.

\begin{definition}\label{def:pando}
Suppose $g\in G(F,F')$ is hyperbolic. A pando $\cP$ for $g$ is a finite complete subtree of $T$ which satisfies the following:
\begin{enumerate}
	\renewcommand{\labelenumi}{\textbf{\theenumi}}
	\renewcommand{\theenumi}{P\arabic{enumi}}
	\item $S(g)\subset \Int(\cP)$\label{pando1};
	\item there exists a vertex $v\in \axis(g)\cap \cP$ such that $g(v)\in\Int(\cP)$\label{pando2};
	\item if $v\in V(T)$ such that $\pi_{g}(v)\in\Int(\cP)$ and $d(u,\pi_{g}(u)) = d(v,\pi_{g}(v))$ for some $u\in V(\cP)$, then $v\in V(\cP)$\label{pando3}. 
\end{enumerate}
For a pando $\cP$ of $g$ we define the initial segment $\cP_{0}$ to be the smallest complete subtree of $T$ which contains $\cP\setminus g(\cP)$. 
\end{definition}

\begin{remark}
Given a hyperbolic element $g\in G(F,F')$, there exists infinitely many pandos for $g$. Indeed, choose any $D> D_g$ and $v_0,v_1\in\axis(g)$ with $g(v_0)<_g v_1$ and $v_0<_g\pi_g v<_g v_1$ for all $v\in S(g)$. Such a $v_0$ and $v_1$ exist since $S(g)$ is finite. Set $\cP$ to be the complete tree which contains $v_0$, $v_1$ and all vertices $v$ within distance $D$ of $\axis(g)$ such that $v_0<_g\pi_{g}(v)<_g v_1$. This is easily verified to be a pando for $g$.
\end{remark}
\begin{lemma}\label{lem:axis_fix_to_pando_stab}
	Suppose $g\in G(F,F')$ is hyperbolic and $\cT$ is a complete subtree of $T$ satisfying \ref{pando2} and \ref{pando3}. Then $G(F,F')_{\axis(g)}\le G(F,F')_{\{\cT\}}$.
\end{lemma}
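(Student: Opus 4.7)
The plan is to show $h(\cT) = \cT$ for every $h \in G(F,F')_{\axis(g)}$. Since $T$ is a tree and $\cT$ is a connected subtree, $E(\cT)$ is determined by $V(\cT)$: an edge of $T$ lies in $E(\cT)$ precisely when both of its endpoints do. Hence it suffices to prove $h(V(\cT)) = V(\cT)$.

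The first observation I would make is that, because $h$ fixes $\axis(g)$ pointwise and is an isometry of $T$, for every vertex $v$ one has $\pi_g(h(v)) = h(\pi_g(v)) = \pi_g(v)$ and $d(h(v), \pi_g(h(v))) = d(v, \pi_g(v))$; that is, $h$ preserves both the projection of $v$ onto $\axis(g)$ and the distance of $v$ from $\axis(g)$. This is exactly the data required to apply \ref{pando3}.

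Given $v \in V(\cT)$, I would split into two cases. If $v \in \axis(g)$ then $h(v) = v$ and we are done. Otherwise \ref{pando3}, applied with the given $v$ in the role of $u$ and $h(v)$ in the role of the ambient vertex, yields $h(v) \in V(\cT)$ as soon as we know $\pi_g(v) \in \Int(\cT)$.

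Establishing $\pi_g(v) \in \Int(\cT)$ for $v \in V(\cT) \setminus \axis(g)$ is the main obstacle and is where \ref{pando2} together with the completeness of $\cT$ enter. Letting $v_0$ be the vertex supplied by \ref{pando2}, so that $v_0 \in V(\cT) \cap \axis(g)$ and $g(v_0) \in \Int(\cT)$, the unique $T$-geodesic from $v$ to $g(v_0)$ lies inside $\cT$ by connectedness and passes through $\pi_g(v)$. Either $\pi_g(v) = g(v_0) \in \Int(\cT)$ directly, or $\pi_g(v)$ is an interior vertex of this geodesic, in which case it carries at least two edges of $\cT$ (one off-axis toward $v$, one on-axis toward $g(v_0)$), so it cannot be a leaf of $\cT$ and completeness forces $\pi_g(v) \in \Int(\cT)$. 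Running the same argument with $h^{-1}$, which also fixes $\axis(g)$ pointwise, gives the reverse inclusion and therefore $h(V(\cT)) = V(\cT)$, so $h(\cT) = \cT$ as required.
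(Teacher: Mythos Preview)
Your proof is correct and follows essentially the same approach as the paper's: both split on whether $v$ lies on $\axis(g)$, and in the off-axis case invoke \ref{pando3} after establishing $\pi_g(v)\in\Int(\cT)$. You spell out in more detail why $\pi_g(v)\in\Int(\cT)$ (via the geodesic to $g(v_0)$ and completeness) and explicitly note the reverse inclusion via $h^{-1}$, whereas the paper handles both points more tersely, but the argument is the same.
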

\begin{proof}
	Suppose $x\in G(F,F')_{\axis(g)}$ and $v\in V(\cT)$. Since $\cT$ is a complete subtree containing a path in $\axis(g)$, this follows from \ref{pando2}, either $\pi_g(v) = v$ or $\pi_g(v)\in\Int(\cT)$. In the first case we have $x(v) = v$. In the second, since $x\in G(F,F')_{\axis(g)}$ we have $\pi_g(v) = \pi_g(x(v))$ and $d(v,\axis(v)) = d(x(v),\axis(g))$. It follows from \ref{pando3} that $v\in\cT$.
\end{proof}
\begin{proposition}\label{prop:tidy_above}
Suppose $\cP$ is a pando for $g\in G(F,F')$. Then $U_{\cP}$ is tidy above for $g$.
\end{proposition}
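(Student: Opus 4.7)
The plan is to invoke the characterisation in Theorem~\ref{thm:tid_min}(1), which reduces the claim to the identity $U_\cP = U_{\cP_+} U_{\cP_-}$, where $\cP_+ := \bigcup_{n\ge 0} g^n(\cP)$ and $\cP_- := \bigcup_{n\le 0} g^n(\cP)$. Given $h\in U_\cP$ I would construct $h_+\in U_{\cP_+}$ agreeing with $h$ on $\cP_-$; then $h_- := h_+^{-1}h$ automatically lies in $U_{\cP_-}$ and $h = h_+ h_-$ is the desired factorisation.

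Two preliminary structural observations drive the construction. First, $\cP_+\cap\cP_- = \cP$: if $u\in g^n(\cP)\cap g^{-m}(\cP)$ with $n,m\ge 1$, then $\pi_g(u)$ lies in $[a+nl(g),\,b-ml(g)]\subseteq[a+l(g),\,b-l(g)]$, where $[a,b] = \axis(g)\cap\cP$. Property~\ref{pando2} forces $b-a\ge l(g)+1$, and completeness of $\cP$ places every axis vertex strictly between $a$ and $b$ in $\Int(\cP)$; thus $\pi_g(u)\in\Int(\cP)$. Since $g^{-n}u\in\cP$ realises the depth $d(u,\pi_g(u))$, property~\ref{pando3} yields $u\in\cP$. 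Second, no leaf $v$ of $\cP$ has a branch of $T\setminus\cP$ meeting both $\cP_+\setminus\cP$ and $\cP_-\setminus\cP$. At the forward axis leaf $b$ every vertex in the outgoing branches has $\pi_g$-value $\ge_g b$, whereas every vertex of $\cP_-\setminus\cP$ has $\pi_g$-value $\le_g b-l(g)<_g b$, so no outgoing branch at $b$ meets $\cP_-\setminus\cP$; a symmetric argument excludes $\cP_+\setminus\cP$ at $a$. At a non-axis leaf $v$, property~\ref{pando3} forces $v$ to sit at the maximal depth $D$ realised in $\cP$ (else its $\deg(T)-1$ deeper neighbours would all be pushed into $\cP$, contradicting $v$ being a leaf), so the outgoing branches consist only of vertices of depth greater than $D$, which lie neither in $\cP_+$ nor in $\cP_-$.

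Because $h$ fixes each leaf of $\cP$ and merely permutes the branches attached to it, the second observation gives $h(\cP_+\setminus\cP)\cap(\cP_-\setminus\cP)=\emptyset$. The partial assignment $h_+|_{\cP_+} = \id$, $h_+|_{\cP_-} = h|_{\cP_-}$ on the subtree $A = \cP_+\cup\cP_-$ is therefore well-defined (by the first observation combined with $h$ fixing $\cP$) and injective. The associated local-action data, namely $\sigma_v = \id$ for $v\in\cP_+$ and $\sigma_v = \sigma(h,v)$ for $v\in\cP_-$, agree at $v\in\cP$ because $\sigma(h,v)$ already fixes every $\cP$-edge colour at $v$, are consistent across each edge of $A$ by the automorphism identity for $h$ (together with the absence of edges joining $\cP_+\setminus\cP$ to $\cP_-\setminus\cP$), and only finitely many of them lie outside $F$ since $S(h)$ is finite. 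Lemma~\ref{lem:autExtension}, applied with a base point in $\cP$ fixed, delivers $h_+\in G(F,F')$ realising this data, and propagating from the base point verifies $h_+|_{\cP_+}=\id$ and $h_+|_{\cP_-}=h|_{\cP_-}$. Setting $h_- := h_+^{-1}h$ completes the factorisation.

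The main obstacle I foresee is the branch classification at the axis leaves $a,b$: one must track via \ref{pando3} that a depth-$k$ vertex at $b$ with $k\le D$ is sent by $g^{-n}$ (for some $n\ge 1$) to a depth-$k$ vertex at an interior axis position, hence into $\cP$, thus placing it in $\cP_+$ rather than $\cP_-$. Once this geometric picture is clear, the injectivity of the partial assignment, the consistency of the local actions, and the application of Lemma~\ref{lem:autExtension} all reduce to bookkeeping.
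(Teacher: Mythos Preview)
Your reduction of tidiness above to the identity $U_\cP = U_{\cP_+}U_{\cP_-}$ hides the main difficulty. Theorem~\ref{thm:tid_min}(1) asks for $U_\cP = (U_\cP)_+(U_\cP)_-$ with $(U_\cP)_\pm = \bigcap_{n\ge 0} g^{\pm n}U_\cP g^{\mp n}$, so your plan only succeeds once you know $U_{\cP_\pm}\subseteq (U_\cP)_\pm$. Concretely, for your constructed $h_+\in U_{\cP_+}$ you must verify $g^{-k}h_+g^k\in U(F)$ for every $k\ge 0$, and this is not automatic: $g$ has singularities, so conjugation by powers of $g$ can push local actions out of $F$. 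The verification hinges on \ref{pando1}---since $S(g)\subset\Int(\cP)$, every singularity of every $g^{\pm k}$ lies in some interior of a translate $g^m(\cP)\subset\cP_+$, where $h_+$ acts as the identity on an entire neighbourhood, forcing $\sigma(g^{-k}h_+g^k,v)=\id$ at the problematic vertices. Carrying this out is precisely the content of the paper's proof; your proposal never engages with it and instead identifies the branch classification at $a,b$ as the main obstacle.

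There is also a smaller defect in your construction of $h_+$. You prescribe $\sigma_v=\id$ for $v\in\cP_+$ and $\sigma_v=\sigma(h,v)$ for $v\in\cP_-$, claiming these agree on $\cP=\cP_+\cap\cP_-$. They agree on $\Int(\cP)$, but at the axis leaf $a$ the element $h\in U_\cP$ fixes only the single $\cP$-edge at $a$ and may genuinely permute the remaining neighbours (which lie in $\cP_-\setminus\cP$), so $\sigma(h,a)$ need not be the identity; the two prescriptions then conflict and the data fed to Lemma~\ref{lem:autExtension} is not well defined. The paper sidesteps both issues by defining $a_+$ directly---identity on the half-tree $\{v:\pi_g(v)>_g o(e)\}$ and equal to $a$ on its complement---and then checking $g^{-k}a_+g^k\in U_\cP$ by hand, which is exactly the singularity computation you are missing.
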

\begin{proof}
Suppose $a\in U_{\cP}$. We factorise $a$ as a product $a_{+}a_{-}$ where $a_{\pm}\in (U_{\cP})_{\pm}$. By \ref{pando2} in Definition \ref{def:pando}, we may choose an edge $e\in E(\cP)$ such that $o(e)\in\axis(g)$ is minimised with respect to $\le_g$. Since $\cP$ is a complete subtree of $T$, $o(e)$ must have valency $1$ in $\cP$. This implies that if $v\in V(\cP)$ with $\pi_{g}(v) = o(e)$, then $v = o(e)$. Since $S(g)\subset \Int(\cP)$, for all $v\in S(g)$ we have $\pi_g(v)>_g o(e)$.

Define $a_+\in \Aut(T)$ by
\[a_+(v) = \twopartdef{a(v)}{\pi_{g}(v)\le_g o(e)}{v}{\hbox{otherwise.}}\]
It follows that $a_+$ is an automorphism of $T$ since $a(e) = e$. Furthermore, it follows that $a_+\in U(F)$ since $\sigma(a_+,v)\in\{\sigma(a,v),\id\}\subset F$ for all $v\in V(T)$. Fix $k\ge 0$. We show $g^{-k}a_+g^{k}\in U_{\cP}$ which implies $a_+\in (U_{\cP})_{+}$ as out choice of $k$ is arbitrary. To see that $g^{-k}a_{+}g^{k}$ fixes $\cP$, note that if $v\in V(\cP)$, then $\pi_{g}g^{k}(v) \ge_{g} \pi_{g}(v) \ge_g o(e)$. The definition of $a_+$ shows that $a_+g^{k}(v) = g^{k}(v)$ and so $g^{-k}a_{+}g^{k}(v) = v$. It suffices to show $g^{-k}a_{+}g^{k}\in U(F)$. Suppose $v\in V(T)$. Then
\begin{equation}\label{eq:conj_calc}\sigma(g^{-k}a_+g^{k},v) = \sigma(g^{-k},a_+g^{k}(v))\sigma(a_+,g^{k}v)\sigma(g^{k},v).
\end{equation}
If $\sigma(g^{-k}a_+g^{k},v)\not\in F$, then since $a_+\in U(F)$, we have $v\in S(g^{k})$ or ${a_{+}g^{k}(v)\in S(g^{-k})}$. Lemma \ref{lem:SingOfPowers} gives $0\le l < k$ such that $g^{l}(v)\in S(g)$, or $g^{-l}a_{+}g^{k}(v) \in S(g^{-1}) = gS(g)$ and so $g^{-l - 1}a_{+}g^{k}(v)\in S(g)$. We  must have either $\pi_{g}a_{+}g^{k}(v) >_g o(e)$ or $\pi_{g}g^{k}(v)>_g o(e)$ by choice of $e$. This shows that $\sigma(a_+,g^{k}(v)) = \id$ for both cases. Substituting into equation \eqref{eq:conj_calc} we see that $\sigma(g^{-k}a_+g^{k},v) = \id \in F$.

Now consider $a_{-}:= a_{+}^{-1}a$. Observe that
\[a_-(v) = \twopartdef{v}{\pi_{g}(v)\le_g o(e)}{a(v)}{\hbox{otherwise.}}\]
An argument similar to that of the previous paragraph can be used to show $g^{k}a_{-}g^{-k}\in U_{\cP}$ for all $k\ge 0$. Thus $a_{-}\in (U_{\cP})_-$.
\end{proof}
For a given pando $\cP$ for $g\in G(F,F')$, it is not guaranteed that $U_{\cP}$ is tidy below for $g$. To construct a subgroup which is tidy for $g$ we set
\begin{equation}\label{eq:tidy_obstruction}
\cL: = \{l\in G(F,F')\mid g^{k}lg^{-k}\in U_{\cP}\hbox{ for all but finitely many }k\in\bZ\},
\end{equation}
and $L = \overline{\cL}$. If
\[U_{\cP}':= \{u\in U_{\cP}\mid  lul^{-1}\in U_{\cP} \hbox{ for all }l\in L\},\]
then the product $U_{\cP}'L$ is tidy subgroup for $g$ by \cite[Section 3]{Willis01}.

Lemma \ref{lem:ObstructionToTidySingularities} highlights the control over $\cL$ given by using a pando to define the subgroup used as input for the tidying procedure.
\begin{lemma}\label{lem:ObstructionToTidySingularities}
Suppose $g\in G(F,F')$ is hyperbolic and $\cP$ is a pando for $g$. Suppose also that $a\in G(F,F')_{\axis(g)}$ such that $g^{-k}ag^{k}\in U(F)$ for all but finitely many $k\in\bZ$. Then \[S(a)\subset \Int(g(\cP))\cap \Int(\cP).\]
\end{lemma}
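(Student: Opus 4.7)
My plan is to analyse the local actions of the conjugates $g^{-k}ag^{k}$ for large $|k|$ and, by combining information from $k\to+\infty$ and $k\to-\infty$, pin $S(a)$ down inside $\Int(\cP)\cap\Int(g\cP)$. As a preliminary observation, Lemma~\ref{lem:axis_fix_to_pando_stab} gives $a\in G(F,F')_{\{\cP\}}$; a routine verification shows that every translate $g^m\cP$ also satisfies~\ref{pando2} and~\ref{pando3} for $g$, and the same lemma then yields $a(g^m\cP)=g^m\cP$, hence $a(\Int(g^m\cP))=\Int(g^m\cP)$ for every $m\in\bZ$.

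Now fix $v\in S(a)$. For large $k>0$ with $g^{-k}ag^{k}\in U(F)$, evaluating the local action at $w=g^{-k}(v)$ gives
\[
\sigma(g^{-k}ag^{k},g^{-k}v) \;=\; \sigma(g^{-k},a(v))\,\sigma(a,v)\,\sigma(g^{k},g^{-k}v) \;\in\; F.
\]
Since $\sigma(a,v)\notin F$ and $F$ is a subgroup of $F'$, at least one of the two outer factors must also lie outside $F$. Expanding these factors via Lemma~\ref{lem:sig_basic} as products of $g$-local actions along the relevant orbits of $v$ and $a(v)$, and then letting $k\to\infty$, forces either $g^{-l}(v)\in S(g)$ or $g^{-l}(a(v))\in S(g)$ for some $l\ge 1$. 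Combining~\ref{pando1} ($S(g)\subset\Int\cP$) with the $a$-invariance of each $\Int(g^{l}\cP)$ then yields $v\in\bigcup_{l\ge 1}\Int(g^{l}\cP)$. The parallel computation for large $k<0$, which uses the identity $\sigma(g^{-n},g^{n}(v))=\sigma(g^{n},v)^{-1}$ to rewrite the right factor, produces in the same way $v\in\bigcup_{j\ge 0}\Int(g^{-j}\cP)$.

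It remains to reconcile these two unions with $\Int(g\cP)\cap\Int(\cP)$. By the set identity $(\bigcup_{l} A_{l})\cap(\bigcup_{j} B_{j})=\bigcup_{l,j}(A_{l}\cap B_{j})$, it suffices to check that $\Int(g^{l}\cP)\cap\Int(g^{-j}\cP)\subset\Int(g\cP)\cap\Int(\cP)$ for every $l\ge 1$ and $j\ge 0$. Along $\axis(g)$ this is an interval-intersection computation: completeness of $\cP$ forces $\Int\cP\cap\axis(g)=[p+1,q-1]$ when $\cP\cap\axis(g)=[p,q]$, so $\Int(g^{m}\cP)\cap\axis(g)=[p+1+m\,l(g),\,q-1+m\,l(g)]$, and the intersection $[p+1+l\,l(g),\,q-1-j\,l(g)]$ is always contained in $[p+1+l(g),\,q-1]$, which is exactly the axial part of $\Int(g\cP)\cap\Int(\cP)$ (non-empty by~\ref{pando2}). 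Off the axis, completeness of $\cP$ together with~\ref{pando3} produces a uniform depth profile of $\cP$ above each internal axis vertex, so the same interval argument goes through layer by layer. I expect this final geometric reconciliation, in particular the off-axis case relying on the profile-uniformity consequence of~\ref{pando3}, to be the main obstacle.
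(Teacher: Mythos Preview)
Your argument is correct and coincides with the paper's in all essentials. Both compute $\sigma(g^{-k}ag^{k},g^{-k}v)$ for $v\in S(a)$ and large $|k|$, invoke Lemma~\ref{lem:SingOfPowers} to find $g$-translates of $v$ (or of $a(v)$, handled via the axis-preservation of $a$) lying in $S(g)\subset\Int(\cP)$ from each direction, and then use~\ref{pando3} to trap $v$ in $\Int(\cP)\cap\Int(g\cP)$.

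The only difference is in packaging the final trapping step. The paper works directly with $\pi_g$ and the order $\le_g$: once $\pi_g g^{-l-1}(v)$ and $\pi_g g^{m}(v)$ both lie in the axial interval $\Int(\cP)\cap\axis(g)$, so do $\pi_g(v)$ and $\pi_g g^{-1}(v)$, and one application of~\ref{pando3} with witness $g^{-l-1}(v)$ (resp.\ $g^m(v)$) finishes. You instead recast this as the inclusion $\Int(g^{l}\cP)\cap\Int(g^{-j}\cP)\subset\Int(g\cP)\cap\Int(\cP)$ for $l\ge 1$, $j\ge 0$. Your axial interval computation is exactly the paper's, and your ``uniform depth profile'' justification for the off-axis part is nothing more than~\ref{pando3} again, so no real obstacle remains there. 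Your use of Lemma~\ref{lem:axis_fix_to_pando_stab} to get $a(\Int(g^{m}\cP))=\Int(g^{m}\cP)$, and hence pass from $a(v)$ to $v$, is a clean alternative to the paper's direct observation that $a$ preserves both $\pi_g$ and distance to $\axis(g)$.
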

\begin{proof}
Suppose $v\in S(a)$. For $k\in\bN$ sufficiently large, we have 
\[\sigma(g^{-k}ag^{k},g^{-k}(v)) = \sigma(g^{-k},a(v))\sigma(a,v)\sigma(g^{k},g^{-k}(v))\in F.\]
Since $\sigma(a,v)\not\in F$, this implies either
\[\sigma(g^{-k},a(v))\not\in F\] or 
\[\sigma(g^{k},g^{-k}(v)) = \sigma(g^{-k},v)^{-1}\not\in F.\] 
That is, $v\in S(g^{-k})$ or $a(v)\in S(g^{-k})$. Lemma \ref{lem:SingOfPowers} gives $0< l \le k$ such that \[g^{-l}(v)\in S(g^{-1}) = g(S(g))\subset \Int(g(\cP))\] or 
\[g^{-l}a(v)\in S(g^{-1}) = g(S(g))\subset \Int(g(\cP)).\]
In the first case, we have $g^{-l - 1}(v)\in \Int(\cP)$. In the latter, $g^{-l - 1}a(v)\in \Int(\cP)$, but since $a\in G(F,F')_{\axis(g)}$, we have 
\[\pi_gg^{-l - 1}a(v) = \pi_g g^{-l-1}(v) \hspace{0.2cm}\hbox{ and }\hspace{0.2cm} d(g^{-l - 1}a(v), \axis(g)) = d(g^{-l - 1}(v),\axis(g)).\]
It follows from \ref{pando3} that we again have $g^{-l - 1}(v)\in \Int(\cP)$.

Repeating the same calculation but with $-k\in\bN$ sufficiently large, we see that there exists $m \ge 0$ such that $g^{m}(v)\in \Int(\cP)$. Since $\pi_g g^{-l-1}(v), \pi_g g^m(v)\in \Int(\cP)$ and $\pi_g g^{-l-1}(v)<_g \pi_g(v)\le_g \pi_g g^m(v)$, we have $\pi_g(v)\in \Int(\cP)$. But $d(v,\axis(g)) = d(g^{-l-1}(v),\axis(g))$, and so $v\in \Int(\cP)$ by \ref{pando3}. Similarly, since 
\[\pi_gg^{-l}(v)\le_g \pi_g g^{-1}(v) <_g\pi_g g^m(v)\]
we have $g^{-1}(v)\in \Int(\cP)$.
\end{proof}
\begin{lemma}\label{lem:L_fixes_axis}
	Suppose $g\in G(F,F')$ is hyperbolic with pando $\cP$. Define $\cL$ as in equation \eqref{eq:tidy_obstruction}. Then $\cL\le G(F,F')_{\axis(g)}$.
\end{lemma}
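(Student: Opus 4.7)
The plan is to show directly that every $l\in\cL$ fixes each vertex of $\axis(g)$. Fix $l\in\cL$. By property \ref{pando2} of a pando, we may choose a vertex $v_0\in\axis(g)\cap V(\cP)$. From the definition of $\cL$ (equation \eqref{eq:tidy_obstruction}), there exists $K\in\bN$ such that $g^{k}lg^{-k}\in U_{\cP}$ for all $k\in\bZ$ with $|k|\ge K$. Since $U_{\cP}$ is contained in the pointwise stabiliser of $V(\cP)$, applying $g^{k}lg^{-k}$ to $v_{0}$ and rearranging gives
\[
l(g^{-k}(v_{0}))=g^{-k}(v_{0})\qquad\text{for all }|k|\ge K.
\]

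Next I would exploit the fact that $g$ translates $\axis(g)$ toward $\omega_{+}(g)$, so that $g^{-1}$ translates toward $\omega_{-}(g)$. Consequently $\{g^{-k}(v_{0})\mid k\ge K\}$ is an infinite subray of $\axis(g)$ converging to $\omega_{-}(g)$, while $\{g^{-k}(v_{0})\mid k\le -K\}$ is an infinite subray converging to $\omega_{+}(g)$. Together these two rays cover all but finitely many vertices of $\axis(g)$, and all of their vertices are fixed by $l$.

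Finally, given an arbitrary $v\in\axis(g)$, I can choose $k_{1}\ge K$ and $k_{2}\le -K$ large enough in absolute value so that $v$ lies on the segment of $\axis(g)$ from $g^{-k_{1}}(v_{0})$ to $g^{-k_{2}}(v_{0})$. Because $T$ is a tree, this segment is the unique path between these two endpoints; since $l$ fixes both endpoints, $l$ must fix every vertex of the path, and in particular $l(v)=v$. Hence $l\in G(F,F')_{\axis(g)}$ and, as $l\in\cL$ was arbitrary, $\cL\le G(F,F')_{\axis(g)}$.

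I do not anticipate any substantive obstacle: the proof reduces to the bookkeeping observation that conjugation by $g^{k}$ converts fixing the pando into fixing $g^{-k}$-translates of its vertices, together with the trivial fact that a tree automorphism fixing two vertices of a geodesic fixes the geodesic pointwise. Property \ref{pando2} is the only feature of a pando that is used here; properties \ref{pando1} and \ref{pando3} play no role in this particular step.
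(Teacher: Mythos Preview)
Your proof is correct and follows essentially the same approach as the paper's own argument: pick $v_0\in\axis(g)\cap V(\cP)$, use the definition of $\cL$ to see that $l$ fixes $g^{-k}(v_0)$ for all $|k|$ sufficiently large, and conclude by the uniqueness of geodesics in a tree that $l$ fixes every vertex on the axis. The only cosmetic difference is that the paper fixes the target vertex $v$ first and then chooses a single large $k$ with $g^{-k}(v_0)<_g v<_g g^{k}(v_0)$, whereas you first find a threshold $K$ and then choose $k_1,k_2$; the content is identical.
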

\begin{proof}
	Suppose $l\in\cL$ and $v\in\axis(g)$. Choose any vertex $v_0\in\cP\cap \axis(g)$. There exists $k\in\bN$ sufficiently large such that $g^{-k}(v_0)<_gv<_gg^{k}(v_0)$ and $g^{\pm k}lg^{\mp k}\in U_{\cP}$. We then have $g^{\pm k}lg^{\mp k}(v_0) = v_0$ and so $lg^{\mp k}(v_0) = g^{\mp k}(v_0)$. Since $v$ is on the unique path between $g^{-k}(v_0)$ and $g^{k}(v_0)$, we have $l(v) = v$.
\end{proof}
\begin{theorem}\label{thm:tidysubgroup}
Suppose $g\in G(F,F')$ is hyperbolic with pando $\cP$. Define $\cL$ as in equation \eqref{eq:tidy_obstruction}. Then $U_{\cP}' = U_{\cP}$ and $U_{\cP}\cL$ is tidy for $g$.
\end{theorem}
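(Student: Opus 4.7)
The plan is to reduce everything to a single key fact: every element of $\cL$ normalizes $U_{\cP}$. From this, $U_{\cP}' = U_{\cP}$ follows by extending normalization from $\cL$ to its closure $L$, since for each fixed $u \in U_{\cP}$ the set $\{h \in G(F,F') : huh^{-1} \in U_{\cP}\}$ is closed and contains $\cL$. The tidying procedure of \cite{Willis01}, combined with Proposition \ref{prop:tidy_above}, then tells us $U_{\cP}' L = U_{\cP} L$ is tidy for $g$; a short argument at the end identifies $U_{\cP}\cL$ with $U_{\cP} L$.

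The heart of the matter is showing $l U_{\cP} l^{-1} \subset U_{\cP}$ for each $l \in \cL$. By Lemma \ref{lem:L_fixes_axis} combined with Lemma \ref{lem:axis_fix_to_pando_stab}, $l$ stabilizes $\cP$ setwise, so $lul^{-1}$ fixes $\cP$ pointwise for $u \in U_{\cP}$. It remains to show $lul^{-1} \in U(F)$, i.e.\ that it has no singularities. For any $v \in V(T)$, Lemma \ref{lem:sig_basic} yields
\[\sigma(lul^{-1}, v) = \sigma(l, ul^{-1}(v))\,\sigma(u, l^{-1}(v))\,\sigma(l, l^{-1}(v))^{-1}.\]
The middle factor lies in $F$ because $u \in U(F)$, so if the product escapes $F$, then at least one outer factor does, forcing either $l^{-1}(v) \in S(l)$ or $ul^{-1}(v) \in S(l)$. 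I will then apply Lemma \ref{lem:ObstructionToTidySingularities} to $a = l$, which is admissible because $l \in \cL$ and $l \in G(F,F')_{\axis(g)}$ by Lemma \ref{lem:L_fixes_axis}; this gives $S(l) \subset \Int(\cP)$. In either case, since $u$ and $u^{-1}$ fix $\cP$ pointwise, the points $l^{-1}(v)$ and $ul^{-1}(v)$ coincide and both lie in $\Int(\cP)$; since every edge at an interior vertex of $\cP$ lies in $\cP$ and $u$ fixes both endpoints, $\sigma(u, l^{-1}(v)) = \id$. The outer factors then cancel, giving $\sigma(lul^{-1}, v) = \id \in F$, a contradiction. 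Hence $S(lul^{-1}) = \emptyset$.

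Finally, to identify $U_{\cP}\cL$ with $U_{\cP} L$: directly from the definition, $\cL$ is closed under products and inverses, so it is a subgroup; combined with the normalization above, $U_{\cP}\cL$ is a subgroup containing the open set $U_{\cP}$, hence is open and therefore closed. Since $\cL$ is dense in $L$, $U_{\cP} L \subset \overline{U_{\cP}\cL} = U_{\cP}\cL$, and the reverse inclusion is immediate. The main obstacle throughout is the cocycle cancellation in the second paragraph, where the containment $S(l) \subset \Int(\cP)$ from Lemma \ref{lem:ObstructionToTidySingularities} is essential; this is precisely the payoff for choosing a pando (condition \ref{pando3} being what enables Lemma \ref{lem:ObstructionToTidySingularities}) rather than an arbitrary complete subtree containing $S(g)$.
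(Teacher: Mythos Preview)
Your proposal is correct and follows essentially the same route as the paper: both arguments show that each $l\in\cL$ normalizes $U_{\cP}$ by combining Lemma~\ref{lem:L_fixes_axis}, Lemma~\ref{lem:axis_fix_to_pando_stab}, and Lemma~\ref{lem:ObstructionToTidySingularities} with the observation that $\sigma(u,w)=\id$ whenever $w\in\Int(\cP)$, then pass to $L$ by continuity and identify $U_{\cP}\cL$ with $U_{\cP}L$ via openness. Your cocycle computation is written out in slightly more detail than the paper's, but the substance is identical.
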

\begin{proof}
Suppose $a\in U_{\cP}$ and $l\in \cL$. It follows from Lemma  \ref{lem:L_fixes_axis} and Lemma \ref{lem:axis_fix_to_pando_stab} that $l\in G(F,F')_{\{\cP\}}$. Since $a$ fixes $\cP$, the product $lal^{-1}$ also fixes $\cP$. Applying Lemma \ref{lem:ObstructionToTidySingularities}, we see that $S(l^{-1}) \cup S(l)\subset\Int(\cP)$. Now $\sigma(a,v) = \id$ for all $v\in\Int(\cP)$, and so $\sigma(lal^{-1},u)\in F$ for all $u\in V(T)$. Thus, $lal^{-1}\in U_{\cP}$ and so $U_{\cP}\cL = \cL U_{\cP}$ is a subgroup.

Now suppose $(l_{n})_{n\in\bN}\subset \cL$ is a sequence converging to $l'\in L$. Continuity of multiplication shows that $l_nal_n^{-1} \to l'a(l')^{-1}$. But this is a sequence contained in $U_{\cP}$ which is closed. Hence, $l'a(l')^{-1}\in U_{\cP}$. This shows that $U_{\cP}' = U_{\cP}$.  

Now $U_{\cP}\cL$ is an open and therefore closed subgroup. We have
\begin{equation}\label{eq:closure_calc}
U_{\cP}\cL\le U_{\cP}L\le \overline{U_{\cP}\cL} = U_{\cP}\cL.
\end{equation}
This shows that $U_{\cP}\cL= U_{\cP}'L$ is tidy for $g$.
\end{proof}
\subsection{A formula for the scale}
\label{ssec:leboudec_scale_formula}
We now have a tidy subgroup for a given hyperbolic $g\in G(F,F')$. We use this subgroup to give a formula for the scale which does not require calculating the actual subgroup. This formula is given in Proposition \ref{prop:Scale_calc_2}. Our formula is in terms of a pando $\cP$ and a special subset of automorphisms of $\cP_0$, see Definition \ref{def:pando} for a definitions of $\cP$ and $\cP_0$, which we define in Definition \ref{def:scale_aut}. These automorphisms are related to our tidy subgroup in Lemma \ref{lem:quot_map}. We need two results, with slight adaptations, found in \cite{WillisNeretin}. The statements and proofs, see Lemma \ref{lem:wilNeretin} and Proposition \ref{prop:Scale_calc_1}, are included for convenience.

\begin{lemma}\label{lem:wilNeretin}
Suppose $g\in G(F,F')$ is hyperbolic with pando $\cP$. Define $\cL$ as in equation \eqref{eq:tidy_obstruction}. Then
\[U_{\cP}\cL\cap gU_{\cP}g^{-1}\cL = (U_{\cP}\cap gU_{\cP}g^{-1})\cL.\] 
\end{lemma}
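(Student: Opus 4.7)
The inclusion $(U_{\cP}\cap gU_{\cP}g^{-1})\cL\subseteq U_{\cP}\cL\cap gU_{\cP}g^{-1}\cL$ is immediate. For the reverse containment, my plan is first to verify that $\cL$ normalises both $U_\cP$ and $gU_\cP g^{-1}$, so that all three products appearing in the statement are subgroups of $G(F,F')$. Normalisation of $U_\cP$ by $\cL$ is essentially the content of $U_\cP'=U_\cP$ from the proof of Theorem~\ref{thm:tidysubgroup}, and normalisation of $gU_\cP g^{-1}$ follows at once because $g\cL g^{-1}=\cL$ directly from the definition of $\cL$.

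Next I would take $x\in U_\cP\cL\cap gU_\cP g^{-1}\cL$ and write $x=u_1l_1=u_2l_2$ with $u_1\in U_\cP$, $u_2\in gU_\cP g^{-1}$, and $l_1,l_2\in\cL$. The discrepancy $m:=l_1l_2^{-1}=u_1^{-1}u_2$ lies in $\cL$, is the identity on $\cP\cap g\cP$ (because $u_1$ fixes $\cP$ and $u_2$ fixes $g\cP$), fixes $\axis(g)$ pointwise by Lemma~\ref{lem:L_fixes_axis}, and has $S(m)\subseteq\Int(\cP)\cap\Int(g\cP)$ by Lemma~\ref{lem:ObstructionToTidySingularities}. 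The combination of the first two of these facts forces $u_1$ to act as the identity on $\axis(g)\cap g\cP$, a compatibility condition used below.

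The heart of the argument is to construct $l'\in\cL$ with $l'|_\cP=\id$ and $l'|_{g\cP}=u_1|_{g\cP}$; then $u_1(l')^{-1}\in U_\cP\cap gU_\cP g^{-1}$ and the factorisation $x=(u_1(l')^{-1})(l'l_1)$ lies in $(U_\cP\cap gU_\cP g^{-1})\cL$. Consistency of the two prescriptions on $\cP\cap g\cP$ is automatic, and consistency with $l'$ fixing $\axis(g)$ is exactly what the previous paragraph established. The main obstacle is the existence of such $l'$ inside $\cL$: my approach is to prescribe the local actions of $l'$ on a finite subtree containing $\cP\cup g\cP$ in a way realising both restrictions, invoke an extension argument in the style of Lemma~\ref{lem:autExtension} using the pando properties \ref{pando1}--\ref{pando3}, and verify that the extension can be taken with support confined to a bounded neighbourhood of $\cP\cup g\cP$ so that $g^nl'g^{-n}\in U_\cP$ for all but finitely many $n\in\bZ$.
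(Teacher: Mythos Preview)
Your approach diverges from the paper's at the crucial step. The paper does not attempt to construct $l'$ directly; instead it invokes the tidy-above factorisation $U_\cP = (U_\cP)_+(U_\cP)_-$ established in Proposition~\ref{prop:tidy_above}, writes both representations of $a$ as $u_+u_-l_1 = (gw_+g^{-1})(gw_-g^{-1})l_2$, and then uses a compactness/accumulation argument from \cite[Lemma~3.2]{Willis01} to show that the element $(gw_+g^{-1})^{-1}u_+$ lies in $L$. Once that is known, the rearrangement into $(U_\cP\cap gU_\cP g^{-1})\cL$ is routine using $(U_\cP)_+\subset gU_\cP g^{-1}$, $g(U_\cP)_-g^{-1}\subset (U_\cP)_-$, and the fact that $L$ normalises $(U_\cP)_-$.

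Your direct construction of $l'$ has a real gap. First, a minor point: the prescription $l'|_{g\cP}=u_1|_{g\cP}$ does not yield $u_1(l')^{-1}\in U_{g\cP}$; what you actually need is $l'=u_1$ on $u_1^{-1}(g\cP)$, and this set need not coincide with $g\cP$ since $u_1$ is not assumed to stabilise $g\cP$ setwise. More seriously, the claim that $l'$ can be extended with support confined to a bounded neighbourhood of $\cP\cup g\cP$ is false in general. If $u_1$ moves a leaf $v$ of $g\cP\setminus\cP$ to a vertex $u_1(v)\neq v$, then any tree automorphism agreeing with $u_1$ at $v$ must carry the entire infinite subtree beyond $v$ onto the disjoint subtree beyond $u_1(v)$; such an $l'$ cannot be the identity outside any finite set. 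Even abandoning bounded support, the verification that $g^n l' g^{-n}\in U_\cP$ for large $|n|$ is not automatic: since $g$ has singularities, conjugation by $g^n$ can create singularities in $g^nl'g^{-n}$ at vertices $g^n(w)$ for $w$ in the support of $l'$, and whether $\sigma(g^n,l'(w))\sigma(l',w)\sigma(g^n,w)^{-1}\in F$ depends on compatibility between $\sigma(l',\cdot)$ and the cosets $\sigma(g^n,\cdot)F$---precisely the conditions appearing in Definition~\ref{def:scale_aut}. The hypothesis $u_1^{-1}u_2\in\cL$ does encode such compatibility, but you have not shown how to transplant it onto your freshly constructed $l'$; this is exactly the content the paper extracts through the tidy-above decomposition and \cite[Lemma~3.2]{Willis01}.
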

\begin{proof}
Suppose $a\in U_{\cP}\cL\cap gU_{\cP}g^{-1}\cL$. Lemma \ref{prop:tidy_above} shows that $U_{\cP}$ is tidy above, thus there exist $u_{\pm},w_\pm\in (U_{\cP})_{\pm}$ and $l_1,l_2\in \cL$ such that
\[a = u_+u_-l_1 = gw_+g^{-1}gw_{-}g^{-1}l_2.\]
Rearranging, we have
\begin{equation}\label{eq:ComplexEquality}
(gw_+g^{-1})^{-1}u_+ = gw_{-}g^{-1}l_2l_1^{-1}u_{-}^{-1}.
\end{equation}

Since $(U_{\cP})_+\le g(U_{\cP})_{+}g^{-1}$, we have $(gw_+g^{-1})^{-1}u_+\in g(U_{\cP})_{+}g^{-1}$. In particular
\begin{equation}\label{eq:in_+}
{g^{-1}(gw_+g^{-1})^{-1}u_+g\in (U_{\cP})_{+}}.
\end{equation} 
Looking now at the right-hand side of \eqref{eq:ComplexEquality}, we see that since $g(U_{\cP})_{-}g^{-1}\le (U_{\cP})_{-}$, we have \[{gw_{-}g^{-1}l_2l_1^{-1}u_{-}^{-1}\in (U_{\cP})_{-}\cL (U_{\cP})_{-}}.\]

It follows that for all $n\in\bN$
\[g^{n}(gw_{-}g^{-1}l_2l_1^{-1}u_{-}^{-1})g^{-n}\in g^{n}(U_{\cP})_{-}g^{-n}\cL g^{n}(U_{\cP})_{-}g^{-n}\le U_{\cP}\cL.\]
The sequence must have an accumulation point as $U_{\cP}\cL$ is compact. Applying {\cite[Lemma 3.2]{Willis01}}, equations \eqref{eq:ComplexEquality} and \eqref{eq:in_+} and noting that $L$ is invariant under conjugation by $g$, we see that $(gw_+g^{-1})^{-1}u_+ \in L$. It follows that 
\begin{align*}
a & = gw_+g^{-1}gw_{-}g^{-1}l_2 = u_+u_+^{-1}gw_+g^{-1}gw_{-}g^{-1}l_2\\
& = u_+((gw_+g^{-1})^{-1}u_+)^{-1}gw_-g^{-1}l_2
\in U_{+}L(g(U_{\cP})_{-}g^{-1})\cL.
\end{align*}
But $L$ is invariant under conjugation by $g$ and normalises $(U_{\cP})_{-}$ by Theorem \ref{thm:tidysubgroup} and \cite[Lemma 3.7]{Willis01}. Thus, $a\in (U_{\cP})_{+}(g(U_{\cP})_{-}g^{-1})L$. But \[(U_{\cP})_{+}g(U_{\cP})_{-}g^{-1}\le (U_{\cP})_{+}(U_{\cP})_{-}\] and \[(U_{\cP})_{+}g(U_{\cP})_{-}g^{-1}\le g(U_{\cP})_{+}g^{-1}g(U_{\cP})_{-}g^{-1}\le gU_{\cP}g^{-1},\]
this shows that $a\in (U_{\cP}\cap gU_{\cP}g^{-1})L$. That $a\in (U_{\cP}\cap gU_{\cP}g^{-1})\cL$ follows since $U_{\cP}\cap gU_{\cP}g^{-1}$ an open subgroup, see the calculation \eqref{eq:closure_calc}.  We have shown that 
\[U_{\cP}\cL\cap gU_{\cP}g^{-1}\cL\subset (U_{\cP}\cap gU_{\cP}g^{-1})\cL.\]
The reverse inclusions is a consequence of the general fact $(A\cap B)C \subset AC\cap BC$ for any subsets $A,B,C$ contained in any group. 
\end{proof}
Proposition \ref{prop:Scale_calc_1} shows that the scale of $g\in G(F,F')$ for $g$ hyperbolic can be calculated by only considering subgroups in $U(F)$.
\begin{proposition}\label{prop:Scale_calc_1}
Suppose $g\in G(F,F')$ is hyperbolic with pando $\cP$. Define $\cL$ from equation \eqref{eq:tidy_obstruction}. Then
\[s(g) = \dfrac{[U_{g(\cP)}:U_{\cP\cup g(\cP)}]}{[\cL\cap U_{g(\cP)}:\cL\cap U_{\cP\cup g(\cP)}]}.\]
\end{proposition}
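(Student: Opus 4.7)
The plan is to apply the tidy subgroup $U = U_\cP\cL$ constructed in Theorem \ref{thm:tidysubgroup}. By definition of the scale, $s(g) = [gUg^{-1} : gUg^{-1}\cap U]$. The first step is to compute the conjugate: since $gU_\cP g^{-1} = U_{g(\cP)}$ and $\cL$ is invariant under conjugation by $g$ (immediate from its defining property in \eqref{eq:tidy_obstruction}), we have $gUg^{-1} = U_{g(\cP)}\cL$. Lemma \ref{lem:wilNeretin} then gives
$$U_\cP\cL\cap U_{g(\cP)}\cL = (U_\cP\cap U_{g(\cP)})\cL = U_{\cP\cup g(\cP)}\cL,$$
so that $s(g) = [U_{g(\cP)}\cL : U_{\cP\cup g(\cP)}\cL]$.

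Next I would reduce this index of products to an index inside $U_{g(\cP)}$. Because $\cL$ is contained in $U_{\cP\cup g(\cP)}\cL$, every left coset of $U_{\cP\cup g(\cP)}\cL$ in $U_{g(\cP)}\cL$ is represented by an element of $U_{g(\cP)}$, and two such representatives give the same coset iff they differ by an element of $U_{g(\cP)}\cap U_{\cP\cup g(\cP)}\cL$. Standard coset counting therefore yields
$$s(g) = [U_{g(\cP)} : U_{g(\cP)}\cap U_{\cP\cup g(\cP)}\cL].$$
To identify this intersection, write an arbitrary element $x$ of it as $x = u'\ell$ with $u'\in U_{\cP\cup g(\cP)}$ and $\ell\in\cL$; since $u'\in U_{g(\cP)}$ and $x\in U_{g(\cP)}$, the identity $\ell = (u')^{-1}x$ forces $\ell\in\cL\cap U_{g(\cP)}$. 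Hence
$$U_{g(\cP)}\cap U_{\cP\cup g(\cP)}\cL = U_{\cP\cup g(\cP)}(\cL\cap U_{g(\cP)}).$$

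For the final step, I would apply the tower $U_{\cP\cup g(\cP)} \le U_{\cP\cup g(\cP)}(\cL\cap U_{g(\cP)}) \le U_{g(\cP)}$ together with the coset-counting identity
$$[U_{\cP\cup g(\cP)}(\cL\cap U_{g(\cP)}) : U_{\cP\cup g(\cP)}] = [\cL\cap U_{g(\cP)} : \cL\cap U_{\cP\cup g(\cP)}],$$
which holds without any normality assumption because the product on the left is a subgroup (it equals $U_{g(\cP)}\cap U_{\cP\cup g(\cP)}\cL$). Dividing the total index $[U_{g(\cP)} : U_{\cP\cup g(\cP)}]$ by this factor gives the claimed formula. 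The main obstacle is the careful coset bookkeeping: one has to verify that every set product appearing is actually a subgroup so that the second-isomorphism-style identities apply, and that all indices are finite; the latter follows from openness of each subgroup in the compact group $U_\cP\cL$.
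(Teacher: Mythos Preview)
Your argument is correct and follows essentially the same route as the paper: both reduce to $s(g)=[U_{g(\cP)}\cL:U_{\cP\cup g(\cP)}\cL]$ via Lemma~\ref{lem:wilNeretin} and the identity $gU_\cP g^{-1}=U_{g(\cP)}$, and then extract the quotient of indices by a coset argument. The only cosmetic difference is that the paper writes down an explicit surjection $U_{g(\cP)}/U_{\cP\cup g(\cP)}\to U_{g(\cP)}\cL/U_{\cP\cup g(\cP)}\cL$ and computes its fibres, whereas you use the product formula $[HK:H]=[K:K\cap H]$ directly; these are the same computation. One small point worth tightening: your justification that the relevant products are subgroups and that indices are finite should not appeal to ``the compact group $U_\cP\cL$'' (since $U_{g(\cP)}\not\subset U_\cP\cL$ in general); rather, $U_{\cP\cup g(\cP)}\cL$ is a group because it equals $U_\cP\cL\cap U_{g(\cP)}\cL$ by Lemma~\ref{lem:wilNeretin}, and finiteness follows from $[\cL\cap U_{g(\cP)}:\cL\cap U_{\cP\cup g(\cP)}]$ injecting into $U_{g(\cP)}/U_{\cP\cup g(\cP)}$.
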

\begin{proof}
Since $g$ normalises $\cL$, Lemma \ref{lem:wilNeretin} implies
\[U_{\cP}\cL\cap gU_{\cP}\cL g^{-1} = U_{\cP}\cL\cap gU_{\cP}gg^{-1}\cL g = (U_{\cP}\cap gU_{\cP}g^{-1})\cL.\] 
Since $S(g)\subset \Int(\cP)$, we have $\sigma(a,v) = \id$ for all $v\in S(g)$ and $a\in U_{\cP}$. It follows that $gU_{\cP}g^{-1} = U_{g(\cP)}$ and so
\[s(g) = [U_{g(\cP)}\cL:U_{\cP\cup g(\cP)}\cL].\] 
Consider the map
\[\lambda:\dfrac{U_{g(\cP)}}{U_{\cP\cup g(\cP)}}\to \dfrac{U_{g(\cP)}\cL}{U_{\cP\cup g(\cP)}\cL}: xU_{\cP\cup g(\cP)}\mapsto xU_{\cP\cup g(\cP)}\cL.\]
That $\lambda$ is well defined and surjective follows from noting that $\cL$ normalises $U_{\cP\cup g(\cP)}$. We calculate the size of $\lambda^{-1}\lambda(xU_{\cP\cup g(\cP)})$ for $x\in U_{g(\cP)}$. Since $U_{g(\cP)}\cL$ and $U_{\cP\cup g(\cP)}\cL$ are both subgroups, $\lambda(xU_{\cP\cup g(\cP)}) = \lambda(yU_{\cP\cup g(\cP)})$ if and only if $x^{-1}y\in U_{\cP \cup g(\cP)}\cL$. Equivalently, there exists $l\in\cL$ and $u\in U_{\cP\cup g(\cP)}$ such that $x^{-1}y = lu$. Rearranging, we see that $yu^{-1}\in x(\cL\cap U_{g(\cP)})$ and $yu^{-1}U_{\cP\cup g(\cP)} = yU_{\cP\cup g(\cP)}$. Alternatively, if $y\in x(\cL\cap U_{g(\cP)})$, then we have $\lambda(yU_{\cP\cup g(\cP)}) = \lambda(xU_{\cP\cup g(\cP)})$. This shows that
\[|\lambda^{-1}\lambda(x)| = |\{xyU_{\cP\cup g(\cP)}\mid y \in \cL\cap U_{g(\cP)}\}| = [\cL\cap U_{g(\cP)}:\cL\cap U_{\cP\cup g(\cP)}].\]
This is independent of $x$ and so
\[s(g) = \dfrac{[U_{g(\cP)}:U_{\cP\cup g(\cP)}]}{[\cL\cap U_{g(\cP)}:\cL\cap U_{\cP\cup g(\cP)}]}\]
as required.
\end{proof}
We now characterise $[\cL\cap U_{g(\cP)}:\cL\cap U_{\cP\cup g(\cP)}]$ in terms of the size of a set of automorphisms of $\cP_0$. To define this set, we define $\sigma(a,v)$ for $a\in \Aut(\cP_0)$ and $v\in \Int(\cP_0)$ by considering the colouring induced on $E(\cP_0)$ by $c$.
\begin{definition}\label{def:scale_aut}
Suppose $g\in G(F,F')$ is hyperbolic with pando $\cP$. Let $M_{g,\cP_0}$ denote the collections of automorphisms $a\in\Aut(\cP_0)$ such that: 
\begin{enumerate}[label = (\roman{*})]
	\item $a$ fixes $\axis(g)\cap \cP_{0}$; and
	\item for sufficiently large $k\in\bN$, we have $\sigma(a,v)\in \sigma(g^{k},a(v))^{-1}F\sigma(g^{k},v)\cap F$ for all $v\in\Int(\cP_{0})$.
\end{enumerate} 
\end{definition}
\begin{lemma}\label{lem:quot_map}
Suppose $g\in G(F,F')$ is hyperbolic with pando $\cP$ and $a\in M_{g,\cP_0}$. Then there exists $b\in \cL\cap U_{g(\cP)}$ such that $a(v) = b(v)$ for all $v\in V(\cP_{0})$.

Conversely, for $b\in \cL$, define $a\in \Aut(\cP_{0})$ by setting $a(v) = b(v)$ for all $v\in V(\cP_0)$. Then $a\in M_{g,\cP_0}$.
\end{lemma}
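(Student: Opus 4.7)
My strategy is to handle the two implications separately. The converse amounts to a direct unpacking of the definition of $\cL$ using the cocycle identity in Lemma \ref{lem:sig_basic} together with Lemma \ref{lem:ObstructionToTidySingularities}, while the forward direction constructs $b$ via Lemma \ref{lem:autExtension} applied to specific local actions on the complete subtree $\cP_0\cup g(\cP)$. The local-action requirement built into the definition of $M_{g,\cP_0}$ is precisely what makes the extension land in $\cL$; I anticipate verifying this containment to be the main step.

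For the converse, given $b\in\cL$ and $a:=b|_{V(\cP_0)}$, I first confirm $a\in\Aut(\cP_0)$: Lemma \ref{lem:L_fixes_axis} gives $b\in G(F,F')_{\axis(g)}$, and applying Lemma \ref{lem:axis_fix_to_pando_stab} in turn to $\cP$ and to $g(\cP)$ (both complete subtrees satisfying \ref{pando2} and \ref{pando3} with respect to $g$) shows $b$ stabilises each setwise, so by the minimality defining $\cP_0$ also $b(\cP_0)=\cP_0$. The axis-fixing requirement on $a$ is then immediate. For the local-action condition, fix $k\in\bN$ with $g^{k}bg^{-k}\in U_{\cP}\le U(F)$; Lemma \ref{lem:sig_basic} yields, for each $w\in\Int(\cP_0)$,
\[\sigma(g^{k}bg^{-k},g^{k}(w))=\sigma(g^{k},b(w))\,\sigma(b,w)\,\sigma(g^{k},w)^{-1}\in F,\]
which rearranges to $\sigma(a,w)\in\sigma(g^{k},a(w))^{-1}F\sigma(g^{k},w)$. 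The additional requirement $\sigma(a,w)\in F$ follows from Lemma \ref{lem:ObstructionToTidySingularities}: the completion step producing $\cP_0$ from $\cP\setminus g(\cP)$ only adjoins leaves, so every $w\in\Int(\cP_0)$ lies in $\cP\setminus g(\cP)$ and in particular $w\notin\Int(g(\cP))\supseteq S(b)\cap\Int(\cP)$.

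For the forward direction, given $a\in M_{g,\cP_0}$, I apply Lemma \ref{lem:autExtension} to $A:=\cP_0\cup g(\cP)$ with local actions $\sigma_v=\id$ on $V(g(\cP))$, $\sigma_v=\sigma(a,v)$ on $\Int(\cP_0)\setminus V(g(\cP))$, and a compatible extension in $F$ at each remaining leaf, setting $\sigma_v=\id$ at the axis-leaves $v_0$ and $g(v_1)$ of $A$ (consistent since $a$ fixes these vertices). Consistency at the axis vertices of $\cP_0\cap g(\cP)$ is automatic since $a$ fixes $\axis(g)\cap\cP_0$. Taking $u=u'$ to be a vertex of $g(\cP)\cap\axis(g)$, the lemma produces $b\in G(F,F')$ with $S(b)\subseteq V(A)$, realising the prescribed local actions, which forces $b\in U_{g(\cP)}$ and $b|_{V(\cP_0)}=a$; I choose the extension of $b$ beyond $V(A)$ to be the identity on every hanging subtree rooted at a leaf of $A$ fixed by the prescribed local actions. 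To see $b\in\cL$, note that for $|k|$ sufficiently large $g^{-k}(\cP)$ is axially disjoint from $A$ and, having off-axis depth bounded by the pando depth $D$, lies entirely in the hanging subtree off either $v_0$ or $g(v_1)$, on which $b$ acts trivially; thus $b$ fixes $g^{-k}(\cP)$ pointwise. Reversing the $\sigma$-calculation of the converse, for every $v\in V(T)$ with $g^{-k}(v)\in\Int(\cP_0)$ the $M_{g,\cP_0}$ condition yields $\sigma(g^{k}bg^{-k},v)\in F$, while at all other $v$ each factor of the product $\sigma(g^{k},b(g^{-k}v))\,\sigma(b,g^{-k}v)\,\sigma(g^{k},g^{-k}v)^{-1}$ already lies individually in $F$ (using $D>D_g$ to ensure relevant vertices avoid $S(g^k)$). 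Hence $g^{k}bg^{-k}\in U_\cP$ for all but finitely many $k$, and $b\in\cL\cap U_{g(\cP)}$ as required.
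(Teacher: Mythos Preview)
Your converse direction is correct and follows the paper's argument, though one justification is slightly off: the claim ``the completion step producing $\cP_0$ from $\cP\setminus g(\cP)$ only adjoins leaves'' is not accurate. Writing $v_0$ for the $\le_g$-minimal axis vertex of $\cP$, the vertex $g(v_0)$ belongs to $g(\cP)$ (hence not to $\cP\setminus g(\cP)$), yet since $g(v_0)\in\Int(\cP)$ its off-axis neighbours lie in $\cP\setminus g(\cP)$, forcing $g(v_0)\in\Int(\cP_0)$. This does not hurt the conclusion, because $g(v_0)$ is a \emph{leaf} of $g(\cP)$, so $\Int(\cP_0)\cap\Int(g(\cP))=\varnothing$ and Lemma~\ref{lem:ObstructionToTidySingularities} still gives $\sigma(a,w)\in F$ for all $w\in\Int(\cP_0)$.

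The forward direction, however, has a genuine gap stemming from the same vertex $g(v_0)$. Your prescription sets $\sigma_v=\id$ on all of $V(g(\cP))$, in particular $\sigma_{g(v_0)}=\id$. But $g(v_0)\in\Int(\cP_0)$, and a general $a\in M_{g,\cP_0}$ may have $\sigma(a,g(v_0))\ne\id$ (it need only lie in a certain coset intersection inside $F$). If the pando has off-axis depth $\ge 2$, then each off-axis neighbour $w$ of $g(v_0)$ lies in $\Int(\cP_0)\setminus V(g(\cP))$, so your rule assigns $\sigma_w=\sigma(a,w)$; along the edge $(g(v_0),w)$ this forces $c(g(v_0),w)=\sigma(a,w)(c(g(v_0),w))=c(a(w),g(v_0))$, which fails whenever $a(w)\ne w$, so the family $\{\sigma_v\}$ is inconsistent and Lemma~\ref{lem:autExtension} does not apply. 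If the depth is $1$, the family can be made consistent, but then $b$ fixes every off-axis neighbour of $g(v_0)$, giving $b|_{V(\cP_0)}\ne a$. Either way the construction fails. The paper avoids this by a different device: it first extends $a$ to some $\bar a\in U(F)$ on all of $T$, and then \emph{cuts} it, setting $b(v)=\bar a(v)$ when $\pi_g(v)\in\Int(\cP_0)$ and $b(v)=v$ otherwise. This yields $b|_{V(\cP_0)}=a$ while still fixing $g(\cP)$ pointwise (the only vertex of $g(\cP)$ with $\pi_g(v)\in\Int(\cP_0)$ is $g(v_0)$, which $a$ fixes). A minor secondary point: your check that $g^{k}bg^{-k}\in U_{\cP}$ invokes the $M_{g,\cP_0}$ condition for $w\in\Int(\cP_0)$, but that condition is stated only for large positive $k$; the case $k<0$ needs a separate (straightforward) argument using $g^{-l-1}(w)\notin S(g)$ for all $l\ge 0$.
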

\begin{proof}
Suppose $a\in M_{g,\cP_0}$. By Lemma \ref{lem:autExtension}, there exists $\bar{a}\in U(F)$ such that $\bar{a}(v) = a(v)$ for all $v\in V(\cP_{0})$. We define $b\in U(F)$ by
\[b(v) = \twopartdef{\bar{a}(v)}{\pi_{g}(v)\in\Int(\cP_{0})}{v}{\hbox{ otherwise.}}\]
It is immediate that $b(v) = a(v)$ for all $v\in V(\cP_0)$ with $\pi_{g}(v)\in\Int(\cP_{0})$. If $v\in \cP_{0}$ with $\pi_{g}(v)\not\in\Int(\cP_{0})$, then $v\in\axis(g)$ as $\cP_{0}$ is a complete subtree. We have $a(v)= v = b(v)$. Overall, $a(v) = b(v)$ for all $v\in V(\cP_{0})$. If $v\in V(g(\cP))$, then the definition of $\cP_{0}$ implies  $\pi_{g}(v)\not\in\Int(\cP_{0})$. It follows that $b(v) = v$. This shows that $b\in U_{g(\cP)}$. For the first claim, we are left to show $b\in\cL$. Suppose $v\in V(T)$ and $k\in\bZ$. We show that for $|k|$ sufficiently large, we have $\sigma(g^{k}bg^{-k},g^{k}(v))\in F$. We split our argument into cases based on $v$.

\noindent{\bfseries{Case 1: }}Suppose $\pi_{g}(v)\not\in\Int(\cP_0)$. Then $\sigma(b, v) = \id$ by definition. Thus,

\[\sigma(g^{k}bg^{-k},g^{k}(v)) = \sigma(g^{k},v)\sigma(g^{-k},g^{k}(v)) = \id.\]

\noindent{\bfseries{Case 2: }}Suppose  $\pi_{g}(v)\in\Int(\cP_0)$ but $v\not\in\Int(\cP_{0})$. From \ref{pando3} we see that $d(v,\axis(g)) > D_g$. Thus, $d(g^{n}(v),\axis(g)) = d(v,\axis(g))> D_g$ for all $n\in\bZ$. In particular, $\sigma(g^{n},v)\in F$ for all $n\in\bZ$. Since $\sigma(b,v)\in F$ and $d(v,\axis(g)) = d(b(v),\axis(g))$, we have $\sigma(g^{k}bg^{-k},g^{k}(v))\in F$ by Lemma \ref{lem:sig_basic}.

\noindent{\bfseries{Case 3: }}Finally, suppose $v\in \Int(\cP_0)$. For all $k < 0$ we have $\sigma(g^{-k}, g^{k}(v))\in F$ since $S(g)\subset \cP$ and $g^{l}(v)\not\in \cP$ for all $l < 0$. The same reasoning shows $\sigma(g^{k}, b(v))\in F$. Since $\sigma(b,v) \in F$ by construction, $\sigma(g^{k}bg^{-k},g^{k}(v)) \in F$. Alternatively, if $k > 0$, we can without loss of generality assume that $k$ is chosen sufficiently large so that 
\[\sigma(a,v)\in \sigma(g^{k},a(v))^{-1}F\sigma(g^{k},v).\]
Since $\sigma(b,v) = \sigma(a,v)$ and
\[\sigma(g^{k}bg^{-k},g^{k}(v)) = \sigma(g^{k}b(v))\sigma(b,v)\sigma(g^{k},v)^{-1},\]
it follows that $\sigma(g^{k}bg^{-k},g^{k}(v))\in F$. 

We have covered all possible cases for $v$ and conclude $b\in\cL\cap U_{g(\cP)}$.

Conversely, suppose $b\in\cL$. Lemma \ref{lem:L_fixes_axis} and Lemma \ref{lem:axis_fix_to_pando_stab} show $b\in G(F,F')_{\{\cP_0\}}$. Thus, defining $a\in \Aut(\cP_0)$ as the restriction of $b$ to $\cP_0$ is well defined. Lemma \ref{lem:L_fixes_axis} shows that $a$ fixes $\axis(g)\cap \cP_{0}$. Lemma \ref{lem:ObstructionToTidySingularities} shows that $\sigma(a,v) = \sigma(b,v)\in F$ for all  $v\in\Int(\cP_0)$. That $\sigma(a,v)\in \sigma(g^{k},a(v))^{-1}F\sigma(g^{k},v)$ for sufficiently large $k\in\bN$ follows from the definition of $\cL$.
\end{proof}
\begin{proposition}\label{prop:Scale_calc_2}
Suppose $g\in G(F,F')$ is hyperbolic with pando $\cP$. Then
\[s(g) = \dfrac{[U_{g(\cP)}:U_{\cP\cup g(\cP)}]}{|M_{g,\cP_0}|}\]
\end{proposition}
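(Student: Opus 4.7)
The plan is to combine Proposition \ref{prop:Scale_calc_1} with a counting argument identifying the denominator $[\cL\cap U_{g(\cP)}:\cL\cap U_{\cP\cup g(\cP)}]$ with $|M_{g,\cP_0}|$; the formula then follows by substitution.

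To set up the counting, I would introduce the restriction map $\Phi: \cL\cap U_{g(\cP)} \to \Aut(\cP_0)$ defined by $\Phi(b) := b|_{V(\cP_0)}$. Lemma \ref{lem:L_fixes_axis} together with Lemma \ref{lem:axis_fix_to_pando_stab} shows that every $b\in \cL$ setwise stabilises $\cP_0$, so the restriction genuinely lands in $\Aut(\cP_0)$ and defines a group homomorphism. The second assertion of Lemma \ref{lem:quot_map} guarantees that its image lies in $M_{g,\cP_0}$, while the first assertion produces, for every $a\in M_{g,\cP_0}$, a preimage $b\in \cL\cap U_{g(\cP)}$ with $\Phi(b)=a$. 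Hence $\Phi$ surjects onto $M_{g,\cP_0}$, which as a byproduct confirms that $M_{g,\cP_0}$ inherits a group structure.

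Next I identify $\ker\Phi = \cL\cap U_{\cP\cup g(\cP)}$. For one containment, $\cP$ is itself a complete subtree of $T$ containing $\cP\setminus g(\cP)$, so minimality in the definition of $\cP_0$ forces $\cP_0\subseteq \cP$; therefore any $b$ fixing $\cP\cup g(\cP)$ fixes $\cP_0$, giving $\ker\Phi \supseteq \cL\cap U_{\cP\cup g(\cP)}$. For the reverse containment, if $b\in \cL\cap U_{g(\cP)}$ fixes $\cP_0$ pointwise then it fixes $V(\cP_0)\cup V(g(\cP))$, and this set already contains $V(\cP\cup g(\cP))$ because $V(\cP_0)\supseteq V(\cP\setminus g(\cP))$. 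The first isomorphism theorem applied to $\Phi$ then yields
\[
[\cL\cap U_{g(\cP)}:\cL\cap U_{\cP\cup g(\cP)}] \;=\; |M_{g,\cP_0}|,
\]
and substituting into Proposition \ref{prop:Scale_calc_1} completes the proof.

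The only subtlety is matching the kernel correctly, which rests on the double observation that $\cP_0\subseteq \cP$ (minimality) and $V(\cP_0)\cup V(g(\cP))\supseteq V(\cP\cup g(\cP))$ (since $V(\cP_0)\supseteq V(\cP\setminus g(\cP))$); both are immediate from the definition of $\cP_0$. Beyond that, the argument is a repackaging of Lemma \ref{lem:quot_map} into coset language, so no new obstacle arises.
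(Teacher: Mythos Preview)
Your proposal is correct and follows essentially the same route as the paper: both define the restriction homomorphism $\cL\cap U_{g(\cP)}\to M_{g,\cP_0}$, invoke Lemma~\ref{lem:quot_map} for well-definedness and surjectivity, identify the kernel via the equality $\cP_0\cup g(\cP)=\cP\cup g(\cP)$, and finish with the First Isomorphism Theorem before substituting into Proposition~\ref{prop:Scale_calc_1}. Your kernel computation just spells out the two inclusions that the paper compresses into the single observation $\cP\cup g(\cP)=\cP_0\cup g(\cP)$.
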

\begin{proof}
Since $\cL\le G(F,F')_{\axis(g)}$ by Lemma \ref{lem:L_fixes_axis}, quick calculation can be used to show $\cL\cap U_{\cP\cup g(\cP)}\normal  \cL\cap U_{g(\cP)}$. Applying Proposition \ref{prop:Scale_calc_1}, it suffices to show 
\[(\cL\cap U_{g(\cP)})/(\cL\cap U_{\cP\cup g(\cP)})\cong M_{g,\cP_0}. \]
We do so by applying the First Isomorphism Theorem. Indeed, by Lemma \ref{lem:quot_map}, for each $b\in \cL\cap U_{g(\cP)}$, there exists unique $a\in M_{g,\cP_0}$ for which $b(v) = a(v)$ for all $v\in V(\cP_0)$. The map $b\mapsto a$ gives a surjective homomorphism from $(\cL\cap U_{g(\cP_0)})$ onto $M_{g,\cP_0}$ by Lemma \ref{lem:quot_map}. The kernel is precisely $\cL\cap U_{\cP_0\cup g(\cP)}$ which is equal to $\cL\cap U_{\cP\cup g(\cP)}$ as $\cP\cup g(\cP) = \cP_0\cup g(\cP)$. The First Isomorphism Theorem gives the required isomorphism.  
\end{proof}
For Corollary \ref{cor:same_scale}, we use the notation $s_G$ to denote the scale function on a group $G$. In general, for $g\in G\le G'$, it may not be that $s_G(g) = s_{G'}(g)$ but it is true that $s_G(g)\le s_{G'}(g)$, see \cite[Proposition 4.3 and Example 6.1]{Willis01}.
\begin{corollary}\label{cor:same_scale}
	Suppose $F\le F'$ and $F\le F''$ and $g\in G(F,F')\cap G(F,F'')$. Then $s_{G(F,F')}(g) = s_{G(F,F'')}(g)$.
\end{corollary}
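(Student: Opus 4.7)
The plan is to observe that the scale formula established in Proposition \ref{prop:Scale_calc_2} depends only on $F$ and on $g$'s action on $T$, not on the ambient group $F'$ or $F''$. Thus the two scales must coincide.

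First, I would split into the two cases based on whether $g$ is elliptic or hyperbolic as a tree automorphism. Note that this dichotomy is a property of the action on $T$ and is independent of $F'$ or $F''$. If $g$ is elliptic, Proposition \ref{prop:elliptic_scale} immediately gives $s_{G(F,F')}(g) = 1 = s_{G(F,F'')}(g)$, since its proof only uses that some power of $g$ fixes a vertex, together with the fact that $U_v$ and the finite-index extension $K_n(v)$ are compact open subgroups (in either group) fixed by conjugation by $g$.

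Next, suppose $g$ is hyperbolic. The set of singularities $S(g) = \{v\in V(T)\mid \sigma(g,v)\notin F\}$ depends only on $F$ and on $g$'s local action, hence is the same whether we view $g$ inside $G(F,F')$ or inside $G(F,F'')$. Consequently, $\axis(g)$ and $S(g)$ being intrinsic to $g$, a pando $\cP$ for $g$ in the sense of Definition \ref{def:pando} and its initial segment $\cP_{0}$ are identical in both settings. The subgroups $U_{g(\cP)}$ and $U_{\cP\cup g(\cP)}$ appearing in the numerator of the formula from Proposition \ref{prop:Scale_calc_2} are subgroups of $U(F)$, and so depend only on $F$ and the pando. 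Likewise the set $M_{g,\cP_{0}}$ from Definition \ref{def:scale_aut} is defined purely in terms of $F$, $\cP_{0}$, and the local actions $\sigma(g^k,\cdot)$; again no reference to $F'$ or $F''$ is made.

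Applying Proposition \ref{prop:Scale_calc_2} in each of $G(F,F')$ and $G(F,F'')$ therefore gives
\[
s_{G(F,F')}(g) \;=\; \frac{[U_{g(\cP)}:U_{\cP\cup g(\cP)}]}{|M_{g,\cP_{0}}|} \;=\; s_{G(F,F'')}(g),
\]
completing the proof. There is really no obstacle here; the work has all been done in setting up the formula so that its right-hand side manifestly does not see the ambient group beyond $F$, and the corollary is essentially a remark harvesting this fact. The one item worth mentioning explicitly is that Proposition \ref{prop:Scale_calc_2} genuinely applies in both groups, which is the case precisely because its hypothesis ($g$ hyperbolic with a chosen pando) is satisfied in $G(F,F')$ if and only if it is satisfied in $G(F,F'')$.
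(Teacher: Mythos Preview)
Your proposal is correct and follows essentially the same approach as the paper: split into elliptic and hyperbolic cases, use Proposition~\ref{prop:elliptic_scale} for the former, and for the latter observe that the formula of Proposition~\ref{prop:Scale_calc_2} depends only on $F$, $g$ and $\cP$, not on $F'$ or $F''$. Your write-up is more detailed than the paper's (which is a single sentence for the hyperbolic case), but the underlying idea is identical.
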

\begin{proof}
	If $g$ is elliptic, then Proposition \ref{prop:elliptic_scale} shows $s_{G(F,F')}(g) = s_{G(F,F'')}(g) = 1$. Alternatively, if $g$ is hyperbolic, note that the formula for the scale given in Proposition \ref{prop:Scale_calc_2} depends only on $F$, $g$ and $\cP$ and not on $F'$ or $F''$.	
\end{proof}
\subsection{Uniscalar elements}\label{ssec:leboudec_uniscalar}
We now consider uniscalar elements of $G(F,F')$ and investigate the possibility of a converse to Proposition \ref{prop:elliptic_scale}. We show in Corollary \ref{cor:classification_of_uniscalar} that all uniscalar elements of $G(F,F')$ are elliptic if and only if $F$ satisfies the property given as Definition \ref{def:distinct_pt_stab}.

Throughout this section we refer to the scale of an element in $U(F)\le G(F,F')$. There is no confusion about which scale function we are discussing as Corollary \ref{cor:same_scale} shows the scale on $G(F,F')$ restricted to $U(F)$ agrees with the scale on $U(F) = G(F,F)$. 
\begin{definition}\label{def:distinct_pt_stab}
We say $F\le \Sym(\Omega)$ has \emph{distinct point stabilisers} if for all $a,b\in \Omega$, we have $F_a = F_b$ if and only if $a = b$.
\end{definition}
We investigate the scale of hyperbolic elements in $G(F,F')$ by comparing the scale of an element in $G(F,F')$ with the scale of an element in $U(F)$. Proposition \ref{prop:U(F)_scale_calc} is found in \cite{Reid13} and calculates the scale for hyperbolic $g\in U(F)$. It can be proved using Proposition \ref{prop:Scale_calc_2}, Lemma \ref{eq:coset_calc}, Corollary \ref{cor:same_scale} and choosing a pando $\cP$ such that $\Int(\cP)\subset\axis(g)$.
\begin{proposition}[{\cite[Proposition 4.3]{Reid13}}]\label{prop:U(F)_scale_calc}
Suppose $g\in U(F)$ is hyperbolic and \[(v_0,\ldots,v_{k} = g(v_0), v_{k+1} = g(v_1))\subset\axis(g)\]
is a path in $\axis(g)$. Write $c_i = c(v_{i},v_{i+1})\in \Omega$ for $i\in \{0,\ldots, k\}$. Then 
\[s(g) = \prod_{i = 0}^{k-1} \left|\dfrac{F_{c_{i}}}{F_{c_i}\cap F_{c_{i+1}}}\right|.\]
\end{proposition}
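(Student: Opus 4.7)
The plan is to apply Proposition \ref{prop:Scale_calc_2} to a pando chosen so that $\Int(\cP)\subset \axis(g)$. Since $g\in U(F)$ we have $S(g)=\emptyset$, so condition \ref{pando1} is vacuous; take $\cP$ to be the finite complete subtree with $\Int(\cP)=\{v_0,\ldots,v_k\}$, so that $V(\cP)$ also contains $v_{-1}$, $v_{k+1}$, and every non-axis neighbour of the $v_i$. Condition \ref{pando2} holds with $v=v_0$ since $g(v_0)=v_k\in\Int(\cP)$, and \ref{pando3} is immediate because every vertex of $\cP$ is within distance one of $\axis(g)$. A direct check gives $\Int(\cP_0)=\{v_0,\ldots,v_{k-1}\}$.

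The denominator in Proposition \ref{prop:Scale_calc_2} simplifies considerably because $g\in U(F)$: the factors $\sigma(g^n,v)\in F$ appearing in Definition \ref{def:scale_aut} make the double-coset condition collapse to $\sigma(a,v)\in F$. Hence $M_{g,\cP_0}$ is the set of automorphisms of $\cP_0$ that pointwise fix $\axis(g)\cap\cP_0$ and act locally by $F$. Fixing the two axis neighbours of an internal $v_i$ forces $\sigma(a,v_i)$ to fix both incident axis colours, so writing $c_{-1}=c(v_{-1},v_0)$ I obtain
\[
|M_{g,\cP_0}| = |F_{c_{-1}}\cap F_{c_0}|\cdot\prod_{i=1}^{k-1}|F_{c_{i-1}}\cap F_{c_i}|.
\]
Because $\sigma(g,v_0)\in F$ maps $(c_{-1},c_0)$ to $(c_{k-1},c_k)$, conjugation rewrites the leading factor as $|F_{c_{k-1}}\cap F_{c_k}|$, giving $|M_{g,\cP_0}|=\prod_{i=0}^{k-1}|F_{c_i}\cap F_{c_{i+1}}|$.

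For the numerator $[U_{g(\cP)}:U_{\cP\cup g(\cP)}]$ I plan an iterative coset computation (packaged in Lemma \ref{eq:coset_calc}) working inward along the axis. The vertex $v_{k-1}$ lies in $V(g(\cP))$ and so is fixed by every element of $U_{g(\cP)}$; the local action there must lie in $F_{c_{k-1}}$ and contributes an independent factor $|F_{c_{k-1}}|$ modulo $U_{\cP\cup g(\cP)}$. Given local actions at $v_{k-1},\ldots,v_{i+1}$, the image $u(v_i)$ is pinned down, and the local action at $v_i$ ranges over the coset of $F_{c_i}$ in $F$ determined by compatibility on the shared axis edge $(v_i,v_{i+1})$, contributing $|F_{c_i}|$ further choices. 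Admissibility of every such sequence follows from Lemma \ref{lem:autExtension}, yielding $[U_{g(\cP)}:U_{\cP\cup g(\cP)}]=\prod_{i=0}^{k-1}|F_{c_i}|$. Dividing, and applying Corollary \ref{cor:same_scale} to identify the scale computed in $U(F)$ with the one computed in $G(F,F)$, then produces the claimed formula.

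The main obstacle is the iterative numerator computation: one must verify that the only constraint linking the local actions at consecutive axis vertices is the edge-compatibility on their shared axis edge, that each admissible sequence is realised by some element of $U_{g(\cP)}$, and that distinct admissible sequences land in distinct cosets of $U_{\cP\cup g(\cP)}$. This is precisely the content of Lemma \ref{eq:coset_calc}, and once it is invoked the remainder of the argument is routine bookkeeping with the conjugation trick of the previous paragraph handling the mismatch between $c_{-1}$ and $c_k$.
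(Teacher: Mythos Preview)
Your proposal is correct and follows exactly the approach the paper sketches: choose a pando $\cP$ with $\Int(\cP)\subset\axis(g)$, apply Proposition~\ref{prop:Scale_calc_2}, compute numerator and denominator by repeated orbit--stabiliser, and invoke Corollary~\ref{cor:same_scale}. The only cosmetic slips are that the reference ``Lemma~\ref{eq:coset_calc}'' points to an equation label (the paper's own typo for Lemma~\ref{lem:coset_calc}, whose \emph{technique} rather than literal statement is what you use), and the final sentence should read $G(F,F')$ rather than $G(F,F)$.
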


\begin{lemma}\label{lem:U(F)_hyp_scale}
Suppose $F$ has distinct point stabilisers and $g\in U(F)$ is hyperbolic. Then $s(g) > 1$.
\end{lemma}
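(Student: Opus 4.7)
The plan is to argue by contradiction using the explicit product formula from Proposition \ref{prop:U(F)_scale_calc}. Suppose $s(g) = 1$; then every factor in
\[s(g) = \prod_{i = 0}^{k-1} \left|\dfrac{F_{c_{i}}}{F_{c_i}\cap F_{c_{i+1}}}\right|\]
must equal $1$, which forces $F_{c_i} \subseteq F_{c_{i+1}}$ for each $i \in \{0,\dots,k-1\}$. This yields a chain of point stabilisers
\[F_{c_0}\subseteq F_{c_1}\subseteq\cdots\subseteq F_{c_k}\]
inside the finite group $F$.

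The next step is to pin down the relationship between the two ends of the chain using the fact that the path along $\axis(g)$ has length strictly greater than the translation length and so ``wraps around'' under $g$. Explicitly, since $v_k = g(v_0)$ and $v_{k+1} = g(v_1)$, the edge $(v_k, v_{k+1})$ is the $g$-image of $(v_0, v_1)$, and by the definition of $\sigma$ together with $g \in U(F)$,
\[c_k = c(g(v_0),g(v_1)) = \sigma(g,v_0)(c_0),\qquad \sigma(g,v_0)\in F.\]
Thus $F_{c_k}$ and $F_{c_0}$ are conjugate in $F$, in particular $|F_{c_0}| = |F_{c_k}|$. Combined with the chain of inclusions above, this forces $F_{c_0} = F_{c_1} = \cdots = F_{c_k}$.

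Finally, I would invoke distinct point stabilisers. The edges $(v_1, v_0)$ and $(v_1, v_2)$ are two distinct elements of $E(v_1)$ (distinct because the path is non-backtracking, i.e.\ $v_0 \neq v_2$), so the bijectivity of $c|_{E(v_1)}$ forces $c_0 \neq c_1$. But $F_{c_0} = F_{c_1}$ and distinct point stabilisers then give $c_0 = c_1$, a contradiction. Hence $s(g) > 1$.

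The only subtle point — more a bookkeeping observation than a genuine obstacle — is the ``wraparound'' identity $c_k = \sigma(g,v_0)(c_0)$, which is exactly what turns a one-sided chain of inclusions into a chain of equalities; without looking at the segment of length strictly longer than $l(g)$ (as already built into the statement of Proposition \ref{prop:U(F)_scale_calc}), one would only obtain $F_{c_0} \subseteq F_{c_1}$, which is not enough to conclude.
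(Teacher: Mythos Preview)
Your proof is correct and follows essentially the same route as the paper's own argument: contrapositive via Proposition~\ref{prop:U(F)_scale_calc}, the chain of inclusions $F_{c_0}\le\cdots\le F_{c_k}$, the wraparound $c_k=\sigma(g,v_0)(c_0)$ forcing $|F_{c_0}|=|F_{c_k}|$ and hence equality throughout, and the contradiction from $c_0\neq c_1$ at $v_1$. Your write-up is in fact slightly more explicit than the paper's about why $c_0\neq c_1$ (via $c(e)=c(\bar e)$ and bijectivity of $c|_{E(v_1)}$), but the logic is identical.
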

\begin{proof}
We prove the contrapositive. Suppose $g\in U(F)$ is hyperbolic with $s(g) = 1$. Choose a path $(v_0,\ldots, v_k = g(v_0),v_{k+1} = g(v_1))\subset\axis(g)$ and let $c_i = c(v_{i},v_{i+1})\in \Omega$ for $i\in \{0,\ldots, k\}$. Proposition \ref{prop:U(F)_scale_calc} shows 
\[1 = s(g) = \prod_{i = 0}^{k-1} \left|\dfrac{F_{c_{i}}}{F_{c_i}\cap F_{c_{i+1}}}\right|.\]
This implies $F_{c_i}\cap F_{c_{i+1}} = F_{c_{i}}$ 
for $0\le i \le k-1$. Thus, $ F_{c_i}\le F_{c_{i+1}}$ for $1\le i \le k$, which is a nested chain of inclusions
\begin{equation}
\label{eq:nesting}
F_{c_0}\le F_{c_1}\le \cdots\le F_{c_{k}}.
\end{equation} However, $\sigma(g,v_0)(c_0) = c_{k}$ since $g(v_0) = v_{k}$ and $g(v_1) = v_{k+1}$. Thus, 
\[|F_{c_0}| = |\sigma(g,v_0)^{-1}F_{c_{k}}\sigma(g,v_0)| = |F_{c_{k}}|.\] 
But $F$ is finite and so \eqref{eq:nesting} shows $F_{c_i}= F_{c_{i+1}}$ for $i\in\{0,\ldots,k-1\}$. This completes the proof since $c_0$ and $c_1$ are the colours of distinct edges in $E(v_1)$ and are therefore distinct.
\end{proof}
To extend the conclusion of Lemma \ref{lem:U(F)_hyp_scale} to hyperbolic $g\in G(F,F')$, we use the following Lemma.
\begin{lemma}\label{lem:coset_from_found}
Suppose $g\in G(F,F')$ is hyperbolic and $\cP$ a pando for $g$. Then
\[[U_{g(\cP)}:U_{g(\cP)\cup\cP}] = [U_{g(\cP_0)}:U_{g(\cP_0)\cup\cP_0}].\]
\end{lemma}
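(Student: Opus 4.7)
The plan is to identify both indices with the cardinality of the image of a common restriction map $\phi\mapsto\phi|_{V_1}$, where $V_1:=V(\cP)\setminus V(g(\cP))=V(\cP_0)\setminus V(g(\cP_0))$; proving this vertex-set equality is the first step.

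The inclusion $V(\cP)\setminus V(g(\cP))\subseteq V(\cP_0)\setminus V(g(\cP_0))$ is immediate from $\cP\setminus g(\cP)\subseteq V(\cP_0)$ and $V(g(\cP_0))\subseteq V(g(\cP))$. For the reverse, suppose $v\in V(\cP_0)\setminus V(g(\cP_0))$ also lies in $V(g(\cP))$; iterating $g^{-1}$ the minimum number of steps gives $v=g^k(w)$ with $k\geq 1$ and $w\in V(\cP)\setminus V(g(\cP))\subseteq V(\cP_0)$, so $v\in V(\cP_0)\cap g^k(V(\cP_0))$. I would then invoke the pando conditions, in particular \textbf{P2} and the minimality of $\cP_0$ as a complete subtree containing $\cP\setminus g(\cP)$, to show that the axis-range of $\cP_0$ is short enough that any such overlap with $k\geq 1$ is contained in $V(g(\cP_0))$, contradicting the choice of $v$.

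Next I would reduce both indices to orbit sizes under $\phi\mapsto\phi|_{V_1}$: each kernel is the pointwise fixator of $V_1$ inside the ambient subgroup, so $[U_{g(\cP)}:U_{g(\cP)\cup\cP}]$ equals the number of maps $V_1\to V(T)$ realised by $\phi\in U_{g(\cP)}$, and similarly for $\cP_0$. Since $U_{g(\cP)}\subseteq U_{g(\cP_0)}$ one image is contained in the other; moreover $U_{g(\cP)}\cap U_{g(\cP_0)\cup\cP_0}=U_{g(\cP)\cup\cP_0}=U_{g(\cP)\cup\cP}$, where the last equality uses $V(\cP)\cup V(g(\cP))=V(\cP_0)\cup V(g(\cP))$ (which also follows from the first step). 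So by coset counting, equality of indices reduces to the subgroup factorisation $U_{g(\cP_0)}=U_{g(\cP)}\cdot U_{g(\cP_0)\cup\cP_0}$.

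To prove this factorisation, given $\phi\in U_{g(\cP_0)}$ I would construct $\phi'\in U_{g(\cP)}$ with $\phi'|_{V_1}=\phi|_{V_1}$ by applying Lemma \ref{lem:autExtension} to the subtree $A=\cP_0\cup g(\cP)$, prescribing $\sigma_v=\sigma(\phi,v)$ for $v\in V(\cP_0)$ and, for each $v\in V(g(\cP))\setminus V(\cP_0)$, a permutation $\sigma_v\in F$ that fixes the colours of every edge from $v$ to its other $g(\cP)$-neighbours (so that $\phi'$ fixes $V(g(\cP))$ pointwise). The main obstacle I anticipate is the consistency check at edges $(v,u)$ crossing the interface between $V(\cP_0)$ and $V(g(\cP))\setminus V(\cP_0)$; this reduces to showing that no $V_1$-vertex has a $T$-neighbour in $V(g(\cP))\setminus V(g(\cP_0))$. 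For internal vertices of $\cP_0$ this is automatic from completeness combined with the first step, but for leaves of $\cP_0$ lying in $V_1$ it requires the pando depth margin $D>D_g$ and a careful analysis of how $\cP_0$ arises as the completion of the convex hull of $\cP\setminus g(\cP)$. This last geometric verification is the step I expect to require the most care.
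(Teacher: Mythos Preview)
Your reduction is sound: both approaches come down to the factorisation $U_{g(\cP_0)}=U_{g(\cP)}\cdot U_{g(\cP_0)\cup\cP_0}$, which is exactly the surjectivity of the natural coset map $aU_{g(\cP)\cup\cP}\mapsto aU_{g(\cP_0)\cup\cP_0}$ that the paper writes down. The paper also uses the identity $\cP_0\cup g(\cP)=\cP\cup g(\cP)$ you mention (this is all it needs for injectivity), but it never appeals to your finer vertex-set equality $V(\cP)\setminus V(g(\cP))=V(\cP_0)\setminus V(g(\cP_0))$.

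Where the two routes diverge is in how the factorisation is produced. You propose to build $\phi'\in U_{g(\cP)}$ agreeing with $\phi$ on $V_1$ via Lemma~\ref{lem:autExtension}, which forces you into the interface consistency check and a geometric argument about which leaves of $\cP_0$ can be adjacent to $V(g(\cP))\setminus V(g(\cP_0))$. The paper bypasses all of this with a one-line cut-off: given $a\in U_{g(\cP_0)}$, pick any axis vertex $v\in\Int(g(\cP_0))$ and set $b(u)=u$ for $\pi_g(u)<_g v$ and $b(u)=a(u)$ otherwise. Then $b\in U_{g(\cP_0)\cup\cP_0}$ (since $a$ already fixes $g(\cP_0)$ and the rest of $\cP_0$ lies to the left of $v$) and $ab^{-1}$ is the identity on $\{\pi_g\ge_g v\}\supseteq g(\cP)$, hence $ab^{-1}\in U_{g(\cP)}$. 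No extension lemma, no consistency check, no structural analysis of $\cP_0$ is needed. Your route would ultimately succeed, but the obstacle you flag at the end is entirely self-inflicted; also note that the ``depth margin $D>D_g$'' you plan to invoke is not part of the pando axioms, so you would have to argue around it rather than cite it.
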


\begin{proof}
Define $\varphi:U_{g(\cP)}/U_{g(\cP)\cup\cP}	\to U_{g(\cP_0)}/U_{g(\cP_0)\cup\cP_0}$ by setting \[{\varphi(aU_{g(\cP)\cup\cP}) = aU_{g(\cP_0)\cup\cP_0}}.\]
That $\varphi$ well defined follows since $ U_{g(\cP)\cup\cP}\le U_{g(\cP_0)\cup\cP_0}$. To see $\varphi$ is injective, first observe that $\cP_{0}\cup g(\cP) = g(\cP)\cup \cP$. It follows that if $a,b\in U_{g(\cP)}$ and $a^{-1}b\in U_{\cP_0\cup g(\cP_0)}$, then $a^{-1}b U_{\cP_{0}\cup g(\cP)} = U_{\cP\cup g(\cP)}$. 

To see $\varphi$ is surjective, suppose $a\in U_{g(\cP_0)}$. Pick $v\in \Int(g(\cP_0))\cap \axis(g)$ and define $b\in U_{g(\cP_0)\cup\cP_0}$ by
\[b(u) = \twopartdef{u}{\pi_{g}(u)<_g v}{a(u)}{\pi_g(u)\ge_g v}.\]
Calculation shows $ab^{-1}\in U_{g(\cP)}$ and $\varphi(ab^{-1}U_{g(\cP)\cup\cP}) = aU_{g(\cP_0)\cup\cP_0}$.
\end{proof}

\begin{proposition}\label{prop:U(F)_small_scale}
	Suppose $F$ has distinct point stabilisers and $g\in G(F,F')$ is hyperbolic. Then there exists $h\in U(F)$ with $l(h) = l(g)$ such that $s(h)\le s(g)$.
\end{proposition}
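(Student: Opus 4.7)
My plan is to construct $h\in U(F)$ with $\axis(h)=\axis(g)$ and $l(h)=k$, where $k:=l(g)$, so that a single pando serves both elements, and then to compare their scales via Proposition \ref{prop:Scale_calc_2}. Fix a pando $\cP$ for $g$ together with a path $(v_0,v_1,\ldots,v_{k+1})$ along $\axis(g)\cap V(\cP)$ satisfying $g(v_0)=v_k$, and set $c_i=c(v_i,v_{i+1})\in\Omega$.

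I would apply Lemma \ref{lem:autExtension} with $A=\axis(g)$ to a consistent family $\{\tau_{v_i}\in F\}_{i\in\bZ}$ with $\tau_{v_i}(c_{i-1})=c_{i+k-1}$ and $\tau_{v_i}(c_i)=c_{i+k}$. At non-singular vertices one takes $\tau_{v_i}=\sigma(g,v_i)\in F$; the main technical obstacle is producing $\tau_{v_i}\in F$ at the finitely many vertices $v_i\in S(g)\cap\axis(g)$, which amounts to showing that the $F$-orbit of the ordered pair $(c_{i-1},c_i)$ coincides with that of $(c_{i+k-1},c_{i+k})$. If this ever fails, one falls back on constructing $h$ via a periodic path in $T$ with identity local actions. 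Because $\sigma(h,v)\in F$ on $\axis(g)$ and $S(h)\subseteq\axis(g)$ by the lemma, $h\in U(F)$. Since $h$ and $g$ translate their common axis by the same amount $k$, we also have $h(\cP)=g(\cP)$ as subtrees of $T$ (only the bijection of off-axis leaves differs), making $\cP$ a pando for $h$ with the same initial segment $\cP_0$.

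Proposition \ref{prop:Scale_calc_2} then gives
\[s(h)=\frac{[U_{g(\cP)}:U_{\cP\cup g(\cP)}]}{|M_{h,\cP_0}|}\qquad\text{and}\qquad s(g)=\frac{[U_{g(\cP)}:U_{\cP\cup g(\cP)}]}{|M_{g,\cP_0}|},\]
so $s(h)\leq s(g)$ reduces to $|M_{g,\cP_0}|\leq|M_{h,\cP_0}|$. This follows from the containment $M_{g,\cP_0}\subseteq M_{h,\cP_0}$, which is immediate from Definition \ref{def:scale_aut}: for any $a\in M_{g,\cP_0}$ and $v\in\Int(\cP_0)$, the constraint $\sigma(a,v)\in\sigma(g^k,a(v))^{-1}F\sigma(g^k,v)\cap F$ already places $\sigma(a,v)\in F$, which is precisely the $h$-constraint (since $\sigma(h^k,\cdot)\in F$ makes the corresponding $h$-set equal to $F$).
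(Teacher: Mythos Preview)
Your overall strategy is the same as the paper's---compare $g$ and $h$ through Proposition~\ref{prop:Scale_calc_2} and finish via $M_{g,\cP_0}\subseteq M_{h,\cP_0}$---and that final inclusion is argued correctly. The gap is in the construction of $h$. Insisting on $\axis(h)=\axis(g)$ forces, at every axis vertex $v_i$, an element $\tau_{v_i}\in F$ taking the ordered pair $(c_{i-1},c_i)$ to $(c_{i+k-1},c_{i+k})$. At a singular vertex $v_j\in S(g)\cap\axis(g)$ you only know that $\sigma(g,v_j)\in\hat F$ achieves this, and $F$ need not share $\hat F$'s orbits on ordered pairs, even under the distinct-point-stabiliser hypothesis. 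Concretely, let $F=\bZ/7\rtimes\bZ/3$ (the Frobenius group of order $21$) act on $\Omega=\bZ/7$: then $F$ has distinct point stabilisers, $\hat F=S_7$, and $F$ has two orbits on ordered pairs of distinct elements while $S_7$ has one. One can build a hyperbolic $g\in G(F,S_7)$ whose unique axis singularity realises a pair-move in $S_7\setminus F$, and then no $h\in U(F)$ can share both the axis and the translation length of $g$. Your fallback---translate along a periodically coloured path with identity local actions---hands you an $h$ with a \emph{different} axis, so $\cP$ is no longer a pando for $h$, the identity $h(\cP)=g(\cP)$ fails, and you lose the equality of numerators on which the rest of the argument depends.

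The paper avoids this obstruction entirely by enlarging $\cP$ so that its initial segment $\cP_0$ is disjoint from $S(g)$: since $S(g)$ is finite one simply makes the axis segment of $\cP$ extend far enough to the left that the width-$l(g)$ slice $\cP_0$ lies past all singularities. Then $\sigma(g,v)\in F$ for every $v\in V(\cP_0)$, and Lemma~\ref{lem:autExtension} applied to $A=\cP_0$ produces $h\in U(F)$ with $h|_{\cP_0}=g|_{\cP_0}$, no pair-orbit condition needed. The paper then takes $\cP_0$ itself as a pando for $h$, and uses Lemma~\ref{lem:coset_from_found} to pass the numerator from $\cP$ down to $\cP_0$. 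After that, your argument for $M_{g,\cP_0}\subseteq M_{h,\cP_0}$ finishes the proof verbatim.
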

\begin{proof}
	Since $S(g)$ is finite, there exists a pando $\cP$ for $g$ such that $V(\cP_0)\cap S(g) = \varnothing$. Applying Lemma \ref{lem:autExtension}, there exists $h\in U(F)$ such that $h(v) = g(v)$ for all $v\in V(\cP_0)$. Then $h(\cP_0) = g(\cP_0)$. It follows from \cite[Lemma 4.1]{Baumgartner15} that $h$ is hyperbolic and $\cP_0$ is a pando for $h$. Applying Lemma \ref{lem:coset_from_found} gives
	\[[U_{g(\cP)}:U_{g(\cP)}\cap U_{\cP}] = [U_{g(\cP_0)}: U_{g(\cP_0)}\cap U_{\cP_0}] = [U_{h(\cP_0)} : U_{h(\cP_0)}\cap U_{\cP_0}].\]
	Definition \ref{def:scale_aut} and the fact that $S(h) = \varnothing$ gives $M_{g,\cP_0}\le M_{h,\cP_0}$. Proposition \ref{prop:Scale_calc_2} shows
	\[s(h) = \dfrac{[U_{h(\cP_0)}:U_{h(\cP_0)}\cap U_{\cP_0}]}{|M_{h,\cP_0}|}\le \dfrac{[U_{g(\cP)}:U_{g(\cP)}\cap U_{\cP}]}{|M_{g,\cP_0}|}= s(g).\qedhere\]
\end{proof}
\begin{corollary}\label{cor:classification_of_uniscalar}
Suppose $F\le F'$. Then $F$ has distinct point stabilisers if and only if the set of uniscalar elements in $G(F,F')$ is equal to the set of elliptic elements in $G(F,F')$.
\end{corollary}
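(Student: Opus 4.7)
The plan is to assemble the results developed earlier in the section. For the forward direction (distinct point stabilisers implies uniscalar equals elliptic), the inclusion of elliptic elements into uniscalar ones is immediate from Proposition \ref{prop:elliptic_scale}. For the converse inclusion, given any hyperbolic $g\in G(F,F')$ I would invoke Proposition \ref{prop:U(F)_small_scale} to produce a hyperbolic $h\in U(F)$ with $l(h)=l(g)>0$ and $s(h)\le s(g)$, then apply Lemma \ref{lem:U(F)_hyp_scale} to conclude that $s(h)>1$. These two inequalities give $s(g)>1$, so $g$ is not uniscalar. Since every tree automorphism is either elliptic or hyperbolic, this yields the desired equality of sets.

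For the reverse implication I would argue the contrapositive, producing a uniscalar non-elliptic element whenever $F$ fails to have distinct point stabilisers. Fix $a\ne b$ in $\Omega$ with $F_a=F_b$, and consider the bi-infinite path $P=(\ldots,v_{-1},v_0,v_1,\ldots)$ whose successive edges carry the alternating colours $\ldots,a,b,a,b,\ldots$; because $c|_{E(v)}$ is a bijection and $a\ne b$, the prescribed incoming and outgoing edges at each vertex are distinct, so this path exists and is uniquely determined by any choice of initial edge. Applying Lemma \ref{lem:autExtension} with $A=P$, $u=v_0$, $u'=v_2$, and $\sigma_v=\id\in F$ for every $v\in V(P)$ (trivially a consistent family) yields $g\in U(F)\le G(F,F')$ satisfying $g(v_0)=v_2$ and $\sigma(g,v)=\id$ along $P$. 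A short induction, using that the identity local action sends the unique colour-$a$ edge at $v_i$ to the unique colour-$a$ edge at $v_{i+2}$, gives $g(v_i)=v_{i+2}$ for every $i\in\bZ$; in particular $g$ is hyperbolic with axis $P$ and $l(g)=2$.

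The scale is then immediate from Proposition \ref{prop:U(F)_scale_calc}: with $c_0=a$, $c_1=b$, $c_2=a$, the formula collapses, via $F_a=F_b$, to
\[s(g)=\left|\dfrac{F_a}{F_a\cap F_b}\right|\cdot\left|\dfrac{F_b}{F_b\cap F_a}\right|=1.\]
The same calculation along the reversed axis yields $s(g^{-1})=1$, so $g$ is uniscalar in $U(F)$. Corollary \ref{cor:same_scale} then transports this to the ambient group: since $g\in U(F)\cap G(F,F')$, we have $s_{G(F,F')}(g)=s_{U(F)}(g)=1$, and likewise for its inverse. As $g$ remains hyperbolic in $G(F,F')$, it is the desired uniscalar non-elliptic element.

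The only point that requires some care is verifying that the extension produced by Lemma \ref{lem:autExtension} really translates $P$ by exactly $2$, rather than behaving unexpectedly off the axis; this reduces to observing that $\sigma(g,v_i)=\id$ forces the unique colour-$a$ edge at $v_i$ to be sent to the unique colour-$a$ edge at $v_{i+2}$, which pins down $g(v_{i+1})$ and feeds the induction. With this in hand, the argument is a direct assembly of the preceding results and I do not foresee any further obstruction.
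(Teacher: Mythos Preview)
Your proof is correct and follows essentially the same route as the paper: the forward direction combines Proposition~\ref{prop:elliptic_scale}, Proposition~\ref{prop:U(F)_small_scale}, and Lemma~\ref{lem:U(F)_hyp_scale} exactly as you do, and the reverse direction constructs the same alternating $a,b$-coloured axis, extends via Lemma~\ref{lem:autExtension} with all local permutations trivial, and reads off $s(g)=1$ from Proposition~\ref{prop:U(F)_scale_calc}. If anything you are slightly more thorough than the paper, which omits the explicit check that $s(g^{-1})=1$ and the appeal to Corollary~\ref{cor:same_scale} to pass from $U(F)$ to $G(F,F')$.
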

\begin{proof}
Suppose $F$ has distinct point stabilisers and $g\in G(F,F')$. If $g$ is elliptic, then $g$ is uniscalar by Proposition \ref{prop:elliptic_scale}. Suppose instead that $g$ is hyperbolic. Then Proposition \ref{prop:U(F)_small_scale} gives $h\in U(F)$ with $s(h) \le s(g)$. Lemma \ref{lem:U(F)_hyp_scale} gives $1<s(h)$.
Thus, $g$ is not uniscalar.

Suppose $F$ does not have distinct point stabilisers. Choose distinct $a,b\in\Omega$ such that $F_a = F_b$. There exists an infinite path $P := (\ldots,v_{-1},v_0,v_1,\ldots)\subset T$ such that $c(v_{2k},v_{2k+1}) = a$ and $c(v_{2k+1},v_{2k}) = b$ for all $k\in\bZ$. Using Lemma \ref{lem:autExtension}, define $g\in U(F)$ such that $g(v_k) = v_{k+2}$ and $\sigma(g,v_k) = \id$. Then $g$ is hyperbolic with $\axis(g) = P$. Proposition \ref{prop:U(F)_scale_calc} shows $s(g) = 1$.
\end{proof}

\section{Asymptotic classes and directions}
\label{sec:leboudecDirections}
In this section we study the space of directions of $G(F,F')$ when $F\le \Sym(\Omega)$ is assumed to be $2$-transitive. The outcomes of this sections can be summarised into three main results. Our first major result, Theorem \ref{thm:asymptotic_and_length_classification}, relates the asymptotic relation on $G(F,F')_>$ to a length function defined in \cite{Boudec15}. Indirectly, this compares the asymptotic classes to the action of $G(F,F')$ on a $\CAT(0)$ cube complex. The majority of Section \ref{sec:comm_and_asymp} is devoted to proving this result. Proposition \ref{prop:classification_in_terms_of_sing_traj} is another characterisation of the asymptotic relation but this time in terms of a function which captures the information on singularities of high powers, see Definition \ref{def:asymp_func}. The basic properties of this function are given in Lemma \ref{lem:basic_asym_sing_prop} and control over this function on products is given by the technical Lemma \ref{lem:special_set}. The final major result of this section is Theorem \ref{thm:directions} which shows that the topology on the space of directions of $G(F,F')$ is discrete by giving a lower bound on the distance between distinct asymptotic classes. Corollary \ref{cor:dist_bound_asym} gives the precise lower bound.

Lemma \ref{lem:coset_calc} can be shown using repeated applications of the Orbit-Stabiliser Theorem.
\begin{lemma}\label{lem:coset_calc}
Suppose $(v_0,v_1,\ldots,v_k,v_{k+1})$ is a path in $T$. Then, setting $e_i = (v_i,v_{i+1})$ for $i\in\{0,\ldots,k\}$, we have  
\begin{equation*}\label{eq:coset_calc}
[U_{e_0}:U_{e_0}\cap U_{e_k}] = \prod_{i = 0}^{k-1}\left|\dfrac{F_{c(e_i)}}{F_{c({e_i})}\cap F_{c(e_{i+1})}}\right|.
\end{equation*}
\end{lemma}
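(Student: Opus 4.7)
The plan is to proceed by induction on $k$, using multiplicativity of index along the tower
\[
U_{e_0}\cap U_{e_k}\;\le\; U_{e_0}\cap U_{e_{k-1}}\;\le\; U_{e_0}.
\]
A preliminary observation is that $U_{e_0}\cap U_{e_j}$ pointwise fixes the four endpoints $v_0,v_1,v_j,v_{j+1}$ of the two edges and hence, because $T$ is a tree, fixes the entire initial subpath $v_0,\dots,v_{j+1}$ pointwise; in particular $U_{e_0}\cap U_{e_{k-1}}\cap U_{e_k}=U_{e_0}\cap U_{e_k}$. The problem therefore reduces to computing the single-step index $[U_{e_0}\cap U_{e_{k-1}}:U_{e_0}\cap U_{e_k}]$ and combining with the induction hypothesis.

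For this index I would apply orbit--stabiliser twice. Since $U_{e_0}\cap U_{e_k}$ is exactly the stabiliser of $v_{k+1}$ inside $U_{e_0}\cap U_{e_{k-1}}$, the index equals the size of the $U_{e_0}\cap U_{e_{k-1}}$-orbit of $v_{k+1}$. The colouring $c$ identifies $E(v_k)$ with $\Omega$, so this orbit corresponds to the orbit of $c(e_k)\in\Omega$ under the image of the local-action map $\sigma(\cdot,v_k)\colon U_{e_0}\cap U_{e_{k-1}}\to F$. Every element of this group fixes the incoming edge $(v_k,v_{k-1})$ of colour $c(e_{k-1})$, so the image lies inside $F_{c(e_{k-1})}$. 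Conversely, Lemma~\ref{lem:autExtension}, applied along the subtree $v_0,\dots,v_k$ with $\sigma_{v_i}=\id$ for $i<k$ and $\sigma_{v_k}$ any prescribed element of $F_{c(e_{k-1})}$, realises every such permutation (the consistency condition holds because $\sigma_{v_k}$ fixes $c(e_{k-1})$, and the extension sits in $U(F)$ since all prescribed local actions already lie in $F$). A final application of orbit--stabiliser to the action of $F_{c(e_{k-1})}$ on $\Omega$ yields the factor $[F_{c(e_{k-1})}:F_{c(e_{k-1})}\cap F_{c(e_k)}]$, which is precisely the missing term in the product. The base case $k=1$ is the same calculation applied directly to $U_{e_0}$.

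There is no real conceptual obstacle here; the argument is essentially bookkeeping. The only delicate point is the careful use of Lemma~\ref{lem:autExtension}, in particular verifying that the prescribed local actions are consistent along every edge of the supporting subtree and all lie in $F$, which is what guarantees surjectivity of $\sigma(\cdot,v_k)$ onto the full point stabiliser $F_{c(e_{k-1})}$.
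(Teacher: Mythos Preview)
Your proposal is correct and matches the paper's own approach: the paper does not give a full proof but simply states that the lemma ``can be shown using repeated applications of the Orbit-Stabiliser Theorem,'' which is exactly what your inductive argument carries out in detail. Your use of Lemma~\ref{lem:autExtension} to establish surjectivity of the local-action map onto $F_{c(e_{k-1})}$ is the only nontrivial step, and you have handled it correctly (the consistency check is satisfied since $\sigma_{v_k}$ fixes $c(e_{k-1})=c(v_k,v_{k-1})$).
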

For $g,h\in \Aut(T)$ hyperbolic, it is shown that $g\asymp h$ (in $\Aut(T)$) if and only if $\omega_{+}(g) = \omega_{+}(h)$, see \cite[Section 5.1]{Baumgartner06}. This does not hold in $G(F,F')$ for arbitrary $F\le F'\le \Sym(\Omega)$. Lemma \ref{lem:equal_ends} shows one of the implications does hold.
\begin{lemma}\label{lem:equal_ends}
Suppose $F$ has distinct point stabilisers and $g,h\in G(F,F')$  are hyperbolic such that
\[\{[U_{e}: U_{e}\cap U_{e}^{g^{-n}h^{n}}]\mid n\in\bN\}\]
is bounded for some $e\in E(T)$. Then $\{d(e, g^{-n}h^{n}(e))\mid n\in\bN\}$ is bounded. In particular $\omega_{+}(g) = \omega_{+}(h)$. 
\end{lemma}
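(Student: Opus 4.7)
Begin by translating the hypothesis via the identity $U_e^{g^{-n}h^n} = U_{g^{-n}h^n(e)}$. Writing $f_n := g^{-n}h^n(e)$, what is bounded is $[U_e : U_e \cap U_{f_n}]$. The first assertion claims $d(e, f_n)$ is bounded, and I would argue contrapositively. If instead $d(e, f_n) \to \infty$ along a subsequence, take the unique geodesic edge-path $e_0 = e, e_1, \ldots, e_k = f_n$ joining them in $T$ (consecutive edges sharing a vertex), noting $k \to \infty$. Lemma \ref{lem:coset_calc} then expresses the index as
\[
[U_e : U_e \cap U_{f_n}] \;=\; \prod_{i=0}^{k-1}\left|\frac{F_{c(e_i)}}{F_{c(e_i)}\cap F_{c(e_{i+1})}}\right|,
\]
so the task reduces to showing every factor is at least $2$.

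The crux mirrors the argument driving Lemma \ref{lem:U(F)_hyp_scale}. Since $F$ is $2$-transitive on $\Omega$ (the standing hypothesis of Section \ref{sec:leboudecDirections}), $F$ acts transitively, so all point stabilisers $F_a$ are conjugate in $F$ and therefore have the same finite order. Consequently, an inclusion $F_a \leq F_b$ inside the finite group $F$ forces $F_a = F_b$, and by the distinct-point-stabilisers hypothesis forces $a = b$. Now consecutive edges $e_i, e_{i+1}$ of the geodesic path share a vertex $v$ and represent distinct elements of $E(v)$, so their colours are distinct, giving $F_{c(e_i)} \not\leq F_{c(e_{i+1})}$. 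Hence each factor is $\geq 2$, and the index is $\geq 2^k$; as $k \to \infty$, this contradicts boundedness.

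For the ``in particular'' claim, I would use that $g^n$ is an isometry of $T$, so $d(g^n(e), h^n(e)) = d(e, g^{-n}h^n(e))$ is bounded by the first part. If $\omega_{+}(g) \neq \omega_{+}(h)$, then for any fixed vertex $v$ the sequences $(g^n(v))$ and $(h^n(v))$ converge in the end compactification to distinct ends; a standard tree-geometric argument through the branching vertex of the two geodesic rays forces $d(g^n(v), h^n(v)) \to \infty$, and this transfers to $g^n(e)$ and $h^n(e)$ up to a bounded additive constant, a contradiction.

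The only genuinely delicate step is the lower bound $\geq 2$ on each factor, which is a small adaptation of the technique in Lemma \ref{lem:U(F)_hyp_scale}; everything else is bookkeeping relating edge-paths to edge distances, plus the classical fact that iterates of a hyperbolic tree automorphism converge to its attracting end.
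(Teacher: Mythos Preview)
Your opening identity $U_e^{g^{-n}h^n} = U_{g^{-n}h^n(e)}$ is false in $G(F,F')$. Recall that $U_e$ denotes $U(F)_e$. Conjugation by $w := g^{-n}h^n \in G(F,F')$ carries $U_e$ to a compact open subgroup fixing $w(e)$, but because $w$ generally has singularities the conjugate $wU_ew^{-1}$ need not lie inside $U(F)$: for $x \in U_e$ and $v \in V(T)$, Lemma~\ref{lem:sig_basic} gives
\[
\sigma(wxw^{-1}, w(v)) \;=\; \sigma(w,x(v))\,\sigma(x,v)\,\sigma(w,v)^{-1},
\]
and when $v \in S(w)$ but $x(v) \notin S(w)$ this can land outside $F$. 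So $U_e^{w} \neq U_{w(e)}$ in general, and your translation of the hypothesis is not valid as stated.

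What survives, and what the paper actually uses, is the containment $U_e \cap U_e^{w} \le U_e \cap U_{w(e)}$: any $y$ on the left lies in $U(F)$ (because $y \in U_e$) and fixes $w(e)$ (because $y = wxw^{-1}$ for some $x \in U_e$), hence $y \in U_{w(e)}$. This yields
\[
[U_e : U_e \cap U_e^{w}] \;\ge\; [U_e : U_e \cap U_{w(e)}],
\]
and from this point your product argument via Lemma~\ref{lem:coset_calc} goes through unchanged. Under the section's standing $2$-transitivity hypothesis, transitivity forces all point stabilisers $F_a$ to have equal order, so distinct point stabilisers upgrades to pairwise non-inclusion and each factor is indeed $\ge 2$, giving the bound $\ge 2^{d(e,w(e))}$. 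Your end-convergence argument for $\omega_+(g)=\omega_+(h)$ is correct and amounts to the paper's explicit triangle-inequality computation. With the containment fix above, the proof is essentially the paper's own.
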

\begin{proof}
Since $U_{e}\cap U_{e}^{g^{-n}h^{n}}\le U_{e}\cap U_{g^{-n}h^{n}(e)}$, we have
\[[U_{e}:U_{e}\cap U_{e}^{g^{-n}h^{n}}]\ge [U_{e}:U_{e}\cap U_{g^{-n}h^{n}(e)}].\]
Noting that $F$ has distinct point stabilisers and applying Lemma \ref{lem:coset_calc}, we see that
\[[U_{e}:U_{e}\cap U_{g^{-n}h^{n}(e)}] \ge d(e,g^{-n}h^{n}(e)).\]
This proves our first claim. For the second choose $v\in \axis(g)$ and $u\in\axis(h)$. Observe that
\begin{align*}
d(h^{n}(u),g^{n}(v)) &= d(v,g^{-n}h^{n}(v))\\
&\le d(v,o(e))  + d(o(e),g^{-n}h^{n}(o(e)))\\
& \hspace{20.52mm} + d(g^{-n}h^{n}(o(e)),g^{-n}h^{n}(u))\\
&\le d(v,e)+ d(e,g^{-n}h^{n}(e))+d(g^{-n}h^{n}(e),g^{-n}h^{n}(u)) + 4\\
& = d(v,e) + d(e, g^{-n}h^{n}(e)) + d(e,u) + 4
\end{align*}
Since $T$ is a tree, any pair of infinite paths containing $(g^{n}(v))_{n\in\bN}$ and $(h^{n}(u))_{n\in\bN}$ must eventually agree. This shows $\omega_{+}(g) = \omega_{+}(h)$ completing the proof.
\end{proof}
\subsection{Asymptotic classes and a length function}\label{sec:comm_and_asymp}

In this section we relate asymptotic classes of $G(F,F')$ with the action on a $\CAT(0)$ cube complex. The action we consider is given in \cite[Section 6]{Boudec15}. We describe the construction for convenience and to establish notation.

Suppose $e$ is an edge in $T$ and $g\in G(F,F')$. We denote by $\cT_{e}(g)$ the unique minimal complete subtree of $T$ such that $e$ and $g^{-1}(e)$ are edges in $\cT_{e}(g)$ and $v\in S(g)$ implies $v\in\Int(\cT_{e}(g))$.
Note that $\cT_{e}(g^{-1}) = g\cT_{e}(g)$. Let $\cN_{e}(g) = |\Int(\cT_{e}(g))|$. It is shown in \cite[Section 6]{Boudec15} that $\cN_e$ is a length function, that is, for all $g,h\in G(F,F')$
\begin{enumerate}[label = (\roman{*})]
	\item $\cN_e(\id) = 0$; 
	\item $\cN_{e}(g) = \cN_{e} (g^{-1})$; and
	\item $0\le \cN_e(gh)\le \cN_e(g)+ \cN_e(h)$.
\end{enumerate} 
Furthermore, if $F$ is transitive and $e'$ is another edge then there exists $K_1,K_2\in\bN$ such that 
\begin{equation}\label{eq:comparable_card_def}
K_1^{-1}\cN_{e'}(g) - K_2\le \cN_e(g)\le K_1\cN_{e'}(g)+K_2
\end{equation}
for all $g\in G(F,F')$. It is shown in  \cite[Proposition 6.11]{Boudec15} that $\cN_{e}$ is a cardinal definite function, that is, there exists a set $S$ on which $G(F,F')$ acts and a subset $S'\subset S$ such that $|g(S')\Delta S'| = 2\cN_e(g)$, here $\Delta$ is the symmetric difference of sets. A general argument, see \cite{corn13} and references therein, gives an action of $G(F,F')$ on the $1$-skeleton of a $\CAT(0)$ cube complex with distinguished vertex $m_0$ such that $d(m_0,g(m_0)) = \cN_{e}(g)$. We show that if $F$ is $2$-transitive, then  $g\asymp h$ if and only if there exists $k_1,k_2\in \bN$ such that $\cN_{e}(g^{-k_1n}h^{k_2n})$, equivalently $ d(g^{k_1n}(m_0),h^{k_2n}(m_0))$, is bounded for $n\in\bN$.

\begin{theorem}\label{thm:asymptotic_and_length_classification}
	Suppose $F$ is $2$-transitive, $e\in E(T)$ and $g,h\in G(F,F')$ are hyperbolic. Then $g\asymp h$ if and only if there exists $p,q\in \bN$ such that $\{\cN_{e}(g^{-pn}h^{qn})\mid n\in\bN\}$ is bounded.
\end{theorem}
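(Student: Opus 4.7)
The plan is to prove both implications, using that $2$-transitivity of $F$ implies distinct point stabilizers and hence via Lemma \ref{lem:coset_calc} gives two-sided bounds on indices in terms of tree distance.

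For the easier direction, set $\alpha_n = g^{-pn}h^{qn}$ and assume $\{\cN_e(\alpha_n)\}_n$ is bounded. By the length function property $\cN_e(\alpha_n) = \cN_e(\alpha_n^{-1})$, both $\cT_e(\alpha_n)$ and $\cT_e(\alpha_n^{-1})$ have bounded interior, so $e$, $\alpha_n(e)$, $\alpha_n^{-1}(e)$ and $S(\alpha_n) \cup S(\alpha_n^{-1})$ all lie in a fixed finite region. Choose a finite $\cF \subset V(T)$ containing all of these together with a $1$-neighbourhood of $S(\alpha_n^{-1})$ for every $n$. Any $\beta \in U_\cF$ fixes $e$ and $\alpha_n(e)$ pointwise, and because it acts trivially in a neighbourhood of $S(\alpha_n^{-1}) = \alpha_n(S(\alpha_n))$ its conjugate $\alpha_n^{-1}\beta\alpha_n$ still lies in $U(F)$; thus $U_\cF \le U_e \cap U_e^{\alpha_n}$. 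Since $[U_e : U_\cF]$ is finite and independent of $n$, the $\COS$-distance $d(U_e, U_e^{\alpha_n}) = d(g^{pn}U_eg^{-pn}, h^{qn}U_eh^{-qn})$ (by isometry) is uniformly bounded, so the rays from $U_e$ generated by $g^p$ and $h^q$ are asymptotic, proving $g \asymp h$.

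For the harder direction, assume $g \asymp h$, witnessed by base $U_e$ and powers $k_g, k_h$. The inclusion argument of Lemma \ref{lem:equal_ends} applied to $\alpha_n = g^{-k_g n}h^{k_h n}$ yields both $d(e, \alpha_n(e))$ bounded and $\omega_+(g) = \omega_+(h) =: \omega$. Moreover, the $\COS$-distance along each ray grows linearly, with rate comparable to $k_g l(g)$ (resp.\ $k_h l(h)$) again via Lemma \ref{lem:coset_calc} and distinct point stabilizers, so bounded separation forces $k_g l(g) = k_h l(h)$. Taking $p = k_g$ and $q = k_h$, the elements $g^p$ and $h^q$ translate by equal amounts along the common infinite ray to $\omega$; hence there is a deep vertex $v^* \in \axis(g) \cap \axis(h)$ with $g^{-pn}h^{qn}(v^*) = v^*$ for every $n$, making each $\alpha_n$ elliptic and $d(e, \alpha_n^{-1}(e)) \le 2 d(e, v^*)$ uniformly bounded.

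The remaining and most technical step, which I expect to be the main obstacle, is to bound $S(\alpha_n)$ uniformly in $n$. By Lemma \ref{lem:sig_basic}, $S(\alpha_n) \subseteq S(h^{qn}) \cup h^{-qn}(S(g^{-pn}))$; by Lemma \ref{lem:SingOfPowers} these are unions of preimage orbits of the finite sets $S(h)$ and $S(g)$, which are themselves unbounded in general. The matched translation, however, forces cancellation: for a candidate singularity $v$ whose $h$-trajectory enters $S(h)$ and whose $h^{qn}$-image lies deep along the common ray, the non-trivial factors $\sigma(h^{qn}, v)$ and $\sigma(g^{-pn}, h^{qn}(v))$ inside $\sigma(\alpha_n, v)$ combine into an element of $F$, so $v \notin S(\alpha_n)$. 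Candidates far from $v^*$ in any direction other than $\omega$ are excluded since, by Lemma \ref{lem:dist_to_axis_seq}, their $h^{qn}$-trajectories avoid the relevant singularity neighbourhoods for large $n$ and so $\sigma(h^{qn}, v) \in F$ already. A case analysis on the location of $v$ relative to $\axis(g)$, $\axis(h)$ and the common ray, in the spirit of Lemma \ref{lem:ObstructionToTidySingularities}, confines $S(\alpha_n)$ to a bounded neighbourhood of $v^*$. Combined with the bounded displacement of $e$, this yields the desired uniform bound on $\cN_e(\alpha_n)$.
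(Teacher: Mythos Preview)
Your argument for the easy implication (bounded $\cN_e$ implies $g\asymp h$) is correct and is essentially a reformulation of the paper's Lemma~\ref{lem:U_stab_rel} and Lemma~\ref{lem:length_bound_coset_above}: a single finite set $\cF$ works because every $\cT_e(\alpha_n)$ and $\cT_e(\alpha_n^{-1})$ contains $e$ and has at most $M$ internal vertices, hence bounded diameter.

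The harder implication, however, has a genuine gap. You correctly deduce $\omega_+(g)=\omega_+(h)$, and from Lemma~\ref{lem:translation_of_edge_calc} you correctly extract $k_gl(g)=k_hl(h)$, so that $\alpha_n=g^{-pn}h^{qn}$ fixes a common deep vertex $v^*$. The difficulty is entirely in bounding $S(\alpha_n)$. Your proposed ``cancellation'' --- that for $v$ far from a fixed region the two non-trivial factors $\sigma(h^{qn},v)$ and $\sigma(g^{-pn},h^{qn}(v))$ combine into an element of $F$ --- is precisely the statement that $\lambda_g(w)=\lambda_h(w)$ for the relevant $w=h^{qn}(v)$ (in the notation of Definition~\ref{def:asymp_func}). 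That equality is one direction of Proposition~\ref{prop:classification_in_terms_of_sing_traj}, and in the paper that direction is \emph{deduced from} Theorem~\ref{thm:asymptotic_and_length_classification}. So as written your argument is circular; you would need an independent proof that $g\asymp h$ forces this local-action cancellation, and the sketch you give (``a case analysis \ldots\ in the spirit of Lemma~\ref{lem:ObstructionToTidySingularities}'') does not supply one. Lemma~\ref{lem:ObstructionToTidySingularities} concerns conjugates $g^{-k}ag^{k}$ of a single axis-fixing element, not products $g^{-pn}h^{qn}$, and its mechanism does not transfer.

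The paper sidesteps this entirely by proving the contrapositive. Assuming $\{\cN_e(g^{-pn}h^{qn})\}$ is unbounded for every $p,q$, one may take $\omega_+(g)=\omega_+(h)$ (else Lemma~\ref{lem:equal_ends} already gives $g\not\asymp h$). The first part of Lemma~\ref{lem:cylinder_of_singularities} uses only this common end to confine all singularities of $h^{-qn}g^{pn}$ to a tube of width $M$ around $\axis(g)$; the second part shows that if the relevant indices were bounded then so would be the whole of $\cT_e(h^{-qn}g^{pn})$. Granting a width bound $M_2$, unboundedness of $\cN_e$ forces $\cT_e(h^{-qn}g^{pn})$ to become arbitrarily long \emph{along} the axis, and hence $P_e(h^{-qn}g^{pn})$ contains paths of at least $M_1$ distinct lengths for any prescribed $M_1$. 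Lemma~\ref{lem:paths_bound_cosets} (which is where $2$-transitivity is used, to move leaves of different depths into a common long path) then gives
\[
[U_{\{e\}}:U_{\{e\}}\cap U_{\{e\}}^{h^{-qn}g^{pn}}]\ge p_e(h^{-qn}g^{pn})\ge M_1,
\]
so $d(U_{\{e\}}^{g^{pn}},U_{\{e\}}^{h^{qn}})$ is unbounded and $g\not\asymp h$. The point is that this route never needs to identify which individual vertices lie in $S(\alpha_n)$ --- only that the tree $\cT_e$ is thin and long --- and therefore avoids the cancellation issue altogether.
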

The proof of Theorem \ref{thm:asymptotic_and_length_classification} can be split into two parts. Each part finds either an upper or lower bounds, roughly in terms of $\cN_{e}$, for the indices that feature in the asymptotic relation. The majority of the first part is Lemma \ref{lem:U_stab_rel} and Lemma \ref{lem:length_bound_coset_above} which gives an upper bound in terms of $\cN_e$. 

The second part is more complicated but can be summarised as follows: Lemma \ref{lem:U_stab_rel} that if $g\asymp h$, then $\cT_{e}(h^{-n}g^n)$ is contained within a finite distance of $\axis(g)$ and that this distance is independent of $n$. Thus $|\cT_{e}(h^{-n}g^n)|$ is unbounded as $n\to\infty$, then the number distinct distances of between leaves in $\cT_{e}(h^{-n}g^n)$ and $e$ is also unbounded as $n\to\infty$. This is the case since paths between leaves and $e$ must stay within bounded distance of $\axis(g)$. Lemma \ref{lem:paths_bound_cosets} gives a lower bound on the relevant indices in terms of the number of these distinct lengths.

\begin{lemma}\label{lem:U_stab_rel}
Suppose $g\in G(F,F')$ and $e\in E(T)$. Then \[{U_{\cT_{e}(g^{-1})}\le U_{\{e\}}^{g}\cap U_{\{e\}}\le U_{\{\cT_{e}(g^{-1})\}}}.\]
\end{lemma}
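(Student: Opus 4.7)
I will prove the two inclusions separately; in both the main tool is expanding $\sigma(\cdot,\cdot)$ by Lemma \ref{lem:sig_basic}, together with the defining property $S(g^{-1})\subset\Int(\cT_{e}(g^{-1}))$.

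\emph{First inclusion.} Take $x\in U_{\cT_{e}(g^{-1})}$; by definition $x\in U(F)$ and $x$ pointwise fixes every vertex of $\cT_{e}(g^{-1})$. Since $e$ and $g(e)$ are edges of this subtree, $x$ fixes both of their endpoint pairs, giving $x\in U_{\{e\}}$ immediately and giving $g^{-1}xg\in U_{\{e\}}$ provided I can check $g^{-1}xg\in U(F)$. Expanding
\[
\sigma(g^{-1}xg,v)=\sigma(g^{-1},xg(v))\,\sigma(x,g(v))\,\sigma(g,v),
\]
I split on whether $v\in S(g)$. If $v\in S(g)$, then $g(v)\in S(g^{-1})\subset\Int(\cT_{e}(g^{-1}))$, so $x$ fixes $g(v)$ together with every neighbour, hence $\sigma(x,g(v))=\id$ and $xg(v)=g(v)$; the remaining factors then collapse to $\sigma(g,v)^{-1}\sigma(g,v)=\id\in F$. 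If $v\notin S(g)$, then $\sigma(g,v)\in F$, and it suffices to check $xg(v)\notin S(g^{-1})$: since $x$ pointwise fixes $V(\cT_{e}(g^{-1}))$ it also preserves the complement $V(T)\setminus V(\cT_{e}(g^{-1}))$ as a bijection of $V(T)$, and combined with $g(v)\notin S(g^{-1})$ this forces $xg(v)\notin S(g^{-1})$, whence $\sigma(g^{-1},xg(v))\in F$ and the product lies in $F$.

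\emph{Second inclusion.} Take $x=gyg^{-1}$ with $x,y\in U_{\{e\}}\le U(F)$. Since $y$ fixes the endpoints of $e$, $x$ fixes the endpoints of $g(e)$, and since $x\in U_{\{e\}}$ it also fixes the endpoints of $e$; in particular $x$ pointwise fixes the (unique) path from $e$ to $g(e)$. The crucial step is to show that $x$ permutes $S(g^{-1})$ setwise, equivalently that $y(S(g))=S(g)$. For $v\in S(g)$, Lemma \ref{lem:sig_basic} applied to $x=gyg^{-1}$ gives
\[
\sigma(x,g(v))=\sigma(g,y(v))\,\sigma(y,v)\,\sigma(g,v)^{-1},
\]
which rearranges to $\sigma(g,y(v))=\sigma(x,g(v))\,\sigma(g,v)\,\sigma(y,v)^{-1}\in F\,\sigma(g,v)\,F$. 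Since $F\le\Sym(\Omega)$ is a subgroup and $\sigma(g,v)\notin F$, the double coset $F\sigma(g,v)F$ is disjoint from $F$, so $\sigma(g,y(v))\notin F$, meaning $y(v)\in S(g)$. Finiteness of $S(g)$ upgrades this to $y(S(g))=S(g)$. Consequently $x$ preserves setwise the finite vertex set $\{o(e),t(e),o(g(e)),t(g(e))\}\cup S(g^{-1})$ (fixing the first four vertices and permuting the rest), hence preserves its convex hull in $T$. Moreover, for each $v\in S(g^{-1})$ we have $x(v)\in S(g^{-1})\subset\Int(\cT_{e}(g^{-1}))$, so $x$ sends the full star at $v$ to the full star at $x(v)$, both of which lie in $\cT_{e}(g^{-1})$. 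Since $\cT_{e}(g^{-1})$ is the union of this convex hull with the stars at its interior singularities, $x$ preserves $\cT_{e}(g^{-1})$ setwise, i.e.\ $x\in U_{\{\cT_{e}(g^{-1})\}}$.

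The main obstacle is the coset argument at the heart of the second inclusion: translating the algebraic constraint that $y$ and $gyg^{-1}$ both lie in $U(F)$ into the geometric statement that $y$ permutes $S(g)$ relies essentially on $F$ being a \emph{subgroup} of $\Sym(\Omega)$, so that $F\sigma(g,v)F\cap F=\varnothing$ whenever $\sigma(g,v)\notin F$. The remainder is careful but routine bookkeeping of how pointwise and setwise stabilisers interact with the complete subtree $\cT_{e}(g^{-1})$ and its interior.
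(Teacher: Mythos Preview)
Your argument is essentially correct and close in spirit to the paper's, but the final sentence of the second inclusion contains a genuine (if easily repaired) error.

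For the first inclusion your case split on $v\in S(g)$ versus $v\notin S(g)$ is correct and matches the paper's reasoning; the one slightly elliptical phrase (``combined with $g(v)\notin S(g^{-1})$ this forces $xg(v)\notin S(g^{-1})$'') really does require the two-case check you have in mind: if $g(v)\in V(\cT_e(g^{-1}))$ then $x$ fixes it, and if not then $xg(v)$ lies in the complement, hence outside $S(g^{-1})\subset\Int(\cT_e(g^{-1}))$.

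For the second inclusion, your double-coset argument showing $y(S(g))\subset S(g)$, hence $x(S(g^{-1}))=S(g^{-1})$, is valid and is the key point. The error is in the description of $\cT_e(g^{-1})$: it is \emph{not} in general the union of the convex hull $H$ of $\{o(e),t(e),o(g(e)),t(g(e))\}\cup S(g^{-1})$ with the stars at the singularities. To obtain a complete subtree one must add the star at \emph{every} non-leaf vertex of $H$, not only at vertices of $S(g^{-1})$; for instance if $S(g^{-1})=\varnothing$ and $e,g(e)$ are far apart, the convex hull is a bare path while $\cT_e(g^{-1})$ is its completion. The fix is immediate from what you have already proved: since $\cT_e(g^{-1})$ is \emph{uniquely determined} as the minimal complete subtree containing $e$, $g(e)$ as edges and $S(g^{-1})$ in its interior, and since the automorphism $x$ preserves precisely this defining data, necessarily $x(\cT_e(g^{-1}))=\cT_e(g^{-1})$.

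By way of comparison, the paper argues the second inclusion by contrapositive: assuming $x\notin U_{\{\cT_e(g^{-1})\}}$, it locates $v\in S(g^{-1})$ with $x(v)\notin V(\cT_e(g^{-1}))$ and exhibits a singularity of $g^{-1}xg$. Your direct route actually establishes the stronger conclusion $x(S(g^{-1}))=S(g^{-1})$ (the paper only rules out $x(v)$ leaving $\cT_e(g^{-1})$), after which invariance of $\cT_e(g^{-1})$ follows by uniqueness rather than by the tree-geometric decomposition you attempted.
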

\begin{proof}
Suppose $x\in U_{\cT_{e}(g^{-1})}$. Then $x\in U_{\{e\}}\cap U_{\{g(e)\}}$ since both $e$ and $g(e)$ are edges in $\cT_{e}(g^{-1})$. It suffices to show $g^{-1}xg\in U(F)$. If $v\in S(g^{-1})$, then $v\in\Int(\cT_{e}(g^{-1}))$ and so $\sigma(x,v) = \id$. This and Lemma \ref{lem:sig_basic} shows $g^{-1}xg\in U(F)$.

Now suppose $x\not\in U_{\{\cT_{e}(g^{-1})\}}$. From the definition of $\cT_{e}(g^{-1})$, it follows that either $x(e)\not\in E(\cT_{e}(g^{-1}))$, $x(g(e))\not\in E(\cT_{e}(g^{-1}))$ or that there exists $v\in S(g^{-1})$ such that $x(v)\not\in V(\cT_{e}(g^{-1}))$. The first two cases imply $x\not\in U_{\{g(e)\}}\cap U_{\{e\}}$ which contains $ U_{\{e\}}^{g}\cap U_{\{e\}}$. We may suppose $v\in S(g^{-1})$ with $x(v)\not\in \cT_{e}(g^{-1})$. This implies $x(v)\not\in S(g^{-1})$ and so $\sigma(g^{-1},x(v))\in F$. However, $\sigma(g,g^{-1}(v))\not\in F$ since $g^{-1}(v)\in S(g)$. Lemma \ref{lem:sig_basic} shows $\sigma(g^{-1}xg,g^{-1}(v))\
\not\in F$. Thus $x\not\in U_{\{e\}}^{g}\cap U_{\{e\}}$.
\end{proof}
\begin{lemma}\label{lem:length_bound_coset_above}
Suppose $g\in G(F,F')$ and $e\in E(T)$, then \[{[U_{\{e\}}:U_{\{e\}}\cap U_{\{e\}}^{g}]\le (2(\deg(T)-1)^{\cN_{e}(g)})!}.\]
\end{lemma}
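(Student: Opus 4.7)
The strategy is to combine Lemma \ref{lem:U_stab_rel} with a counting argument based on the action of $U_{\{e\}}$ on a sphere around $e$ in $T$. Writing $\cT = \cT_{e}(g^{-1})$, the first inclusion in Lemma \ref{lem:U_stab_rel} gives $U_{\cT} \le U_{\{e\}} \cap U_{\{e\}}^{g}$, so it suffices to bound $[U_{\{e\}} : U_{\cT}]$.

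The first sub-goal is a geometric containment: $V(\cT) \subseteq B_{\cN_e(g)}(e)$, where $B_{k}(e) := \{v \in V(T) \mid d(v,e) \le k\}$. Since $\Int(\cT)$ has $\cN_e(g)$ vertices and is a connected subtree of $T$ containing at least one endpoint of $e$ (whenever $\cN_e(g) \ge 1$; the case $\cN_e(g)=0$ reduces to $\cT$ being just the single edge $e$), its diameter is at most $\cN_e(g)-1$, placing every internal vertex within distance $\cN_e(g)-1$ of $e$. Each leaf of $\cT$ is adjacent to an internal vertex, which pushes its distance to $e$ up by at most one.

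Next, let $X := \{v \in V(T) \mid d(v,e) = \cN_e(g)\}$, so that $|X| = 2(\deg(T)-1)^{\cN_e(g)}$ by Lemma \ref{lem:size_of_balls}. The key step, and what I expect to be the main obstacle, is showing $U_{X} \le U_{\cT}$: any element of $U(F)$ fixing the sphere $X$ pointwise must fix every vertex of $\cT$. Elements of $U_{X}$ are tree automorphisms and hence preserve distances in $T$; using that $X$ contains multiple vertices branching off each endpoint of $e$ (for $\deg(T)\ge 3$), a branching argument forces such an automorphism to fix $o(e)$ and $t(e)$ individually, and uniqueness of paths in $T$ then forces all of $B_{\cN_e(g)}(e)$ to be fixed pointwise. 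Combined with $V(\cT) \subseteq B_{\cN_e(g)}(e)$, this gives $U_{X} \le U_{\cT}$.

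Finally, $U_{\{e\}}$ acts by permutations on $X$ (since its elements preserve distance from $e$), yielding a homomorphism $U_{\{e\}} \to \Sym(X)$ whose kernel is $U_{\{e\}}\cap U_{X}$. Since the same branching argument yields $U_{X} \le U_{\{e\}}$, the kernel equals $U_{X}$, so
\[[U_{\{e\}} : U_{X}] \le |\Sym(X)| = (2(\deg(T)-1)^{\cN_e(g)})!.\]
Combining with $U_{X} \le U_{\cT} \le U_{\{e\}} \cap U_{\{e\}}^{g}$ gives the stated bound.
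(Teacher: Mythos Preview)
Your proof is correct and follows essentially the same approach as the paper: reduce via Lemma~\ref{lem:U_stab_rel} to bounding $[U_{\{e\}}:U_{\cT_e(g^{-1})}]$, show $\cT_e(g^{-1})$ sits inside a ball of radius $\cN_e(g)$ around $e$, and bound the index by the size of a symmetric group coming from the action of $U_{\{e\}}$ on vertices at that radius. The only cosmetic difference is that the paper introduces the minimal radius $k$ with $V(\cT_e(g^{-1}))\subseteq B_k(e)$ and notes $k\le \cN_e(g)$, whereas you argue directly via the diameter of $\Int(\cT)$; and the paper phrases the action as being on the ball $B$ while you work with the sphere $X$, which is slightly cleaner since $|X|=2(\deg(T)-1)^{\cN_e(g)}$ exactly.
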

\begin{proof}
Choose $k\in\bN$ minimal such that if $v\in V(\cT_{e}(g^{-1}))$, then $d(v,e)\le k$. Note that $\cN_{e}(g)\ge k$ as $\cT_{e}(g^{-1})$ is connected and so must contain a path of length $k+1$. Set $B := \{v\in V(T)\mid  d(v,e)\le k\}$. Then $U_{\{e\}}$ acts by permutations on $B$. Also if $u_1,u_2\in U_{\{e\}}$ such that $u_{1}^{-1}u_{2}$ fixes $B$, then $u_{1}^{-1}u_{2}\in U_{\cT_{e}(g^{-1})}$. Lemma \ref{lem:U_stab_rel} shows $u_1$ and $u_2$ are in the same coset of $gU_{\{e\}}g^{-1}\cap U_{\{e\}}$. Applying Lemma \ref{lem:size_of_balls} shows
\[[U_{\{e\}}:U_{\{e\}}^{g}\cap U_{\{e\}}]\le |\Sym(B)| = (2(\deg(T) - 1)^{k})! \le (2(\deg(T) - 1)^{\cN_{e}(g)})!, \]
as required.
\end{proof}
\begin{lemma}\label{lem:cylinder_of_singularities}
Suppose $g,h\in G(F,F')$ are hyperbolic such that $\omega_{+}(g) = \omega_{+}(h)$. There exists $M\in\bN$ such that
\[v\in \bigcup_{m,n\in\bN}S(h^{-m}g^{n})\hbox{ implies }d(v,\axis(g))\le M.\]

Suppose further that $F$ has distinct point stabilisers, $e\in E(T)$ and 
\[\left\{[U_{\{e\}}:U_{\{e\}}\cap U_{\{e\}}^{g^{-n}h^{n}}]\mid n\in\bN\right\}\]
is bounded.  Then there exists $M'\in \bN$ such that $n\in \bN$ and $v\in V(\cT_e(h^{-n}g^{n}))$ implies  $d(v,\axis(g))\le M'$.
\end{lemma}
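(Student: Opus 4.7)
The plan is to prove the two assertions in sequence, using Lemma \ref{lem:SingOfPowers} repeatedly to reduce singularities of high powers back to $S(g)$ or $S(h)$.

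For the first assertion, I would factor $\sigma(h^{-m}g^{n},v) = \sigma(h^{-m}, g^{n}(v))\,\sigma(g^{n},v)$ via Lemma \ref{lem:sig_basic}; if $v \in S(h^{-m}g^{n})$ then at least one factor lies outside $F$. In the first case, $v \in S(g^{n})$, so Lemma \ref{lem:SingOfPowers} provides $0 \le l < n$ with $g^{l}(v)\in S(g)$, and $g$-invariance of $\axis(g)$ gives $d(v,\axis(g)) \le D_{g}$. In the second case, $g^{n}(v)\in S(h^{-m})$, and a similar application produces $u \in S(h)$ and some $l \ge 0$ with $v = g^{-n}h^{l+1}u$, so $d(v,\axis(g)) = d(h^{l+1}u,\axis(g))$. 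Writing $u_{0} := \pi_{h}(u) \in \axis(h)$ with $d(u,u_{0}) \le D_{h}$, the triangle inequality gives $d(h^{l+1}u,\axis(g)) \le D_{h} + d(h^{l+1}u_{0},\axis(g))$, and the common-end hypothesis implies that $\axis(g)\cap\axis(h)$ contains a ray toward $\omega_{+}(g)$, so $h^{l+1}u_{0}\in \axis(g)$ for all $l$ beyond some threshold $N$ independent of the finitely many choices of $u_{0}$. For the remaining finitely many small values of $l$, the distances are finite, so one obtains a uniform bound $M$.

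For the second assertion, Lemma \ref{lem:equal_ends} applied to the hypothesis yields both $\omega_{+}(g) = \omega_{+}(h)$ and a uniform bound $K$ on $d(e, g^{-n}h^{n}(e))$. Specialising the first part to $m = n$ then gives $d(v, \axis(g)) \le M$ for all $v \in S(h^{-n}g^{n})$; the endpoints of $e$ and of $g^{-n}h^{n}(e)$ are controlled by a fixed constant $d_{0}$ and by $d_{0}+K+1$ respectively. Let $M_{0}$ denote the maximum of $M$ and these. The minimal complete subtree $\cT_{e}(h^{-n}g^{n})$ consists of the convex hull $H$ of the anchor vertices (the endpoints of $e$, of $g^{-n}h^{n}(e)$, and the vertices of $S(h^{-n}g^{n})$) together with $T$-neighbours adjoined to make the required vertices internal. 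The key geometric fact is that in a tree, the unique path between two vertices each at distance at most $M_{0}$ from the geodesic $\axis(g)$ remains within distance $M_{0}$ of $\axis(g)$; hence every vertex of $H$ lies within $M_{0}$ of $\axis(g)$, and the adjoined leaves lie within $M_{0}+1$. Thus $M' := M_{0}+1$ works.

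The main obstacle is the second case of the first part: because $l$ can be arbitrarily large while $m$ and $n$ vary, one must convert a bound on $d(u,\axis(h))$ into a bound on $d(h^{l+1}u,\axis(g))$, and this is precisely where the common-end hypothesis enters. The second assertion is then a tree-convexity argument once the first is in hand and Lemma \ref{lem:equal_ends} supplies the base-point control on $g^{-n}h^{n}(e)$.
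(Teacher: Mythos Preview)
Your argument is correct and follows essentially the same route as the paper. The two-case split in the first assertion is identical; in Case~2 the paper packages your ray argument (that $h^{l+1}u_{0}$ eventually lies on the shared half-axis) by citing Lemma~\ref{lem:dist_to_axis_seq}, whereas you unpack it by hand via the projection $\pi_{h}$ and the triangle inequality. Both yield the same bound, and your version has the advantage of being self-contained.

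One small gap to patch in the second assertion: Lemma~\ref{lem:equal_ends} is stated for the pointwise stabiliser $U_{e}$, while the hypothesis here is on the setwise stabiliser $U_{\{e\}}$. You need one line to pass from a bound on $[U_{\{e\}}:U_{\{e\}}\cap U_{\{e\}}^{g^{-n}h^{n}}]$ to a bound on $[U_{e}:U_{e}\cap U_{e}^{g^{-n}h^{n}}]$; this is immediate since $U_{e}$ has index at most~$2$ in $U_{\{e\}}$ (the paper invokes \cite[Lemma~7]{Baumgartner06} for this). After that, your tree-convexity argument for $\cT_{e}(h^{-n}g^{n})$ is a correct and more explicit version of what the paper states in a single sentence.
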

\begin{proof}
Fix $m,n\in\bN$ and suppose $v\in S(h^{-m}g^{n})$. We must have $v\in S(g^{n})\cup g^{-n}S(h^{-m})$. We separate these two cases.

If $v\in S(g^{n})$, applying Lemma \ref{lem:SingOfPowers} shows $g^{k}(v)\in S(g)$ for some $0\le k < n$. But $d(v,\axis(g)) = d(g^{k}(v),\axis(g))$, and so choosing $M_1$ such that $d(u,\axis(g))\le M_1$ for all $u\in S(g)$ gives $d(v,\axis(g))\le M_1$.

If $g^{n}(v)\in S(h^{-m})$, Lemma \ref{lem:SingOfPowers} shows $h^{-k}g^{n}(v)\in S(h^{-1})$ for some $k\ge 0$. Since $\omega_{+}(g) = \omega_{+}(h)$ and $S(h^{-1})$ is finite, Lemma \ref{lem:dist_to_axis_seq} applied to $h$ and $g$ shows that $\{d(h^n(u),\axis(g))\mid u\in S(h^{-1}), n\in\bN\}$ is bounded, say by $M_2\in\bN$. Thus $d(g^n(v),\axis(g)) = d(v,\axis(g))\le M_2$. Setting $M = \max\{M_1,M_2\}$ gives the required bound.

Suppose $F$ has distinct point stabilisers and $\{[U_{\{e\}}:U_{\{e\}}\cap U_{\{e\}}^{g^{-n}h^{n}}]\mid n\in\bN\}$ is bounded. Applying \cite[Lemma 7]{Baumgartner06} shows $\{[U_{e}:U_{e}\cap U_{e}^{g^{-n}h^{n}}]\mid n\in\bN\}$ is also bounded. Lemma \ref{lem:equal_ends} shows $\{d(e,g^{-n}h^{n}(e))\mid n\in\bN\}$ is bounded. We must have $\{d(\axis(g),g^{-n}h^{n}(e))\mid n\in\bN\}$ also bounded. Applying the first assertion and completes the result as $\cT_e(h^{-n}g^{n})$ is the minimal subtree containing $e$, $g^{-n}h^{n}(e)$ and $S(h^{-n}g^{n})$.
\end{proof}
\begin{definition}
Suppose $g\in G(F,F')$ and $e\in E(T)$. Let $P_e(g)$ denote the collection of paths $(v_0,\ldots,v_k)$ such that $(v_0,v_1)\in \{e,\overline{e}\}$ and $v_k$ is a vertex in $\cT_{e}(g)$ which is not internal. Set 
\[p_e(g) := |\{d(v,v')\mid  \hbox{ there exists }\gamma\in P_e(g)\hbox{ from } v \hbox{ to } v'\}|.\]
\end{definition}
\begin{lemma}\label{lem:paths_bound_cosets}
Suppose $F$ is $2$-transitive and $g\in G(F,F')$ is hyperbolic. Then \[[U_{\{e\}}:U_{\{e\}}\cap U_{\{e\}}^{g}]\ge p_e(g).\]
\end{lemma}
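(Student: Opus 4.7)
The strategy is to exhibit $p_e(g)$ elements of $U_{\{e\}}$ lying in pairwise distinct cosets of $U_{\{e\}}\cap U_{\{e\}}^{g}$. First, enumerate the distinct path-lengths $\ell_1<\cdots<\ell_m$ (with $m=p_e(g)$) realised by paths in $P_e(g)$. For each $i$, fix a representative $\gamma_i=(v_0^{(i)},\ldots,v_{\ell_i}^{(i)})\in P_e(g)$ of length $\ell_i$ terminating at a non-internal vertex $w_i=v_{\ell_i}^{(i)}$ of $\cT_e(g)$. Since $w_i$ is non-internal, choose an edge $f_i\in E(w_i)\setminus E(\cT_e(g))$.

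Using the $2$-transitivity of $F$ together with Lemma \ref{lem:autExtension}, I plan to build $u_i\in U(F)\cap U_{\{e\}}$ fixing $\gamma_i$ pointwise and whose local action at $w_i$ is a permutation $\sigma_i\in F_{c((v_{\ell_i-1}^{(i)},w_i))}$ that maps $c(f_i)$ to a prescribed colour chosen to distinguish $i$ from the other indices. Since $F_{c}$ acts transitively on $\Omega\setminus\{c\}$ for each $c$ by $2$-transitivity, such $\sigma_i$ exist; the extension lemma then produces a genuine $u_i\in U(F)$ realising the chosen local data, and $u_i(e)=e$ automatically because $\gamma_i$ starts with $e$ or $\overline{e}$.

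To verify distinctness of cosets, suppose $u_j^{-1}u_i\in U_{\{e\}}\cap U_{\{e\}}^{g}$ for some $i\ne j$. By Lemma \ref{lem:U_stab_rel}, $u_j^{-1}u_i$ setwise stabilises $\cT_e(g^{-1})=g\cdot\cT_e(g)$; conjugating by $g$ translates this into a constraint on how $g^{-1}u_j^{-1}u_i g$ acts on $\cT_e(g)$. Since $\ell_i\ne\ell_j$, the witnesses $w_i,w_j$ sit at different depths of $\cT_e(g)$ (and accordingly of $\cT_e(g^{-1})$), so the discrepancies built into $u_i$ and $u_j$ at $w_i,w_j$ cannot cancel in $u_j^{-1}u_i$, forcing some vertex or edge to leave $\cT_e(g^{-1})$ and yielding the required contradiction. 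The main obstacle lies precisely in this last step: the construction is anchored in $\cT_e(g)$ via $P_e(g)$, whereas the stability condition from Lemma \ref{lem:U_stab_rel} is phrased in terms of $\cT_e(g^{-1})$, and one must carefully use the $g$-translation and account for the extra cosets contributed by the singularities of $g$ (beyond the $U_{\{e\}}$-orbit of the edge $g(e)$) in order to close the argument.
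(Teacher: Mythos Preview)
Your approach has a genuine gap, and it is not the $\cT_e(g)$ versus $\cT_e(g^{-1})$ mismatch you flag at the end---that is at worst a bookkeeping issue. The real problem is structural: your elements $u_i$ each fix their own path $\gamma_i$ pointwise and twist only at its endpoint $w_i$, but for $i\neq j$ the paths $\gamma_i,\gamma_j$ may diverge at any vertex after $e$, so there is no usable relationship between the actions of $u_i$ and $u_j$. The claim that ``the discrepancies built into $u_i$ and $u_j$ at $w_i,w_j$ cannot cancel'' has no argument behind it, and indeed $u_j^{-1}u_i$ need not interact with either $w_i$ or $w_j$ in any controlled way. Nothing in your construction actually exploits the fact that the lengths $\ell_i$ are distinct.

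The paper's argument sidesteps this entirely by choosing the coset representatives differently. Rather than fixing $\gamma_i$, one uses that $2$-transitivity of $F$ makes $U(F)$ transitive on paths of a given length issuing from $e$, and picks $x_i\in U_{\{e\}}$ sending $\gamma_i$ onto the length-$\ell_i$ initial segment of the \emph{longest} path $\gamma_{p_e(g)}$. Then for $i<j$ one automatically has $x_i(\gamma_i)\subsetneq x_j(\gamma_j)$, hence $\gamma_i\subsetneq x_i^{-1}x_j(\gamma_j)$. Since $\gamma_i$ terminates at a leaf of the tree, the strictly longer path $x_i^{-1}x_j(\gamma_j)$ must exit it; thus $x_i^{-1}x_j$ fails to stabilise the tree setwise, and Lemma~\ref{lem:U_stab_rel} gives $x_i^{-1}x_j\notin U_{\{e\}}\cap U_{\{e\}}^{g}$. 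Mapping all paths into one common reference path is precisely the device that makes the comparison $x_i^{-1}x_j$ tractable, and it is the idea your construction lacks.
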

\begin{proof}
Choose $\{\gamma_{i}\mid 1\le i \le p_e(g)\}\subset P_e(g)$ such that $i < j$ implies the length of $\gamma_i$ is strictly less that $\gamma_j$. Since $F$ is $2$-transitive, $U(F)$ acts transitively on paths with the same length. This implies there exists $x_i\in U_{\{e\}}$ such that $x_i(\gamma_i)\subset \gamma_{p_e(g)}$. Then for $i < j$, we have $\gamma_i\subsetneq x_i^{-1}x_j(\gamma_j)$ as these are two paths with  different lengths. Since $\gamma_i$ ends in a leaf of $\cT_e(g^{-1})$, we must have $x_{i}^{-1}x_{j}\not\in U_{\{\cT_e(g^{-1})\}}$. Lemma \ref{lem:U_stab_rel} shows $x_i^{-1}x_j\not\in U_{\{e\}}\cap U_{\{e\}}^{g}$, and so $[U_{\{e\}}:U_{\{e\}}\cap U_{\{e\}}^{g}]\ge p_e(g)$. 
\end{proof}

\begin{proof}[Proof of Theorem \ref{thm:asymptotic_and_length_classification}]
Suppose there exist $p,q, M\in \bN$ such that $M$ bounds $\{\cN_{e}(g^{-pn}h^{qn})\mid n\in\bN\}$. Recall that from Lemma \ref{lem:cos_g_met}
\[d(U_{\{e\}}^{g^{pn}}, U_{\{e\}}^{h^{qn}}) = \log([U_{\{e\}}^{g^{pn}}:U_{\{e\}}^{g^{pn}}\cap U_{\{e\}}^{h^{qn}}][U_{\{e\}}^{h^{qn}}:U_{\{e\}}^{g^{pn}}\cap U_{\{e\}}^{h^{qn}}]).\]
Applying Lemma \ref{lem:length_bound_coset_above} shows
\begin{align*}
[U_{\{e\}}^{g^{pn}}:U_{\{e\}}^{g^{pn}}\cap U_{\{e\}}^{h^{qn}}] &= [U_{\{e\}}:U_{\{e\}}\cap U_{\{e\}}^{g^{-pn}h^{qn}}]\\
&\le (2(\deg(T)-1)^{\cN_{e}(g^{-pn}h^{qn})})!\\
&\le (2(\deg(T)-1)^M)!.
\end{align*}
We also have 
\[[U_{\{e\}}^{h^{qn}}:U_{\{e\}}^{g^{pn}}\cap U_{\{e\}}^{h^{qn}}]\le (2(\deg(T)-1)^M)!,\]
by the same argument but noting that $\cN_{e}(h^{-qn}g^{pn}) = \cN_{e}((h^{-qn}g^{pn})^{-1})$. Thus,
\[d(U_{\{e\}}^{g^{pn}}, U_{\{e\}}^{h^{qn}})\le 2\log((2(\deg(T)-1)^M)!),\]
which shows $g$ and $h$ are asymptotic.

Now suppose that for all $p,q\in\bN$, $\{\cN_{e}(g^{-pn}h^{qn})\mid n\in\bN\}$ is unbounded. Lemma \ref{lem:equal_ends} shows we can assume $\omega_{+}(h) = \omega_{+}(g)$ as otherwise we are done. This implies there exists an edge with endpoints in $\axis(g)\cap \axis(h)$. Equation \eqref{eq:comparable_card_def} implies, without loss of generality, we can suppose $e$ is such an edge.

Choose any $p,q\in\bN$ and fix a constant $M_1\in\bN$. Lemma \ref{lem:cylinder_of_singularities} allows us to suppose there that exists $M_{2}\in\bN$ such that $v\in \bigcup_{n\in\bN}\cT_e(h^{-qn}g^{pn})$ implies $d(v,\axis(g))< M_2$. Since $\{\cN_{e}(g^{-pn}h^{qn})\mid n\in\bN\}$ is unbounded and $\cN_e$ is a length function, $\{\cN_{e}(h^{qn}g^{-pn})\mid n\in\bN\}$ is also unbounded. Hence, the lengths of paths in $\cT_e(h^{-qn}g^{pn})$ is unbounded as $n\to\infty$. Choose $n\in\bN$ sufficiently large such that $P_e(h^{-qn}g^{-pn})$ contains a path 
\[(v_0,\ldots, v_{M_1M_2},\ldots,v_{(M_1+1)M_2},\ldots,  v_l)\]
where $l > (M_1+1)M_2$. By definition of $P_e(g^{-pn}h^{qn})$, we have $(v_0,v_1)\in\{e,\overline{e}\}$ and $d(v_l,\axis(g))\le M_2$. Since \[d(v_l,v_{M_1M_2}) = l - M_1M_2 > (M_1+1)M_2 - M_1 = M_2\] 
and $v_0,v_1\in\axis(g)$ our choice of $M_2$ shows we must have $(v_0,\ldots, v_{M_1M_2})\subset \axis(g)$. For each $0 < k \le M_1$, we must have $v_{kM_2}\in\Int(\cT_e(h^{-qn}g^{pn}))$ since $\cT_e(h^{-qn}g^{pn})$ is a complete subtree. It follows that there exist paths $p_k\in P_e(h^{-qn}g^{pn})$ such that $p_k$ contains $(v_0,\ldots,v_{kM_2})$ but not $v_{kM_2+1}$. Since our choice of $M_2$ implies the end of $p_k$ is distance at most $M_2 - 1$ from $\axis(g)$, the length of $p_k$ is at least $kM_2$ but strictly less than $(k+1)M_2$. This gives $M_1$ paths of different lengths in $P(h^{-qn}g^{pn})$. Applying Lemma \ref{lem:paths_bound_cosets}, we have
\[[U_{\{e\}}:U_{\{e\}}\cap U_{\{e\}}^{h^{-qn}g^{pn}}]\ge p_e(h^{-qn}g^{pn})\ge M_1.\]
But,
\begin{align*}
d(U_{\{e\}}^{g^{pn}},U_{\{e\}}^{h^{qn}}) &= \log([U_{\{e\}}^{g^{pn}}:U_{\{e\}}^{g^{pn}}\cap U_{\{e\}}^{h^{qn}}][U_{\{e\}}^{h^{qn}}:U_{\{e\}}^{g^{pn}}\cap U_{\{e\}}^{h^{qn}}])\\
& \ge \log([U_{\{e\}}:U_{\{e\}}\cap U_{\{e\}}^{h^{-qn}g^{pn}}])\\
&\ge \log(M_1).
\end{align*}
Since our choice of $M_1$ was arbitrary, we have $g\not\asymp h$.
\end{proof}

\subsection{Asymptotic classes and the local action at a vertex}
\label{ssec:asympt_func_and_dir}
In this section we give another equivalent condition for being asymptotic which is complimentary to that given in Theorem \ref{thm:asymptotic_and_length_classification}.  For Definition \ref{def:asymp_func}, note that for $g\in G(F,F')$ hyperbolic and $v\in V(T)$, there exists $N\in\bN$ such that $g^{-n}(v)\not\in S(g)$ for all $n\ge N$. Thus, for $n\ge N$ we have 
\[\sigma(g^{n},g^{-n}(v)) = \sigma(g^N, g^{-N}(v))\sigma(g^{n - N}, g^{-n}(v))\in\sigma(g^N, g^{-N}(v))F.\] 
In particular, the sequence $\sigma(g^{n},g^{-n}(v))F$ is eventually constant.
\begin{definition}\label{def:asymp_func}
For $g\in G(F,F')$ hyperbolic and $v\in V(T)$ define
\[\lambda_g(v) = \lim_{n\to\infty}\sigma(g^{n},g^{-n}(v))F,\]
and set $H_g(v)$ to be the smallest natural number such that $\sigma(g^n,g^{-n}(v))F = \lambda_g(v)$ for all $n\ge H_g(v)$.
\end{definition}

\begin{lemma} \label{lem:basic_asym_sing_prop}
Suppose $g\in G(F,F')$ is hyperbolic and $v\in V(T)$. Then:
\begin{enumerate}[label = (\roman{*})]
	\item\label{itm:traj_1} for all $k\ge 0$ we have $\lambda_g(v) = \sigma(g^{k},g^{-k}(v))\lambda_g(g^{-k}(v))$.
	\item\label{itm:traj_3} if $g^{-k}(v)\not\in S(g)$ for all $k\ge 1$, then $\lambda_g(g^{n}(v)) = \sigma(g^n,v)F$ for all $n\ge 0$. In particular, if $d(v,\axis(g))> D_g$, then $\lambda_g(v) = F$.
	\item\label{itm:traj_4} $\lambda_g(v) = \lambda_{g^k}(v)$ for all $k> 0$.
\end{enumerate}
\end{lemma}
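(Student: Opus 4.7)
The proof rests entirely on the cocycle identity $\sigma(gh,v) = \sigma(g,h(v))\sigma(h,v)$ from Lemma \ref{lem:sig_basic}, together with the observation following Definition \ref{def:asymp_func} that the sequence $\sigma(g^n, g^{-n}(v))F$ eventually stabilises. The plan is to prove all three parts from the single identity
\[
\sigma(g^{n+k}, g^{-(n+k)}(v)) \;=\; \sigma(g^k, g^{-k}(v))\,\sigma(g^n, g^{-n}(g^{-k}(v))),
\]
obtained by writing $g^{n+k}=g^k\cdot g^n$ and applying Lemma \ref{lem:sig_basic}.

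For part \ref{itm:traj_1}, I fix $k\ge 0$ and let $n\to\infty$ in the displayed identity. The left-hand side stabilises, modulo $F$, to $\lambda_g(v)$; on the right the first factor is constant in $n$, while the second stabilises modulo $F$ to $\lambda_g(g^{-k}(v))$. This gives the desired product formula.

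For part \ref{itm:traj_3}, the hypothesis $g^{-k}(v)\notin S(g)$ for every $k\ge 1$ means that $\sigma(g, g^{-k}(v))\in F$ for all such $k$. A straightforward induction using the cocycle identity shows that $\sigma(g^N, g^{-N}(v))\in F$ for every $N\ge 1$, so $\lambda_g(v) = F$. Applying part \ref{itm:traj_1} to $g^n(v)$ with $k=n$ then yields $\lambda_g(g^n(v)) = \sigma(g^n, v)\lambda_g(v) = \sigma(g^n, v)F$. For the final assertion, because $g$ acts as an isometry preserving $\axis(g)$, we have $d(g^{-k}(v), \axis(g)) = d(v,\axis(g)) > D_g$ for every $k$, so $g^{-k}(v)\notin S(g)$, and $\lambda_g(v) = F$ follows from what we just proved.

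Part \ref{itm:traj_4} is the easiest: the sequence $\sigma((g^k)^n, (g^k)^{-n}(v))F = \sigma(g^{kn}, g^{-kn}(v))F$ is a subsequence of the eventually constant sequence $\sigma(g^m, g^{-m}(v))F$, so it has the same limit, namely $\lambda_g(v)$. The only real bookkeeping issue throughout is keeping careful track of the order of composition when splitting $g^{n+k}$ via the cocycle rule; beyond this, the argument is routine.
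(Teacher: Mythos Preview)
Your proof is correct and follows essentially the same route as the paper's. Both arguments hinge on splitting $\sigma(g^{n+k},g^{-(n+k)}(v))$ via the cocycle identity and then passing to the limit; the paper makes the choice of $n$ explicit using the constants $H_g(v)$ and $H_g(g^{-k}(v))+k$, whereas you phrase the same step as ``let $n\to\infty$'', but the content is identical.
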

\begin{proof}
For \ref{itm:traj_1}, note that for $n>\max\{H_g(v),  H_g(g^{-k}(v)) + k\}$
\begin{align*}\lambda_g(v) &= \sigma(g^n,g^{-n}(v))F\\ &= \sigma(g^k,g^{-k}(v))\sigma(g^{n - k}, g^{-n}(v))F\\ &= \sigma(g^k,g^{-k}(v))\sigma(g^{n - k}, g^{k - n}g^{-k}(v))F.
\end{align*}
But $n-k \ge  H_g(g^{-k}(v))$, hence $\sigma(g^{n - k}, g^{k - n}g^{-k}(v))F = \lambda_{g}(g^{-k}(v))$.

For \ref{itm:traj_3}, note that by \ref{itm:traj_1} we have 
\[\lambda_{g}(g^n(v)) = \sigma(g^n,g^{-n}g^{n}(v))\lambda_g(g^{-n}g^{n}(v)) = \sigma(g^n,v)\lambda_g(v)\].
But $g^{-k}(v)\not\in S(g)$ for all $k\in\bN$, hence $\lambda_g(v) = \lim_{n\to\infty}\sigma(g^k,g^{-k}(v))F = F$ .

For \ref{itm:traj_4}, fix $k >1$. Observe that for $n\ge H_g(v)$, we have $\lambda_g(v) = \sigma(g^{kn},g^{-kn}(v))F$. Taking the limit as $n\to\infty$ gives $\lambda_{g^k}(v) = \lambda_g(v)$.
\end{proof}
\begin{lemma}\label{lem:special_set}
Suppose $g,h\in G(F,F')$ are hyperbolic such that $\omega_{+}(g) = \omega_{+}(h)$ and $l(g) = l(h)$. Let $\cP_{g}$ and $\cP_{h}$ be pandos for $g$ and $h$ respectively. There exists a finite set $V\subset V(T)$ such that if $v\not\in V$, then both of the following hold: 
\begin{enumerate}[label = (\roman{*})]
	\item\label{itm:tech_lem_1} $h^{k}(v)\in \cP_{h}$ for some $k\ge 0$ implies  $g^{-j}g^{-n}h^{n}(v)\not\in \cP_{g}$ for all $j,n\in \bN$; and
	\item\label{itm:tech_lem_2}$h^{-k}(v)\in\cP_{h}$ for some $k\ge 0$ implies $g^{-j}h^{n}(v)\not\in \cP_{g}$ for all $n\ge j\ge 0$.
\end{enumerate}
\end{lemma}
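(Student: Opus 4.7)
The plan is to introduce the Busemann function $\beta\colon V(T)\to \bZ$ towards the common end $\omega := \omega_{+}(g) = \omega_{+}(h)$ and reduce the lemma to an elementary linear identity in the exponents. Fix a basepoint $v_0\in V(T)$; for $v\in V(T)$ define $\beta(v) := d(v,u) - d(v_0, u)$ for any vertex $u$ far enough along the ray from $v_0$ to $\omega$ (the value stabilises in a tree). Because $g$ and $h$ both fix $\omega$ and translate towards it at the common speed $l := l(g) = l(h)$, one checks directly that $\beta\circ g^{k} = \beta - kl$ and $\beta\circ h^{k} = \beta - kl$ for every $k\in\bZ$. The hypotheses $\omega_{+}(g) = \omega_{+}(h)$ and $l(g) = l(h)$ enter here precisely to make $\beta$ affine with the same slope for both $g$ and $h$.

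For (i), suppose some $v$ violates the implication, so there exist $k\ge 0$ with $h^{k}(v)\in \cP_{h}$ and $j,n\in\bN$ with $g^{-j-n}h^{n}(v)\in \cP_{g}$. Set $w_1 := h^{k}(v)$ and $w_2 := g^{-j-n}h^{n}(v)$. The slope property gives $\beta(v) = \beta(w_1) + kl$ from the first relation and $\beta(v) = \beta(w_2) - jl$ from the second, whence $(k+j)l = \beta(w_2) - \beta(w_1)$. Since $\cP_{g}$ and $\cP_{h}$ are finite subtrees, the right-hand side ranges over a finite set, so $k$ is bounded above by some constant $K_1$. Thus every such $v$ lies in the finite set $V_1 := \bigcup_{k=0}^{K_1} h^{-k}(\cP_{h})$.

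The argument for (ii) is parallel. From $h^{-k}(v)\in \cP_{h}$ and $g^{-j}h^{n}(v)\in \cP_{g}$ with $n\ge j\ge 0$, set $w_1 := h^{-k}(v)$ and $w_2 := g^{-j}h^{n}(v)$. One computes $\beta(v) = \beta(w_1) - kl$ and $\beta(v) = \beta(w_2) + (n-j)l$, so $(k + n - j)l = \beta(w_1) - \beta(w_2)$. Since $k\ge 0$, $n - j \ge 0$, and the right-hand side is bounded, $k$ is bounded by some $K_2$, placing $v$ in the finite set $V_2 := \bigcup_{k=0}^{K_2} h^{k}(\cP_{h})$. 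Setting $V := V_1 \cup V_2$ completes the construction.

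The main obstacle is spotting the correct invariant; once $\beta$ is available, the pando data enter only through the finiteness of $\cP_{g}$ and $\cP_{h}$, and neither the defining properties \textbf{P1}--\textbf{P3} nor the singularity sets $S(g), S(h)$ are used. A more tree-geometric substitute would be to compare signed $\pi_{g}$-positions along $\axis(g)$, but this introduces error terms from projecting $\cP_{h}$ onto $\axis(g)$ and is messier. Phrasing the argument in terms of $\beta$ makes transparent that the lemma is ultimately a statement about two isometries having the same translation direction and speed.
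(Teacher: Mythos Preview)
Your argument is correct and considerably cleaner than the paper's. The paper proceeds by a case split on whether $\omega_{-}(g) = \omega_{-}(h)$: when the repelling ends coincide the two axes agree and one can compare positions via the ordering $\le_{g}$ on $\axis(g)$; when they differ, the paper tracks the projections $\pi_{g}$, $\pi_{h}$ separately, chooses constants $k_0,k_1,k_2$ governing when $h$-iterates leave a tube around $\axis(g)$, and handles several sub-cases. Your use of the Busemann function $\beta$ towards the common end $\omega_{+}(g) = \omega_{+}(h)$ collapses all of this: the single identity $\beta\circ g = \beta\circ h = \beta - l$ (valid on all of $V(T)$ because the end lies on both axes) turns each violation into a bounded linear equation in the exponents, and finiteness of $\cP_{g},\cP_{h}$ does the rest. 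What you gain is brevity and a transparent explanation of why only $\omega_{+}$ and $l$ matter, not the pando axioms \textbf{P1}--\textbf{P3} or the singularities; what the paper's approach retains is an explicit description of the exceptional set $V$ in terms of $h$-translates of $\cP_{h}$ indexed by geometrically meaningful constants, which is marginally more informative but not needed downstream.
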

\begin{proof}
Set $v_{\min}$ and $v_{\max}$ to be the minimum and maximum elements respectively of $\pi_{g}V(\cP_{g})$ with the order $\le_{g}$. Our proof splits into two cases.

\noindent{\bfseries{Case 1: }} $\omega_{-}(g) = \omega_{-}(h)$. 

In this case $\axis(h) = \axis(g)$, we have $\pi_{g} = \pi_{h}$ and the relations $<_g$ and $<_h$ agree. The additional assumption that $l(g) = l(h)$ implies $h(v) = g(v)$ for all $v\in\axis(g)$. This, and that $\cP_{h}$ is finite, implies there exists $k_0\in \bN$ such that for all $v\in V(\cP_{h})$ we have $\pi_{h}h^{-k_0}(v)<_h v_{\min}$ and $\pi_{h}h^{k_0}(v)>_h v_{\max}$. Set $V = \bigcup_{k = -k_0}^{k_0}h^{k}(V(\cP_{h}))$. 

Suppose $v\not\in V$. We start by showing \ref{itm:tech_lem_1}. Assume $h^{k}(v)\in \cP_{h}$ for some $k \ge 0$. Then $\pi_{h}h^{-k_0}h^{k}(v)<_h v_{\min}$. Furthermore, since $v\not\in V$ we have $k> k_0$ and so $\pi_hh^{-k+k_0}(u)<_h\pi_hu$ for all $u\in V(T)$. Thus 
\[\pi_{h}(v) = \pi_{h}h^{-k + k_0}h^{-k_0}h^{k}(v)<_h \pi_hh^{-k_0}h^{k}(v)<_h v_{\min}.\] 
However, $g(u) = h(u)$ for all $u\in\axis(g)$. Thus, $\pi_{h}(v) = \pi_{g}g^{-n}h^{n}(v)$ for all $n\in\bN$. It follows that \[\pi_{g}g^{-j}g^{-n}h^{n}(v) = \pi_{g}g^{-j}(v) \le_g \pi_{g}(v)<_gv_{\min}\] 
for all $j\in\bN$. The definition of $v_{\min}$ shows $g^{-j}g^{-n}h^{n}(v)\not\in\cP_{g}$.

For \ref{itm:tech_lem_2}, if $h^{-k}(v)\in \cP_{h}$ for some $k > 0$, then again we must have $k > k_0$ as $v\not\in V$ by assumption. Using a similar argument to that of the previous paragraph, we see that
\[\pi_{h}(v) = \pi_{h}h^{k-k_0}h^{k_0}h^{-k}(v)>_h \pi_hh^{k_0}h^{-k}(v)>_h v_{\max}.\] 
Again, since $g(u) = h(u)$ for all $u\in\axis(g)$, for all $j,n\in\bN$ with $n \ge j$ we have
\[\pi_{g}g^{-j}h^{n}(v) \ge_g \pi_g(v) >_g v_{\max}.\]
In particular, $g^{-j}h^{n}(v)\not\in\cP_{g}$. This completes the proof in the case when $\omega_{-}(g) = \omega_{-}(h)$.

\noindent{\bfseries{Case 2: }} $\omega_{-}(h) \neq \omega_{-}(g)$. 

In this case, for all $v\in V(T)$, the sequence $d(h^{-k}(v),\axis(g))_{k\in\bN}$ is unbounded and eventually non-decreasing by Lemma \ref{lem:dist_to_axis_seq}. Set $B = \max\{d(v,\axis(g))\mid v\in V(\cP_g)\}$. Since $\omega_{+}(h) = \omega_{+}(g)$ by assumption, $\axis(h)$ and $\axis(g)$ have infinite intersection. Hence, for any $v\in V(T)$, there exists $K\in\bN$ such that for all $k\ge K$ we have $\pi_{g}h^k(v) = \pi_hh^k(v)$. Since $\cP_h$ and $\cP_g$ are finite, there exists $k_0,k_1,k_2\in \bN$ such that for all $v\in V(\cP_{h})$, we have
\begin{enumerate}
	\item $\pi_gg^{-k_0}(v)<_g v_{\min}$. 
	\item $k>k_1$ implies $d(h^{-k}(v),\axis(g))> B$. \label{itm:dist_assump}
	\item $\pi_{g}h^{k_2}(v) = \pi_{h}k^{k_2}(v)>_g v_{\max}$.
\end{enumerate}
Set 
\[V = \bigcup_{i = -k_0-k_1}^{k_2}h^{i}(\cP_{h}).\]

Suppose $v\not\in V$. For \ref{itm:tech_lem_1}, suppose $h^{k}(v)\in \cP_{h}$ for some $k\ge 0$ and fix $n\in\bN$. We must have $k > k_0+k_1$ as $v\not\in V$. We assume that $d(h^{n-k}h^{k}(v),\axis(g))\le B$ as otherwise, since the action of $g$ preserves distance from $\axis(g)$, choice of $B$ implies $g^{-n}h^{n-k}h^{k}(v)\not\in \cP_g$. From \ref{itm:dist_assump} we have $-k_1\le n - k$ and so $n \ge k - k_1 > k_0$. If $n - k < 0$, then since $\omega_{+}(g) = \omega_{+}(h)$, we have $\pi_{g} h^{n-k}h^{k}(v)\le_g \pi_{g}h^{k}(v)$. By choice of $k_0$ we have
\[\pi_gg^{-n}h^n(v) = \pi_{g}g^{-n}h^{n-k}h^{k}(v)\le_g\pi_gg^{-n}h^{k}(v)\le_g \pi_gg^{-k_0}h^{k}(v)<_g v_{\min}.\]
This shows $g^{-j}g^{-n}h^{n}(v)\not\in\cP_{g}$ for all $j \ge 0$. Alternatively, if $n - k \ge 0$, then because $l(g) = l(h)$ and $\omega_{+}(g) = \omega_{+}(h)$,  
\[\pi_{g}h^{n-k}h^{k}(v)\le_g\pi_{g}g^{n-k}h^{k}(v). \]
It follows that 
\begin{align*}
\pi_{g}g^{-n}h^{n}(v)& = g^{-n}\pi_{g}h^{n}h^{-k}h^{k}(v) \le_{g}g^{-n}\pi_{g}g^{n - k}h^{k}(v)\\
& = \pi_{g}g^{-k+k_0}g^{-k_0}h^{k}(v).
\end{align*}
But $h^{k}(v)\in \cP_{h}$ and $-k +k_0 < 0$. Choice of $k_0$ gives $\pi_{g}g^{-k+k_0}g^{-k_0}h^{k}(v)<_{g} v_{\min}$. In particular $\pi_{g}g^{-n}h^{n}(v)<_{g}v_{\min}$ and so $g^{-j}g^{-n}h^{n}(v)\not\in\cP_{g}$.

For \ref{itm:tech_lem_2}, if $h^{-k}(v)\in \cP_{h}$ for some $k\ge 0$, then
we must have $k > k_2$ as $v\not\in V$. Thus, we may write $v = h^{k}h^{-k}(v)$, where $k > k_2$ and $h^{-k}(v)\in V(\cP_{h})$. Choice of $k_2$ gives $\pi_{g}(v) = \pi_{h}(v) >_g v_{\max}$. Since $l(g) = l(h)$ and $\omega_{+}(g) = \omega_{+}(h)$, for all $n,j\in\bN$ with $n \ge j$ we have
\[\pi_{g}g^{-j}h^{n}(v) = \pi_{g}g^{n-j}(v)\ge_g \pi_g(v)>_g v_{\max}.\]
In particular $g^{-j}h^{n}(v)\not\in \cP_{g}$.
\end{proof}
\begin{proposition}\label{prop:classification_in_terms_of_sing_traj}
Suppose $F$ is $2$-transitive and $g,h\in G(F,F')$ are hyperbolic. Then the following are equivalent:
\begin{enumerate}
	\item We have $\omega_{+}(g) = \omega_{+}(h)$ and for all $v\in V(T)$, we have $\lambda_g(v) = \lambda_h(v)$;
	\item $g$ and $h$ are asymptotic.
\end{enumerate}
\end{proposition}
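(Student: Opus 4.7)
The plan is to use Theorem \ref{thm:asymptotic_and_length_classification} as the bridge between $g \asymp h$ and boundedness of $\cN_e(g^{-pn}h^{qn})$. For $(2) \Rightarrow (1)$, given $g \asymp h$, I extract $p, q, M$ with $\cN_e(g^{-pn}h^{qn}) \le M$; via \eqref{eq:comparable_card_def} and Lemma \ref{lem:length_bound_coset_above} this bounds the index sequence appearing in Lemma \ref{lem:equal_ends}, yielding $\omega_+(g^p) = \omega_+(h^q)$ and hence $\omega_+(g) = \omega_+(h)$. For $\lambda_g = \lambda_h$, suppose otherwise and pick $v_0$ with $\lambda_g(v_0) \ne \lambda_h(v_0)$, which by Lemma \ref{lem:basic_asym_sing_prop}\ref{itm:traj_4} gives $\lambda_{g^p}(v_0) \ne \lambda_{h^q}(v_0)$. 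Setting $u_n := h^{-qn}(v_0)$ and using $\sigma(g^{-pn}, v_0) = \sigma(g^{pn}, g^{-pn}(v_0))^{-1}$, the cocycle identity yields
\[
\sigma(g^{-pn}h^{qn}, u_n) = \sigma(g^{pn}, g^{-pn}(v_0))^{-1}\,\sigma(h^{qn}, h^{-qn}(v_0)),
\]
and for $n$ large enough that both $\lambda$-limits are attained, this product lies in $F$ if and only if $\lambda_g(v_0) = \lambda_h(v_0)$. Thus $u_n \in S(g^{-pn}h^{qn}) \subset \Int(\cT_e(g^{-pn}h^{qn}))$; since $d(e, u_n) \to \infty$, the complete subtree $\cT_e(g^{-pn}h^{qn})$ must contain the path from $e$ to $u_n$, contributing at least $d(e, u_n) - 2$ internal vertices to $\cN_e$ and contradicting the uniform bound $M$.

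For $(1) \Rightarrow (2)$, I first replace $(g, h)$ by $(g^{l(h)}, h^{l(g)})$: Lemma \ref{lem:basic_asym_sing_prop}\ref{itm:traj_4} preserves $\lambda_g = \lambda_h$, and since $\asymp$ is an equivalence relation with $g \asymp g^k$ for every $k \ge 1$, asymptoticity of the reduced pair implies $g \asymp h$. I may thus assume $l(g) = l(h)$; choosing an edge $e$ with both endpoints on $\axis(g) \cap \axis(h)$ far enough along $\omega_+$, the fact that $g$ and $h$ translate the common axis by the same amount gives $g^n(e) = h^n(e)$ and so $h^{-n}g^n(e) = e$ for every $n$. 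By Theorem \ref{thm:asymptotic_and_length_classification} it now suffices to bound $\cN_e(g^{-n}h^n)$ uniformly in $n$, which I prove by showing that $\bigcup_{n \ge 1} S(g^{-n}h^n)$ sits in a fixed finite set $W$. Taking fixed pandos $\cP_g, \cP_h$, I let $W$ be the union of the exception set from Lemma \ref{lem:special_set} together with finitely many translates $h^i(V(\cP_h))$ absorbing the bounded $h$-orbit excursions of $\cP_h$-vertices through $S(h)$. For $v \notin W$, a case analysis on whether the $h$-orbit of $v$ is disjoint from $\cP_h$, hits $\cP_h$ only in the forward direction, or only in the backward direction uses clauses (i) and (ii) of Lemma \ref{lem:special_set} to force $\sigma(g^{-n}, h^n(v)) \in F$, while Lemma \ref{lem:basic_asym_sing_prop}\ref{itm:traj_3} together with $\lambda_g = \lambda_h$ forces $\sigma(h^n, v) \in F$ (or else aligns the two cocycle factors into the common coset $\lambda_g(h^n(v)) = \lambda_h(h^n(v))$); in every case $\sigma(g^{-n}h^n, v) \in F$, so $v \notin S(g^{-n}h^n)$.

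The main obstacle is the uniformity step of $(1) \Rightarrow (2)$: producing a single finite $W$, independent of $n$, outside of which every vertex fails to be a singularity of $g^{-n}h^n$ for every $n$. Lemma \ref{lem:special_set} is tailored for this, but clauses (i) and (ii) handle opposite sides of $\cP_h$ asymmetrically, so the hypothesis $\lambda_g = \lambda_h$ is indispensable to handle the remaining case and to force cancellation in the cocycle product modulo $F$.
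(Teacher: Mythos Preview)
Your proposal is correct and follows essentially the same route as the paper: both directions pass through Theorem~\ref{thm:asymptotic_and_length_classification}, with $(1)\Rightarrow(2)$ obtained by reducing to $l(g)=l(h)$, applying Lemma~\ref{lem:special_set} to confine $\bigcup_n S(g^{-n}h^n)$ to a fixed finite set via the same three-case analysis, and $(2)\Rightarrow(1)$ obtained by the cocycle computation showing $h^{-qn}(v_0)\in S(g^{-pn}h^{qn})$ would force $\cN_e\to\infty$. One minor simplification: for $\omega_+(g)=\omega_+(h)$ you need not detour through Theorem~\ref{thm:asymptotic_and_length_classification} and Lemma~\ref{lem:length_bound_coset_above}; the very definition of $g\asymp h$ already bounds $[U_e:U_e\cap U_e^{g^{-pn}h^{qn}}]$, so Lemma~\ref{lem:equal_ends} applies directly (and this avoids the $U_{\{e\}}$ versus $U_e$ discrepancy between Lemma~\ref{lem:length_bound_coset_above} and Lemma~\ref{lem:equal_ends}).
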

\begin{proof}
First suppose $\omega_{+}(g) = \omega_{+}(h)$ and $\lambda_g(v) = \lambda_h(v)$ for all $v\in V(T)$. It suffices to show $g^{n_1}\asymp h^{n_2}$ for some $n_1,n_2\in\bN$ as $\asymp$ is an equivalence relation which is closed under taking powers. Part \ref{itm:traj_4} of Lemma \ref{lem:basic_asym_sing_prop} shows that, by taking powers of $g$ and $h$ if necessary, we may assume $l(g) = l(h)$. Since $\axis(g)\cap \axis(g)$ is an infinite path, choose an edge $e$ with $o(e),t(e)\in\axis(g)\cap\axis(h)$. Then $g^{n}(e) = h^{n}(e)$ for all $n\in\bN$. To show $g\asymp h$ it suffices to show $\{\cS(g^{-n}h^{n})\mid n\in\bN\}$ is bounded as this would imply that $\{\cH_{e}(g^{-n}h^{n})\mid n\in\bN\}$ is bounded which, by Theorem \ref{thm:asymptotic_and_length_classification}, shows $g\asymp h$. 

Choose a pando $\cP_{g}$ for $g$. Since $\cP_{g}$ is finite, there exists $B\in\bN$ such that $v\in V(\cP)$ implies $d(v,\axis(g)\cap \axis(h))<B$. Choose any pando $\cP_{h}$ for $h$ containing a vertex $u$ with $d(u,\axis(h))>B$.

Applying Lemma \ref{lem:special_set} to $\cP_{g}$ and $\cP_{h}$, there exists a finite set $V_1\subset V(T)$ such that for any $v\not\in V_{1}$
\begin{enumerate}
	\item $h^{k}(v)\in \cP_{h}$ for some $k\ge 0$ implies $g^{-j}g^{-n}h^{n}\not\in \cP_{g}$ for all $j,n\in \bN$; and
	\item $h^{-k}(v)\in\cP_{h}$ for some $k\ge 0$ implies $g^{-j}h^{n}(v)\not\in \cP_{g}$ for all $n\ge j\ge 0$.
\end{enumerate}
Set $V = V_{1}\cup V(\cP_{h})$ and fix $n\in\bN$. We show  $\cS(g^{-n}h^{n})\subset V$. Since $V$ is finite, this proves the result. To this end suppose $v\not\in V$. We consider three possibilities: $h^{k}(v)\in \cP_{h}$ for some $k\ge 0$; $h^{-k}(v)\in\cP_{h}$ for some $k\ge 0$; and $h^{k}(v)\not\in\cP_{h}$ for all $k\in\bZ$. 

Suppose first that $h^{k}(v)\in \cP_{h}$ for some $k\ge 0$. Since $v\not\in\cP_{h}$ and $\cP_h$ is a pando for $h$, we must have $k > 0$ and $h^{-j}(v)\not\in \cP_{h}$ for all $j \ge 0$. Then $\lambda_{h}(h^n(v)) = \sigma(h^n,v)F$ by part \ref{itm:traj_3} of Lemma \ref{lem:basic_asym_sing_prop} and \ref{pando1} in Definition \ref{def:pando}. Since $v\not\in V_1$, we have $g^{-j}g^{-n}h^{n}(v)\not\in \cP_{g}$ for all $j\ge 0$. Thus, the sequence $\sigma(g^{i},g^{-i}h^{n}(v))F$ is constant for $i\ge n$. This shows $\lambda_{g}(h^n(v)) = \sigma(g^{n}, g^{-n}h^{n}(v))F$. By assumption $\lambda_{g}(h^n(v)) = \lambda_h(h^n(v))$ and so 
\[\sigma(g^{n}, g^{-n}h^{n}(v))^{-1}\sigma(h^{n},v)\in F.\]
Hence
$\sigma(g^{-n}h^{n},v) = \sigma(g^n,g^{-n}h^{n}(v))^{-1}\sigma(h^n,v)\in F$. 

Now suppose $h^{-k}(v)\in\cP_{h}$ for some $k\ge 0$. Again we must have $k > 0$ and $h^{j}(v)\not\in\cP_{h}$ for all $j\ge 0$. Since $v\not\in V_{1}$, we have $g^{-j}h^{n}(v)\not\in \cP_{g}$ for all $n \ge j\ge 0$. Lemma \ref{lem:SingOfPowers} shows $\sigma(g^{-n}h^{n},v) = \sigma(g^{-n},h^{n}(v))\sigma(h^{n},v)\in F$. Combining all previous cases shows $S(g^{-n}h^n)\subset V$ as required.	

Finally, suppose $h^{k}(v)\not\in\cP_{h}$ for all $k\in\bZ$. Since $\cP_{h}$ is a pando for $h$, \ref{pando2} and \ref{pando3} show $d(h^{k}(v),\axis(h))> B$ for all $k\in\bZ$. Noting that $\omega_{+}(g) = \omega_{+}(h)$ we have
\[d(g^{-j}h^{n}(v),\axis(h)\cap \axis(g)) \ge d(h^{n}(v),\axis(h)\cap \axis(g)) > B.\]
Choice of $B$ shows $g^{-j}h^{n}(v)\not\in \cP_{g}$ for all $j\ge 0$. Lemma \ref{lem:SingOfPowers} shows $\sigma(g^{-n}h^{n},v)\in F$.

Now suppose $g\asymp h$. Lemma \ref{lem:equal_ends} shows $\omega_+(g) = \omega_{+}(h)$. Theorem \ref{thm:asymptotic_and_length_classification} gives $p,q\in\bN$ and $e\in E(T)$ such that $\{\cH_{e}(g^{-pn}h^{qn})\mid n\in\bN\}$ is bounded. We can assume without loss of generality that $g = g^{p}$ and $h = h^{q}$ by noting that $g^p\asymp g\asymp h \asymp h^q$ and applying Lemma \ref{lem:basic_asym_sing_prop}. 

Suppose $v\in V(T)$. Since $\{\cH_{e}(g^{-j}h^{j})\mid j\in\bN\}$ is bounded, it follows from the definition of $\cH_e$ and $\cT_e(g^{-j}h^j)$ that vertices $S(g^{-j}h^{j})$ are contained within a bounded distance of $e$. There exists $N\in\bN$ such that $n\ge N$ implies $h^{-n}(v),g^{-n}(v)\not\in S(g^{-j}h^{j})$ for all $j\in\bN$. Choosing $n\ge N$ sufficiently large such that $\lambda_h(v) = \sigma(h^{n},h^{-n}(v))F$ and $\lambda_{g}(v) = \sigma(g^{n},g^{-n}(v))F$, we have
\[\lambda_g(v)^{-1}\lambda_h(v) = F\sigma(g^{n},g^{-n}(v))^{-1}\sigma(h^{n},h^{-n}(v))F = F\sigma(g^{-n}h^{n},h^{-n}(v))F = F.\] 
Thus $\lambda_g(v) = \lambda_h(v)$.
\end{proof}
\subsection{The distance between asymptotic classes}
In this section we consider the pseudometric defined on asymptotic classes. We show this pseudometric induces the discrete topology on the set of equivalence classes and is therefore a metric.
\begin{lemma}\label{lem:translation_of_edge_calc}
Suppose $g,h\in G(F,F')$ are hyperbolic and $e\in E(T)$. Then there exists a constant $K\in\bN$ such that \[|nl(g) - ml(h)|\le d(e,g^{-n}h^{m}(e)) + K\]
for all $n,m\in\bN$.
\end{lemma}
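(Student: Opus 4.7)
The plan is to reduce both sides to vertex-to-vertex distances and then apply the triangle inequality twice, exploiting that $g^n$ and $h^m$ act as translations along their axes. First I would choose basepoints $v\in\axis(g)$ and $u\in\axis(h)$ (for concreteness, vertices of minimal distance to $e$). The key fact is that $d(v,g^n(v)) = nl(g)$ and $d(u,h^m(u)) = ml(h)$, because $g^n$ translates $\axis(g)$ by exactly $nl(g)$ (and similarly for $h^m$). This converts the left-hand side of the inequality into the difference of two vertex-to-vertex distances, both of which depend linearly on $n$ or $m$.

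Second, I would apply the triangle inequality along the chain $v\to u\to h^m(u)\to g^n(v)$:
\[d(v,g^n(v))\le d(v,u)+d(u,h^m(u))+d(h^m(u),g^n(v)),\]
which rearranges to $nl(g)-ml(h)\le d(v,u)+d(g^n(v),h^m(u))$. The symmetric chain $u\to v\to g^n(v)\to h^m(u)$ yields $ml(h)-nl(g)\le d(v,u)+d(g^n(v),h^m(u))$, so
\[|nl(g)-ml(h)|\le d(v,u)+d(g^n(v),h^m(u)).\]

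Third, I would convert the right-hand distance into one involving $e$ by using that $g^n$ and $h^m$ act as isometries of $T$. Triangle inequality gives
\[d(g^n(v),h^m(u))\le d(g^n(v),g^n(e))+d(g^n(e),h^m(e))+d(h^m(e),h^m(u)),\]
and isometric invariance gives $d(g^n(v),g^n(e)) = d(v,e)$, $d(h^m(e),h^m(u)) = d(u,e)$, and $d(g^n(e),h^m(e)) = d(e,g^{-n}h^m(e))$. Setting
\[K := d(v,u)+d(v,e)+d(u,e)\]
(plus a small additive fudge of at most $2$ to absorb the convention that $d(\cdot,e)$ involves an edge rather than a vertex, in the style of Lemma \ref{lem:equal_ends}) yields the claim, and $K$ is manifestly independent of $n$ and $m$. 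The proof is essentially a routine quasi-isometry estimate for isometries translating along their axes; the only real issue is bookkeeping around the edge-versus-vertex convention for distances, which is resolved by identifying $e$ with either of its endpoints.
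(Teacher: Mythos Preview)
Your proof is correct and follows essentially the same strategy as the paper: exploit that $g^n$ and $h^m$ translate along their axes and apply triangle inequalities to relate $|nl(g)-ml(h)|$ to $d(e,g^{-n}h^m(e))$. The only cosmetic difference is that the paper works with the single basepoint $o(e)$, uses the explicit formula $d(g^n(o(e)),o(e)) = nl(g) + 2d(o(e),\axis(g))$, and then applies the reverse triangle inequality once, whereas you project to on-axis basepoints $v,u$ and apply the forward triangle inequality several times; this yields a slightly larger but equally valid constant.
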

\begin{proof}
Since $g$ and $h$ act as translations along $\axis(g)$ and $\axis(h)$ respectively, we have \[d(g^{n}(o(e)),o(e)) = nl(g)+2d(o(e),\axis(g))\] and \[d(h^{m}(o(e)),o(e)) = ml(h)+2d(o(e),\axis(h)).\]
Set $K = 2d(o(e),\axis(h)) - 2d(o(e),\axis(g))$. Then
\begin{align*}
|nl(g) - ml(h)|	& = |d(g^{n}(o(e)),o(e)) - d(h^{m}(o(e)),o(e)) + K|\\
&\le |d(g^{n}(o(e)),o(e)) - d(h^{m}(o(e)),o(e))| +|K|.
\end{align*}
Applying the reverse triangle inequality, and noting that 
\[d(g^{n}(o(e)),h^{m}(o(e))) \le d(e,g^{-n}h^{m}(e))+2\]
shows $|K|+2$ is the required constant.
\end{proof}

\begin{remark}\label{rem:constant_in_2_tran}
	For the proof of Theorem \ref{thm:directions} and other results in Section \ref{sec:s_mult_leboudec}, we often assume that $F$ is $2$-transitive and make use of the constant $|F_a/(F_a\cap F_b)|$, where $a,b\in\Omega$ are distinct. That this does not depend on $a$ and $b$ is a consequence of $2$-transitivity. The Orbit-Stabiliser Theorem shows the constant is precisely $|\Omega| - 1 = \deg(T) - 1$.
\end{remark}

We are ready to show that the space of directions of $G(F,F')$ is discrete when $F$ is $2$-transitive. To do so we assume that $g,h\in G(F,F')$ are hyperbolic but not asymptotic. We then consider four cases depending on $\omega_+(g)$, $\omega_+(h)$ and $\delta_{+n}^{U,U}(g,h)$, here $U$ is a fixed compact open subgroup. The aim of each case is similar. Roughly, up to exchanging $n\in\bN$ for a subsequence $(n_i)\subset\bN$, we build a sequence of edges $(e_i)_{i\in\bN}$ such that $U$ fixes $e_0$, $U\cap U^{g^{-n}h^{n'}}$ fixes $e_n$ ($n'\in\bN$ depends on $n$), and $d(e_0, e_n)$ grows proportional to $nl(g)$ as $n\to\infty$. In the simpler cases this sequence of edges will be $g^{-n}h^{n'}(e_0)$ but in the more complicated case, specifically when $\omega_+(g) = \omega_{+}(h)$, Proposition \ref{prop:classification_in_terms_of_sing_traj} and Lemma \ref{lem:cylinder_of_singularities} are required to build the sequence. This case is the most difficult because as elements of $\Aut(T)$, $g$ and $h$ are asymptotic thus examination of the singularities of $g$ and $h$ is required. Following the construction of $(e_i)$, Lemma \ref{lem:coset_calc} is used to complete each case.

\begin{theorem}\label{thm:directions}
Suppose $F$ is $2$-transitive. The pseudometric on the asymptotic classes of $G(F,F')$ induces the discrete topology.
\end{theorem}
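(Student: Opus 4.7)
The plan is to show that in the space of directions, every asymptotic class $[g]$ is isolated; since $\delta$ is a metric on classes, it suffices to produce, for each hyperbolic $g \in G(F,F')_>$, a constant $c(g) > 0$ such that $\delta(g,h) \geq c(g)$ whenever $h \not\asymp g$. That I may assume both $g$ and $h$ are hyperbolic follows from Proposition \ref{prop:elliptic_scale}. Fix $e \in E(T)$, set $U = U_{\{e\}}$, and let $q = \deg(T) - 1$.

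The basic index estimate, valid for any $\varphi \in G(F,F')$, is
\[
[U : U \cap U^{\varphi}] \geq q^{d(e,\varphi(e)) - 2},
\]
obtained as follows: $U^{\varphi} \leq U_{\{\varphi(e)\}}$, so $U \cap U^{\varphi} \leq U_{\{e\}} \cap U_{\{\varphi(e)\}}$, and Lemma \ref{lem:coset_calc} together with the $2$-transitivity of $F$ (Remark \ref{rem:constant_in_2_tran}) evaluates the latter index as a product of factors all equal to $q$ over the path from $e$ to $\varphi(e)$. Since the denominator of $\delta_{+,n}^{U,U}(g,h)$ is the linear quantity $n \log s(g)$, showing $\delta_+^{U,U}(g,h) \geq c(g)>0$ reduces to exhibiting, for infinitely many $n \in \bN$, an admissible $k = k(n) \in \bN_0$ (i.e.\ $s(h^k) \leq s(g^n)$) together with a lower bound $d(e, g^{-n}h^{k(n)}(e)) \geq c_1(g) \cdot n$ for some positive $c_1(g)$.

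I split into two cases. In Case I, $\omega_+(g) \neq \omega_+(h)$, Lemma \ref{lem:translation_of_edge_calc} yields $d(e, g^{-n}h^k(e)) \geq n l(g) + k l(h) - K$ for a constant $K$; the scale-feasibility condition is easily arranged by first passing to powers of $g$ and $h$ whose scales agree, using Proposition \ref{prop:scale_prop}, and the linear growth of the distance in $n$ follows immediately. In Case II, $\omega_+(g) = \omega_+(h)$ but $g \not\asymp h$, this naive distance may remain bounded, so a more delicate construction is required. I appeal to Proposition \ref{prop:classification_in_terms_of_sing_traj} to obtain $v^* \in V(T)$ with $\lambda_g(v^*) \neq \lambda_h(v^*)$; by Definition \ref{def:asymp_func} this means $\sigma(g^{-n}h^n, v^*) \notin F$ for every sufficiently large $n$, so $v^* \in S(g^{-n}h^n)$ persistently. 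Theorem \ref{thm:asymptotic_and_length_classification} further guarantees that $\cN_e(g^{-n}h^n)$ is unbounded, while Lemma \ref{lem:cylinder_of_singularities} confines all singularities of $g^{-n}h^n$ to a bounded tube around $\axis(g)$. Consequently, $\cT_e(g^{-n}h^n)$ contains vertices at distance proportional to $n l(g)$ from $e$ along the direction of $\axis(g)$, and a careful choice of edge $e_n$ adjacent to such a distant singularity produces, via the core estimate, the required lower bound.

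The principal obstacle is Case II: one must parlay the abstract persistence of a singularity at the fixed vertex $v^*$, together with the tube-confinement of Lemma \ref{lem:cylinder_of_singularities} and the unbounded growth of $\cN_e(g^{-n}h^n)$, into an explicit element $\varphi = g^{-n}h^{k(n)}$ for feasible $k(n)$ with $d(e, \varphi(e)) \geq c_1 n$. This amounts to tracking the propagation of singularities of $g^{-n}h^n$ along $\axis(g)$ and verifying that they reach distance linear in $n l(g)$ from $e$. Once this construction is in place, combining the core estimate with the bound $n \log s(g)$ in the denominator yields $\delta_+^{U,U}(g,h) \geq c_1 l(g) \log q / \log s(g) > 0$, a constant depending only on $g$. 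This isolates $[g]$ in the quotient metric, completing the proof.
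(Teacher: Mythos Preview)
Your overall strategy --- isolate each class $[g]$ by bounding $\delta_+(g,h)$ from below by a constant depending only on $g$ --- is correct and matches the paper's approach. The difficulties are in the execution of your two cases.

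In Case~I your invocation of Lemma~\ref{lem:translation_of_edge_calc} is wrong: that lemma gives $d(e,g^{-n}h^{k}(e))\ge |nl(g)-kl(h)|-K$, not the sum. The inequality you want, $d(e,g^{-n}h^{k}(e))\ge nl(g)-K$ uniformly over admissible $k$, does hold when $\omega_+(g)\neq\omega_+(h)$, but it comes from the elementary branching argument in a tree (the geodesics from $o(e)$ towards $\omega_+(g)$ and $\omega_+(h)$ diverge at a fixed vertex), not from Lemma~\ref{lem:translation_of_edge_calc}. The paper separates this into two sub-cases (its Cases~2 and~4) precisely because the geometry differs according to whether $\omega_+(h)=\omega_-(g)$ or not; you should do likewise, or supply the branching argument directly. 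You also do not address the case where the minimising $k$ in $\delta_{+,n}$ stays bounded; the paper handles this separately (its Case~1) via \cite[Lemma~15]{Baumgartner06}.

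Case~II has two substantive gaps. First, your claim that $v^*\in S(g^{-n}h^n)$ persistently is incorrect: what $\lambda_g(v^*)\neq\lambda_h(v^*)$ actually yields is that $h^{-n}(v^*)$ (equivalently $g^{-n}(v^*)$ for the inverse composition) lies in the relevant singularity set for all large $n$. This vertex \emph{moves} along $\axis(g)$ as $n$ grows, and that motion is precisely what produces linear growth --- a fixed singularity at $v^*$ would not. Second, and more seriously, your ``core estimate'' $[U:U\cap U^{\varphi}]\ge q^{d(e,\varphi(e))-2}$ cannot do the work here: when $\omega_+(g)=\omega_+(h)$ and the minimising $k$ satisfies $k\,l(h)\approx n\,l(g)$, the distance $d(e,g^{-n}h^{k}(e))$ may remain bounded, so the estimate is vacuous. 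The paper's mechanism is different: it shows that every $x\in U\cap U^{g^{-n}h^{f(n)}}$ must \emph{fix} a specific vertex $v_i\in\axis(g)$ lying at distance $\sim n\,l(g)$ from $e$. The reason is that if $x$ moves $v_i$ then $x$ pushes the moving singularity $g^{-n}(v^*)$ outside the tube of Lemma~\ref{lem:cylinder_of_singularities}, and a direct local-action computation then shows $\varphi^{-1}x\varphi\notin U(F)$, so $x\notin U^{\varphi}$. This gives $U\cap U^{\varphi}\le U\cap U_{v_i}$, and Lemma~\ref{lem:coset_calc} applied to the path from $e$ to $v_i$ supplies the linear lower bound. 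Your phrase ``a careful choice of edge $e_n$ \dots\ via the core estimate'' does not capture this mechanism; you need the fixed-vertex argument, not an edge-displacement bound. Finally, as in Case~I, you must treat the minimisation over all admissible $k$, not just $k=n$; the paper does this by introducing the minimiser $f(n)$ and passing to a subsequence along which $f(n_i)\to\infty$.
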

\begin{proof}
Fix $g\in G(F,F')$ moving towards infinity. We find a constant $B > 0$ such that for all $h\in G(F,F')$ moving towards infinity with $h\not\asymp g$ we have  $\delta_{+}(g,h)\ge B$. To this end, suppose $h\in G(F,F')$ moves towards infinity and $h\not\asymp g$.

Fix an edge $e\in E(T)$ such that $o(e), t(e)\in \axis(g)$ and set $U = U_{e}$. Define a function $f:\bN\to\bN_0$ by letting $f(n)$ be a natural number such that
\[\delta_{+n}^{U,U}(g,h) = \dfrac{\log([U^{g^{n}}: U^{h^{f(n)}}\cap U^{g^{n}}])}{n\log s(g)}.\] 
We split our argument into four cases depending on $f$ and $\omega_{+}(h)$. 

\noindent{\bfseries{Case 1}:} There exists $M\in\bN$ such that $f(n)\le M$ for all $n\in\bN$.\\
It follows that $\delta_+(g, h)\ge 1$ by \cite[Lemma 15]{Baumgartner06}. Propositions \ref{prop:U(F)_small_scale} and \ref{prop:U(F)_scale_calc} show that 
\[\delta_+(g,h)\ge 1 \ge \dfrac{l(g)\log(|\Omega| - 1)}{\log s(g)}.\]

\noindent{\bfseries{Case 2}:} $\omega_{-}(g) = \omega_{+}(h)$.\\
If $x\in U\cap U^{g^{-n}h^{f(n)}}$, then $x$ fixes $e$ and $g^{-n}h^{f(n)}(e)$. Lemma \ref{lem:coset_calc} shows
\[[U:U^{g^{-n}h^{f(n)}}\cap U] \ge [U: U\cap U_{g^{-n}h^{f(n)}}]\ge (|\Omega| - 1)^{d(e, g^{-n}h^{n}(e))}.\]
Since $\omega_-(g) = \omega_{+}(h)$, we have $\omega_+(g^{-1}) = \omega_+(h)$. It follows from \cite[Lemma 4.7]{Baumgartner15} that $l(g^{-n}h^{f(n)}) = nl(g)+f(n)l(h)$. Thus, \[d(g^{-n}h^{f(n)}(e), e)\ge l(g^{-n}h^{f(n)}) - 1\ge nl(g) - 1,\] 
and so
\[[U:U^{g^{-n}h^{f(n)}}\cap U] \ge  (|\Omega| - 1)^{nl(g) - 1}.\]
This implies 
\[\delta_{+n}^{U,U}(g,h)\ge (n l(g) - 1)\dfrac{\log(|\Omega| - 1)}{n\log s(g)}.\]
Taking the limit as $n\to\infty$ gives
\[\delta_+(g,h)\ge \dfrac{l(g)\log(|\Omega| - 1)}{\log s(g)}.\]
\\[.3cm]
\noindent{\bfseries{Case 3}:} $f(n)\to \infty$ as $n\to\infty$ and $\omega_{+}(g) = \omega_{+}(h)$.\\
Lemma \ref{lem:cylinder_of_singularities} gives $M_1\in\bN$ such that $v\in \cS(h^{-f(n)}g^{n})$ implies $d(v,\axis(g))\le M_1$. Proposition \ref{prop:classification_in_terms_of_sing_traj} shows there exists $v\in V(T)$ such that $\lambda_g(v)\neq\lambda_h(v)$. Observe that $n\ge H_g(v)$ and $f(n)\ge H_h(v)$ implies $\lambda_g(v) = \sigma(g^{n},g^{-n}(v))F$ and $\lambda_h(v) = \sigma(h^{f(n)},h^{-f(n)}(v))F$. Since these cosets are distinct, we have 
\[\sigma(h^{-f(n)}g^{n},g^{-n}(v)) = \sigma(h^{f(n)},h^{-f(n)}(v))^{-1}\sigma(g^{n},g^{-n}(v))\not\in F.\]
In particular, $g^{-n}(v)\in S(h^{-f(n)}g^{n})$.

As $g$ acts by translation along $\axis(g)$, there exists $M_{3}$ such that for $n\ge M_{3}$ we have $d(\pi_{g}g^{-n}(v),e)> M_1$ and $\pi_{g}g^{-n}(v)<_go(e)$. Since $f(n)\to\infty$ as $n\to\infty$, we can choose a sequence $(n_i)_{i\in\bN}\subset\bN$ such that:
\begin{enumerate}
	\item $n_{i+1}>n_i\ge H_g(v)$ and $f(n_{i+1})>f(n_i)$; and
	\item $n_0\ge M_3$ and $f(n_0)\ge H_h(v)$.
\end{enumerate}
For each $i\in\bN$, we have $d(\pi_{g}g^{-n_i}(v),e)> M_1$ since $n_i \ge n_0 \ge M_3$. Since $n_i \ge H_g(v)$ and $f(n_i)\ge H_h(v)$, we have $g^{- n_i}(v)\in S(h^{-f(n_i)}g^{n_i})$. Hence $d(g^{-n_i}(v),\axis(g))\le M_1$. There exists $v_0\in\axis(g)$ on the path between $e$ and $g^{-n_0}(v)$ with $d(v_0, g^{-n_0}(v)) = M_1$. Set $v_i = g^{n_0-n_i}(v_0)$. Then $v_i\in\axis(g)$ and $\pi_{g}g^{-n_i}(v)\le_g v_i \le_g o(e)$. Furthermore, we have $d(v_i, g^{-n_i}(v)) = M_1$ and $d(v_0,v_i) = (n_0 - n_i)l(g)$.

Now suppose $x\in U$ with $x(v_i)\neq v_i$. Since $x(v_i)$ is on the path between $xg^{-n_i}(v)$ and $e$, $x(v_i)$ is on the path between $xg^{-n_i}(v)$ and $\axis(g)$. But $d(x(v_i), xg^{-n_i}(v)) = M_1$ and so $d(xg^{- n_i}(v),\axis(g))> M_1$. Choice of $M_1$ shows $xg^{ - n_i}(v)\not\in \cS(h^{-f(n_i)}g^{n_i})$. This shows $x\not\in U^{g^{-n_i}h^{f(n_i)}}$. We have shown that $U\cap U^{g^{-n_i}h^{f(n_i)}}\le U\cap U_{v_i}$. Thus,
\[[U:U\cap  U^{g^{-n_i}h^{f(n_i)}}]\ge [U:U\cap U_{v_i}].\] 
Let $v_i'$ be the unique vertex distance one away from $v_i$ on the path between $v_i$ and $e$. Set $e_i = (v_i,v_i')$. Then $U\cap U_{v_i} = U\cap U_{e_i}$. Also, since $v_0$ is on the path between $v_i$ and $e$, we know $d(e_i,e)\ge d(v_i', v_0) = (n_i - n_0)l(g) - 1$. Lemma \ref{lem:coset_calc} shows
\[[U:U\cap U_{v_i}]\ge (|\Omega| - 1)^{(n_i - n_0)l(g) - 1}.\]
Hence,
\[\delta_{+n_i}^{U,U}(g,h)\ge \dfrac{(n_il(g) - n_0l(g) - 1)\log (|\Omega| - 1)}{n_i\log s(g)}\]
and so 
\[\delta_{+}(g,h)\ge \dfrac{l(g)\log (|\Omega| - 1)}{\log s(g)}.\]
\\[.3cm]
\noindent{\bfseries{Case 4}:} $f(n)\to \infty$ as $n\to\infty$ and $ \omega_{+}(h)\not\in\{\omega_{+}(g),\omega_{-}(g)\}$.\\
As in case 2,
\[[U:U\cap U^{g^{-n}h^{f(n)}}]\ge[U:U\cap U_{g^{-n}h^{f(n)}(e)}].\]
Since $\omega_{+}(g)\neq\omega_{+}(h)$ and $\omega_+(h)\neq \omega_-(g)$, there exists $M_1\in\bN$ such that $n\ge M_1$ implies $h^{n}(e)\not\subset\axis(g)$ and $\pi_{g}h^{n}(o(e)) = \pi_{g}h^{M_1}(o(e))$. There exists $M_2$ such that $n\ge M_2$ implies $\pi_{g}g^{-n}h^{M_1}(o(e)) <_g o(e)$. It follows that for $k_0,k_1 \ge M_1$ and $n_1\ge n_0\ge M_2$ we have 
\begin{align*}
d(g^{-n_1}\pi_gh^{k_1}o(e), g^{-n_0}\pi_gh^{k_0}o(e))& = d(g^{-n_1}\pi_gh^{M_1}o(e), g^{-n_0}\pi_gh^{M_1}o(e))\\
& = (n_1 - n_0)l(g)
\end{align*}
Since $f(n)\to \infty$ as $n\to \infty$, choose a sequence $(n_i)_{i\in\bN}\subset \bN$ such that:
\begin{enumerate}
	\item $n_{i+1}> n_i$ and $f(n_{i+1})> f(n_i)$; and
	\item $n_0 > M_2$ and $f(n_0) > M_1$.
\end{enumerate}
Since $n_i> n_0 > M_2$ and $f(n_i) > f(n_0) > M_1$ we have  \[\pi_gg^{-n_i}h^{f(n_i)}o(e)<_g \pi_g g^{-n_0}h^{f(n_0)}o(e) <_g o(e)\]
for all $i\in\bN$. This shows
\[d(e,g^{-n_i}h^{f(n_i)}(e))\ge d(\pi_g g^{-n_0}h^{f(n_0)}o(e),\pi_gg^{-n_i}h^{f(n_i)}o(e)) - 2.\] 
Applying Lemma \ref{lem:coset_calc}, we see that
\begin{align*}
[U:U\cap U_{g^{-n_{i}}h^{f(n_{i})}(e)}]\ge (|\Omega| - 1)^{d(e,g^{-n_i}h^{f(n_i)}(e))} \ge  (|\Omega| - 1)^{l(g)(n_i - n_0) - 2}.
\end{align*} 
Hence,
\[\delta_{+n_i}^{U,U}(g,h) \ge \dfrac{(n_il(g) - n_0l(g)-2)\log (|\Omega| - 1)}{n_i\log s(g)},\]
which implies
\[\delta_{+}(g,h)\ge \dfrac{l(g)\log (|\Omega| - 1)}{\log s(g)}.\qedhere\]
\end{proof}
Corollary \ref{cor:dist_bound_asym} follows directly from the proof of Theorem \ref{thm:directions}
\begin{corollary}\label{cor:dist_bound_asym}
	Suppose $F$ is $2$-transitive and $g,h\in G(F,F')$ are hyperbolic but not asymptotic. Then 
	\[\delta(g,h)\ge \log(|\Omega| - 1)\left(\dfrac{l(g)}{\log s(g)}+\dfrac{l(h)}{\log s(h)}\right).\]
\end{corollary}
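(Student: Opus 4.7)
The plan is to extract the corollary directly from the proof of Theorem \ref{thm:directions} plus a symmetrization step, so this is genuinely a bookkeeping result rather than a new argument. I would begin by observing that each of the four cases in that proof produces not merely $\delta_+(g,h) > 0$ but the explicit lower bound
\[\delta_+(g,h) \geq \frac{l(g)\log(|\Omega|-1)}{\log s(g)}.\]
In Cases 2, 3 and 4 this inequality appears verbatim at the end of each case, obtained by constructing a sequence of edges whose distance from the base edge grows like $nl(g)$ and then applying Lemma \ref{lem:coset_calc} to produce the factor $(|\Omega|-1)^{(n-n_0)l(g)+O(1)}$ in the relevant index. In Case 1 (where the exponent function $f$ is bounded), the cited lemma from \cite{Baumgartner06} gives $\delta_+(g,h)\ge 1$, and one combines Propositions \ref{prop:U(F)_small_scale} and \ref{prop:U(F)_scale_calc} together with the $2$-transitivity identity $|F_a/(F_a\cap F_b)|=|\Omega|-1$ (Remark \ref{rem:constant_in_2_tran}) to conclude that $s(g)\ge(|\Omega|-1)^{l(g)}$, equivalently $1 \ge l(g)\log(|\Omega|-1)/\log s(g)$.

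The second step is simply to notice that the hypothesis ``$g$ and $h$ are hyperbolic and not asymptotic'' is symmetric in $g$ and $h$. Running the exact same four-case analysis with the two elements interchanged yields
\[\delta_+(h,g) \geq \frac{l(h)\log(|\Omega|-1)}{\log s(h)}.\]
Adding this to the bound on $\delta_+(g,h)$ and recalling from Proposition \ref{intro:prop:pmetric} that $\delta(g,h) = \delta_+(g,h) + \delta_+(h,g)$ produces the desired estimate.

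The only technical point to verify is that both denominators $\log s(g)$ and $\log s(h)$ are strictly positive so the expression is well-defined. This follows because $2$-transitivity of $F$ (with $|\Omega|\geq 3$, which is the standing setting for regular trees) implies that $F$ has distinct point stabilisers, and then Lemma \ref{lem:U(F)_hyp_scale} together with Proposition \ref{prop:U(F)_small_scale} guarantees $s > 1$ on every hyperbolic element of $G(F,F')$. I do not anticipate any genuine obstacle: Theorem \ref{thm:directions} already performs all of the substantive estimates, and the corollary is just the observation that those estimates are quantitative and symmetric.
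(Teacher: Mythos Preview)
Your proposal is correct and matches the paper's approach exactly: the paper simply states that the corollary ``follows directly from the proof of Theorem \ref{thm:directions}'', and you have correctly identified that each of the four cases there terminates with the explicit bound $\delta_+(g,h)\ge l(g)\log(|\Omega|-1)/\log s(g)$, after which symmetrizing and summing via $\delta=\delta_++\delta_+$ gives the result. Your added remark about positivity of $\log s(g)$ and $\log s(h)$ is a welcome piece of hygiene the paper leaves implicit.
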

Corollary \ref{cor:flt_rank} follows from Theorem \ref{thm:directions} and \cite[Theorem 3]{Baumgartner12}.
\begin{corollary}\label{cor:flt_rank}
	Suppose $F$ is $2$-transitive. Then $G(F,F')$ has flat rank $1$.
\end{corollary}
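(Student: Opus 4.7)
The plan is a two-sided bound on the flat rank.

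For the upper bound, I would combine Theorem \ref{thm:directions} with the cited \cite[Theorem 3]{Baumgartner12}. Theorem \ref{thm:directions} already establishes that the space of directions of $G(F,F')$ is discrete. The external result then says, roughly, that the flat rank of a t.d.l.c.\ group is controlled by the geometry of the space of directions: a flat subgroup of rank $n$ produces an $(n-1)$-sphere inside the space of directions (this is the heuristic flagged by the author in the discussion following Lemma \ref{lem:dir_inv_distance}, and what \cite[Theorem 3]{Baumgartner12} makes precise). A discrete metric space contains no positive-dimensional sphere, so the flat rank is at most $1$.

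For the lower bound, it suffices to exhibit a single element $h \in G(F,F')$ with $s(h) > 1$, since the cyclic subgroup generated by $h$ is then flat of rank $1$: it is abelian, and Proposition \ref{prop:prop_of_tid} guarantees a common tidy subgroup for every power. Provided $\deg(T)\ge 3$, $2$-transitivity of $F$ forces distinct point stabilisers, because if $F_a = F_b$ for distinct $a,b\in\Omega$, then $F_a$ fixes $b$, so the $F_a$-orbit of $b$ is a singleton, contradicting transitivity of $F_a$ on $\Omega \setminus \{a\}$. Applying Corollary \ref{cor:2-trans_translations} produces a hyperbolic $h \in U(F) \leq G(F,F')$ with $l(h) = 1$, and Lemma \ref{lem:U(F)_hyp_scale} then yields $s(h) > 1$, as required.

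The main obstacle is essentially bookkeeping: one has to verify that the discreteness statement of Theorem \ref{thm:directions} matches the precise hypothesis under which \cite[Theorem 3]{Baumgartner12} delivers an upper bound on flat rank. If a direct appeal is not available, the fallback is to argue that any putative rank-$2$ flat subgroup $\langle g_1,g_2\rangle$ would yield a one-parameter family of pairwise non-asymptotic hyperbolic elements (indexed by rationally approximated directions in $\bR^2$), contradicting the discreteness in Theorem \ref{thm:directions} via Corollary \ref{cor:dist_bound_asym}.
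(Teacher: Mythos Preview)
Your proposal is correct and follows essentially the same route as the paper, which simply states that the result follows from Theorem \ref{thm:directions} and \cite[Theorem 3]{Baumgartner12}. Your write-up is a faithful expansion of that one-liner: the upper bound via discreteness of the space of directions is exactly the intended application of the external result, and your lower-bound argument (exhibiting a hyperbolic $h\in U(F)$ with $s(h)>1$ via Corollary \ref{cor:2-trans_translations} and Lemma \ref{lem:U(F)_hyp_scale}, after observing that $2$-transitivity on a set of size at least $3$ forces distinct point stabilisers) supplies the detail the paper leaves implicit.
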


\section{Scale-multiplicative semigroups at infinity}\label{sec:s_mult_leboudec}

In this section we consider scale-multiplicative semigroups of restricted Burger-Mozes groups associated to asymptotic classes. We start by studying a weaker relation than asymptotic, see Definition \ref{def:wk_asymp_tran_c}. In Section \ref{ssec:asymp_is_semigroup} we show that this relation is stable under multiplication. This section builds on results from Section \ref{ssec:asympt_func_and_dir} utilises the functions $\lambda_g$, $g\in G$, defined in Definition \ref{def:asymp_func} which characterise the asymptotic relation, see Proposition \ref{prop:classification_in_terms_of_sing_traj}.  
Section \ref{ssec:asymp_scalemult} focuses on showing that the semigroups constructed are scale multiplicative. Our proofs rely on calculations given in Section \ref{ssec:leboudec_scale_formula}. It is an immediate corollary that these results also apply to asymptotic classes. Thus asymptotic classes are scale-multiplicative semigroups of non-uniscalar elements.

Observe that any scale-multiplicative semigroup can be decomposed into a disjoint union of a two scale-multiplicative semigroups, each containing only uniscalar or non-uniscalar elements respectively. To have any hope of building a maximal scale-multiplicative semigroup from asymptotic classes, the correct uniscalar elements need to be identified. Section \ref{ssec:asymp_uniscalar} focuses on establishing a candidate uniscalar component and showing that its addition still gives a scale-multiplicative semigroup. Section \ref{ssec:asymp_maximal} then considers maximality with the final result given as Theorem \ref{thm:maximal}. The proof of this theorem is split into two major parts. The first part, given as Lemma \ref{lem:end_semigroup_large}, gives a selection of many elements in the asymptotic class and places restrictions on the singularities of these elements. The proof then proceeds by cases on elements not in the candidate semigroup. Each case chooses an element given by Lemma \ref{lem:end_semigroup_large} and shows that the semigroup generated by it and the external element is not scale multiplicative. The most complicated of these cases is covered by Lemma \ref{lem:same_attracting_end_not_smult}. The proof is technical but can be summarised as establishing results about the singularities of the product and calculating a lower bound on the scale from these results. 

\begin{definition}\label{def:wk_asymp_tran_c}
	Suppose $g,h\in G(F,F')$ are hyperbolic. 
	\begin{enumerate}[label = (\roman{*})]
		\item We say $g$ and $h$ are \emph{translation compatible} for any pair $u,v\in\axis(g)\cap \axis(h)$, we have $u\le_g v$ if and only if $u\le_h v$. 
		\item We say $g$ and $h$ are \emph{weakly asymptotic} if $\lambda_g(v) = \lambda_h(v)$ for all $v\in V(T)$.
	\end{enumerate}
\end{definition}
\begin{remark}
	Results from \cite{Baumgartner15} show that $g$ and $h$ are translation compatible if and only if $l(gh)\ge l(g)+l(h)$.
\end{remark}
\begin{lemma}
	Weakly asymptotic is an equivalence relation on the set of hyperbolic elements of $G(F,F')$.
\end{lemma}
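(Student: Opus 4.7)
The plan is to observe that the relation in question is, by definition, nothing more than the equality of two functions on $V(T)$. More precisely, the preamble to Definition \ref{def:asymp_func} shows that for every hyperbolic element $g \in G(F,F')$ and every $v \in V(T)$ the sequence $\sigma(g^n, g^{-n}(v))F$ is eventually constant, so the coset $\lambda_g(v)$ is well-defined. Consequently, for each hyperbolic $g$ we obtain a function
\[
\lambda_g : V(T) \longrightarrow \{\tau F \mid \tau \in \Sym(\Omega)\}, \qquad v \longmapsto \lambda_g(v),
\]
and the relation ``$g$ weakly asymptotic to $h$'' is precisely the relation $\lambda_g = \lambda_h$ as functions on $V(T)$.

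Since equality of functions is reflexive, symmetric, and transitive, the result follows immediately. Explicitly: reflexivity says $\lambda_g = \lambda_g$ for every hyperbolic $g$; symmetry says $\lambda_g = \lambda_h$ implies $\lambda_h = \lambda_g$; and transitivity says $\lambda_g = \lambda_h$ and $\lambda_h = \lambda_k$ together imply $\lambda_g = \lambda_k$. There is no obstacle here beyond checking that $\lambda_g$ really is defined for all hyperbolic $g$, which was already established in Definition \ref{def:asymp_func}.
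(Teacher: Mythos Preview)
Your proof is correct and matches the paper's treatment: the paper states this lemma without proof, implicitly relying on the same observation that weak asymptoticity is defined as equality of the functions $\lambda_g$ and $\lambda_h$ on $V(T)$, which is manifestly an equivalence relation.
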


Hyperbolic elements $g,h\in G(F,F')$ where $\axis(g)\cap \axis(h)=\varnothing$ are trivially translation compatible. Hyperbolic $g,h\in G(F,F')$ where $\omega_+(g) = \omega_+(h)$ are also translation compatible. 
\begin{remark}\label{rem:asymp_in_terms_of_wk_asymp}
	Using Lemma \ref{lem:equal_ends}, we may rephrase Proposition \ref{prop:classification_in_terms_of_sing_traj} as follows:
	
	\emph{Suppose $g,h\in G(F,F')$ are hyperbolic. Then $g\asymp h$ if and only if $g$ and $h$ are weakly asymptotic and $\omega_+(g) = \omega_+(h)$.}
\end{remark}
\subsection{Closed under multiplication}
\label{ssec:asymp_is_semigroup}
We build semigroups from the relations given in Definition \ref{def:wk_asymp_tran_c}. Recall the definition $D_g = \max\{d(v,\axis(g))\mid v \in S(g)\}$ from Definition \ref{def:sing_depth}.
\begin{lemma}\label{lem:trans_compatible_basics}
	Suppose $g,h\in G(F,F')$ are translation compatible. Then:
	\begin{enumerate}[label = (\roman{*})]
		\item \label{itm:tr_compatibe_1} The product $gh$ is hyperbolic and translation compatible with both $g$ and $h$.
		\item \label{itm:tr_compatibe_2}$\omega_{-}(g)\neq\omega_{+}(h)$.
		\item \label{itm:tr_compatibe_3}$g^{-1}$ and $h^{-1}$ are also translation compatible. 
	\end{enumerate}	
\end{lemma}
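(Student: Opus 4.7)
My plan is to address the three items in order of increasing difficulty: (iii), (ii), then (i).

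Part (iii) will be immediate. I will use that $\axis(g^{-1}) = \axis(g)$ with $\le_{g^{-1}}$ equal to the reverse of $\le_g$, and similarly for $h$. Since translation compatibility requires the two orders to agree on the common intersection, reversing both simultaneously preserves agreement.

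For part (ii), I will argue by contradiction. Assume $\omega_{-}(g) = \omega_{+}(h) =: \xi$. In a tree, two bi-infinite geodesics sharing a common boundary point must share an infinite subray terminating at that point, so $\axis(g) \cap \axis(h)$ contains an infinite ray $R$ ending at $\xi$. I will choose distinct $u, v \in R$ with $u$ strictly closer to $\xi$. Since $\omega_{+}(g) \ne \xi = \omega_{-}(g)$, the action of $g$ translates $u$ away from $\xi$ and through $v$, giving $u <_g v$. But $\omega_{+}(h) = \xi$, so $h$ translates $v$ toward $\xi$ and through $u$, giving $v <_h u$. These contradict translation compatibility.

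Part (i) is the substantive obstacle. Hyperbolicity of $gh$ is immediate from the remark quoted just before the lemma: $l(gh) \ge l(g) + l(h) \ge 2 > 0$. For translation compatibility of $gh$ with $g$ (and symmetrically with $h$), my approach is to understand $\axis(gh)$ via tree geometry. Under translation compatibility, I expect $\axis(gh)$ to have its attracting end $\omega_{+}(gh)$ in the same ``$\omega_{+}(g)$-direction'' as $\axis(g)$, so that $\axis(gh) \cap \axis(g)$ is a ray (possibly empty) on which $\le_{gh}$ and $\le_g$ both increase toward $\omega_{+}(g)$ and therefore agree; the analogous statement for $h$ follows by a symmetric argument using $\omega_{-}(gh)$ and $\omega_{-}(h)$.

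The main obstacle lies in justifying this directional claim uniformly across the possible axis configurations: $\axis(g)$ and $\axis(h)$ may coincide, share a ray toward a common end, meet in a finite segment, or be disjoint joined by a bridge. A unifying alternative is to invoke the iff direction of the remark and try to deduce $l((gh)g) \ge l(gh) + l(g)$ and $l((gh)h) \ge l(gh) + l(h)$ from the analogous inequalities for the pairs $(g,h)$, $(g^2,h)$, and $(g,h^2)$; however, because $l(gh)$ may strictly exceed $l(g) + l(h)$ in the disjoint-axes configuration, the bound $l(g^2 h) \ge 2 l(g) + l(h)$ obtained from translation compatibility of $g^2$ with $h$ does not straightforwardly yield $l(g^2 h) \ge l(g) + l(gh)$, so the direct geometric analysis of $\axis(gh)$ and its intersections with $\axis(g)$ and $\axis(h)$ seems unavoidable.
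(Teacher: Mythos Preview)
Your treatments of parts (ii) and (iii) are correct and essentially identical to the paper's: for (iii) the paper notes $\axis(g)=\axis(g^{-1})$ with $\le_g$ reversed to $\le_{g^{-1}}$, and for (ii) it argues by contradiction via an infinite ray in $\axis(g)\cap\axis(h)$ on which the two orders disagree.

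For part (i), the paper does not carry out the geometric analysis you are attempting; it simply cites \cite[Lemma~4.6 and Lemma~4.7]{Baumgartner15}. Those lemmas contain precisely the case-by-case description of $\axis(gh)$ and $l(gh)$ in terms of the relative position of $\axis(g)$ and $\axis(h)$ (disjoint with a bridge, overlapping in a segment, sharing a ray, coinciding) that you outline as ``unavoidable''. Your instinct that the length-function route via $l((gh)g)\ge l(gh)+l(g)$ fails when $l(gh)>l(g)+l(h)$ is correct: in the disjoint-axes case one has $l(gh)=l(g)+l(h)+2d(\axis(g),\axis(h))$, and the inequality $l(g^2h)\ge 2l(g)+l(h)$ is too weak. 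The direct axis analysis you sketch---showing $\omega_+(gh)=\omega_+(g)$ (or that $\axis(gh)\cap\axis(g)$, when nonempty, lies on the $\omega_+(g)$-side and carries agreeing orders)---is the right approach and is exactly what the cited lemmas provide. So your proposal has no conceptual error; the ``gap'' is only that you are reconstructing results the paper outsources to \cite{Baumgartner15}, and you have not yet written out that case analysis in full.
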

\begin{proof}
	Noting $\axis(g) = \axis(g^{-1})$ and that the relation $\le_g$ on $\axis(g)$ agrees with $\ge_{g^{-1}}$, we see that \ref{itm:tr_compatibe_3} holds. If $\omega_{+}(g) = \omega_{-}(h)$, then there exists an infinite path $(v_0,v_1,\ldots)$ contained in $\axis(g)\cap \axis(h)$ such that $v_i <_g v_j$ and $v_i >_h v_j$ for all $i < j$. Thus, $g$ and $h$ cannot be translation compatible. This shows \ref{itm:tr_compatibe_2}. Finally, \ref{itm:tr_compatibe_1} follows from \cite[Lemma 4.6 and Lemma 4.7]{Baumgartner15}.
\end{proof}
\begin{lemma}\label{lem:random_natural}
	Suppose $g,h\in G(F,F')$ are hyperbolic and translation compatible. Then for any $v\in V(T)$, there exists $N\in\bN$ such that $n\ge N$ implies $g^{-k}h^{-n}(v)\not\in S(g)$ for all $k\ge 0$.
\end{lemma}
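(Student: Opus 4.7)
The plan is to show that for $n$ sufficiently large, $h^{-n}(v)$ lies outside the set
\[\Sigma := \bigcup_{k\ge 0} g^{k}(S(g)).\]
This is equivalent to the statement, since $g^{-k}h^{-n}(v)\in S(g)$ iff $h^{-n}(v)\in g^{k}(S(g))$. If $S(g)=\varnothing$, the result is trivial; so assume $S(g)\ne\varnothing$. The key is that $\Sigma$ has two restrictive properties: every vertex of $\Sigma$ lies within $D_g$ of $\axis(g)$ (because $g^{k}$ preserves $\axis(g)$ setwise and preserves distance from it), and its projection $\pi_g(\Sigma)=\bigcup_{k\ge 0}g^{k}(\pi_g S(g))$ is bounded below in $\le_g$ by $v_{\min}:=\min_{\le_g}\pi_g(S(g))$, which is well-defined since $S(g)$ is finite.

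I would then split into cases according to the sequence $d_n:=d(h^{-n}(v),\axis(g))$, using Lemma \ref{lem:dist_to_axis_seq}, which says $d_n$ is eventually non-decreasing and bounded if and only if $\omega_{-}(h)\in\{\omega_{\pm}(g)\}$. In the unbounded case, for $n$ large enough we have $d_n>D_g$, and hence $h^{-n}(v)\notin\Sigma$ immediately. So it remains to handle the case $\omega_{-}(h)\in\{\omega_{\pm}(g)\}$.

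Within this remaining case, translation compatibility rules out $\omega_{-}(h)=\omega_{+}(g)$: rewriting this as $\omega_{+}(h^{-1})=\omega_{-}(g^{-1})$ and noting that $g^{-1},h^{-1}$ are translation compatible by Lemma \ref{lem:trans_compatible_basics}\ref{itm:tr_compatibe_3}, this would contradict Lemma \ref{lem:trans_compatible_basics}\ref{itm:tr_compatibe_2}. Hence $\omega_{-}(h)=\omega_{-}(g)$. Then $h^{-n}(v)$ converges to the end $\omega_{-}(g)$ of $\axis(g)$, and since $d_n$ is bounded the projections $\pi_g h^{-n}(v)$ travel along $\axis(g)$ towards $\omega_{-}(g)$. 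In particular, for $n$ large, $\pi_g h^{-n}(v)<_g v_{\min}$, which places $h^{-n}(v)$ strictly below every vertex of $\pi_g(\Sigma)$ and so outside $\Sigma$.

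The main obstacle is the bounded-distance case: the geometric fact that $h^{-n}(v)$ must then slide towards $\omega_{-}(g)$ needs to be invoked carefully, but it follows from Lemma \ref{lem:dist_to_axis_seq} together with translation compatibility (which eliminates the dangerous possibility $\omega_{-}(h)=\omega_{+}(g)$, in which case $h^{-n}(v)$ would instead be sliding into the half-strip containing $\Sigma$).
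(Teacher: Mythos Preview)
Your proposal is correct and follows essentially the same approach as the paper's proof: both rule out $\omega_{-}(h)=\omega_{+}(g)$ via translation compatibility, then split into the unbounded-distance case (where $d(h^{-n}(v),\axis(g))>D_g$ suffices) and the case $\omega_{-}(h)=\omega_{-}(g)$ (where $\pi_g h^{-n}(v)<_g v_{\min}$ for large $n$). Your detour through Lemma~\ref{lem:trans_compatible_basics}\ref{itm:tr_compatibe_3} to exclude $\omega_{-}(h)=\omega_{+}(g)$ is slightly more roundabout than simply applying Lemma~\ref{lem:trans_compatible_basics}\ref{itm:tr_compatibe_2} with the roles of $g$ and $h$ swapped, but it is valid.
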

\begin{proof}
	Since $h$ and $g$ are translation compatible $\omega_{-}(h)\neq\omega_{+}(g)$. If $\omega_{-}(h) \neq \omega_{-}(g)$, then the sequence $d(h^{-n}(v),\axis(g))$ is unbounded and eventually non-decreasing by Lemma \ref{lem:dist_to_axis_seq}. There exists $N\in\bN$ such that $n\ge N$ implies
	\[d(h^{-n}(v),\axis(g))>D_g.\]
	Then for all $k\in\bN$ and $n\ge N$, we have $d(g^{-k}h^{-n}(v),\axis(g))> D_g$ and so $g^{-k}h^{-n}(v)\not\in S(g)$.
	
	Alternatively if $\omega_{-}(h) = \omega_{-}(g)$, let $v_{\min} = \min_{\le g}\pi_gS(g)$. Since $h$ and $g$ are translation compatible, there exists $N\in\bN$ such that $n\ge N$ implies $\pi_gh^{-n}(v) <_g v_{\min}$. Then for all $k\in\bN$ and $n\ge N$, $\pi_gg^{-k}h^{-n}(v)< v_{\min}$ and so $g^{-k}h^{-n}(v)\not\in S(g)$.
\end{proof}
\begin{proposition}\label{prop:wk_asymp_semigroup}
	Suppose $g,h\in G(F,F')$ are hyperbolic, translation compatible and weakly asymptotic. Then $h$ and $hg$ are weakly asymptotic.
\end{proposition}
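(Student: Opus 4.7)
The plan is to show $\lambda_{hg}(v) = \lambda_h(v)$ for every $v \in V(T)$, which is the definition of $h$ and $hg$ being weakly asymptotic. The approach establishes two cocycle-style identities relating $\lambda_{hg}$ and $\lambda_{gh}$, and then iterates them to a base case where both sides are trivially equal to $F$.

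First I would derive the identity
\[\lambda_{hg}(v) = \sigma(h, h^{-1}v)\,\lambda_{gh}(h^{-1}v) \qquad (\ast)\]
and the symmetric one $\lambda_{gh}(v) = \sigma(g, g^{-1}v)\,\lambda_{hg}(g^{-1}v)$. For $(\ast)$, use the factorisation $(hg)^n = h(gh)^{n-1}g$ (an easy induction) together with Lemma \ref{lem:sig_basic} to expand
\[\sigma((hg)^n, (hg)^{-n}v) = \sigma(h, h^{-1}v)\,\sigma\bigl((gh)^{n-1}, (gh)^{-(n-1)}h^{-1}v\bigr)\,\sigma(g, (hg)^{-n}v),\]
which relies on the telescoping identity $(gh)^{n-1}g(hg)^{-n}v = h^{-1}v$ and on the algebraic rewriting $h^{-1}(hg)^{-(n-1)} = (gh)^{-(n-1)}h^{-1}$. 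Finiteness of $S(g)$ forces $\sigma(g, (hg)^{-n}v) \in F$ for all $n$ sufficiently large, so passing to the limit gives $(\ast)$.

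Next, combine $(\ast)$ with its symmetric counterpart and with the cocycle formula in Lemma \ref{lem:basic_asym_sing_prop}(i) applied to $\lambda_h$ and $\lambda_g$. Using the weakly asymptotic hypothesis $\lambda_g = \lambda_h$, elementary cancellation of the leading $\sigma$-factors (valid because left multiplication by an element of $\hat{F}$ is a bijection on left cosets of $F$) yields that $\lambda_{hg}(v) = \lambda_h(v)$ if and only if $\lambda_{hg}((hg)^{-1}v) = \lambda_h((hg)^{-1}v)$. Iterating, $\lambda_{hg}(v) = \lambda_h(v)$ is equivalent to $\lambda_{hg}((hg)^{-n}v) = \lambda_h((hg)^{-n}v)$ for any $n \geq 0$.

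Finally I would verify that both $\lambda$-values equal $F$ at $(hg)^{-n}v$ for $n$ sufficiently large. For $\lambda_{hg}$, Lemma \ref{lem:basic_asym_sing_prop}(iii) applied to $hg$ together with the finiteness of $S(hg)$ forces $\lambda_{hg}((hg)^{-n}v) = F$ once $n$ exceeds every index $m$ with $(hg)^{-m}v \in S(hg)$. For $\lambda_h$, Lemma \ref{lem:trans_compatible_basics}(i) makes $(h, hg)$ a translation compatible pair, so Lemma \ref{lem:random_natural} (applied with the roles of $g$ and $h$ played by $h$ and $hg$) produces $N$ such that $h^{-k}(hg)^{-n}v \notin S(h)$ for all $n \geq N$ and all $k \geq 0$; another application of Lemma \ref{lem:basic_asym_sing_prop}(iii) then forces $\lambda_h((hg)^{-n}v) = F$. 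The main obstacle is the cocycle manipulation in the first step, since the two interlocking factorisations of $(hg)^n$ and $(gh)^n$ must be combined with the precise algebraic identities above to make the telescoping work; once $(\ast)$ is in hand, the reduction and base case are essentially formal.
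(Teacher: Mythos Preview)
Your argument is correct and follows essentially the same telescoping/induction mechanism as the paper: both reduce $\lambda_{hg}(v)=\lambda_h(v)$ to the equality at $(hg)^{-n}v$ for large $n$ via the cocycle identity in Lemma~\ref{lem:basic_asym_sing_prop}\ref{itm:traj_1} together with $\lambda_g=\lambda_h$, and both establish the base case using Lemma~\ref{lem:random_natural} and Lemma~\ref{lem:trans_compatible_basics}\ref{itm:tr_compatibe_1}. The only cosmetic differences are that you route the recursion through the auxiliary function $\lambda_{gh}$ and the identity $(\ast)$, whereas the paper works directly with the expression $\sigma((hg)^k,(hg)^{-k}v)\lambda_h((hg)^{-k}v)$ and shows it is constant in $k$; and the paper applies Lemma~\ref{lem:random_natural} to the pair $(g,hg)$ and then invokes $\lambda_g=\lambda_h$, while you apply it to $(h,hg)$ directly. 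One small slip: when you conclude $\lambda_{hg}((hg)^{-n}v)=F$ and $\lambda_h((hg)^{-n}v)=F$ for $n$ large, the relevant part of Lemma~\ref{lem:basic_asym_sing_prop} is \ref{itm:traj_3} (item (ii)), not item (iii).
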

\begin{proof}
	Choose any $v\in V(T)$. For $n\ge H_{hg}(v)$ we have $\lambda_{hg}(v) = \sigma((hg)^n,(hg)^{-n}(v))F$. Choosing $n$ larger if necessary, Lemma \ref{lem:random_natural} and Lemma \ref{lem:trans_compatible_basics} allows us to suppose $g^{-k}(hg)^{-n}(v)\not\in S(g)$ for $k\ge 0$. In particular, Lemma \ref{lem:basic_asym_sing_prop} shows that $\lambda_g((hg)^{-n}(v)) = F$. By assumption, $g$ and $h$ are weakly asymptotic and so $\lambda_h((hg)^{-n}(v)) = F$. We have
	\[\lambda_{hg}(v) = \sigma((hg)^n,(hg)^{-n}(v))F = \sigma((hg)^n,(hg)^{-n}(v))\lambda_h((hg)^{-n}(v)).\]

	We now show that for $k > 0$ 
	\[\sigma((hg)^{k},(hg)^{-k}(v))\lambda_h((hg)^{-k}(v)) = \sigma((hg)^{k - 1},(hg)^{-k+1}(v))\lambda_h((hg)^{-k+1}(v)).\]
	This completes the proof as inductive applications of this equality to $\lambda_{hg}(v)$ gives
	\begin{align*}
	\lambda_{hg}(v) &= \sigma((hg)^{n},(hg)^{-n}(v))\lambda_h((hg)^{-n}(v))\\
	&= \sigma((hg)^{n-1},(hg)^{-n+1}(v))\lambda_h((hg)^{-n+1}(v))\\
	&\hspace{0.2cm}\vdots\\
	& = \lambda_h(v).
	\end{align*}
	Now
	\[\sigma((hg)^{k},(hg)^{-k}(v)) = \sigma((hg)^{k - 1}h,g(hg)^{-k}(v))\sigma(g, (hg)^{-k}(v)),\]
	and, applying Lemma \ref{lem:basic_asym_sing_prop} after recalling that $g$ and $h$ are weakly asymptotic, we have 
	\[\sigma(g, (hg)^{-k}(v))\lambda_h((hg)^{-k}(v)) = \sigma(g, (hg)^{-k}(v))\lambda_g((hg)^{-k}(v)) = \lambda_g(g(hg)^{-k}(v)).\]
	Hence, 
	\begin{align*}
	\sigma((hg)^{k},(hg)^{-k}(v))\lambda_h((hg)^{-k}(v)) &=\sigma((hg)^{k - 1}h,g(hg)^{-k}(v))\lambda_g(g(hg)^{-k}(v))\\
	& = \sigma((hg)^{k - 1}h,g(hg)^{-k}(v))\lambda_h(g(hg)^{-k}(v)).
	\end{align*}
	But
	\[\sigma((hg)^{k - 1}h,g(hg)^{-k}(v)) = \sigma((hg)^{k - 1},(hg)^{-k+1 }(v))\sigma(h, g(hg)^{-k}(v))\]
	and
	\[\sigma(h, g(hg)^{-k}(v)) \lambda_h(g(hg)^{-k}(v)) = \lambda_h((hg)^{-k+1}(v)),\]
	be Lemma \ref{lem:basic_asym_sing_prop}.
	Hence, 
	\[\sigma((hg)^{k},(hg)^{-k}(v))\lambda_h((hg)^{-k}(v)) =  \sigma((hg)^{k - 1},(hg)^{-k+1 }(v))\lambda_h((hg)^{-k+1}(v))\]
	as required.
\end{proof}
\begin{corollary}
	Suppose $F$ is $2$ transitive and $g,h\in G(F,F')$ are both hyperbolic and asymptotic. Then $gh\asymp g$.
\end{corollary}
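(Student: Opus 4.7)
The plan is to reduce to Proposition \ref{prop:wk_asymp_semigroup} via Remark \ref{rem:asymp_in_terms_of_wk_asymp}. That remark tells us that, since $g\asymp h$, we already have both that $g$ and $h$ are weakly asymptotic and that $\omega_+(g)=\omega_+(h)$. To deduce $gh\asymp g$ via the same remark, it suffices to establish three things: $gh$ is hyperbolic; $g$ and $gh$ are weakly asymptotic; and $\omega_+(gh)=\omega_+(g)$.

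First I would verify that $g$ and $h$ are translation compatible. Writing $\varepsilon:=\omega_+(g)=\omega_+(h)$, the axes $\axis(g)$ and $\axis(h)$ must share a common infinite ray heading to $\varepsilon$, and on that ray both $g$ and $h$ translate toward $\varepsilon$. Hence for any $u,v\in\axis(g)\cap\axis(h)$, $u\le_g v$ agrees with $u\le_h v$, which is the definition of translation compatibility. Lemma \ref{lem:trans_compatible_basics}\ref{itm:tr_compatibe_1} then ensures that $gh$ is hyperbolic and translation compatible with $g$. Applying Proposition \ref{prop:wk_asymp_semigroup} with the roles of $g$ and $h$ exchanged (which is permissible since both translation compatibility and weak asymptoticity are symmetric relations) yields that $g$ and $gh$ are weakly asymptotic.

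For the remaining step, I would pin down the attracting end of $gh$ by choosing a vertex $v$ on the common ray to $\varepsilon$ that lies sufficiently far along so that $gv$, $hgv$, and all further iterates $(gh)^n v$ stay on this shared ray. Translation compatibility of $g$ and $h$ prevents cancellation, so each application of $gh$ moves $v$ strictly closer to $\varepsilon$; indeed by the remark recalled after Definition \ref{def:wk_asymp_tran_c} one has $l(gh)\ge l(g)+l(h)>0$. Consequently $(gh)^n(v)\to\varepsilon$, so $\omega_+(gh)=\varepsilon=\omega_+(g)$.

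Combining these with Remark \ref{rem:asymp_in_terms_of_wk_asymp} gives $gh\asymp g$. The only mildly delicate point is the choice of $v$: it must be far enough along the common ray to $\varepsilon$ that the finite parts of $\axis(g)$ and $\axis(h)$ which diverge from each other are bypassed, and that the singularities of $g$ and $h$ (which could perturb the action off the ray) lie behind $v$ in both orders $\le_g$ and $\le_h$. Both conditions are met by pushing $v$ sufficiently far toward $\varepsilon$, which is possible since $\axis(g)\cap\axis(h)$ contains an entire ray to $\varepsilon$.
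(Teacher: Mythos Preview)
Your proof is correct and follows essentially the same route as the paper: use Remark \ref{rem:asymp_in_terms_of_wk_asymp} to reduce asymptoticity to weak asymptoticity plus matching attracting ends, invoke Lemma \ref{lem:trans_compatible_basics} and Proposition \ref{prop:wk_asymp_semigroup} for the former, and verify $\omega_+(gh)=\omega_+(g)$ directly on the shared ray. One small remark: your concern about singularities in the final paragraph is unnecessary, since singularities affect only the local action $\sigma(\cdot,\cdot)$ and not where vertices are sent, so they play no role in determining $\omega_+(gh)$.
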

\begin{proof}
	Observe that $gh$ and $h$ are weakly asymptotic and translation compatible by Lemma \ref{lem:trans_compatible_basics} and Proposition \ref{prop:wk_asymp_semigroup}. The result follows from Remark \ref{rem:asymp_in_terms_of_wk_asymp} since $\omega_+(g) = \omega_+(gh)$.
\end{proof}
\subsection{Scale multiplicative}
\label{ssec:asymp_scalemult}
We investigate scale multiplicativity of the semigroups generated by elements that are weakly asymptotic and translation compatible. The simpler case is when the generators have differing attracting ends. This case is restrictive on elements as shown in Lemma \ref{lem:wk_asym_not_asymp} and Lemma \ref{lem:scale_weak_asyp}. The case when the generators have the same attracting ends, which occurs when looking at asymptotic classes, is more complicated and relies on a results from Section \ref{ssec:leboudec_scale_formula}.

\begin{lemma}\label{lem:wk_asym_not_asymp}
	Suppose $g,h\in G(F,F')$ are translation compatible with $\omega_+(g)\neq \omega_+(h)$. If $g$ and $h$ are weakly asymptotic, then for each $v\in V(T)$ there exists $N\in \bN$ such that $n\ge N$ implies $\lambda_{g}(g^n(v)) = F$.
\end{lemma}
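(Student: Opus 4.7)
The plan is to reduce the statement to a condition on distance from $\axis(h)$, then exploit the incompatibility of ends between $g$ and $h$ together with Lemma \ref{lem:dist_to_axis_seq}.

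First I would use weak asymptoticness to rewrite the target: since $\lambda_g = \lambda_h$ on $V(T)$ by hypothesis, it suffices to show that for each $v\in V(T)$ there exists $N\in\bN$ with $\lambda_h(g^n(v)) = F$ for every $n\ge N$. By the ``In particular'' part of Lemma \ref{lem:basic_asym_sing_prop}\ref{itm:traj_3} applied to $h$, this reduction is completed once I verify $d(g^n(v),\axis(h)) > D_h$ for all sufficiently large $n$.

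Next I would argue that $d(g^n(v),\axis(h)) \to \infty$ as $n\to\infty$. For this I apply Lemma \ref{lem:dist_to_axis_seq} with the roles reversed: take the automorphism acting as ``$h$'' in that lemma to be $g^{-1}$ and the automorphism acting as ``$g$'' there to be $h$. Then the sequence $d_n := d((g^{-1})^{-n}(v),\axis(h)) = d(g^n(v),\axis(h))$ is eventually non-decreasing, and is bounded if and only if $\omega_{-}(g^{-1}) \in \{\omega_{\pm}(h)\}$, i.e.\ if and only if $\omega_+(g)\in\{\omega_+(h),\omega_-(h)\}$. The assumption $\omega_+(g)\neq\omega_+(h)$ rules out one of these possibilities. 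For the other, note that Lemma \ref{lem:trans_compatible_basics}\ref{itm:tr_compatibe_3} tells us $g^{-1}$ and $h^{-1}$ are again translation compatible, so Lemma \ref{lem:trans_compatible_basics}\ref{itm:tr_compatibe_2} applied to the pair $(g^{-1},h^{-1})$ gives $\omega_-(g^{-1})\neq\omega_+(h^{-1})$, that is, $\omega_+(g)\neq\omega_-(h)$. Hence the sequence $d_n$ is unbounded and eventually non-decreasing, so $d_n\to\infty$.

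Combining these two steps, there exists $N\in\bN$ such that $n\ge N$ implies $d(g^n(v),\axis(h))>D_h$; then $\lambda_h(g^n(v))=F$ by Lemma \ref{lem:basic_asym_sing_prop}, and weak asymptoticness delivers $\lambda_g(g^n(v))=F$. There is no real obstacle here beyond book-keeping about which end of which axis is excluded; the mild subtlety is remembering to extract $\omega_+(g)\neq\omega_-(h)$ from translation compatibility of the inverses rather than from the original pair.
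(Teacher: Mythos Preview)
Your proof is correct and follows essentially the same route as the paper: reduce to $\lambda_h(g^n(v))=F$ via weak asymptoticness, then use that $\omega_+(g)\notin\{\omega_\pm(h)\}$ forces $d(g^n(v),\axis(h))\to\infty$, so eventually $d(g^n(v),\axis(h))>D_h$ and Lemma~\ref{lem:basic_asym_sing_prop}\ref{itm:traj_3} applies. The only cosmetic difference is that the paper extracts $\omega_+(g)\neq\omega_-(h)$ directly by applying Lemma~\ref{lem:trans_compatible_basics}\ref{itm:tr_compatibe_2} with the roles of $g$ and $h$ swapped (translation compatibility being symmetric), rather than passing through inverses via \ref{itm:tr_compatibe_3}.
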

\begin{proof}
	Since $g$ and $h$ are translation compatible $\omega_+(g)\neq \omega_{-}(h)$. This and the assumption that $\omega_{+}(g)\neq \omega_+(h)$ gives $N\in\bN$ such that $n\ge N$ implies $d(g^n(v),\axis(h))> D_h$. We must have $\lambda_h(g^n(v)) = F$, since $h^{-k}g^n(v)\not\in S(h)$ for all $k\ge 0$. Since $g$ and $h$ are weakly asymptotic, we have $\lambda_g(g^n(v)) = F$. 
\end{proof}

\begin{lemma}\label{lem:scale_weak_asyp}
	Suppose $F$ is $2$-transitive and $g\in G(F,F')$ is hyperbolic such that for each $v\in V(T)$, there exists $N\in \bN$ such that $n\ge N$ implies $\lambda_{g}(g^n(v)) = F$. Then 
	\[s(g) = (|\Omega|-1)^{l(g)}.\]
\end{lemma}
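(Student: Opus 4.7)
First I would establish the lower bound $s(g)\geq (|\Omega|-1)^{l(g)}$. Since $F$ is $2$-transitive it has distinct point stabilisers (if $F_{a}=F_{b}$ for $a\neq b$, two-transitivity produces $f\in F$ fixing $a$ and sending some $c\notin\{a,b\}$ to $b$, contradicting $f\in F_{b}$), so Proposition \ref{prop:U(F)_small_scale} furnishes $h\in U(F)$ with $l(h)=l(g)$ and $s(h)\leq s(g)$, while Proposition \ref{prop:U(F)_scale_calc} together with Remark \ref{rem:constant_in_2_tran} gives $s(h)=(|\Omega|-1)^{l(g)}$.

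For the matching upper bound, I would choose a pando $\cP$ for $g$ whose initial segment $\cP_{0}$ has its axis portion lying strictly on the $\omega_{-}(g)$ side of $\pi_{g}(S(g))$; this is possible because $S(g)$ is finite, by extending the axis of $\cP$ sufficiently far in the $\omega_{-}$ direction. For such a pando, $V(\cP_{0})\cap S(g)=\varnothing$ and for every $v\in V(\cP_{0})$ the backward iterates $g^{-j}(v)$ have $\pi_{g}(g^{-j}(v))=\pi_{g}(v)-jl(g)$ lying even further in the $\omega_{-}$ direction than $\pi_{g}(S(g))$, so $g^{-j}(v)\notin S(g)$ for every $j\geq 1$. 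Applying Lemma \ref{lem:autExtension} I extend $g|_{V(\cP_{0})}$ to $h\in U(F)$; then $\cP_{0}$ is a pando for $h$ with $h(\cP_{0})=g(\cP_{0})$, and combining Proposition \ref{prop:Scale_calc_2} with Lemma \ref{lem:coset_from_found} yields $s(g)/s(h)=|M_{h,\cP_{0}}|/|M_{g,\cP_{0}}|$. The inclusion $M_{g,\cP_{0}}\subseteq M_{h,\cP_{0}}$ is immediate because $S(h)=\varnothing$ makes the $h$-condition weaker, so it remains to establish the reverse inclusion.

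For the reverse inclusion, take $a\in M_{h,\cP_{0}}$ and $v\in\Int(\cP_{0})$; since $a$ is an automorphism of $\cP_{0}$, the image $a(v)$ also lies in $V(\cP_{0})$. By Lemma \ref{lem:basic_asym_sing_prop}\ref{itm:traj_3} applied to $v$ and to $a(v)$ (both of whose backward orbits avoid $S(g)$ by the choice of $\cP$) we obtain $\lambda_{g}(g^{k}(v))=\sigma(g^{k},v)F$ and $\lambda_{g}(g^{k}(a(v)))=\sigma(g^{k},a(v))F$ for every $k\geq 0$. The hypothesis forces $\lambda_{g}(g^{k}(v))=F$ and $\lambda_{g}(g^{k}(a(v)))=F$ for all sufficiently large $k$, so $\sigma(g^{k},v),\sigma(g^{k},a(v))\in F$ eventually. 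Consequently $\sigma(g^{k},a(v))^{-1}F\sigma(g^{k},v)=F$ for such $k$, and condition (ii) of Definition \ref{def:scale_aut} for $g$ collapses to $\sigma(a,v)\in F$, which holds since $a\in M_{h,\cP_{0}}$. The main obstacle is precisely this dual role of the assumptions: placing $\cP_{0}$ on the $\omega_{-}(g)$ side of $\pi_{g}(S(g))$ controls the \emph{backward} trajectories and secures $\lambda_{g}\equiv F$ on $V(\cP_{0})$, while the hypothesis controls the \emph{forward} trajectories and is exactly what is needed to upgrade $\lambda_{g}(v)=F$ to the stronger $\sigma(g^{k},v)\in F$ for large $k$ that trivialises the conjugated coset in condition (ii).
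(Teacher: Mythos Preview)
Your argument is correct and follows essentially the same strategy as the paper's proof: compare $s(g)$ with the scale of an auxiliary element of $U(F)$ having the same translation length, by showing the two $M$-sets on $\cP_0$ coincide. The only difference is cosmetic. The paper builds its auxiliary element $g'$ via Corollary~\ref{cor:2-trans_translations} so that $\axis(g')=\axis(g)$; this makes $\cP$ automatically a pando for $g'$ and forces $\axis(g)\cap\cP_0=\axis(g')\cap\cP_0$, so condition~(i) in Definition~\ref{def:scale_aut} is identical for both elements without further comment. You instead reuse the element $h$ from Proposition~\ref{prop:U(F)_small_scale}, which agrees with $g$ on $\cP_0$ but may have a different axis outside $\cP_0$; the equality $\axis(h)\cap\cP_0=\axis(g)\cap\cP_0$ still holds (it is implicit in the citation of \cite[Lemma~4.1]{Baumgartner15} in Proposition~\ref{prop:U(F)_small_scale}, since $\cP_0\cap h(\cP_0)$ is the single edge $\{v_{l(g)},v_{l(g)+1}\}$ on $\axis(g)$), but you should mention it so that condition~(i) transfers between $M_{g,\cP_0}$ and $M_{h,\cP_0}$. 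Your extra requirement that $\cP_0$ lie entirely on the $\omega_-(g)$ side of $\pi_g(S(g))$ is harmless but unnecessary: for \emph{any} pando and any $u\in\Int(\cP_0)$ one has $g^{-k}(u)\notin\Int(\cP)\supseteq S(g)$ for $k\ge 1$, which is all Lemma~\ref{lem:basic_asym_sing_prop}\ref{itm:traj_3} needs.
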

\begin{proof}
	Since $F$ is $2$-transitive, Corollary \ref{cor:2-trans_translations} gives $g'\in U(F)$ such that $l(g') = l(g)$ and $\omega_{\pm}(g) = \omega_{\pm}(g')$. We show that $s(g) = s(g')$. Observe that $\axis(g) = \axis(g')$. Choose a pando $\cP$ for $g$ with initial segment $\cP_0$. Then $\cP$ is a pando for $g'$, again with initial segment $\cP_0$. We claim that $M_{g,\cP_0} = M_{g',\cP_0}$. It is clear that $M_{g,\cP_0}\le M_{g',\cP_0}$ since $S(g') = \varnothing$. Suppose $\varphi\in M_{g',\cP_0}$. We have $\sigma(\varphi,u)\in F$ for all $u\in\Int(\cP_0)$. Also, if $u\in\Int(\cP_0)$, then $g^{-k}(u)\not\in S(g)$ for all $k\ge 1$ as $S(g)\subset \cP$. Lemma \ref{lem:basic_asym_sing_prop} shows that $k\in\bN$ we have $\lambda_{g}(g^k(u)) = \sigma(g^k,u)F$. Hence, for $k$ sufficiently large we have $\sigma(g^k,u)\in F$. Therefore, for any $u\in\Int(\cP_0)$ and $k\in \bN$ sufficiently large we have
	\[\sigma(g^k,\varphi(u))\sigma(\varphi,u)\sigma(g^{k},u)^{-1} \in F.\]
	Hence $\varphi\in M_{g,\cP_0}$.
	
	We have shown that $s(g) = s(g')$. Lemma \ref{lem:U(F)_hyp_scale} and Remark \ref{rem:constant_in_2_tran} complete the result. 
\end{proof}
To compare the scale of various $g,h\in G(F,F')$, it is useful to have a more flexible version of Proposition \ref{prop:Scale_calc_2}.
\begin{lemma}\label{lem:scale_flex}
	Suppose $F$ is $2$-transitive $g\in G(F,F')$ is hyperbolic, $v_0\in\axis(g)$. Choose $D > D_g$ and let $\cT$ the minimal complete subtree of $T$ satisfying:
	\begin{enumerate}[label = (\roman{*})]
		\item $v_0\in\Int(\cT)$ and $g(v_0)\in \cT$; and
		\item if $\pi_g(v)\in\Int(\cT)$ and $d(v,\axis(g))\le D $, then $v\in \cT$.
	\end{enumerate} 
	Define $M_{g,\cT}$ to be the set of automorphisms $\varphi$ of $\cT$ fixing $\axis(g)\cap \cT$ such that for $k\in\bZ$ with $|k|$ sufficiently large and $v\in\Int(\cT)$ \[\sigma(g^k,\varphi(v))\sigma(\varphi,v)\sigma(g^{-k},g^{k}(v))\in F.\]
	Then 
	\[s(g) = \dfrac{(|\Omega| - 1)^{|\Int(\cT)|}}{|M_{g,\cT}|}.\]
	
	Furthermore, if $v_0>_g\pi_g(u)$ for all $u\in S(g)$, then $M_{g,\cT}$ is the set of automorphisms $\varphi$ of $\cT$ fixing $\axis(g)\cap \cT$ such that $\sigma(\varphi,u)\in F\cap \lambda_g(\varphi(u))\lambda_g(u)^{-1}$ for all $u\in\Int(\cT)$.
\end{lemma}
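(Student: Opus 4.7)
The plan is to recognize $\cT$ as the initial segment $\cP_0$ of a suitably chosen pando $\cP$ for $g$, and then to apply Proposition \ref{prop:Scale_calc_2} together with $2$-transitivity of $F$ to evaluate each piece of the formula. The conditions imposed on $\cT$ (completeness, minimality, $v_0\in\Int(\cT)$, $g(v_0)\in\cT$, and closure via $D>D_g$) are designed precisely so that $\cT$ serves as the initial segment $\cP_0$ of a pando. In the general case I would construct $\cP$ by extending $\cT$ by finitely many $g^{-i}$-translates so that \ref{pando1} holds (i.e.\ $S(g)\subset\Int(\cP)$); these extensions do not alter $\cP_0=\cT$, since $\cP_0$ is the smallest complete subtree containing $\cP\setminus g(\cP)$.

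Once the pando is in place, Proposition \ref{prop:Scale_calc_2} gives $s(g)=[U_{g(\cP)}:U_{\cP\cup g(\cP)}]/|M_{g,\cP_0}|$, and by Lemma \ref{lem:coset_from_found} the numerator depends only on $\cT$, namely $[U_{g(\cP)}:U_{\cP\cup g(\cP)}]=[U_{g(\cT)}:U_{g(\cT)\cup\cT}]$. The key combinatorial step, and the main obstacle, is identifying this index with $(|\Omega|-1)^{|\Int(\cT)|}$. I would argue this inductively on the internal vertices of $\cT$, ordering them from the ``outside'' (those adjacent to $g(\cT)$) inward and building an element of $U_{g(\cT)}$ one local action at a time. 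At each $v\in\Int(\cT)$ we have already constrained the local action by the previously assigned neighbour colour and by the edges into $g(\cT)$, so by $2$-transitivity and the constant from Remark \ref{rem:constant_in_2_tran} the remaining freedom contributes exactly $|\Omega|-1$ options, giving the claimed product.

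The denominator $|M_{g,\cP_0}|$ from Definition \ref{def:scale_aut} coincides with the $|M_{g,\cT}|$ defined in the lemma. Using $\sigma(g^{-k},g^k(v))=\sigma(g^k,v)^{-1}$ from Lemma \ref{lem:sig_basic}, the condition $\sigma(g^k,\varphi(v))\sigma(\varphi,v)\sigma(g^{-k},g^k(v))\in F$ rearranges to $\sigma(\varphi,v)\in\sigma(g^k,\varphi(v))^{-1}F\sigma(g^k,v)$, and together with the implicit membership $\sigma(\varphi,v)\in F$ (which is automatic once one requires $\varphi$ to be realizable as the restriction of an element of $\cL\cap U_{g(\cP)}$ via Lemma \ref{lem:quot_map}) this reproduces the condition in Definition \ref{def:scale_aut}. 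Combining the two calculations yields the main formula.

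For the \emph{Furthermore} clause, under the extra hypothesis $v_0>_g\pi_g(u)$ for all $u\in S(g)$, the axis portion of $\cT$ lies strictly beyond every singularity, $\cT$ itself contains no singularity of $g$, and for each $v\in\Int(\cT)$ the backward orbit $g^{-k}(v)$ stays out of $S(g)$ for $k\ge 1$. By Lemma \ref{lem:basic_asym_sing_prop}\ref{itm:traj_3}, $\sigma(g^k,v)F=\lambda_g(g^k(v))$ for $k$ sufficiently large, and similarly for $\varphi(v)$. Forming the corresponding coset product gives $\sigma(g^k,\varphi(v))^{-1}F\sigma(g^k,v)=\lambda_g(\varphi(v))\lambda_g(v)^{-1}$, so the $M_{g,\cT}$ condition becomes $\sigma(\varphi,v)\in F\cap\lambda_g(\varphi(v))\lambda_g(v)^{-1}$, which is the asserted simpler characterization.
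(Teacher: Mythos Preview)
Your outline follows the paper's route (recognise $\cT$ as the initial segment of a pando, invoke Proposition \ref{prop:Scale_calc_2}, and evaluate the numerator via $2$-transitivity and Orbit--Stabiliser), but two steps do not go through as written.

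\textbf{The pando.} Adjoining $g^{-i}$-translates to $\cT$ moves the initial segment rather than preserving it: if $\cP=\bigcup_{i=0}^{m}g^{-i}(\cT)$ then $\cP\setminus g(\cP)\subset g^{-m}(\cT)$, whence $\cP_0=g^{-m}(\cT)\ne\cT$. Adjoining forward translates $g^{i}(\cT)$ would keep $\cP_0=\cT$, but then there is no reason $S(g)\subset\Int(\cP)$, since $v_0$ is arbitrary relative to $S(g)$. The paper avoids this by choosing $n$ so that $g^{-n}(\cT)$ is the initial segment of some pando, applying Proposition \ref{prop:Scale_calc_2} there, and then exhibiting a bijection $M_{g,\cP_0}\to M_{g,\cT}$ via conjugation by $g^{n}$. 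That conjugation is also what matches the two-sided ``$|k|$ large'' condition of the lemma with the one-sided condition of Definition \ref{def:scale_aut}; the identification is \emph{not} automatic when $\cT$ is not itself positioned before $S(g)$.

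\textbf{The ``Furthermore'' clause.} Your claim that the backward orbit $g^{-k}(v)$ of $v\in\Int(\cT)$ avoids $S(g)$ is false: under $v_0>_g\pi_g(u)$ for all $u\in S(g)$ the singularities lie \emph{behind} $v_0$, exactly where the backward iterates of $v$ go (e.g.\ take $v=v_0$ and any $u\in S(g)$ with $\pi_g(u)=g^{-1}(v_0)$). So the hypothesis of Lemma \ref{lem:basic_asym_sing_prop}\ref{itm:traj_3} is unavailable. The correct split is by the sign of $k$. For $k>0$ the \emph{forward} orbit $g^{l}(v)$, $l\ge0$, avoids $S(g)$, giving $\sigma(g^{k},v),\sigma(g^{k},\varphi(v))\in F$ and hence reducing the condition to $\sigma(\varphi,v)\in F$. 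For $k=-m<0$ with $m$ large, use Definition \ref{def:asymp_func} directly: $\sigma(g^{m},g^{-m}(v))\in\lambda_g(v)$ and $\sigma(g^{-m},\varphi(v))=\sigma(g^{m},g^{-m}\varphi(v))^{-1}\in\lambda_g(\varphi(v))^{-1}$, so the condition becomes $\sigma(\varphi,v)\in\lambda_g(\varphi(v))\lambda_g(v)^{-1}$. Combining the two signs yields the stated description.
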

\begin{proof}
	Choice of $\cT$ implies that there exists $n\in\bN$ such that $g^{-n}(\cT)$ is the initial segment of some pando $\cP$ for $g$. Proposition \ref{prop:Scale_calc_2} gives
	\[s(g) = \dfrac{[U_{g^{-n+1}(\cT)}:U_{g^{-n}(\cT)}\cap U_{g^{-n+1}(\cT)}]}{|M_{g,\cP}|}.\]
	Repeated applications of the Orbit-Stabiliser Theorem and the assumption that $F$ is $2$-transitive shows that the numerator is precisely  $(|\Omega|-1)^{|\Int(g^{-n}(\cT))|} = (|\Omega|-1)^{|\Int(\cT)|}$. To complete the proof of the first claim, note that by conjugating by $g^{-n_1}$ gives bijection between $M_{g,\cP_0}$ and $M_{g,\cT}$.
	
	For the final claim, suppose $v_0>_g\pi_g(u)$ for all $u\in S(g)$. Since $\pi_g(v)\ge v_0$ for all $v\in \Int(\cT)$, given $v\in\Int(\cT)$ we have $\sigma(g^k,v)\in F$. Also, for all $k\in\bN$ and for $k$ sufficiently large we have $\sigma(g^{k},g^{-k}(v))F = \lambda_g(v)$. Using these identities gives the required characterisation of $M_{g,\cT}$.
\end{proof}

\begin{proposition}\label{prop:asymp_is_s_mult}
	Suppose $F$ is $2$-transitive and $g,h\in G(F,F')$ are hyperbolic, translation compatible and weakly asymptotic such that $\axis(g)\cap \axis(h)$ is non-empty. Then the semigroup generated by $g$ and $h$ is scale-multiplicative. 
\end{proposition}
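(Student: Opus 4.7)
The plan is to induct on word length, reducing the proposition to the single claim $s(xy) = s(x)s(y)$ where $x = w_1 \cdots w_{n-1}$ is an arbitrary word in $\cS$ and $y = w_n \in \{g, h\}$ is one more generator. A preliminary induction using Lemma \ref{lem:trans_compatible_basics}(i) and Proposition \ref{prop:wk_asymp_semigroup} records that every element of $\cS$ is hyperbolic and weakly asymptotic to both $g$ and $h$; in the geometric configurations that arise (shared attracting end, shared repelling end, or finite axis intersection) each such element also remains translation compatible with the next generator we wish to multiply by, because all axes continue to share the same ray or segment and translate through it in the same direction.

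The main case is $\omega_+(g) = \omega_+(h)$. Here $\axis(x)$ and $\axis(y)$ share an infinite ray towards the common attracting end, and I would apply Lemma \ref{lem:scale_flex} simultaneously to $x$, $y$, and $xy$ with coordinated subtrees. Choose $v_0$ along this shared ray, far enough that $v_0 >_x \pi_x(u)$ for all $u\in S(x)$ and similarly for $y$ and $xy$, and such that both $y(v_0)$ and $xy(v_0) = x(y(v_0))$ still lie on the shared ray; existence follows from translation compatibility and the identity $l(xy) = l(x)+l(y)$. Pick $D$ larger than $D_x, D_y, D_{xy}$ and form the complete subtrees $\cT_x$, $\cT_y$, $\cT_{xy}$ from Lemma \ref{lem:scale_flex}. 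Translation compatibility then lets the strip for $\cT_{xy}$ decompose along the shared axis as $\cT_y$ (over $[v_0, y(v_0)]$) glued to a shift of $\cT_x$ (over $[y(v_0), xy(v_0)]$), giving
\[|\Int(\cT_{xy})| = |\Int(\cT_x)| + |\Int(\cT_y)|.\]
Because $v_0$ is past every singularity, the second part of Lemma \ref{lem:scale_flex} characterises each $M$-set via the single condition $\sigma(\varphi, u) \in F \cap \lambda(\varphi(u))\lambda(u)^{-1}$, and weak asymptoticity forces $\lambda_x = \lambda_y = \lambda_{xy}$ on $V(T)$. Splitting an automorphism of $\cT_{xy}$ at the cut vertex $y(v_0)$ therefore produces a bijection $M_{xy,\cT_{xy}} \to M_{y,\cT_y} \times M_{x,\cT_x}$, and Lemma \ref{lem:scale_flex} immediately delivers $s(xy) = s(x)s(y)$.

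The remaining case $\omega_+(g) \neq \omega_+(h)$ is handled via the weak asymptotic reduction. Here Lemma \ref{lem:wk_asym_not_asymp} followed by Lemma \ref{lem:scale_weak_asyp} gives $s(x) = (|\Omega|-1)^{l(x)}$ and $s(y) = (|\Omega|-1)^{l(y)}$, and one then checks that $xy$ also satisfies the hypothesis of Lemma \ref{lem:scale_weak_asyp}: using $\lambda_{xy} = \lambda_x$ from weak asymptoticity, one notes that for each fixed $v$ the iterates $(xy)^n(v)$ are eventually either far from $\axis(x)$ (by Lemma \ref{lem:dist_to_axis_seq}, applicable whenever $\omega_+(xy)$ avoids the two ends of $\axis(x)$) or head along $\axis(x)$ past every $x$-singularity, so Lemma \ref{lem:basic_asym_sing_prop} forces $\lambda_x((xy)^n(v)) = F$. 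The additivity $l(xy) = l(x) + l(y)$ from translation compatibility then yields $s(xy) = (|\Omega|-1)^{l(xy)} = s(x)s(y)$. I expect the principal obstacle to lie in the geometric bookkeeping of the main case: verifying that a single $v_0$ really does simultaneously push all three singularity sets upstream, that the subtree decomposition is genuinely clean at depth $D$, and that the $M$-set bijection respects the local colour constraints. A secondary hazard is maintaining translation compatibility through the induction in degenerate axis configurations, where one may need to verify directly that every iterate of the running partial product still has axis sharing the common segment with the next generator.
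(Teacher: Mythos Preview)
Your proposal is correct and follows essentially the same approach as the paper. Both proofs split into the two cases $\omega_+(g)=\omega_+(h)$ and $\omega_+(g)\neq\omega_+(h)$; in the first case both use Lemma \ref{lem:scale_flex} with a subtree $\cT_{xy}$ decomposed along the shared ray into pieces $\cT_y$ and $\cT_x$, with weak asymptoticity $\lambda_x=\lambda_y=\lambda_{xy}$ giving the bijection of $M$-sets; in the second case both reduce to additivity of $l$ via Lemmas \ref{lem:wk_asym_not_asymp} and \ref{lem:scale_weak_asyp}. Your write-up is somewhat more explicit than the paper's about the induction on word length needed to pass from $s(gh)=s(g)s(h)$ to full scale-multiplicativity of the semigroup, which the paper leaves implicit in the $\omega_+(g)=\omega_+(h)$ case; in the $\omega_+(g)\neq\omega_+(h)$ case your direct verification that $xy$ satisfies the hypothesis of Lemma \ref{lem:scale_weak_asyp} is slightly more laborious than the paper's observation that \emph{every} element of the semigroup already does (since it is weakly asymptotic to both $g$ and $h$ and cannot share $\omega_+$ with both).
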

\begin{proof}
	First suppose $\omega_+(g)\neq\omega_+(h)$. If $h_0$ is in the semigroup generated by $g$ and $h$, then $h_0$, $g$ and $h$ are all weakly asymptotic by Proposition \ref{prop:wk_asymp_semigroup}. We must have $\omega_+(h_0)\neq \omega_+(g)$ or $\omega_+(h_0)\neq \omega_+(h)$ and so applying Lemma \ref{lem:scale_weak_asyp}, we see that $s(h_0) = (|\Omega| - 1)^{l(h_0)}$. Thus, to show that the semigroup is scale-multiplicative, it suffices to show that it the length function is additive. Since $\axis(g)\cap \axis(h)$ is in the axis of every element of the semigroup by \cite[Lemma 4.7]{Baumgartner15} and any two elements of the semigroup are translation compatible by Lemma \ref{lem:trans_compatible_basics}, the result follows from \cite[Lemma 4.7]{Baumgartner15}.
	
	Suppose now that $\omega_+(g) = \omega_+(h)$. Choose $v_0\in \axis(g)\cap \axis(h)$ such that:
	\begin{enumerate}[label = (\roman{*})]
		\item $v_0 >_g \pi_g(u)$ for all $u\in S(g)$;
		\item $v_0 >_g \pi_h(u)$ for all $u\in S(h)$;
		\item $v_0 >_g{gh}\pi_{gh}(u)$ for all $u\in S(gh)$.
	\end{enumerate}
	Note that $\{v_0, h(v_0), gh(v_0)\}\subset \axis(gh)\cap \axis(h)\cap \axis(g)$.
	Choose
	$D > \max\{D_g,D_h,D_{gh}\}$
	and let $\cT_1$ be the minimal complete subtree of $T$ such that:
	\begin{enumerate}[resume, label = (\roman{*})]
		\item $v_0\in\Int(\cT_{1})$ and $gh(v_0)\in\cT_1$; and
		\item if $\pi_{gh}(v)\in \cT_1$ and $d(v,\pi_{gh}(v))< D$, we have $v\in\cT_1$.
	\end{enumerate}
	Let $\cT_2$ be the minimal complete subtree of $\cT_1$ containing every vertex $v\in V(\cT_1)$ such that $v_0\le_{gh}\pi_{gh}(v)<_{gh}h(v_0)$. Similarly, let $\cT_{3}$ be the minimal complete subtree of $\cT_1$ containing every vertex $v\in V(\cT_1)$ such that $h(v_0)\le_{gh}\pi_{gh}(v)<_{gh}gh(v_0)$. Then $\cT_2$ and $\cT_3$ union to give $\cT_1$ and intersect to give and edge. Thus, $|\Int(\cT_1)| = |\Int(\cT_2)|+|\Int(\cT_3)|$. Applying Lemma \ref{lem:scale_flex} to the pairs $gh$ and $\cT_1$, $h$ and $\cT_2$, and $g$ and $\cT_3$, we see that ${s(gh) = s(g)s(h)}$ if and only if $|M_{gh,\cT_{1}}| = |M_{h,\cT_2}||M_{g,\cT_3}|$. Suppose $\varphi\in M_{gh,\cT_1}$. Equivalently, we have $\varphi\in \Aut(\cT_1)$ such that $\varphi$ fixes $\axis(gh)\cap \cT_1$ and ${\sigma(\varphi,v) \in F\cap \lambda_{gh}(\varphi(v))\lambda_{gh}(v)^{-1}}$ for all $v\in \Int(\cT_1)$. Let $\varphi_2$ and $\varphi_3$ be the restrictions of $\varphi$ to $\cT_2$ and $\cT_3$ respectively. Then $\varphi\in M_{gh,\cT_1}$ if and only if both of the following hold:
	\begin{enumerate}[label = (\alph{*})]
		\item\label{itm:equiv_1} $\varphi_2\in \Aut(\cT_2)$ fixes $\axis(gh)\cap\cT_2$ and $\sigma(\varphi_2,v)\in F\cap \lambda_{gh}(\varphi(v))\lambda_{gh}(v)^{-1}$ for all and $v\in \Int(\cT_2)$; and
		\item\label{itm:equiv_2} $\varphi_3\in \Aut(\cT_3)$ fixes $\axis(gh)\cap\cT_3$ and $\sigma(\varphi_3,v)\in F\cap \lambda_{gh}(\varphi_3(v))\lambda_{gh}(v)^{-1}$ for all and $v\in \Int(\cT_3)$.
	\end{enumerate}
	By assumption $\lambda_{gh}(v) = \lambda_{g}(v) = \lambda_{h}(v)$, and so \ref{itm:equiv_1} and \ref{itm:equiv_2} are equivalent to $\varphi_2\in M_{h,\cT_2}$ and $\varphi_3\in M_{g,\cT_{3}}$. We have given a bijection $M_{gh,\cT_{1}}\to M_{h,\cT_{2}}\times M_{g,\cT_3}$ as required.
\end{proof}
\subsection{Uniscalar elements}
\label{ssec:asymp_uniscalar}
To build maximal scale-multiplicative semigroups from asymptotic classes, the correct uniscalar elements to add need to be identified. To define these elements, observe that and t.d.l.c. group $G$ acts on $\partial G$ by conjugation, that is, set $g\partial h = \partial ghg^{-1}$. Let $G_{\partial g}$ denote the stabiliser of $\partial g$ under the action of $G$. Lemma \ref{lem:uni_and_asymp} shows that adding the uniscalar elements of $G(F,F')_{\partial g}$ to $\partial g$ gives a scale-multiplicative semigroup. The proof involves an alternate description of $G(F,F')_{\partial g}$ which is given in Lemma \ref{lem:stab_comp}. This description and Lemma \ref{lem:length_to_scale} is used to show that we still have a scale-multiplicative semigroup in Lemma \ref{lem:uni_and_asymp}. 

\begin{lemma}\label{lem:length_to_scale}
	Suppose $F$ is $2$-transitive and $g,h\in G(F,F')$ are hyperbolic with $g\asymp h$ such that $l(g) = l(h)$. Then $s(g) = s(h)$.
\end{lemma}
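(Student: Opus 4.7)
The plan is to apply Lemma \ref{lem:scale_flex} to $g$ and $h$ using a common subtree $\cT$, and then observe that, because $g\asymp h$, the sets $M_{g,\cT}$ and $M_{h,\cT}$ coincide.

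First I would invoke Proposition \ref{prop:classification_in_terms_of_sing_traj} to replace the hypothesis $g\asymp h$ by the two concrete facts $\omega_{+}(g)=\omega_{+}(h)$ and $\lambda_{g}(v)=\lambda_{h}(v)$ for every $v\in V(T)$. The first of these tells us that $\axis(g)$ and $\axis(h)$ share a common ray $R$ pointing toward $\omega_{+}(g)$; using $l(g)=l(h)$, $g$ and $h$ agree as translations along $R$, so the orderings $\le_{g}$ and $\le_{h}$ coincide on $R$. I would then pick a vertex $v_{0}$ strictly in the interior of $R$ (so that $v_{0}$ and its $\omega_{-}$-neighbor in $R$ both lie in $\axis(g)\cap\axis(h)$), chosen far enough toward $\omega_{+}$ that $v_{0}>_{g}\pi_{g}(u)$ for every $u\in S(g)$ and $v_{0}>_{h}\pi_{h}(u)$ for every $u\in S(h)$. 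Since $l(g)=l(h)$ and the axes agree on $R$, we also have $g(v_{0})=h(v_{0})$.

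Next, fix $D>\max\{D_{g},D_{h}\}$ and let $\cT$ be the minimal complete subtree containing $v_{0}$ in its interior and $g(v_{0})=h(v_{0})$, closed under the rule that $v\in\cT$ whenever $\pi_{g}(v)\in\Int(\cT)$ and $d(v,\pi_{g}(v))\le D$. I would check that, because $v_{0}$ and $g(v_{0})$ lie strictly inside $R$, every vertex $v$ whose $g$-projection lies in $\Int(\cT)$ satisfies $\pi_{g}(v)=\pi_{h}(v)$ and $d(v,\axis(g))=d(v,\axis(h))$. Consequently $\cT$ is simultaneously the subtree produced by the analogous construction for $h$, and $\axis(g)\cap\cT=\axis(h)\cap\cT$. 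Applying Lemma \ref{lem:scale_flex} to $g$ and to $h$ with this common $\cT$ gives
\[s(g)=\frac{(|\Omega|-1)^{|\Int(\cT)|}}{|M_{g,\cT}|},\qquad s(h)=\frac{(|\Omega|-1)^{|\Int(\cT)|}}{|M_{h,\cT}|}.\]
The ``furthermore'' clause of that lemma applies (thanks to the placement of $v_{0}$ past every singularity of $g$ and every singularity of $h$) and identifies $M_{g,\cT}$ with the set of $\varphi\in\Aut(\cT)$ fixing $\axis(g)\cap\cT$ and satisfying $\sigma(\varphi,u)\in F\cap\lambda_{g}(\varphi(u))\lambda_{g}(u)^{-1}$ for all $u\in\Int(\cT)$, and gives the parallel description of $M_{h,\cT}$. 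Since $\axis(g)\cap\cT=\axis(h)\cap\cT$ and $\lambda_{g}=\lambda_{h}$ pointwise on $V(T)$, the two sets are literally equal, and therefore $s(g)=s(h)$.

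The main obstacle will be verifying that a single subtree $\cT$ is produced by the two constructions and that the axis-intersections match; naively the closure condition is phrased in terms of $\axis(g)$ and $\pi_{g}$, and if $v_{0}$ were placed exactly at the point where $\axis(g)$ and $\axis(h)$ diverge the two constructions would yield different subtrees. Selecting $v_{0}$ strictly inside $R$ is what removes this asymmetry, and it uses $\omega_{+}(g)=\omega_{+}(h)$ in an essential way; once this is done the rest is a direct comparison of the $\lambda$-conditions supplied by $g\asymp h$.
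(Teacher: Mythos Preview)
Your proposal is correct and follows essentially the same route as the paper: choose a vertex $v_0$ far enough along the common ray $\axis(g)\cap\axis(h)$ so that it lies past all singularities of both $g$ and $h$, build a single subtree $\cT$ with $D>\max\{D_g,D_h\}$, apply Lemma~\ref{lem:scale_flex} (including its ``furthermore'' clause) to each of $g$ and $h$, and conclude $M_{g,\cT}=M_{h,\cT}$ from $\lambda_g=\lambda_h$. The paper's proof is slightly terser---it does not spell out the verification that the same $\cT$ serves for both elements---but your extra care about placing $v_0$ strictly inside the shared ray so that $\pi_g$ and $\pi_h$ agree on the relevant vertices is exactly the point that makes this work.
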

\begin{proof}
	Since $g\asymp h$, we have $\omega_+(g) = \omega_+(h)$ by Lemma \ref{lem:equal_ends}. Therefore, we may choose $v\in\axis(h)\cap \axis(g)$ such that: 
	\begin{enumerate}[label = (\roman{*})]
		\item $v>_g\pi_g(u)$ for all $u\in S(g)$; and
		\item $v>_h\pi_h(u)$ for all $u\in S(h)$.
	\end{enumerate}
	Choose $D>\max\{D_h,D_g\}$
	and $\cT$ be the unique minimal subtree of $T$ such that:
	\begin{enumerate}[resume, label = (\roman{*})]
		\item $v\in \Int(\cT)$ and $g(v) = h(v)\in \cT$; and
		\item If $u\in V(T)$ with $\pi_g(u) = \pi_h(u)\in \Int(\cT)$ and $d(u,\pi_g(u))\le D$, then $u\in \cT$.
	\end{enumerate} 
	To show $s(g) = s(h)$ it suffices to show $M_{g,\cT} = M_{h,\cT}$ by Lemma \ref{lem:scale_flex}. However, this is immediate as $\lambda_{g}(v) = \lambda_h(v)$ for all $v\in V(T)$.
\end{proof}

\begin{lemma}\label{lem:stab_comp}
	Suppose $F$ is $2$-transitive and $g\in G(F,F')$ is hyperbolic. Then 
	\begin{align*}
	G(F,F')_{\partial g} &= \{x\in G(F,F')_{\omega_+(g)}\mid \sigma(x,v)\lambda_{g}(v) = \lambda_g(x(v))\}\\
	& =  \{x\in G(F,F')_{\omega_+(g)}\mid   \sigma(x,x^{-1}(v))\lambda_g(x^{-1}(v)) = \lambda_g(v)\}.
	\end{align*}
\end{lemma}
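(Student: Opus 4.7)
The strategy is to unwind definitions and then compute. By construction $x\in G(F,F')_{\partial g}$ if and only if $\partial(xgx^{-1})=\partial g$, i.e.\ $xgx^{-1}\asymp g$. Since $g$ is hyperbolic so is $xgx^{-1}$, so Remark \ref{rem:asymp_in_terms_of_wk_asymp} rephrases $xgx^{-1}\asymp g$ as the conjunction of $\omega_+(xgx^{-1})=\omega_+(g)$ and $\lambda_{xgx^{-1}}(v)=\lambda_g(v)$ for every $v\in V(T)$. Since $\omega_+(xgx^{-1})=x(\omega_+(g))$, the first condition is simply $x\in G(F,F')_{\omega_+(g)}$; it remains to compute $\lambda_{xgx^{-1}}$ in terms of $\lambda_g$.

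Applying the cocycle identity of Lemma \ref{lem:sig_basic} to the factorisation $(xgx^{-1})^n=xg^nx^{-1}$ evaluated at $(xgx^{-1})^{-n}(v)=xg^{-n}x^{-1}(v)$, one obtains
\[\sigma(xg^nx^{-1},xg^{-n}x^{-1}(v))=\sigma(x,x^{-1}(v))\,\sigma(g^n,g^{-n}x^{-1}(v))\,\sigma(x^{-1},xg^{-n}x^{-1}(v)).\]
The first factor is independent of $n$. The third factor lies in $F$ for all sufficiently large $n$ because $S(x^{-1})$ is finite while the sequence $xg^{-n}x^{-1}(v)$ escapes every bounded subset of $V(T)$ as $n\to\infty$ (its limit end is $x(\omega_-(g))$), so eventually $xg^{-n}x^{-1}(v)\notin S(x^{-1})$. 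Multiplying by $F$ on the right and passing to the limit in $n$ therefore yields
\[\lambda_{xgx^{-1}}(v)=\sigma(x,x^{-1}(v))\,\lambda_g(x^{-1}(v)).\]

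Combining the two ingredients, $x\in G(F,F')_{\partial g}$ if and only if $x\in G(F,F')_{\omega_+(g)}$ and $\sigma(x,x^{-1}(v))\lambda_g(x^{-1}(v))=\lambda_g(v)$ for every $v\in V(T)$, which is the second displayed characterisation. Substituting $v=x(w)$ and renaming $w$ to $v$, using that $x$ permutes $V(T)$, produces the first characterisation, so the two sets in the statement coincide.

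The main obstacle is the careful bookkeeping of the cocycle identity together with the observation that the peripheral $\sigma(x^{-1},\cdot)$-factor is eventually absorbed into $F$; once these are in place, the remainder of the argument is a formal substitution.
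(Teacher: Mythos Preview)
Your proof is correct and follows essentially the same approach as the paper: both reduce to the characterisation of $\asymp$ via $\omega_+$ and $\lambda$, then compute $\lambda_{xgx^{-1}}(v)$ by expanding $\sigma(xg^nx^{-1},xg^{-n}x^{-1}(v))$ via the cocycle identity and absorbing the $\sigma(x^{-1},\cdot)$ factor into $F$ for large $n$. The only cosmetic difference is that the paper derives the first displayed set from the second by applying the second characterisation to $x^{-1}$ and using $\sigma(x^{-1},x(v))=\sigma(x,v)^{-1}$, whereas you obtain it by the substitution $v\mapsto x(v)$; both are immediate.
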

\begin{proof}
	Suppose $x\in G(F,F')$.  Observe that $\omega_+(xgx^{-1}) = x\omega_+(g)$ since $xgx^{-1}$ acts by translation along $x\axis(g)$. Hence, if $xgx^{-1}\asymp g$, if follows from Lemma \ref{lem:equal_ends} that $x\in G(F,F')_{\omega_+(g)}$. Suppose this is the case. Then $x\in G(F,F')_{\partial g}$ if and only if $\lambda_{xgx^{-1}}(v) = \lambda_g(v)$ for all $v\in V(T)$. For a given $v\in V(T)$, if $n\ge \max\{H_{xgx^{-1}}(v), H_g(x^{-1}(v))\}$, then $\lambda_{xgx^{-1}}(v) = \sigma(xg^{n}x^{-1}, xg^{-n}x^{-1}(v))F$ and $\lambda_g(x^{-1}(v)) = \sigma(g^n,g^{-n}x^{-1}(v))F$. Choosing $n$ larger if necessary, since $S(x^{-1})$ is finite and $xg^{-1}x^{-1}$ is hyperbolic, we may assume that $\sigma(x^{-1},xg^{-n}x^{-1}(v))\in F$. Then
	\begin{align*}
	\lambda_{xgx^{-1}}(v) &= \sigma(xg^nx^{-1},xg^{-n}x^{-1}(v))F\\&
	= \sigma(x,x^{-1}(v))\sigma(g^n,g^{-n}x^{-1}(v))\sigma(x^{-1},xg^{-n}x^{-1}(v))F\\
	& = \sigma(x,x^{-1}(v))\lambda_g(x^{-1}(v)).
	\end{align*}
	Thus $\lambda_g(v) = \lambda_{xgx^{-1}}(v)$ if and only if $\sigma(x,x^{-1}(v))\lambda_g(x^{-1}(v)) = \lambda_g(v)$. This shows our second claim. 
	
	For the first, note that $x\in G(F,F')_{\partial g}$ if and only if $x^{-1}\in G(F,F')_{\partial g}$. The previous argument shows that this is equivalent to $\sigma(x^{-1},x(v))\lambda_g(x(v)) = \lambda_g(v)$
	for all $v\in V(T)$. Rearranging gives the first claim. 
\end{proof}
For $g\in G(F,F')$ moving towards infinity, we let \[G(F,F')_{\partial g}(1) := \{x\in G(F,F')_{\partial g}\mid s(x) = 1\}\]
and $G(F,F')_{+\partial g} := G(F,F')_{\partial g}(1)\cup \partial g$. 
\begin{lemma}\label{lem:uni_and_asymp}
	Suppose $F$ is $2$-transitive and $g\in G(F,F')$ is hyperbolic. Then: 
	\begin{enumerate}[label = (\roman{*})]
		\item $G(F,F')_{\partial g}(1)$ is a subgroup of $G(F,F')$;
		\item\label{itm:stab_dir_2} If $x\in G(F,F')_{\partial g}(1)$, then $xg\asymp g$ and $s(xg) = s(g)$.
		\item\label{itm:stab_dir_3} If $x\in G(F,F')_{\partial g}$ is hyperbolic with $\omega_+(x) = \omega_+(g)$, we have $x\asymp g$.
	\end{enumerate}
\end{lemma}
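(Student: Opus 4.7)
The plan is to first reduce $G(F,F')_{\partial g}(1)$ to a concrete set and then exploit the characterisations of $\asymp$ provided by Section~\ref{sec:leboudecDirections}. Note that $2$-transitivity of $F$ on $\Omega$ (with $\deg(T)\ge 3$) forces $F$ to have distinct point stabilisers, since $F_a$ acts transitively on $\Omega\setminus\{a\}$. The contrapositive of Lemma~\ref{lem:U(F)_hyp_scale} combined with Proposition~\ref{prop:U(F)_small_scale} then rules out hyperbolic elements of scale $1$ in $G(F,F')$, so the condition $s(x)=1$ is equivalent to $x$ being elliptic (the converse is Proposition~\ref{prop:elliptic_scale}). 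By Lemma~\ref{lem:stab_comp} every element of $G(F,F')_{\partial g}$ fixes the end $\omega_+(g)$, so $G(F,F')_{\partial g}(1)$ coincides with the set of elliptic elements of $G(F,F')$ fixing $\omega_+(g)$.

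For (i), an elliptic element fixing $\omega_+(g)$ pointwise fixes the unique ray from any of its fixed vertices to $\omega_+(g)$; the intersection of any two such rays still contains a terminal ray to $\omega_+(g)$, so the product of two elements of the set is elliptic and fixes $\omega_+(g)$. The inverse is trivially in the set, so $G(F,F')_{\partial g}(1)$ is a subgroup.

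The heart of the lemma is (iii). I would verify the criterion of Proposition~\ref{prop:classification_in_terms_of_sing_traj}: since $\omega_+(x)=\omega_+(g)$ is given, it remains to show $\lambda_x(v)=\lambda_g(v)$ for every $v\in V(T)$. Lemma~\ref{lem:stab_comp} provides $\sigma(x,u)\lambda_g(u)=\lambda_g(x(u))$; iterating this together with Lemma~\ref{lem:sig_basic} yields
\[\sigma(x^n,x^{-n}(v))\,\lambda_g(x^{-n}(v))=\lambda_g(v)\]
for every $n\ge 0$. For $n$ large, $\lambda_g(x^{-n}(v))=F$: if $\omega_-(x)\ne\omega_-(g)$ then $d(x^{-n}(v),\axis(g))\to\infty$ by Lemma~\ref{lem:dist_to_axis_seq}, so $g^{-k}x^{-n}(v)\notin S(g)$ for every $k\ge 0$; while if $\omega_-(x)=\omega_-(g)$ the axes coincide and $\pi_g(x^{-n}(v))$ eventually lies past $\pi_g(S(g))$ in the $\omega_-(g)$ direction, yielding the same conclusion. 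Either way Lemma~\ref{lem:basic_asym_sing_prop}\,(ii) gives $\lambda_g(x^{-n}(v))=F$, and since $\lambda_x(v)=\sigma(x^n,x^{-n}(v))F$ for $n$ large, we conclude $\lambda_x(v)=\lambda_g(v)$.

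Finally, for (ii), since $x$ is elliptic and fixes $\omega_+(g)$, the ray from any of its fixed vertices to $\omega_+(g)$ is pointwise fixed by $x$ and eventually merges with $\axis(g)$, producing $v_0\in\axis(g)$ such that $x$ fixes every $v\in\axis(g)$ with $v\ge_g v_0$. On this tail $xg$ agrees with $g$, which forces $xg$ to be hyperbolic with $l(xg)=l(g)$ and $\omega_+(xg)=\omega_+(g)$. The group $G(F,F')_{\partial g}$ is a stabiliser, hence a subgroup, and it contains both $x$ and $g$, so it contains $xg$; applying part (iii) to $xg$ gives $xg\asymp g$, and Lemma~\ref{lem:length_to_scale} then yields $s(xg)=s(g)$. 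The main obstacle will be the two-case analysis producing $\lambda_g(x^{-n}(v))=F$ in part (iii), which requires careful control of where $x^{-n}(v)$ and its $g$-preimages land relative to $S(g)$.
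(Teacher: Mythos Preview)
Your approach matches the paper's almost exactly: parts (ii) and (iii) are argued just as the paper does (your two-case split producing $\lambda_g(x^{-n}(v))=F$ is precisely the content of Lemma~\ref{lem:random_natural}, which the paper invokes by name), and part (i) uses the same elliptic-ray reasoning.

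There is, however, one genuine slip. You assert that $G(F,F')_{\partial g}(1)$ \emph{coincides} with the set $E$ of elliptic elements of $G(F,F')$ fixing $\omega_+(g)$. Lemma~\ref{lem:stab_comp} only gives the inclusion $G(F,F')_{\partial g}\subseteq G(F,F')_{\omega_+(g)}$; membership in $G(F,F')_{\partial g}$ additionally demands $\sigma(x,v)\lambda_g(v)=\lambda_g(x(v))$ for all $v$, which an arbitrary elliptic element fixing $\omega_+(g)$ need not satisfy. (For instance, if $g\in U(F)$ then $\lambda_g\equiv F$ and the condition forces $x\in U(F)$, so any elliptic element with a singularity that fixes $\omega_+(g)$ lies in $E$ but not in $G(F,F')_{\partial g}(1)$.) As written, your argument for (i) establishes that $E$ is a subgroup and then invokes this false equality to conclude.

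The repair is immediate and you already have the ingredient: $G(F,F')_{\partial g}$ is a stabiliser (as you note in your treatment of (ii)), hence a subgroup, and $G(F,F')_{\partial g}(1)=G(F,F')_{\partial g}\cap E$ is then an intersection of two subgroups. The paper's proof is organised the same way: it checks directly that $xy$ and $x^{-1}$ are elliptic (hence have scale~$1$), with closure in $G(F,F')_{\partial g}$ left implicit since it is a stabiliser.
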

\begin{proof}
	To see $G(F,F')_{\partial g}(1)$ is a subgroup, it suffices to show that if $x,y\in G(F,F')_{\partial g}(1)$, then $s(xy) = s(x^{-1}) = 1$. If $x\in G(F,F')_{\partial g}(1)$, then  $x\omega_+(g) = \omega_+(xgx^{-1}) = \omega_+(g)$. Also, since $F$ is $2$-transitive, Corollary \ref{cor:classification_of_uniscalar} shows that $x$ is elliptic. We must have $x^{-1}$ elliptic and so $s(x^{-1}) = 1$ by Proposition \ref{prop:elliptic_scale}. Thus, $G(F,F')_{\partial g}(1)$ is closed under taking inverses. If $x,y\in G(F,F')_{\partial g}(1)$, then $x,y$ both elliptic and therefore must eventually fix any infinite ray with endpoint $\omega_+(g)$. This shows $xy$ is elliptic and hence uniscalar by Proposition \ref{prop:elliptic_scale}. 
	
	To show \ref{itm:stab_dir_3} suppose $x\in G(F,F')_{\partial_g}$ is hyperbolic with $\omega_+(x) = \omega_{+}(g)$ and choose $v\in V(T)$. We show that $\lambda_x(v) = \lambda_g(v)$ which implies $x\asymp g$ by Proposition \ref{prop:classification_in_terms_of_sing_traj}. Note that $x$ is translation compatible with $g$. Lemma \ref{lem:random_natural} and Lemma \ref{lem:basic_asym_sing_prop} gives $k\in\bN$ such that $\lambda_g(x^{-k}(v)) = F$ and $\sigma(x^k, x^{-k}(v)) = \lambda_{x}(v)$. Multiple applications of Lemma \ref{lem:stab_comp} and Lemma \ref{lem:basic_asym_sing_prop} give
	\begin{align*}
	\lambda_g(v) &= \sigma(x,x^{-1}(v))\lambda_{g}(x^{-1}(v))\\
	& = \sigma(x,x^{-1}(v))\sigma(x, x^{-2}(v))\lambda_{g}(x^{-2}(v))\\
	& = \sigma(x^2,x^{-2}(v))\lambda_{g}(x^{-2}(v))\\
	&\hspace{0.2cm}\vdots\\
	& = \sigma(x^k,x^{-k}(v))\lambda_{g}(x^{-k}(v))\\
	& = \lambda_{x}(v).
	\end{align*}
	
	For \ref{itm:stab_dir_2}, suppose $x\in G(F,F')_{\partial g}(1)$. Then $xg\in G(F,F')_{\partial g}$ and $\omega_+(xg) = \omega_{+}(g)$ as $x(\omega_+(g)) = \omega_+(g)$. Hence, $xg\asymp g$ by the previous paragraph. Since $l(g) = l(xg)$, Lemma \ref{lem:length_to_scale} shows $s(g) = s(xg)$.
\end{proof}
\begin{lemma}\label{lem:ends_and_stab_s_mult}
	Suppose $F$ is $2$-transitive and $g\in G(F,F)$ is hyperbolic. Then $G(F,F')_{+\partial g}$ is a scale-multiplicative semigroup which is not open.
\end{lemma}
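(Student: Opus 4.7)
The plan is to verify three things in turn: closure under multiplication, scale multiplicativity, and failure of openness. For the first two I would partition pairs $x,y\in G(F,F')_{+\partial g}$ into the four cases given by whether each factor lies in $G(F,F')_{\partial g}(1)$ or in $\partial g$, and reduce each case to results already in the paper. When both factors are uniscalar the conclusion is immediate from Lemma \ref{lem:uni_and_asymp} since $G(F,F')_{\partial g}(1)$ is already known to be a subgroup. When exactly one factor is uniscalar and the other lies in $\partial g$, Lemma \ref{lem:uni_and_asymp}\ref{itm:stab_dir_2} applies directly to the product of the form $xg$ with $x$ uniscalar and $g$ hyperbolic; for the other ordering I would conjugate, writing $xy=(xyx^{-1})x$ and observing that $xyx^{-1}$ remains uniscalar and still lies in the stabiliser, so the lemma applies with $x$ in the role of $g$. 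When both factors are hyperbolic and in $\partial g$ they are translation compatible (since they share $\omega_+(g)$), weakly asymptotic by Proposition \ref{prop:classification_in_terms_of_sing_traj}, and their axes meet in an infinite ray, so Proposition \ref{prop:asymp_is_s_mult} gives scale multiplicativity while the corollary to Proposition \ref{prop:wk_asymp_semigroup} keeps the product inside $\partial g$.

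The one subtlety in the case analysis is the observation, needed in the third case, that any $x\in\partial g$ automatically lies in $G(F,F')_{\partial g}$. This follows from Lemma \ref{lem:stab_comp} combined with Lemma \ref{lem:basic_asym_sing_prop}\ref{itm:traj_1}: $x$ is hyperbolic with $\omega_+(x)=\omega_+(g)$ and hence fixes $\omega_+(g)$, while the required identity $\sigma(x,v)\lambda_g(v)=\lambda_g(x(v))$ holds because $\lambda_x=\lambda_g$ by Proposition \ref{prop:classification_in_terms_of_sing_traj} and because the cocycle identity of Lemma \ref{lem:basic_asym_sing_prop}\ref{itm:traj_1} says precisely $\sigma(x,v)\lambda_x(v)=\lambda_x(x(v))$.

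For non-openness, any open neighbourhood of the identity contains some $U_\cF$ with $\cF$ finite, and every element of $U_\cF$ is elliptic, hence uniscalar by Proposition \ref{prop:elliptic_scale}, hence outside $\partial g$. It therefore suffices to produce $h\in U_\cF$ that is not in $G(F,F')_{\partial g}(1)$, and by Lemma \ref{lem:stab_comp} it is enough to ensure $h$ does not fix the end $\omega_+(g)$. I would pick a vertex $v_k$ on a ray representing $\omega_+(g)$, far enough along that neither $v_k$ nor its neighbours meet $\cF$, and use $2$-transitivity of $F$ together with Lemma \ref{lem:autExtension} to build an $h\in U(F)$ fixing $\cF$ pointwise and having local action at $v_k$ swapping the colour of the edge toward $\omega_+(g)$ with the colour of some other edge at $v_k$. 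Then $h$ fails to fix $\omega_+(g)$, so $h\notin G(F,F')_{+\partial g}$, showing the semigroup is not open. The hardest step in practice is the bookkeeping of the third case; the non-openness and the hyperbolic-times-hyperbolic case are essentially direct applications of the machinery already developed.
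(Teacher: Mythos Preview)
Your proof is correct and follows essentially the same route as the paper: closure and scale-multiplicativity come from Lemma~\ref{lem:uni_and_asymp} together with Propositions~\ref{prop:wk_asymp_semigroup} and~\ref{prop:asymp_is_s_mult}, and you have simply unpacked the case analysis that the paper leaves implicit. For non-openness the paper argues more abstractly---$G(F,F')_{+\partial g}\subset G(F,F')_{\omega_+(g)}$ and no $U_{\cF}$ fixes an end---whereas you construct an explicit witness in $U_{\cF}$ via Lemma~\ref{lem:autExtension}; both arguments are valid and amount to the same observation.
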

\begin{proof}
	That $G(F,F')_{+\partial g}$ is a semigroup follows from  Lemma \ref{lem:uni_and_asymp} and Proposition \ref{prop:wk_asymp_semigroup}. That it is scale multiplicative follows from Lemma \ref{lem:uni_and_asymp} and Proposition \ref{prop:asymp_is_s_mult}. To see that it is not open, observe that any open set in $G(F,F')$ contains the translate of a stabiliser in $U(F)$ of a finite set of vertices. Such a set does not fix any boundary points. It follows that since $G(F,F')_{+\partial g}\subset G(F,F')_{\omega_+(g)}$,  the former cannot be open.
\end{proof}

\subsection{Maximality}
\label{ssec:asymp_maximal}
We show that the scale-multiplicative semigroups given in Lemma \ref{lem:ends_and_stab_s_mult} are maximal. As part of our proof, we show that there is no single vertex contained in the axis of every hyperbolic element of $G(F,F')_{+\partial g}$. To prove this result, we construct multiple hyperbolic elements of $G(F,F')_{+\partial g}$. We record details about the singularities of these constructed elements to assist with calculations of the scale function required in the proof of Lemma \ref{lem:same_attracting_end_not_smult}. We require some preparatory lemmas and definitions. Lemma \ref{lem:mult_to_conj_stab} is used to show that no more uniscalar elements can be added.

\begin{lemma}\label{lem:mult_to_conj_stab}
	Suppose $g\in G(F,F')$ is hyperbolic and $x\in G(F,F')$ is elliptic such that $xg^n\asymp g$ for all $n\in\bN$. Then $x\in G(F,F')_{\partial g}(1)$.
\end{lemma}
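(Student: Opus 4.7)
The plan is to verify the two conditions defining $G(F,F')_{\partial g}(1)$: that $s(x)=1$, and that $x\in G(F,F')_{\partial g}$. The first is immediate: since $x$ is elliptic, Proposition \ref{prop:elliptic_scale} yields $s(x)=1$. For the second I will verify the two conditions in the second characterisation given by Lemma \ref{lem:stab_comp}, namely (i) $x\in G(F,F')_{\omega_+(g)}$, and (ii) $\sigma(x,x^{-1}(v))\lambda_g(x^{-1}(v))=\lambda_g(v)$ for every $v\in V(T)$.

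For (i), specialise the hypothesis to $n=1$ to get $xg\asymp g$; Lemma \ref{lem:equal_ends} then gives $\omega_+(xg)=\omega_+(g)$. Note that $xg$ is a tree automorphism that moves to infinity (it is asymptotic to $g$), so it is not uniscalar, and hence by Corollary \ref{cor:classification_of_uniscalar} (which applies since $F$ is 2-transitive and therefore has distinct point stabilisers) it is not elliptic, i.e.\ it is hyperbolic. Consequently $xg$ fixes its own attracting end, so $(xg)(\omega_+(g))=\omega_+(g)$; since $g$ fixes $\omega_+(g)$ as well, cancellation gives $x(\omega_+(g))=\omega_+(g)$.

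For (ii), fix $v\in V(T)$ and pick any $n\in\bN$; by hypothesis $xg^n\asymp g$, so Proposition \ref{prop:classification_in_terms_of_sing_traj} gives $\lambda_{xg^n}=\lambda_g$ pointwise. Apply Lemma \ref{lem:basic_asym_sing_prop}(i) to the hyperbolic element $xg^n$ with $k=1$ at the vertex $v$ to obtain
\begin{equation*}
\lambda_g(v)=\lambda_{xg^n}(v)=\sigma\bigl(xg^n,g^{-n}x^{-1}(v)\bigr)\,\lambda_g\bigl(g^{-n}x^{-1}(v)\bigr).
\end{equation*}
Now expand the local action via Lemma \ref{lem:sig_basic} as $\sigma(xg^n,g^{-n}x^{-1}(v))=\sigma(x,x^{-1}(v))\,\sigma(g^n,g^{-n}x^{-1}(v))$, and then apply Lemma \ref{lem:basic_asym_sing_prop}(i) to $g$ itself with $k=n$ at base point $x^{-1}(v)$ to collapse the trailing product $\sigma(g^n,g^{-n}x^{-1}(v))\,\lambda_g(g^{-n}x^{-1}(v))$ into $\lambda_g(x^{-1}(v))$. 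Substituting back yields the desired identity $\lambda_g(v)=\sigma(x,x^{-1}(v))\,\lambda_g(x^{-1}(v))$.

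With (i) and (ii) in hand, Lemma \ref{lem:stab_comp} places $x$ in $G(F,F')_{\partial g}$, and combined with $s(x)=1$ this places $x$ in $G(F,F')_{\partial g}(1)$. There is no serious obstacle once the right telescoping is spotted; in fact the argument uses $xg^n\asymp g$ for only a single value of $n$, so the hypothesis that this holds for all $n\in\bN$ is stronger than the conclusion actually needs.
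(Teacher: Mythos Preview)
Your argument is correct. Part (i) is essentially the paper's approach; part (ii) is genuinely different and cleaner. The paper establishes $\lambda_g(x(v))=\sigma(x,v)\lambda_g(v)$ by a direct singularity-chasing argument: it fixes a vertex $u\in\axis(g)$ with $x(u)=u$, chooses $n$ large enough that $g^{-n}(v)$ lies beyond all of $S(g)\cup S(x)$ relative to $u$, and then argues that for $k\ge 2$ the vertices $(xg^n)^{-k}x(v)$ avoid all singularities, so $\lambda_{xg^n}(x(v))$ is computed by the single term $\sigma(xg^n,g^{-n}(v))F$. Your approach bypasses this geometric bookkeeping entirely by applying Lemma~\ref{lem:basic_asym_sing_prop}\ref{itm:traj_1} twice and using $\lambda_{xg^n}=\lambda_g$ from Proposition~\ref{prop:classification_in_terms_of_sing_traj} to telescope; this is shorter and makes transparent why a single $n$ suffices, as you note.

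Two minor presentation points. First, in (i) you invoke Lemma~\ref{lem:equal_ends} before verifying that $xg$ is hyperbolic, which is a hypothesis of that lemma; simply swap the order (the hyperbolicity follows already from Proposition~\ref{prop:elliptic_scale} by contrapositive once you know $s(xg)>1$, so the appeal to Corollary~\ref{cor:classification_of_uniscalar} is not needed there). Second, since you already use Proposition~\ref{prop:classification_in_terms_of_sing_traj} in (ii), citing it in (i) as well would give $\omega_+(xg)=\omega_+(g)$ more directly than Lemma~\ref{lem:equal_ends}, whose boundedness hypothesis involves a specific choice of exponents that needs a word of justification from the definition of $\asymp$.
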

\begin{proof}
	We must have $\omega_+(xg) = \omega_+(g)$ and hence $x\omega_+(g) = \omega_+(g)$. Choose $v\in V(T)$. We show that $\lambda_g(x(v)) = \sigma(x,v)\lambda_{g}(v)$. This combined with Lemma \ref{lem:stab_comp} completes the result. Since $x$ is elliptic and fixes $\omega_+(g)$, there exists $u\in \axis(g)$ such that $x(u) = u$. Since $g$ is hyperbolic, there exists $N\in\bN$ such that $n\ge N$ implies $d(g^{-n}(v),u)> d(u',u)$ for all $u'\in S(g)\cup S(x)$. Then $\lambda_{g^{n}}(v) = \sigma(g^n,g^{-n}(v))F$ for all $n\ge N$. Since $x$ fixes $u$ and $\omega_+(g)$, our choice of $N$ shows that for $k\ge 2$ and $n\ge N$, both $(xg^{n})^{-k}x(v)$ and $x^{-1}(xg^{n})^{-k}x(v)$ are not vertices in $S(g)\cup S(x)$.
	Hence, 
	\[\lambda_{xg^n}(x(v)) = \sigma(xg^{n}, g^{-n}(v)) = \sigma(x,v)\sigma(g^{n},g^{-n}(v)) = \sigma(x,v)\lambda_{g^n}(v).\]
	But $xg^n\asymp g\asymp g^n$ by assumption. Thus,
	\[\lambda_g(x(v)) = \sigma(x,v)\lambda_{g}(v)\]
	as required. 
\end{proof}

Lemma \ref{lem:proj_onto_int} is a consequence of the fact that $T$ is a tree.
\begin{lemma}\label{lem:proj_onto_int}
	Suppose $g,h\in G(F,F')$ with $v_0,v_1\in \axis(g)\cap \axis(h)$ and $u\in V(T)$. Then:
	\begin{enumerate}[label = (\roman{*})]
		\item If  $v_0<_g \pi_g(u)<_g v_1$, then $\pi_g(u) = \pi_h(u)$.
		\item If $v_0 = \min_{\le_g}(\axis(h)\cap \axis(g))$ and $\pi_g(u)<_g v_0$, then $\pi_h(u) = v_0$.
	\end{enumerate}
\end{lemma}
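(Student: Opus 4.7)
The plan is to apply the following characterization, valid in any tree $T$: for a convex subtree $A$ (such as $\axis(g)$ or $\axis(h)$) and any vertex $u\in V(T)$, $\pi_A(u)$ is the unique vertex $r\in A$ for which the geodesic $[u,r]$ meets $A$ only at $r$. A second basic input is that $J:=\axis(g)\cap\axis(h)$ is a convex subpath of both axes, so between any two vertices of $J$ the $T$-geodesic agrees with both the $\axis(g)$- and $\axis(h)$-segments.

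For (i), I would first note that convexity of $\axis(h)$ forces the entire $\axis(g)$-segment from $v_0$ to $v_1$ to lie in $\axis(h)$, so $p:=\pi_g(u)\in J$ and the two neighbors of $p$ in $\axis(h)$ coincide with its two neighbors in $\axis(g)$. To conclude $\pi_h(u)=p$ it suffices to show $[u,p]\cap\axis(h)=\{p\}$. If instead some $q'\in[u,p]\cap\axis(h)$ were distinct from $p$, then $[q',p]$ would be the $T$-geodesic between two points of $\axis(h)$ and hence would lie in $\axis(h)$; its edge incident to $p$ would therefore be one of the two $\axis(h)$-edges at $p$, which by the coincidence of neighbors is also an $\axis(g)$-edge, contradicting that $[u,p]$ meets $\axis(g)$ only at $p$.

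For (ii), I would write $v_0^-$ for the predecessor of $v_0$ in $\axis(g)$; since $v_0=\min_{\le_g}J$, we have $v_0^-\notin J$, hence $v_0^-\notin\axis(h)$. The first step is to check $\pi_h(p)=v_0$: for any $r\in\axis(h)$, the concatenation of the $\axis(g)$-segment $[p,v_0]$ with the $\axis(h)$-segment $[v_0,r]$ is a simple path (it enters $v_0$ via $v_0^-\notin\axis(h)$ and leaves along $\axis(h)$) and is therefore the $T$-geodesic $[p,r]$. The same simple-path reasoning identifies $[u,p]\cup[p,v_0]$ with the $T$-geodesic $[u,v_0]$. It then remains to verify that $[u,v_0]$ meets $\axis(h)$ only at $v_0$: the segment $[p,v_0]\subset\axis(g)$ meets $\axis(h)$ only in $J\cap[p,v_0]=\{v_0\}$ by minimality of $v_0$ in $\le_g$, and for the piece $[u,p]$ the argument of part (i) applies with $\pi_h(p)=v_0$ in place of $\pi_h(p)=p$: any intermediate $q'\in[u,p]\cap\axis(h)$ would make $[q',p]\subset\axis(h)$ pass through $\pi_h(p)=v_0$, placing $v_0\in[u,p]\cap\axis(g)$ and forcing $v_0=p$, contradicting $p<_g v_0$.

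The main obstacle will be ensuring that the geodesic $[u,p]$ does not secretly touch $\axis(h)$ away from $\axis(g)$; in both parts this is handled by observing that any such contact would propagate, via the sub-geodesic along $\axis(h)$ back towards $p$, into an additional and forbidden contact with $\axis(g)$.
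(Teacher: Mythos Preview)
Your argument is essentially correct and far more detailed than the paper, which simply asserts that the lemma ``is a consequence of the fact that $T$ is a tree'' and gives no further proof. Your use of the characterisation of $\pi_A(u)$ as the unique vertex $r\in A$ with $[u,r]\cap A=\{r\}$, together with convexity of axes, is exactly the right way to unpack that one-line justification.

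There is one slip in part~(ii). In the final step you write that an intermediate $q'\in[u,p]\cap\axis(h)$ ``would make $[q',p]\subset\axis(h)$ pass through $\pi_h(p)=v_0$''. The inclusion $[q',p]\subset\axis(h)$ is false here: unlike in part~(i), the point $p=\pi_g(u)$ does \emph{not} lie on $\axis(h)$ (indeed $p<_g v_0=\min_{\le_g}J$, so $p\notin J$), so the segment $[q',p]$ cannot be entirely contained in $\axis(h)$. The conclusion you want still holds, but for a different reason: for any $q'\in\axis(h)$ the geodesic $[q',p]$ must pass through $\pi_h(p)=v_0$ (this is the standard tree fact that $[r,p]=[r,\pi_A(p)]\cup[\pi_A(p),p]$ for $r\in A$ convex). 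Hence $v_0\in[q',p]\subset[u,p]$, and since $v_0\in\axis(g)$ while $[u,p]\cap\axis(g)=\{p\}$, you get $v_0=p$, contradicting $p<_g v_0$. With this correction the argument goes through.
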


\begin{lemma}\label{lem:simple_sing_traj_obs}
	Suppose $g\in G(F,F')$ is hyperbolic and $v\in \axis(g)$. Then the set 
	\[A = \{u\in V(T)\mid \pi_{g}(u)\le v\hbox{ and }\lambda_{g}(u)\neq F\}\] is finite.
\end{lemma}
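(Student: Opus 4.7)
The plan is to show that any $u \in A$ must lie in the orbit $\bigcup_{k \ge 1} g^k(S(g))$, and then use the constraint $\pi_g(u) \le_g v$ to bound the allowable exponents $k$ for each fixed $w \in S(g)$.

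First I would argue the orbit inclusion. By the contrapositive of Lemma \ref{lem:basic_asym_sing_prop}\ref{itm:traj_3}, if $g^{-k}(u) \notin S(g)$ for every $k \ge 1$, then every factor $\sigma(g, g^{-k}(u))$ in the product $\sigma(g^n, g^{-n}(u)) = \sigma(g, g^{-1}(u)) \sigma(g, g^{-2}(u)) \cdots \sigma(g, g^{-n}(u))$ lies in $F$, and hence so does the product (since $F$ is a subgroup). This forces $\lambda_g(u) = F$. Therefore, $\lambda_g(u) \neq F$ implies that there exists $k \ge 1$ with $g^{-k}(u) \in S(g)$, i.e., $u = g^k(w)$ for some $w \in S(g)$.

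Next I would exploit $g$-equivariance of $\pi_g$, namely $\pi_g \circ g = g \circ \pi_g$, to rewrite the constraint in $A$. If $u = g^k(w)$ with $w \in S(g)$, then $\pi_g(u) = g^k \pi_g(w)$, and the condition $\pi_g(u) \le_g v$ becomes $g^k \pi_g(w) \le_g v$. Since $g$ acts as a translation along $\axis(g)$ by $l(g) \ge 1$ steps in the $\omega_+(g)$ direction, the iterates $(g^k \pi_g(w))_{k \ge 1}$ are strictly increasing with respect to $\le_g$ and eventually exceed $v$. Hence, for each fixed $w \in S(g)$, only finitely many $k \ge 1$ satisfy $g^k \pi_g(w) \le_g v$.

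Combining these two observations yields
\[
A \subseteq \bigcup_{w \in S(g)} \{\, g^k(w) \,\mid\, k \ge 1,\, g^k \pi_g(w) \le_g v \,\},
\]
which is a finite union (since $S(g)$ is finite by definition) of finite sets, hence finite. There is no substantial obstacle here; the argument is a direct bookkeeping exercise using Lemma \ref{lem:basic_asym_sing_prop} together with the fact that $g$ acts as a translation on its axis.
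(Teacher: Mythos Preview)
Your proof is correct and follows essentially the same approach as the paper: both arguments observe that $\lambda_g(u)\neq F$ forces $u=g^k(w)$ for some $w\in S(g)$ and $k\ge 1$, and then use that $g$ translates along its axis to bound the exponents $k$ compatible with $\pi_g(u)\le_g v$, yielding $A\subseteq\bigcup_{0\le n\le N}g^{n}S(g)$ for a suitable $N$. The only cosmetic difference is that the paper chooses a uniform $N$ for all $w\in S(g)$ up front, whereas you argue finiteness separately for each $w$; this has no mathematical significance.
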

\begin{proof}
	Since $S(g)$ is finite and $g$ is hyperbolic, there exists $N\in\bN$ such that for $n> N$ and $u'\in S(g)$ we have $\pi_{g}g^n(u')>_g v$. It follows from Definition \ref{def:asymp_func} that if $\lambda_g(u)\neq F$, then $u = g^k(u')$ for some $k\in\bN$ and $u'\in S(g)$. Hence, $A$ is contained in the finite set $\bigcup_{0\le n\le N}g^{n}S(g)$.
\end{proof}

Definition \ref{def:2_trans_set} extends the definition of $2$-transitive permutation group to a $2$-transitive subset of a permutation group. According to the definition, if $F$ is $2$-transitive, then any coset or double coset of $F$ in $\Sym(\Omega)$ is $2$-transitive as a set.
\begin{definition}\label{def:2_trans_set}
	A subset $S$ of a permutation group $G$ acting on $X$ is $2$\emph{-transitive} if for any two pairs of distinct elements $(a,b),(c,d)\in X^2$, there exists $g\in S$ such that $(g(a),g(b)) = (c,d)$.
\end{definition}

Recall that for a subtree $A\subset T$, a set of permutations $\{\sigma_v\in F'\mid v\in V(A)\}$ is consistent if $\sigma_v(c(u,v)) = \sigma_u(c(u,v))$ whenever $(u,v)$ forms an edge. Lemma \ref{lem:autExtension} associates automorphisms to consistent sets of permutations.
\begin{lemma}\label{lem:end_semigroup_large}
	Suppose $F$ is $2$-transitive and $g\in G(F,F')$ is hyperbolic with $l(g)> D_g$. Choose $v\in\axis(g)$ such that $u\in S(g)$ implies $\pi_g(u)<_g v$. There exists $g_v\in G(F,F')$ asymptotic to $g$ such that $l(g) = l(g_v)$ and $\min_{\le_g}\pi_g(\axis(g_v)) = v$. Furthermore, if $u\in S(g_v)$, then:
	\begin{enumerate}[label = (\roman{*})]
		\item \label{itm:sing_g_l_1}$\pi_{g}(u)\le_g v$;
		\item \label{itm:sing_g_l_2}$\pi_{g_v}(u)\le_{g_v} v$;
		\item \label{itm:sing_g_l_3}either $\lambda_g(u)\neq F$ or $\lambda_{g}(g_v(u))\neq F$; and
		\item \label{itm:sing_g_l_4}either $d(u,\axis(g))\le D_g$ or $d(g_v(u),\axis(g))\le D_g$.
	\end{enumerate} 
\end{lemma}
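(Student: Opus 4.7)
The plan is to construct $g_v$ explicitly via Lemma \ref{lem:autExtension} by designing a new axis $P$ that branches off $\axis(g)$ at $v$ and prescribing local actions that translate along $P$ by $l(g)$ while concentrating the singular contributions of $g$ into a single fundamental domain on the backward part of $P$. Enumerate $\axis(g) = (v_i)_{i\in\bZ}$ with $v_0 = v$ and $v_i<_g v_{i+1}$; using $\deg(T)\ge 3$ (forced by $2$-transitivity of $F$), pick an edge $e\in E(v)$ with $t(e)\notin\{v_{-1},v_1\}$ and extend backwards to a bi-infinite path $P = (\ldots, u_{-2}, u_{-1}, v_0, v_1, \ldots)$ with $u_{-1} = t(e)$, so $P$ agrees with $\axis(g)$ on the forward side of $v$ and branches off in a third direction at $v$.

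Let $A$ be a finite complete subtree containing long central segments of both $\axis(g)$ and $P$ together with every vertex within distance $D_g$ of these segments. Define a consistent family $\{\sigma_w\in F'\mid w\in V(A)\}$ as follows: for each $w\in V(A)$ with $\pi_g(w)\ge_g v$, set $\sigma_w=\sigma(g,w)$, which lies in $F$ by the hypothesis on $v$; at $v_0$, use $2$-transitivity of $F$ to pick $\sigma_{v_0}\in F$ sending the colors $c(v_0,v_1)$ and $c(v_0,u_{-1})$ to $c(v_{l(g)},v_{l(g)+1})$ and $c(v_{l(g)},v_{l(g)-1})$ respectively; for each $1\le r\le l(g)$, take $\sigma_{u_{-r}}\in F'$ to be the finite product $\sigma(g,v_{-r})\sigma(g,v_{-r-l(g)})\sigma(g,v_{-r-2l(g)})\cdots$, adjusted on the right by an element of $F$ (which does not alter the coset $\sigma_{u_{-r}}F$) to enforce the two required $P$-edge color conditions; on the remaining $u_{-r}$ with $r>l(g)$ in $A$, take $\sigma_{u_{-r}}\in F$ implementing the pure translation along $P$, and complete the remaining $\sigma_w$ in $F$ consistently using $2$-transitivity. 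Only finitely many $\sigma_w$ lie outside $F$, namely those $\sigma_{u_{-r}}$ whose cumulative product falls outside $F$. Lemma \ref{lem:autExtension} then produces $g_v\in G(F,F')$ with $g_v(v_0)=v_{l(g)}$, $\sigma(g_v,w)=\sigma_w$ on $A$, and $S(g_v)\subset V(A)$; by the translation prescription, $\axis(g_v)=P$, $l(g_v)=l(g)$, $\omega_+(g_v)=\omega_+(g)$, and $\min_{\le_g}\pi_g(\axis(g_v))=v$.

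Asymptotic equivalence $g_v\asymp g$ follows from Proposition \ref{prop:classification_in_terms_of_sing_traj}: the concentrated coset $\sigma_{u_{-r}}F$ equals the cumulative coset $\sigma(g,v_{-r})\sigma(g,v_{-r-l(g)})\cdots F = \lambda_g(v_{l(g)-r})$ by Lemma \ref{lem:basic_asym_sing_prop}, so $\lambda_{g_v}(v_{l(g)-r})=\lambda_g(v_{l(g)-r})$; Lemma \ref{lem:basic_asym_sing_prop} then propagates the identity $\lambda_{g_v}(w)=\lambda_g(w)$ to every $w$. Conditions (i) and (ii) hold because $S(g_v)\subset\{u_{-r}:1\le r\le l(g)\}$, satisfying $\pi_g(u_{-r})=v$ and $u_{-r}<_{g_v}v$. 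For (iii), $u_{-r}\in S(g_v)$ means $\sigma_{u_{-r}}\notin F$, so $\lambda_g(g_v(u_{-r}))=\lambda_g(v_{l(g)-r})=\sigma_{u_{-r}}F\ne F$. For (iv), $g_v(u_{-r})=v_{l(g)-r}$ lies on $\axis(g)$, giving $d(g_v(u_{-r}),\axis(g))=0$.

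The main obstacle is verifying that the concentrated singular contributions at $u_{-r}$ reproduce $\lambda_g$ globally; this requires careful bookkeeping of the cumulative singular contributions along each $g$-orbit through $v_{-r}$ and a propagation argument via Lemma \ref{lem:basic_asym_sing_prop}. The hypothesis $l(g)>D_g$ is essential because it ensures the $D_g$-neighborhoods of consecutive $g$-translates of $\axis(g)$ are disjoint, so that a single ``representative'' $u_{-r}$ per $g$-orbit genuinely captures the full cumulative coset without overlap between distinct orbits.
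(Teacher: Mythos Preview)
Your construction has a genuine gap: it only controls $\lambda_{g_v}$ along the new axis $P$, but $\lambda_g$ is in general nontrivial off the axis as well. Concretely, suppose $s\in S(g)$ lies off $\axis(g)$ at distance $d(s,\axis(g))\le D_g$, and set $w=g(s)$; then $\pi_g(w)\ge_g v$ (for a suitable $s$) and $\lambda_g(w)=\sigma(g,s)F\ne F$. In your $g_v$, the backward $g_v$-orbit of $w$ leaves the forward region after one step and lands in a branch hanging off some $u_{-r}$ at positive distance from $P$; there you have only prescribed local actions in $F$, so $\lambda_{g_v}(w)=F$. Thus $\lambda_{g_v}(w)\ne\lambda_g(w)$ and Proposition~\ref{prop:classification_in_terms_of_sing_traj} fails to give $g_v\asymp g$. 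Your appeal to Lemma~\ref{lem:basic_asym_sing_prop} to ``propagate'' the equality from the axis vertices to all of $V(T)$ does not work, because that lemma propagates along $g$-orbits and $g_v$-orbits separately, and these orbits diverge as soon as they cross $v$ going backward. Verifying equality on one fundamental domain of $\axis(g)$ says nothing about off-axis vertices whose backward $g$-orbit meets $S(g)$.

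The paper's construction addresses exactly this: rather than concentrating all singular data onto the axis vertices $u_{-r}$, it prescribes $\sigma(g_v,u)$ for \emph{every} vertex $u$ with $\pi_h(u)$ in two consecutive fundamental domains of $P$, at \emph{all} depths from $P$, choosing $\sigma(g_v,u)\in\lambda_g(g_v(u))$ in the first domain and $\sigma(g_v,u)\in\lambda_g(g_v(u))\lambda_g(u)^{-1}$ in the second. This forces $\lambda_{g_v}(g_v(u))=\lambda_g(g_v(u))$ vertex by vertex, including off the axis, and then a genuine step-by-step argument (not a black-box propagation) pushes this through Step~3. A secondary issue: taking $A$ finite does not suffice to pin down $\axis(g_v)=P$, since Lemma~\ref{lem:autExtension} gives no control outside $A$; you would need $A$ to contain all of $P$ and the forward half of $\axis(g)$. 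But even repairing that, the off-axis problem remains and requires the richer prescription the paper uses.
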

\begin{proof}
	Choose a bi-infinite path $P$ such that $u\in \axis(g)\cap P$ if and only if $u\ge_g v$. We define $g_v$ asymptotic to $g$ such that $\axis(g_v) = P$ and $l(g_v) = l(g)$. To do so, choose any $h\in U(F)$ such that $\axis(h) = P$ and $l(h) = l(g)$. Such a $h$ exists via Corollary \ref{cor:2-trans_translations}. Our automorphism $g_v$ will act as $h$ far enough away from $\axis(g)$, as $g$ far enough along $\axis(g)$ and transition between the two in between. This results in $g_v\asymp g$ and so we verify that $\lambda_{g_v}(g_v(u)) = \lambda_{g}(g_v(u))$ at each step of the definition. We are informally extending the definition of $\lambda_{g_v}(u)$ to cases where $g_v$ may only partially defined but $\sigma(g_v^{k}, g_v^{-k}(u))$ is defined for all $k\in\bN_0$. A schematic for $g_v$ is given in Figure \ref{fig:schem}.
	\begin{figure}
		\caption{A schematic for $g_v$.}
		\label{fig:schem}
		\begin{center}
			\begin{overpic}[ scale = 1.05]{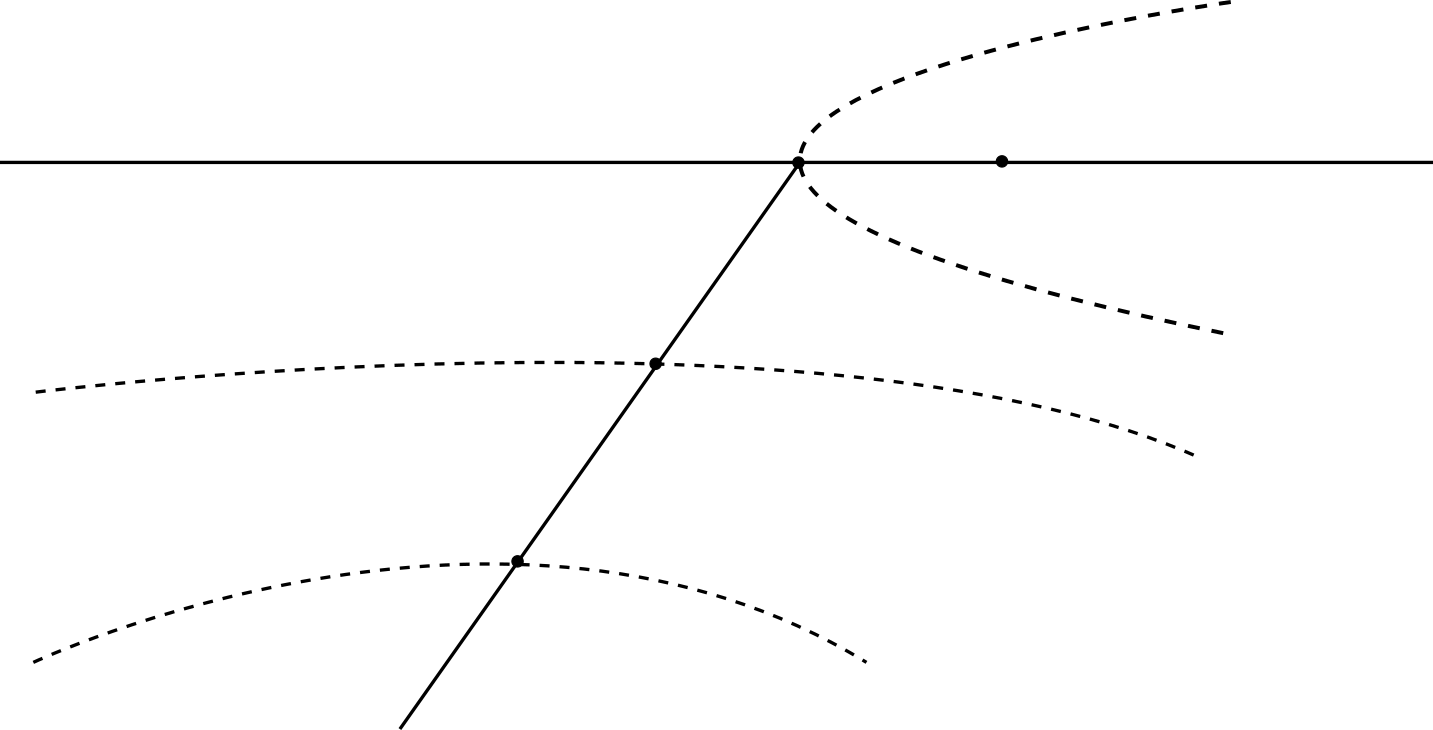}
				\put(51,37){$v$}
				\put(0,42){$\axis(g)$}
				\put(30,0){$\axis(g_v) = \axis(h)$}
				\put(10,5){$g_v(u) = h(u)$}
				\put(10,17){$\sigma(g_v,u) \in \lambda_g(g_v(u))$}
				\put(10,30){$\sigma(g_v,u) \in \lambda_g(g_v(u))\lambda_g(u)^{-1}$}
				\put(70,45){$g_v(u) = g(u)$}
				\put(45,22){$h^{-1}(v)$}
				\put(35,7.5){$h^{-2}(v)$}
				\put(69,36){$g(v)$}
			\end{overpic}
		\end{center}
	\end{figure} 
	For $u\in V(T)$ we define $g_v(u)$ via the following steps:
	\begin{enumerate}[label = {Step \arabic{*}}, itemindent = .7cm, leftmargin = *, itemsep = 0.5cm, labelsep = 0cm]
		\item\label{itm:def_step_1}. If $\pi_{h}(u)\le_{h}h^{-2}(v)$, then set $g_v(u) = h(u)$ and $\sigma(g_v,u) = \sigma(h,u)$. Note that $\pi_{h}h^{-n}(u)\le_{h}h^{-2}(v)$ for all $n\in\bN_0$. In particular, \[\sigma((g_v)^n, g_v^{-n+1}(u)) = \sigma(g_v,u)\sigma(g_v,g_v^{-1}(u)) \cdots \sigma(g_v, g_v^{-n+1}(u)) = \sigma(h^n,h^{-n+1}(u))\]
		for all $n\in\bN_0$. Hence, 
		\[\lambda_{g_v}(g_v(u)) = \lim_{n\to\infty}\sigma(g_v^n, g_v^{-n+1}(u))F = \lim_{n\to\infty}\sigma(h^{n},h^{-n+1}(u))F = F.\]
		Note that
		\[d(g_v(u),\axis(g))\ge d(g_v^{-1}(v),\axis(g))\ge l(h)= l(g) > D_g.\] 
		Lemma \ref{lem:basic_asym_sing_prop}  shows that $\lambda_g(g_v(u)) = F = \lambda_{g_v}(g_v(u))$.
		\item\label{itm:def_step_2}. Suppose $h^{-2}(v)<_{h}\pi_{h}(u)\le_{h}v$. We suppose $h^{-2}(v)<_{h}\pi_{h}(u)\le_{h}h^{-1}(v)$ and define $g_v(u)$, $g_v^2(u)$, $\sigma(g_v,u)$ and $\sigma(g_v,g_v(u))$. This suffices since if $u\in V(T)$ with $h^{-1}(v)<_{h}\pi_{h}(u)\le_{h}v$, then $u = g_v(u')$ where $h^{-2}(v)<_{h}\pi_{h}(u')\le_{h}h^{-1}(v)$. We proceed by induction on $d(u,\axis(h))$. If $d(u,\axis(h)) = 0$, set $g_v(u) = h(u)$ and $g_v^2(u) = h^2(u)$. Since $\lambda_g(g_v(u))$ and $ \lambda_g(g_v^2(u))\lambda_g(g_v(u))^{-1}$ are $2$-transitive sets, we may consistently choose $\sigma(g_v,u)\in \lambda_g(g_v(u))$ and ${\sigma(g_v,g_v(u))\in \lambda_g(g_v^2(u))\lambda_g(g_v(u))^{-1}}$.
		
		Suppose we have consistently defined $\sigma(g_v,u)$ and $g_v(u)$ if $d(u,\axis(h))\le n$. Suppose now that $d(u,\axis(h)) = n+1$. Choose $u'$ distance one away from $u$ on the path between $u$ and $\axis(h)$. Then $g_v(u)$  and $g_v^2(u)$ are determined by $\sigma(g_v,u')$ and $\sigma(g_v,g_v(u'))$. Since $F$ is $2$-transitive, we may make a consistent choice of $\sigma(g_v,u)\in \lambda_g(g_v(u))$ and $\sigma(g_v,g_v(u))\in \lambda_g(g_v^2(u))\lambda_g(g_v(u))^{-1}$. This completes our induction. 
		
		Since $\pi_{h}g_v^{-1}(u) \le_h h^{-2}(v)$, \ref{itm:def_step_1} that shows that $\lambda_{g_v}(u) = F$. Applying Lemma \ref{lem:basic_asym_sing_prop} we have 
		\[\lambda_{g_v}(g_v(u)) = \sigma(g_v,u)\lambda_{g_v}(u) = \lambda_g(g_v(u)),\]
		and
		\[\lambda_{g_v}(g_v^2(u)) = \sigma(g_v,g_v(u))\lambda_{g_v}(g_v(u)) = \lambda_g(g_v^2(u))\lambda_g(g_v(u))^{-1}\lambda_{g}(g_v(u)) = \lambda_g(g_v^2(u)).\]
		\item\label{itm:def_step_3}. Finally, for $\pi_{h}(u)>_{h}v$ set $g_v(u) = g(u)$. This defines $g_v(u)$ as an automorphism since $\pi_g(u) = \pi_{h}(u)$, see Lemma \ref{lem:proj_onto_int}, and $l(h) = l(g)$.  There exists $k> 0$ such that \[h^{-1}(v)<_{h}\pi_{g_v}g_{v}^{-k}(u)\le_{h}v.\]
		We have $\pi_{h}g_v^{1-k}(u) >_{h} v$ and so 
		\[u = g_v^{k-1}g_v^{1-k}(u) = g^{k-1}g_v^{1-k}(u).\]
		Thus, $g_v^{1-k}(u) = g^{1-k}(u)$.
		We saw in \ref{itm:def_step_2} that $\lambda_{g_v}(g_v^{-k}(u)) = \lambda_{g}(g_{v}^{-k}(u))$. Lemma \ref{lem:basic_asym_sing_prop} and our definition of $g_v$ show that
		\begin{align*}
		\lambda_{g_v}(g_v(u)) &= \sigma(g_v^{k - 1},g_v^{-k+1}(u)) \lambda_{g_v}(g_v^{-k+1}(u))\\
		& = \sigma(g^{k - 1},{g}^{-k+1}(u)) \lambda_{g}(g^{-k+1}(u))\\
		& = \lambda_g(g_v(u)).
		\end{align*}
		This completes our definition of $g_v$.
	\end{enumerate}
	\vspace{0.2cm}
	
	That $\min_{\le_g}\pi_g(\axis(g_v)) = v$ follows since $\axis(g_v) = \axis(h) = P$. We now prove the statements concerning $u\in S(g_v)$.
	
	Since $v\in \axis(g)\cap \axis(h)\cap \axis(g_v)$ and $\omega_+(g) = \omega_+(g_v) = \omega_+(h)$, if $\pi_{g}(u)>_g v$ or $\pi_{g_v}(u)> v$, then $\pi_{h}(u)>_{h} v$.  \ref{itm:def_step_3} of our construction shows that $\sigma(g_v,u) = \sigma(g,u)$. Our assumptions on $v$ imply that $\sigma(g_v,u)\in F$. This gives \ref{itm:sing_g_l_1} and \ref{itm:sing_g_l_2}. 
	
	If $u\in S(g_v)$, then  $g_v^{-2}(v)<_{g_v}\pi_{g_v}(u)\le_{g_v}v$, since otherwise ${\sigma(g_v,u) = \sigma(h,u)\in F}$ or $\sigma(g_v,u) = \sigma(g,u)\in F$, see \ref{itm:def_step_1} and \ref{itm:def_step_3}. \ref{itm:def_step_2} of our construction shows that $\sigma(g_v,u) \in \lambda_{g}(g_v(u))$ or $\sigma(g_v,u)\in\lambda_g(g_v(u))\lambda_g(u)^{-1}$. We must have either ${\lambda_g(u) \neq F}$ or $\lambda_{g}(g_v(u))\neq F$, hence \ref{itm:sing_g_l_3} holds. But, if $\lambda_g(u')\neq F$ for some $u'\in V(T)$, then $d(u',\axis(g))\le D_g$ by Lemma \ref{lem:basic_asym_sing_prop}. This gives \ref{itm:sing_g_l_4}.
	
	It remains to show that $g_v\in G(F,F')$ as then $g_v\asymp g$ by construction. It suffices to show that $S(g_v)$ is finite. This follows since the number of vertices satisfying \ref{itm:sing_g_l_1} and \ref{itm:sing_g_l_3} is finite by Lemma \ref{lem:simple_sing_traj_obs}. 
\end{proof}
\begin{lemma}\label{lem:same_attracting_end_not_smult}
	Suppose $F$ is $2$-transitive and $g,h\in G(F,F')$ are hyperbolic such that $\omega_{+}(h) = \omega_{+}(g)$. If the semigroup generated by $h$ and $\partial g$ is scale-multiplicative, then $h\asymp g$.
\end{lemma}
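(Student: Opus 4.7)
I would argue by contradiction: assume $\omega_{+}(h)=\omega_{+}(g)$ and that the semigroup $\mathcal{S}$ generated by $h$ and $\partial g$ is scale-multiplicative, yet $h\not\asymp g$. By Proposition \ref{prop:classification_in_terms_of_sing_traj} there exists $v_0\in V(T)$ with $\lambda_h(v_0)\neq\lambda_g(v_0)$. The goal is to produce an element $g_v\in\partial g$ such that $s(g_v h)>s(g_v)s(h)$; since $g_vh\in\mathcal{S}$ and $s(g_v)=s(g)$ by Lemma \ref{lem:length_to_scale} (and the fact that $g_v\asymp g$ with $l(g_v)=l(g)$), this contradicts scale-multiplicativity.

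First I would perform preliminary reductions. Replacing $g$ and $h$ by suitable powers—note $g^n\in\partial g$ and $h^n\in\mathcal{S}$, while by Lemma \ref{lem:basic_asym_sing_prop}\ref{itm:traj_4} the functions $\lambda_g,\lambda_h$ and the asymptotic class are unchanged—I may assume $l(g)=l(h)$, so $g$ and $h$ act as equal-speed translations along the infinite ray $\axis(g)\cap\axis(h)$ toward $\omega_{+}(g)$. Applying Lemma \ref{lem:simple_sing_traj_obs} to both $g$ and $h$, the set of vertices where either $\lambda_g$ or $\lambda_h$ differs from $F$ has bounded projection onto the common axis, so after translating by a power of $g$ (or equivalently relocating $v_0$ via Lemma \ref{lem:basic_asym_sing_prop}\ref{itm:traj_1}) I may assume $\pi_g(v_0)$ lies a prescribed distance in the $\omega_+$ direction on the common axis. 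Next, choose $v\in\axis(g)\cap\axis(h)$ far enough along $\omega_{+}(g)$ that $v>_g\pi_g(u)$ for every $u\in S(g)\cup S(h)$ and such that $\pi_g(v_0)$ lies strictly between $v$ and $g_v h(v)$. Then Lemma \ref{lem:end_semigroup_large} produces $g_v\in\partial g$ with $\axis(g_v)$ starting at $v$, $l(g_v)=l(g)$, and tightly controlled singularities (\ref{itm:sing_g_l_1}--\ref{itm:sing_g_l_4}).

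The scale computation proceeds via Lemma \ref{lem:scale_flex}. Choose $D>\max\{D_g,D_h,D_{g_vh}\}$ and let $\cT_1$ be the minimal complete subtree of $T$ satisfying $v\in\Int(\cT_1)$, $g_vh(v)\in\cT_1$, and: if $\pi_{g_vh}(u)\in\Int(\cT_1)$ with $d(u,\axis(g_vh))\le D$, then $u\in\cT_1$. Split $\cT_1=\cT_2\cup\cT_3$ along the edge at $h(v)$, where $\cT_2$ contains vertices with $v\le_{g_vh}\pi_{g_vh}(\cdot)<_{g_vh}h(v)$ (the $h$-slot) and $\cT_3$ contains those with $h(v)\le_{g_vh}\pi_{g_vh}(\cdot)<_{g_vh}g_vh(v)$ (the $g_v$-slot); arrange so that $v_0\in\Int(\cT_2)$, say. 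Then $|\Int(\cT_1)|=|\Int(\cT_2)|+|\Int(\cT_3)|$, and by Lemma \ref{lem:scale_flex} the identity $s(g_vh)=s(g_v)s(h)$ is equivalent to $|M_{g_vh,\cT_1}|=|M_{g_v,\cT_3}|\cdot|M_{h,\cT_2}|$. Mirroring the bijection argument from Proposition \ref{prop:asymp_is_s_mult}, restriction sends $\varphi\in M_{g_vh,\cT_1}$ to a pair $(\varphi_2,\varphi_3)$, where $\varphi_i$ fixes $\axis(g_vh)\cap\cT_i$ and satisfies a local $\sigma$-condition controlled by $\lambda_{g_vh}$. The crucial point is that because $v>_g\pi_g(u)$ for every singularity of $g$ and $h$, a direct application of Lemma \ref{lem:basic_asym_sing_prop} and Definition \ref{def:asymp_func}—tracking the alternating factorisation $(g_vh)^n=g_v(hg_v)^{n-1}h$ and using that $g_v^{-k}(v_0),h^{-k}(v_0)$ eventually leave both singularity sets—gives $\lambda_{g_vh}(v_0)=\lambda_h(v_0)$ for $v_0\in\Int(\cT_2)$, and similarly $\lambda_{g_vh}=\lambda_g$ on $\Int(\cT_3)$. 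Consequently every pair in $M_{h,\cT_2}\times M_{g_v,\cT_3}$ lifts, giving an injection $M_{h,\cT_2}\times M_{g_v,\cT_3}\hookrightarrow M_{g_vh,\cT_1}$. To get a strict inequality I would use the discrepancy $\lambda_g(v_0)\neq\lambda_h(v_0)$: a pair $(\varphi_2,\varphi_3)$ can have $\sigma(\varphi_2,v_0)$ lying in $F\cap\lambda_h(\varphi_2(v_0))\lambda_h(v_0)^{-1}$, producing additional elements of $M_{g_vh,\cT_1}$ that do \emph{not} arise from any element of the smaller product set—or, dually, forbidding certain pairs—so the counts cannot match.

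The main obstacle is the fourth-paragraph analysis: showing cleanly that $\lambda_{g_vh}(v_0)=\lambda_h(v_0)$ (or, symmetrically, $\lambda_{g_vh}=\lambda_g$ on the $\cT_3$ side) and extracting from the failure $\lambda_g(v_0)\neq\lambda_h(v_0)$ a strict cardinality inequality $|M_{g_vh,\cT_1}|\neq|M_{g_v,\cT_3}|\cdot|M_{h,\cT_2}|$. The $2$-transitivity of $F$ enters here to ensure that the ``extra'' or ``forbidden'' choices correspond to genuine orbits of full size $|\Omega|-1$, so the two cardinalities differ by a definite factor. If the direct bijection comparison proves too delicate, an alternative is to use Proposition \ref{prop:U(F)_small_scale} to replace $g_vh$ by an element of $U(F)$ with the same axis and to compute $s(g_vh)$ via the product formula in Proposition \ref{prop:U(F)_scale_calc}, where the mismatch between $\lambda_g$ and $\lambda_h$ at $v_0$ forces one factor of the product to strictly exceed the corresponding factor in $s(g_v)s(h)$.
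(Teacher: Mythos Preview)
Your overall strategy---produce $g_v\in\partial g$ via Lemma~\ref{lem:end_semigroup_large} and show $s(g_vh)>s(g_v)s(h)$---matches the paper's, but the mechanism you propose for the scale comparison has a genuine gap.

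The core problem is your ``crucial point'': the claim that $\lambda_{g_vh}=\lambda_h$ on $\Int(\cT_2)$ and $\lambda_{g_vh}=\lambda_g$ on $\Int(\cT_3)$. This is false, and its failure is exactly what drives the argument. The mismatch $\lambda_g(v_0)\neq\lambda_h(v_0)$ forces $\sigma(g_vh,h^{-1}(v_0))=\sigma(g_v,v_0)\sigma(h,h^{-1}(v_0))$ to lie in the double coset $\lambda_g(v_0)^{-1}\lambda_h(v_0)$, which is disjoint from $F$; hence $h^{-1}(v_0)\in S(g_vh)$. Since $h^{-1}(v_0)$ lies on the branched-off part of $\axis(g_v)$, far from $\axis(g)$ and $\axis(h)$, this makes $D_{g_vh}$ large (comparable to $d(v_0,v)$). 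Tracking forward, one finds $\lambda_{g_vh}(g_v(v_0))\neq F$, whereas $\lambda_g(g_v(v_0))=\lambda_h(g_v(v_0))=F$ because $g_v(v_0)$ is far from both original axes. So the bijection machinery of Proposition~\ref{prop:asymp_is_s_mult}, which needs $\lambda_{g_vh}=\lambda_g=\lambda_h$, breaks down precisely at the vertex that matters. (Your preliminary step of translating $v_0$ by a power of $g$ to place it inside $\cT_2$ is also problematic: it does not control $\lambda_h$ at the translated vertex, and in any case the relevant vertex in $\cT_1$ turns out to be $g_v(v_0)$, not $v_0$.)

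Even granting an injection $M_{h,\cT_2}\times M_{g_v,\cT_3}\hookrightarrow M_{g_vh,\cT_1}$, that gives $|M_{g_vh,\cT_1}|\geq|M_{h,\cT_2}|\,|M_{g_v,\cT_3}|$, i.e.\ $s(g_vh)\leq s(g_v)s(h)$---the wrong direction. Your fallback via Propositions~\ref{prop:U(F)_small_scale} and~\ref{prop:U(F)_scale_calc} yields only $s(g_vh)\geq(|\Omega|-1)^{l(g_v)+l(h)}$, which is again the generic \emph{lower} bound satisfied by $s(g_v)s(h)$ and cannot produce a strict excess.

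The paper's argument exploits the failure of your $\lambda$-identity rather than trying to avoid it. It shows (Claims~\ref{claim:restricted_sing}--\ref{claim:aut_fixed_point}) that because $\lambda_{g_vh}(g_v(v_0))\neq F$ while $\lambda_{g_vh}(u)=F$ for every vertex $u$ at the same depth whose path to $\axis(g_vh)$ does \emph{not} run through a specified long initial segment of $g_v(\axis(g))$, every $\varphi\in M_{g_vh,\cT}$ is forced to fix that entire segment pointwise. This constrains $|M_{g_vh,\cT}|$ by a factor of $(|\Omega|-1)^{l-D_g-l(h)}$, and choosing $l$ large enough makes $s(g_vh)>s(g)s(h)=s(g_v)s(h)$. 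No splitting of $\cT$ or comparison with $M_{h,\cT_2}\times M_{g_v,\cT_3}$ is used.
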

\begin{proof}
	We show the contrapositive. Suppose $h\not\asymp g$. Since $g\asymp g^n$ for all $n\in\bN$, Lemma \ref{lem:depth_power} allows us to assume $l(g)> D_g$. Since $h\not\asymp g$, Proposition \ref{prop:classification_in_terms_of_sing_traj} gives $v\in V(T)$ such that $\lambda_g(v)\neq \lambda_h(v)$. Replacing $h$ with $h^n$ for some $n\in\bN$ if necessary, Lemma \ref{lem:basic_asym_sing_prop} allows us to assume that $\lambda_h(v) = \sigma(h,h^{-1}(v))F$ and $\lambda_g(h^{-1}(v)) = F$. 
	
	Since $\omega_+(h) = \omega_+(g)$, we may label 
	\[\axis(g) = (\ldots, v_{-1},v_0,v_{1},\ldots)\]
	such that:
	\begin{enumerate}[label = (\roman{*})]
		\item $v_i\le_g v_j$ if and only if $i\le j$;
		\item \label{itm:axis_assumpt}$v_k\in \axis(h)$ for all $k\ge 0$; and
		\item if $u\in S(h)\cup hS(g)\cup S(g)$ or $u = v$, then $\pi_{g}(v)\le_g v_0$ and $\pi_h(v)\le_h v_0$.\label{itm:assumpt_1}
	\end{enumerate}
	Choose $l > D_g+\log_{|\Omega| - 1}(s(g)s(h))+l(h)$. We show that $s(g_lh)\neq s(g_l)s(h)$ where $g_l:= g_{v_l}$ is given in Lemma \ref{lem:end_semigroup_large}.
	
	It follows from \cite[Lemma 4.7]{Baumgartner15} that $v_n\in\axis(g_lh)$ if and only if $n \ge l - l(h)$. Indeed, $\axis(g_lh)$ is of the form
	\begin{equation}\label{eq:axis}
	(\ldots, (g_lh)^{-n}(v_{l - l(h)}),\ldots,(g_lh)^{-n}(v_{l}),\ldots, (g_lh)^{-1}(v_{l - l(h)}),\ldots, v_{l - l(h)},\ldots, v_l,v_{l+1},\ldots).
	\end{equation}
	We investigate the singularities of $g_lh$ in order bound $s(g_lh)$ from below. We use fact that $S(g_lh)\subset S(h)\cup h^{-1}S(g_l)$ to reduce the problem to $S(h)$ and $S(g_l)$. Restrictions on $S(h)$ are given by Claim \ref{claim:dist} \ref{itm:dist_3} and \ref{itm:assumpt_1}. Restrictions on $S(g_l)$ are given by Claim \ref{claim:dist} \ref{itm:dist_1}, Claim \ref{claim:dist} \ref{itm:dist_2} and Lemma \ref{lem:end_semigroup_large} \ref{itm:sing_g_l_4}.
	
	\begin{claim}\label{claim:dist}
		Suppose $u\in V(T)$ such that $\pi_{g}(u) = g_l(v_l)$. Then for all $n\in\bN$
		\begin{enumerate}[label = (\alph{*})]
			\item\label{itm:dist_1} $d(g_l^{-1}(g_lh)^{-n}(u),\axis(g))> D_g$;
			\item \label{itm:dist_2}$d((g_lh)^{-n - 1},\axis(g))> D_g$; and
			\item \label{itm:dist_3}$\pi_{g}(g_lh)^{-n - 1}(u) = v_{l - l(h)}$.
		\end{enumerate}
	\end{claim}
	\begin{proof}
		Observe that $\pi_{g_lh}(u) = \pi_g(u) = \pi_h(u)$ by Lemma \ref{lem:proj_onto_int}. For \ref{itm:dist_1}, note that since \[\pi_{g_lh}(g_lh)^{-n}(u) \le_{g_lh} (g_lh)^{-1}g_l(v_{l}) = v_{l - l(h)}<_{g_lh} v_l,\] Lemma \ref{lem:proj_onto_int} shows that $\pi_{g_l}(g_lh)^{-n}(u) = v_l$. Thus, $\pi_{g_l}g_l^{-1}(g_lh)^{-n}(u) = g_l^{-1}(v_l)<_{g_l} v_l$. Lemma \ref{lem:proj_onto_int} shows that $\pi_g g_l^{-1}(g_lh)^{-n}(u) = v_l$. Since $\pi_{g_l}(g_lh)^{-n}(u) = v_l$, we have $d(g_l^{-1}(g_lh)^{-n}(u),v_l) = d((g_lh)^{-n}(u),v_l)+ l(g_l)$. By construction, $l(g_l) = l(g)$ which is assumed to be strictly larger than $D_g$. Hence,
		\[d(g_l^{-1}(g_lh)^{-n}(u), \axis(g)) = d(g_l^{-1}(g_lh)^{-n}(u), \pi_gg_l^{-1}(g_lh)^{-n}(u))= d(g_l^{-1}(g_lh)^{-n}(u),v_l)> D_g.\]
		
		Noting that  
		\[\pi_{g_lh}(g_lh)^{-n - 1}(u) = (g_lh)^{-n - 1}g_l(v_l) <_{g_lh} v_{l - l(h)},\]
		\ref{itm:dist_3} follows from Lemma \ref{lem:proj_onto_int}.
		
		Finally, for \ref{itm:dist_2} observe that $\pi_{g_lh}(g_lh)^{-1}(u) = (g_lh)^{-1}g_l(v_l) = v_{l - l(h)}$. Since $g_lh$ acts by translation along its axis 
		\[d((g_lh)^{-n-1}(u), v_{l - l(h)}) = nl(g_lh) + d((g_lh)^{-1}(u), v_{l - l(h)}).\]
		It follows from \cite[Lemma 4.7]{Baumgartner15} that $nl(g_lh) = nl(g_l)+nl(h)$. This is strictly larger that $D_g$ since $l(g_l) = l(g)$. But \ref{itm:dist_3} shows than $\pi_{g} (g_lh)^{-n - 1}(u) = v_{l - l(h)}$. Thus,
		\[d((g_lh)^{-n - 1}(u), \axis(g)) = d((g_lh)^{-n -1}(u), v_{l - l(h)}) > D_g.\qedhere\]
	\end{proof}
	
	\begin{claim}\label{claim:large_powers_not_sing}
		Suppose $u\in V(T)$ with $\pi_{g_lh}(u) = g_l(v_l)$. If $n> 2$, then $(g_lh)^{-n}(u)\not\in S(g_lh)$.
	\end{claim}
	
	\begin{proof}
		We show that $(g_lh)^{-n}(u)\not\in S(h)$ and $h(g_lh)^{-n}(u)\not\in S(g_l)$. Combined, this shows $(g_lh)^{-n}\not\in S(h)\cup h^{-1}S(g_l)$. 
		
		Claim \ref{claim:dist} \ref{itm:dist_3} and \ref{itm:assumpt_1} shows $(g_lh)^{-n}(u)\not\in S(h)$. Now $h(g_lh)^{-n}(u) = g_l^{-1}(g_lh)^{-n+1}(u)$. Since $n> 2$,  applying Claim \ref{claim:dist} \ref{itm:dist_1} to $(g_lh)^{-n+1}(u)$ and Claim \ref{claim:dist} \ref{itm:dist_2} to $g_l^{-1}(g_lh)^{-n+1}(u)$ shows that 
		\[d((g_lh)^{-n+1}(u),\axis(g))> D_g\hbox{ and }d(g_l^{-1}(g_lh)^{-n+1}(u),\axis(g))> D_g.\]
		Lemma \ref{lem:end_semigroup_large} \ref{itm:sing_g_l_4} shows that $g_l^{-1}(g_lh)^{-n+1}(u)\not\in S(g_l)$.
	\end{proof}
	\begin{claim}\label{claim:restricted_sing}
		Suppose $u\in V(T)$ such that:
		\begin{enumerate}[ label = (\Roman{*})]
			\item\label{itm:sing_ass_1}  $\pi_{g_lh}(u) = g_l(v_l)$; 
			\item\label{itm:sing_ass_2} $\pi_g(g_l^{-1}(u))>_gv_{D_g+ l(h)}$; and
			\item\label{itm:sing_ass_3}$d(u,\axis(g_lh)) \ge l$. 
		\end{enumerate}
		Then $(g_lh)^{-n}(u)\not\in S(g_lh)$ for all $n\in\bN$.
	\end{claim}
	\begin{proof}
		It suffices to test when $n = 1$ and $n = 2$ since Claim \ref{claim:large_powers_not_sing} then completes the proof. Observe that Lemma \ref{lem:proj_onto_int}, \ref{itm:sing_ass_1} and \ref{itm:sing_ass_2} show that
		\[\pi_{g_lh}(u) = \pi_{g}(u) = \pi_h(u) = \pi_{g_l}(u) = g_l(v_l),\]
		and
		\[\pi_g(g_l^{-1}(u)) = \pi_h(g_l^{-1}(u))>_gv_{D_g+l(h)}.\]
		In particular, 
		\[d(u,\axis(g)) = d(u,\axis(h)) = d(u,\axis(g_lh))\ge l.\]
		
		Suppose $n = 1$. Since $\pi_{h}(g_l^{-1}(u))>_h v_{D_g+l(h)}$, we have $\pi_h h^{-1}g_l^{-1}(u)>_h v_{D_g}>_h v_0$. It follows from from \ref{itm:assumpt_1} that $(g_lh)^{-1}(u)\not\in S(h)$. It suffices to show $g_l^{-1}(u)\not\in S(g_l)$. Observe that 
		\[d(g_l^{-1}(u),v_l) = d(u,g(v_l)) = d(u,\axis(g)) = d(u,\axis(g_lh))\ge l>D_g.\]
		But $\pi_{g}(g_l^{-1}(u))>_g v_{D_g}$. Thus, \[d(g_l^{-1}(u),\axis(g)) > d(g_l^{-1}(u),v_l) - d(v_l,v_{D_g}) \ge l - l +D_g = D_g.\]
		We have already seen that $d(u,\axis(g))\ge l > D_g$. Hence, $g_l^{-1}(u)\not\in S(g_l)$ by Lemma \ref{lem:end_semigroup_large}  \ref{itm:sing_g_l_4}.
		
		We use a similar argument for $n = 2$. That $(g_lh)^{-2}(u)\not\in S(h)$ follows from Claim \ref{claim:dist} \ref{itm:dist_3} and \ref{itm:assumpt_1}. We show that $g_l^{-1}(g_lh)^{-1}(u)\not\in S(g_l)$ via Lemma \ref{lem:end_semigroup_large} \ref{itm:sing_g_l_4}. Lemma \ref{lem:proj_onto_int} shows that $\pi_g(g_lh)^{-1}(u) = \pi_h(g_lh)^{-1}(u)$ and $\pi_h(g_l^{-1}(u)) = \pi_g(g_l^{-1}(u))$. In particular, 
		\begin{align*}
		d((g_lh)^{-1}(u),\axis(g)) &= d((g_lh)^{-1}(u),\axis(h))= d(g_l^{-1}(u),\axis(h))\\ & =  d(g_l^{-1}(u),\axis(g)) > D_g.
		\end{align*}
		To see that $d(g_l^{-1}(g_lh)^{-1}(u),\axis(g))> D_g$, note that 
		\[\pi_{g_l} (g_lh)^{-1}(u) = v_{l}= \pi_g g_l^{-1}(g_lh)^{-1}(u)\]
		by Lemma \ref{lem:proj_onto_int}. Hence, \begin{align*}
		d(g_l^{-1}(g_lh)^{-1}(u),\axis(g))&=d(g_l^{-1}(g_lh)^{-1}(u),v_l)\\ &= l(g_l) + d((g_lh)^{-1}(u),v_l)\ge l(g)> D_g.\qedhere
		\end{align*}
	\end{proof}
	\begin{claim}\label{claim:one_sing}
		We have $(g_lh)^{-1}g_l(v) = h^{-1}(v)\in S(g_lh)$ but $(g_lh)^{-2}g_l(v)\not\in S(g_lh)$. In particular, $\lambda_{g_lh}(g_l(v))\neq F$.
	\end{claim}
	\begin{proof}
		By assumptions on $v$ and $h$, we have $\lambda_g(v)\neq \lambda_h(v) = \sigma(h,h^{-1}(v))F$. Lemma \ref{lem:proj_onto_int} and \ref{itm:assumpt_1} shows $\pi_{g_l}(v) = v_l$ and so $\pi_{g_l}g_l(v) = g_l(v_l)$. Lemma \ref{lem:proj_onto_int} shows  $\pi_{g}g_l(v) = g_l(v_l)$. Hence
		\[d(g_l(v),\axis(g)) = d(g_l(v),\axis(g_l)) = d(v,v_l).\]
		But $d(v,v_l)> l > D_g$. Lemma \ref{lem:basic_asym_sing_prop} shows that $\lambda_g(g_l(v)) = F$. Construction of $g_l$ gives $\sigma(g_l,v) \in \lambda_{g}(g_l(v))\lambda_{g}(v)^{-1} =\lambda_g(v)^{-1}$, see \ref{itm:def_step_2} of the proof of Lemma \ref{lem:end_semigroup_large}. Thus, 
		\[\sigma(g_lh,h^{-1}(v)) = \sigma(g_l,v)\sigma(h,h^{-1}(v)) \in  \lambda_{g}(v)^{-1}\lambda_{h}(v).\]
		Since $\lambda_g(v)\neq \lambda_h(v)$, the set $\lambda_{g}(v)^{-1}\lambda_{h}(v)$ is of the form $FaF$ where $a\in F'\setminus F$. This intersects trivially with $F$. Hence, $h^{-1}(v)\in S(g_lh)$.  
		
		We now show $(g_lh)^{-2}g_l(v)\not\in S(h)$ and $h(g_lh)^{-2}g_l(v) = (hg_l)^{-1}(v)\not\in S(g_l)$. This shows $(g_lh)^{-2}g_l(v)\not\in S(h)\cup h^{-1}S(g_l)$. That $(g_lh)^{-2}g_l(v)\not\in S(h)$ follows from Claim \ref{claim:dist} \ref{itm:dist_3} and \ref{itm:assumpt_1}. To show $g_l^{-1}h^{-1}(v)\not\in S(g_l)$, it suffices to show that $\lambda_g(g_l^{-1}h^{-1}(v)) = F$ by Lemma \ref{lem:end_semigroup_large} \ref{itm:sing_g_l_3} since it is assumed that $h$ is such that $\lambda_g(h^{-1}(v))= F$. Note that $g_l^{-1}h^{-1}(v) = g_l^{-1}(g_lh)^{-1}g_l(v)$. Also $\pi_{g_l}(v) = v_l$ and so $\pi_{g_l}g_l(v) = \pi_gg_l(v) = g_l(v)$ by Lemma \ref{lem:proj_onto_int}. It follows from Claim \ref{claim:dist} \ref{itm:dist_1} that $d(g_l^{-1}(g_lh)^{-1}g_l(v), \axis(g))> D_g$. Lemma \ref{lem:basic_asym_sing_prop} shows that 
		\[\lambda_g(g_l^{-1}h^{-1}(v)) = \lambda_g(g_l^{-1}(g_lh)^{-1}g_l(v)) = F.\]
		
		Finally, Claim \ref{claim:large_powers_not_sing} and Lemma \ref{lem:basic_asym_sing_prop} show that $\lambda_{g_lh}(g_l(v))=\sigma(g_lh,h^{-1}(v))F\neq F$.
	\end{proof}
	\begin{claim}\label{claim:no_further_sing}
		Suppose $u\in V(T)$ with $\pi_{g_lh}(u)>_{g_lh}v_l$. Then $u\not\in S(g_lh)$.
	\end{claim}
	\begin{proof}
		If $\pi_{g_lh}(u) >_{g_lh}v_l$, then Lemma \ref{lem:proj_onto_int} shows that $\pi_h(u)>_h v_l$. It follows that $u\not\in S(h)$ by \ref{itm:assumpt_1}. It suffices to show that $h(u)\not\in S(g_l)$. Since \[(v_l,v_{l+1},\ldots)\subset \axis(h)\cap \axis(g_lh)\cap \axis(g),\] 
		Lemma \ref{lem:proj_onto_int} shows $\pi_{g_lh}h(u) = \pi_{g_l}h(u) = \pi_hh(u)>_{g_l}v_l$. Lemma \ref{lem:end_semigroup_large} \ref{itm:sing_g_l_2} shows that $h(u) \not\in S(g_l)$.
	\end{proof}
	Let $\cT$ the minimal complete subtree of $T$ satisfying:
	\begin{enumerate}[label = (\Alph*)]
		\item $v_{l+1}\in\Int(\cT)$ and $g_lh(v_{l+1})\in \cT$; and
		\item If $u\in V(T)$ with $\pi_{g_lh}(u)\in\Int(\cT)$ and $d(u,\axis(g))< D_{g_lh}$, then $u\in \cT$.\label{itm:minimal_subtree_prop_2}
	\end{enumerate} 
	Using the description of $\axis(g_lh)$ given in equation \eqref{eq:axis} and the fact that $v_{l - 1}\not\in\axis(g_l)$, we see that the path 
	\[P = (g_l(v_l), g_l(v_{l-1}),\ldots, g_l(v_{l - D_{g_lh}}))\]
	is contained in $\cT$ and meets $\axis(g_lh)$ at the vertex $g_l(v_l)$. Claim \ref{claim:one_sing} and \ref{itm:minimal_subtree_prop_2} show that $g_l(v)\in \Int(\cT)$. It follows from \ref{itm:assumpt_1} and Claim \ref{claim:one_sing} that $D_{g_lh}>l$ and so $g_l(v_0)\in P$. Since $l> D_g + l(h)>0$, we then have $g_l(v_{D_g+l(h)})\in P$.
	
	\begin{claim}\label{claim:aut_fixed_point}
		Suppose $\varphi\in M_{g_lh,\cT}$. Then $\varphi g_l(v_{D_g+l(h)}) = g_l(v_{D_g+l(h)})$.
	\end{claim}
	\begin{proof}
		We proceed by showing the contrapositive. Suppose $\varphi\in \Aut(\cT)$ such that $\varphi$ fixes $\axis(g_lh)\cap \cT$ but ${\varphi g_l(v_{D_g+l(h)})\neq g_l(v_{D_g+l(h)})}$. It follows from \ref{itm:assumpt_1} that the path from $g_l(v)$ to $g_l(v_l)$ passes through $g_l(v_0)$. In particular, it passes through $g_l(v_{D_g+l(h)})$. Since $\varphi g_l(v_{D_g+l(h)})\neq g_l(v_{D_g+l(h)})$, the path from $\varphi g_l(v)$ to $g_l(v_l)$ does not pass through $g_l(v_{D_g+l(h)})$. We must have $\pi_g(g_l^{-1}\varphi g_l(v))>_gv_{D_g+l(h)}$.
		Since $\varphi$ fixes $\axis(g_lh)\cap \cT$, we have
		\[\pi_{g_lh}(\varphi g_l(v)) = \pi_{g_lh}(g_l(v)) = g_l(v)\]
		and 
		\[d(\varphi g_l(v),\axis(g_lh)) = d(g_l(v),\axis(g_lh))= d(g_l(v),g_l(v_{l})) \ge l.\]
		Applying Claim \ref{claim:restricted_sing} to $\varphi g_l(v)$ shows that $\lambda_{g_lh}(\varphi g_l(v)) = F$. Claim \ref{claim:one_sing} shows that 
		\[F\cap \lambda_{g_lh}(\varphi g_l(v))\lambda_{g_lh}(g_l(v))^{-1} = F\cap \lambda_{g_lh}(g_l(v))^{-1} = \varnothing.\]
		Hence, $\sigma(\varphi,g_l(v))\not\in F\cap \lambda_{g_lh}(\varphi g_l(v))\lambda_{g_lh}(g_l(v))^{-1}$.
		The second part of Lemma \ref{lem:scale_flex} shows that $\varphi\not\in M_{g_lh}$.
	\end{proof}
	Claim \ref{claim:aut_fixed_point} and repeated applications of the Orbit-Stabiliser Theorem show that 
	\[|M_{g_lh,\cT_1}|\le (|\Omega| -1)^{|\Int(\cT)| - d(g(v_{l}),g(v_{D_g+l(h)}))}.\]
	Now $d(g(v_{l}),g(v_{D_g+l(h)})) = l - D_g-l(h)$. But  $l > D_g+ \log_{|\Omega| - 1}(s(g)s(h)) + l(h)$ by assumption. Hence, $d(g(v_{l}),g(v_{D_g}))>  \log_{|\Omega| - 1}(s(g)s(h))$. Lemmas \ref{lem:scale_flex} and \ref{lem:length_to_scale} show that $s(g_lh)> s(g)s(h) = s(g_l)s(h)$ as required. 
\end{proof}

\begin{theorem}\label{thm:maximal}
	For $F$ $2$-transitive and $g\in G(F,F')$  hyperbolic, $G(F,F')_{+\partial g}$ is a maximal scale-multiplicative semigroup which is not open.
\end{theorem}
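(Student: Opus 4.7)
The plan is to reduce the maximality claim to Lemma \ref{lem:same_attracting_end_not_smult} by a case analysis on an element witnessing non-maximality. Scale-multiplicativity and non-openness are already established in Lemma \ref{lem:ends_and_stab_s_mult}, so only maximality remains. Suppose $\mathcal{S}$ is a scale-multiplicative semigroup with $G(F,F')_{+\partial g} \subsetneq \mathcal{S}$, and fix $h \in \mathcal{S} \setminus G(F,F')_{+\partial g}$. Since $F$ is $2$-transitive on a set of size $\deg(T) \geq 3$, it has distinct point stabilisers, so Corollary \ref{cor:classification_of_uniscalar} forces every uniscalar element of $G(F,F')$ to be elliptic; hence $h$ is either elliptic but outside $G(F,F')_{\partial g}(1)$, or hyperbolic but not asymptotic to $g$.

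The core strategy is to exhibit a hyperbolic $h_0 \in \mathcal{S}$ with $\omega_+(h_0) = \omega_+(g)$ and $h_0 \not\asymp g$. Once such an $h_0$ is produced, the semigroup generated by $h_0$ and $\partial g$ is contained in $\mathcal{S}$, and Lemma \ref{lem:same_attracting_end_not_smult} applied to $h_0$ and $g$ asserts that this subsemigroup is not scale-multiplicative, contradicting the hypothesis on $\mathcal{S}$.

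I distinguish three scenarios. If $h$ is hyperbolic with $\omega_+(h) = \omega_+(g)$, then by hypothesis $h \not\asymp g$ and $h_0 := h$ works directly. If $h$ is elliptic and fixes $\omega_+(g)$, then $h \notin G(F,F')_{\partial g}(1)$ combined with the contrapositive of Lemma \ref{lem:mult_to_conj_stab} yields some $n \in \bN$ with $hg^n \not\asymp g$; since $h$ fixes $\omega_+(g) = \omega_+(g^n)$ we have $\omega_+(hg^n) = \omega_+(g)$, so $h_0 := hg^n$ works. In the remaining scenario ($h$ either elliptic not fixing $\omega_+(g)$, or hyperbolic with $\omega_+(h) \neq \omega_+(g)$), I invoke Lemma \ref{lem:end_semigroup_large} to produce $g_v \in \partial g$ with $l(g_v) = l(g)$ and prescribed singularity structure, where $v \in \axis(g)$ is chosen far along $\omega_+(g)$. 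For $v$ sufficiently far, the product $g_v h \in \mathcal{S}$ is hyperbolic with $\omega_+(g_v h) = \omega_+(g)$ by a standard axis-tracking argument, so I set $h_0 := g_v h$.

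The main obstacle is the final scenario, specifically the verification that $g_v h \not\asymp g$. Here the fine control over $S(g_v)$ and $\lambda_{g_v}$ afforded by Lemma \ref{lem:end_semigroup_large} is essential: I locate a vertex $u \in V(T)$ where the interaction between $h$ and the transition region of $g_v$ forces a mismatch $\lambda_{g_v h}(u) \neq \lambda_g(u)$, reasoning in the spirit of the singularity computations carried out inside the proof of Lemma \ref{lem:same_attracting_end_not_smult}. Proposition \ref{prop:classification_in_terms_of_sing_traj} then concludes that $g_v h \not\asymp g$. With $h_0$ in hand in every case, Lemma \ref{lem:same_attracting_end_not_smult} delivers the contradiction, establishing maximality.
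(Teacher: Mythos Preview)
Your reduction to Lemma \ref{lem:same_attracting_end_not_smult} works in Cases A and B, but Case C has a fatal gap. The claim that $\omega_+(g_vh)=\omega_+(g)$ is false whenever $h$ does not fix $\omega_+(g)$: since $g_v$ fixes $\omega_+(g_v)=\omega_+(g)$, one has $g_vh(\omega_+(g))=\omega_+(g)$ if and only if $h(\omega_+(g))=\omega_+(g)$, which is precisely what you are \emph{not} assuming in Case C. Hence $\omega_+(g)$ is not a fixed end of $g_vh$, and Lemma \ref{lem:same_attracting_end_not_smult} cannot be invoked on $h_0=g_vh$. The situation cannot be repaired by a different choice of $h_0$ of the form (element of $\partial g$)$\cdot h$, since any $g'\in\partial g$ fixes $\omega_+(g)$ and the same computation obstructs.

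The paper's argument avoids this by \emph{not} reducing the remaining cases to Lemma \ref{lem:same_attracting_end_not_smult}. For $\omega_-(h)=\omega_+(g)$ or $\omega_+(h)=\omega_-(g)$ it finds $n,m$ with $h^ng^m$ elliptic (hence uniscalar), directly violating $s(h^ng^m)=s(h)^ns(g)^m$. For the disjoint-ends case it picks $v$ so that $d(\axis(g_v),\axis(h))>\log_{|\Omega|-1}(s(g)s(h))$; then $l(g_vh)\ge l(g_v)+l(h)+2d(\axis(g_v),\axis(h))$ by \cite[Lemma 4.6]{Baumgartner15}, and Proposition \ref{prop:U(F)_small_scale} together with the formula $s=(|\Omega|-1)^l$ on $U(F)$ forces $s(g_vh)>s(g_v)s(h)$. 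No claim about $\omega_+(g_vh)$ or $\lambda_{g_vh}$ is needed. Your handling of the elliptic elements at the end (via Lemma \ref{lem:mult_to_conj_stab}) matches the paper and is fine.
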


\begin{proof}
	It suffices to show maximality by Lemma \ref{lem:ends_and_stab_s_mult}.	Suppose $h\in G(F,F')\setminus G(F,F')_{+\partial g}$ is hyperbolic. Then $s(h)> 1$ by Corollary \ref{cor:classification_of_uniscalar}. For various cases of $h$, we find $h'\in G(F,F')_{+\partial g}$ and $n\in\bN$ such that $s(h^nh')\neq s(h^n)s(h')$, thus any scale-multiplicative semigroup containing $G(F,F')_{+\partial g}$ cannot contain $h$. We immediately discount the case when $\omega_{-}(h) = \omega_{+}(g)$, since then there exists $n,m\in\bN$ such that $h^{n}g^m$ is elliptic and therefore uniscalar by Proposition \ref{prop:elliptic_scale}. Similarly, we also may also suppose $\omega_{-}(g)\neq \omega_{+}(h)$. Lemma \ref{lem:same_attracting_end_not_smult} discounts the case when $\omega_+(h) = \omega_+(g)$.
	
	Our final case is if the sets of ends fixed by $g$ and $h$ respectively are disjoint. Equivalently, $\axis(h)\cap \axis(g)$ is finite. Therefore, $\pi_g(\axis(h))$ is finite.
	Replacing $g$ with $g^n$ if necessary, we assume that $l(g)> D_g$. 
	Choose $v\in \axis(g)$ such that $d(v,\axis(h))>\log_{|\Omega|-1}(s(g)s(h))$ and $v >_g \pi_g(u)$ for all ${u\in \axis(h)}$. Define $g_v$ as in Lemma \ref{lem:end_semigroup_large}. Then \[d(\axis(g_v),\axis(h)) > \log_{|\Omega|-1}(s(g)s(h)).\]
	It follows from \cite[Lemma 4.6 Part 1]{Baumgartner15} that $l(g_vh)> \log_{|\Omega| - 1}(s(g)s(h))$. Proposition \ref{prop:U(F)_small_scale} and Lemma \ref{lem:scale_weak_asyp} show that $s(g_vh)> s(g)s(h)$. But $l(g_v) = l(g)$, hence $s(g) = s(g_v)$ by Lemma \ref{lem:length_to_scale}. It follows that $s(g_vh)> s(g_v)s(h)$ as required.
	
	We have shown that if $S$ is a scale multiplicative semigroup containing $G(F,F')_{+\partial g}$ and $h\in S$ is hyperbolic, then $h\in G(F,F')_{+\partial g}$. Suppose $x\in S$ is elliptic. We are left to show that $x\in G(F,F')_{+\partial g}$. For all $h\in G(F,F')_{+\partial g}$ hyperbolic, we must have $xh$ hyperbolic as otherwise $s(xh) = 1 < s(h)$ by Proposition \ref{prop:elliptic_scale} and Corollary \ref{cor:classification_of_uniscalar}. We must have $xh\asymp h$ as $xh\in S$. In particular, $xg^n\asymp g^n$ for all $n\in\bN$. Lemma \ref{lem:mult_to_conj_stab} shows that $x\in G(F,F')_{\partial g}(1)\le G(F,F')_{+\partial g}$ as required. 
\end{proof}
	\bibliography{../../../../BibFiles/PhDThesisRef}
\bibliographystyle{alpha}

\end{document}